\newcommand{\draftnewpage}{
    \iftoggle{preliminarydraft}{
        \newpage
    }
}
\newtheorem{thm}{Theorem}[section]
\newenvironment{mythm}[1]
  {\innercustomthm}
  {\endinnercustomthm}
\newtheorem{lemma}[thm]{Lemma}
\newtheorem{coro}[thm]{Corollary}
\newtheorem{prop}[thm]{Proposition}
\theoremstyle{definition}
\newtheorem{defn}[thm]{Definition}
\theoremstyle{remark}
\theoremstyle{remark}
\newenvironment{ex}{\refstepcounter{thm}\begin{proof}[Example \emph{\thethm}]}{\end{proof}}
\newenvironment{rem}{\refstepcounter{thm}\begin{proof}[Remark \emph{\thethm}]}{\end{proof}}
\newenvironment{conv}{\refstepcounter{thm}\begin{proof}[Convention \emph{\thethm}]}{\end{proof}}
\newenvironment{warn}{\refstepcounter{thm}\begin{proof}[Warning \emph{\thethm}]}{\end{proof}}
\newenvironment{cons}{\refstepcounter{thm}\begin{proof}[Construction \emph{\thethm} ]}{\end{proof}}
\numberwithin{equation}{section}
\newcommand{\gvec}[1]{\vec{\mathsf{#1}}}
\colorlet{dark purple}{red!35!blue}
\colorlet{dark green}{green!70!black}
\colorlet{dark red}{red!80!black}
\colorlet{dark blue}{blue!80!black!80!cyan}
\tikzstyle{mutable}=[inner sep=0.9mm,rounded corners,draw,minimum size=2mm]
\tikzstyle{frozen}=[inner sep=.9mm,rectangle,draw]
\tikzstyle{dot} = [fill=black!25,inner sep=0.5mm,circle,draw,minimum size=.5em]
\tikzstyle{marked}=[inner sep=0.5mm,circle,draw,blue!75!black,fill=blue!50]
\tikzset{bpoint/.style = {shape=circle,fill=black,draw,minimum size=.5em, inner sep=0}}
\tikzset{wpoint/.style = {shape=circle,thick,fill=white,draw,minimum size=.5em, inner sep=0}}
\def\D{\mathsf{D}}
\def\Hom{\operatorname{Hom}}
\def\Spec{\operatorname{Spec}}
\def\CA{\mathcal{A}}
\def\kk{\Bbbk}
\def\SS{\Sigma}
\def\MM{M}
\def\Sk{\mathrm{Sk}}
\def\cV{\mathcal{V}}
\title{Deep Points of Cluster Algebras}  
\author{James Beyer}
\author{Greg Muller}
\keywords{Cluster algebra,  cluster varieties, singularities in cluster algebras, special points in varieties}
\subjclass[2020]{
Primary 13F60, 
Secondary 14B05, 
14C99
}
\begin{document}

\maketitle 

\begin{abstract}

We initiate a systematic study of the \textbf{deep points} of a cluster algebra; that is, the points in the associated variety which are not in any cluster torus. We describe the deep points of cluster algebras of type A, rank 2, Markov, and unpunctured surface type.
\end{abstract}

\section{Introduction}

\subsection{Cluster varieties and cluster tori}
Cluster algebras are commutative domains with distinguished elements, called \emph{cluster variables}, and distinguished sets of cluster variables, called \emph{clusters}. 
Cluster algebras were introduced in \cite{FZ02} to axiomatize patterns found in the coordinate rings of Lie groups, and since then they have been found in the coordinate rings of many important varieties, such as Grassmannians, Teichm{\"u}ller spaces, and affine log Calabi-Yau varieties.

We may associate a geometric object to an arbitrary cluster algebra $\CA$ by choosing a field $\kk$ and considering the associated \textbf{cluster variety (of $\kk$-points of $\CA$)}:
\[ V(\CA,\kk) \coloneqq \Hom_{\mathrm{Ring}}(\CA,\kk) \coloneqq \{\text{ring homomorphisms $p:\CA\rightarrow \kk$}\} \]
The examples mentioned above can be recovered in this way, as well as many other varieties.



One of the foundational properties of a cluster algebra is the \emph{Laurent phenomenon}, which says that the localization of a cluster algebra $\CA$ at the cluster variables in any cluster $\{x_1,x_2,\dotsc ,x_n\}$ is the ring of integral Laurent polynomials in those variables; that is,
\[ 
\CA[x_1^{-1},x_2^{-1},\dotsc ,x_n^{-1}] = \mathbb{Z} [x_1^{\pm1},x_2^{\pm1},\dotsc ,x_n^{\pm1}] \]
Applying $\mathrm{Hom}_{\text{Ring}}(-,\kk)$ to the map $\CA\hookrightarrow \mathbb{Z} [x_1^{\pm1},x_2^{\pm1},\dotsc ,x_n^{\pm1}]$ gives an open inclusion of varieties:
\[
(\kk^\times)^n \simeq \Hom_{\mathrm{Ring}}(\mathbb{Z} [x_1^{\pm1} ,x_2^{\pm1},\dotsc ,x_n^{\pm1}],\kk)\hookrightarrow \Hom_{\mathrm{Ring}}(\CA,\kk) \eqqcolon V(\CA,\kk)
\]
The image of $(\kk^\times)^n$ under this inclusion is called the \textbf{cluster torus} of the cluster $\{x_1,x_2,\dotsc ,x_n\}$; explicitly, it is the subset of $V(\CA,\kk)$ on which the none of the cluster variables $x_1,x_2,\dotsc ,x_n$ vanish. 

%

One might hope that the cluster tori cover the cluster variety; however, this is false even in some of the nicest examples. Worse, it is only \emph{sometimes} false, as the following example shows.



\begin{ex}
The cluster variety of type $A_n$ (see Section \ref{section: polygons}) is covered by its cluster tori, unless 
\begin{itemize}
	\item $n\equiv 3$ mod $4$, or
	\item $n\equiv 1$ mod $4$ and $\operatorname{char}(\kk)=2$.
\end{itemize}
In these cases, the union of the cluster tori misses a single point.
\end{ex}

\subsection{Deep points}

We therefore turn our attention to the set of points \emph{not} covered by the cluster tori, called the \textbf{deep points} of the cluster variety. The set of deep points, called the \textbf{deep locus}, forms a closed (possibly empty) subvariety of $V(\CA,\kk)$ (see Propostion \ref{prop: deepideal}).


\begin{ex}
The cluster variety of type $A_n$ (with no frozens) has the following well-known description. Let $\Delta_{n+3}$ denote a convex $(n+3)$-gon, and consider the set of \emph{diagonals} of $\Delta_{n+3}$ connecting non-adjacent vertices. 
Then 
a point $p\in  V(\CA(A_{n}),\kk)$) is equivalent to a map
\[ p:\{\text{diagonals in $\Delta_{n+3}$}\} \rightarrow \kk \]
such that the \emph{Ptolemy relations} hold; that is, the product of two crossing diagonals are equal to the sum of the products of opposite sides (Figure \ref{fig: introptolemy}). The edges of $\Delta_{n+3}$ implicitly have value $1$.
In this model, each cluster corresponds to a triangulation of $\Delta_{n+3}$, and the corresponding cluster torus consists of points which have non-zero values on each diagonal in that triangulation.

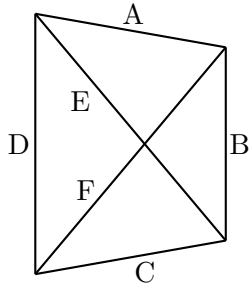
\begin{figure}[h!t]
\begin{tikzpicture}[scale=2]
\draw[thick] (40:1)--(120:1) {}; 
\draw[thick] (120:1)--(240:1) {}; 
\draw[thick] (240:1)--(320:1) {}; 
\draw[thick] (320:1)--(40:1) {}; 
\draw[thick] (40:1)--(240:1) {}; 
\draw[thick] (320:1)--(120:1) {};
\node at (80:0.88) {A};
\node at (180:0.61) {D};
\node at (285:0.88) {C};
\node at (0:0.86) {B};
\node at (125:0.35) {E};
\node at (242:0.35) {F};
\end{tikzpicture}
\caption{A quadralateral with Ptolemy relation $EF=AC+BD$}
\label{fig: introptolemy}
\end{figure}

A deep point in $V(\CA(A_n),\kk)$ is therefore a function on the set of diagonals of $\Delta_{n+3}$ which 
\begin{itemize}
    \item satisfies the Ptolemy relations, and
    \item kills at least one diagonal in every triangulation. 
\end{itemize}
We encourage the reader to verify the following facts for themselves.
\begin{itemize}
    \item There are no deep points in $V(\CA(A_2),\kk)$. That is, it is impossible to assign values to the five diagonals in a pentagon so that the Ptolemy relations hold and each triangulation contains a diagonal with value 0. As a consequence, $V(\CA(A_2),\kk)$ is covered by its 5 cluster tori.
    \item There is a unique deep point in $V(\CA(A_3),\kk)$, given by the values in Figure \ref{fig: deepA3}. \qedhere
\end{itemize}

\end{ex}

\begin{figure}[h!t]
\[
\begin{tikzpicture}[xscale=-1,scale=.75]
    \draw[fill=black!10,thick] 
    (0*60:2)
    to (1*60:2)
    to (2*60:2)
    to (3*60:2)
    to (4*60:2)
    to (5*60:2)
    to (6*60:2)
    ;
    \draw[thick] (5*60:2) to (3*60:2);
    \node at (245:1.27) {$0$};
    \draw[thick] (3*60:2) to (6*60:2);
    \node at (140:0.4) {$-1$};
    \draw[thick] (6*60:2) to (2*60:2);
    \node at (70:1.31) {$0$};
\end{tikzpicture}
\hspace{1cm}
\begin{tikzpicture}[xscale=-1,scale=.75]
    \draw[fill=black!10,thick] 
    (0*60:2)
    to (1*60:2)
    to (2*60:2)
    to (3*60:2)
    to (4*60:2)
    to (5*60:2)
    to (6*60:2)
    ;
    \draw[thick] (4*60:2) to (2*60:2);
    \node at (180:1.22) {$0$};
    \draw[thick] (2*60:2) to (5*60:2);
    \node at (90:0.9) {$-1$};
    \draw[thick] (5*60:2) to (1*60:2);
    \node at (0:1.22) {$0$};
\end{tikzpicture}
\hspace{1cm}
\begin{tikzpicture}[xscale=-1,scale=.75]
    \draw[fill=black!10,thick] 
    (0*60:2)
    to (1*60:2)
    to (2*60:2)
    to (3*60:2)
    to (4*60:2)
    to (5*60:2)
    to (6*60:2)
    ;
    \draw[thick] (3*60:2) to (1*60:2);
    \node at (110:1.31) {$0$};
    \draw[thick] (1*60:2) to (4*60:2);
    \node at (30:0.9) {$-1$};
    \draw[thick] (4*60:2) to (0*60:2);
    \node at (295:1.29) {$0$};
\end{tikzpicture}
\]
\caption{The unique deep point in the $A_3$ cluster variety (with no frozens)}
\label{fig: deepA3}
\end{figure}
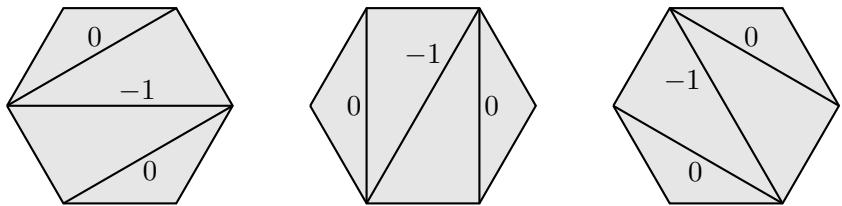

\subsection{Deep points and defects}

To stoke the reader's interest, we propose the following maxim.


\begin{center}
\fbox{\textit{
Any bad property of a cluster algebra can be blamed on its deep points.
}}
\end{center}
As a simple example, any singular points of $V(\CA,\kk)$ must be deep points (Proposition \ref{prop: deepsingular}).\footnote{
The simplest examples of deep points coincide with the singular points, but not every deep point is singular; see Remarks \ref{rem: polygonsingular} and \ref{rem: kronecker}.
}
%

As a contrapositive to the maxim, cluster algebras without deep points should be exceedingly well-behaved. The following theorem collects several notable instances of this phenomenon.

\begin{mythm}{\ref{thm: properties no deep points}}
Let $\CA$ be a cluster algebra with no deep points for all fields $\kk$. Then

\begin{enumerate}
    \item $\CA$ is non-singular.
    \item $\CA$ is finitely generated.
    \item $\CA$ is equal to its upper cluster algebra.
    \item $\CA$ is a locally acyclic cluster algebra.
\end{enumerate}
\end{mythm}

\begin{warn}
The converse to the maxim is generally false, and cluster algebras with deep points can be perfectly well-behaved. For example, all four properties in Theorem \ref{thm: properties no deep points} hold for the cluster algebra of the hexagon (with boundary coefficients), which has deep points by Theorem \ref{thm: deeppolygon}.
\end{warn}

\subsection{Summary of results}

The bulk of this note is dedicated to classifying and parametrizing the deep points of several concrete cluster algebras, with results summarized below. 

\textbf{For simplicity, we assume $\kk$ is algebraically closed and characteristic $0$ below.}
\begin{itemize}
    \item The $A_n$ cluster algebra (with no coefficients) has no deep points, except when $n\equiv 3\bmod{4}$, in which case it has a unique deep point.
    \item The rank 2 cluster algebra of type $(b,c)$ (with no coefficients) has a discrete set of 
    \[
    \left\{
    \begin{array}{cc}
    0 & \text{if $c=1$} \\
    b & \text{if $c>1$}
    \end{array}
    \right\}
    +
    \left\{
    \begin{array}{cc}
    0 & \text{if $b=1$} \\
    c & \text{if $b>1$}
    \end{array}
    \right\}
    \]
    many deep points; e.g.~types $(1,1)$, $(2,1)$, and $(2,2)$ have $0, 1,$ and $4$ deep points, respectively.

    \item The Markov cluster algebra has six deep lines which all intersect at a single point. By contrast, the Markov upper cluster algebra has seven deep lines which all intersect at a single point, and the map $V(\mathcal{U},\kk)\rightarrow V(\CA,\kk)$ collapses one of these lines to a point.
    
    
    \item Let $\SS$ be a connected, unpunctured, triangulable marked surface with genus $g$, $b$-many boundary components, and $m$-many marked points; and let $\CA(\SS)$ be the cluster algebra of $\SS$ (with boundary coefficients).
    \begin{itemize}
        \item If $m$ is even, then the deep locus of $\CA(\SS)$ consists of $2^{2g+b-1}$-many disjoint copies of the algebraic torus $(\kk^\times)^{2g+b+m-2}$. These components are in bijection with many interesting sets (see appendix).
        \item If $m$ is odd, then $\CA(\SS)$ has no deep points.
    \end{itemize}
    E.g.~the annulus with 2 marked points on each boundary has $2$ deep tori of dimension $4$.
    
\end{itemize}

\subsection{Related work and acknowledgements}

While this paper was in preparation, the authors became aware of a related work in preparation by Castronovo, Gorsky, Simental, and Speyer \cite{CGSS24arxiv}. They conjecture that, in many cases, deep points may be characterized as the points in a cluster variety with non-trivial stabilizer for a certain group action. We have since had several productive conversations with them, and both papers are better for it.

While completing this work, J.B. was partially supported by a David and Judi Proctor Department of Mathematics Graduate Fellowship and a Robberson Travel Grant, both from the University of Oklahoma.

\setcounter{tocdepth}{1}
\tableofcontents

\draftnewpage

\section{Deep points in general cluster varieties}

From this point on, we assume the reader is familiar with a basic introduction to cluster algebras, such as the recent book of Fomin-Williams-Zelevinsky (\cite{FWZaa,FWZbb,FWZcc,FWZdd}). Unless otherwise specified, we will work with skew-symmetrizable cluster algebras over $\mathbb{Z}$; frozen variables are allowed but the cluster algebra must include their inverses. 


\subsection{Cluster varieties}


Given a cluster algebra $\CA$ and a field $\kk$, the \textbf{cluster variety} is the set
\[ V(\CA,\kk) \coloneqq \Hom_{\mathrm{Ring}}(\CA,\kk) \coloneqq \{\text{ring homomorphisms $p:\CA\rightarrow \kk$}\} \]
This has the \emph{Zariski topology} in which a closed set consists of homomorphisms which send a fixed ideal to 0. 
Each element $a\in \CA$ determines a function $f_a:V(\CA,\kk)\rightarrow \kk$ by the rule that $f_a(p) \coloneqq p(a)$. The notation $f_a(p)$ is more useful when regarding $V(\CA,\kk)$ as a geometric object with points $\{p\}$, and the notation $p(a)$ is more useful when regarding it as a set of homomorphisms $\{p\}$.






\begin{rem}
If $\CA$ is finitely generated, then $V(\CA,\kk)$ may be identified with an affine $\kk$-variety. Explicitly, a finite presentation $ \CA \simeq \mathbb{Z}[g_1,g_2,\dotsc ,g_m] / \langle p_1,p_2,\dotsc ,p_n\rangle$ induces a homeomorphism between $V(\CA,\kk)$ and the solution set of the system $\{p_1=0,p_2=0,\dotsc ,p_n=0\}$ inside $\kk^m$. 

In general, $V(\CA,\kk)$ is the set of $\kk$-points of the affine scheme $\Spec(\CA)$. While this scheme need not be finite-type, we still refer to $V(\CA,\kk)$ as a `variety' for simplicity. Many of the results of this paper can be reformulated for schemes, but we use varieties to appeal to a wider audience.
\end{rem}


By the Laurent phenemenon, each cluster $\{x_1,x_2,\dotsc ,x_n\}\subset \mathcal{A}$
gives an inclusion of rings
\[ \CA \hookrightarrow \CA[x_1^{-1},x_2^{-1},\dotsc ,x_n^{-1}] =\mathbb{Z} [x_1^{\pm1} ,x_2^{\pm1},\dotsc ,x_n^{\pm1}] \]
A homomorphism $p:\CA\rightarrow \kk$ factors through this localization map iff $p(x_1),p(x_2), \dotsc ,p(x_n)$ are non-zero. Therefore, we have an open inclusion
\[
(\kk^\times)^n \simeq \Hom_{\mathrm{Ring}}(\mathbb{Z} [x_1^{\pm1} ,x_2^{\pm1},\dotsc ,x_n^{\pm1}],\kk)\hookrightarrow \Hom_{\mathrm{Ring}}(\CA,\kk) \eqqcolon V(\CA,\kk)
\]
whose image is the set of homomorphisms $p:\CA\rightarrow \kk$ such that $p(x_1),p(x_2), \dotsc ,p(x_n)$ are non-zero; or equivalently, the open subset of $V(\CA,\kk)$ where none of the functions $f_{x_1},f_{x_2},\dotsc ,f_{x_n}$ vanish. 
This image is called the \textbf{cluster torus} associated to the cluster $\{x_1,x_2,\dotsc ,x_n\}$.\footnote{The reader is cautioned that a cluster torus is not topological torus; rather, it is a \emph{split algebraic torus}.} 


\begin{warn}\label{warn: clustervariety}
The phrase `cluster variety' has been used in at least two other ways in the literature:
\begin{itemize}
    \item The union of the cluster tori (e.g.~\cite{FG09}).
    \item A certain \emph{blowup at infinity} of an algebraic torus (e.g.~\cite{GHKBirat15}).
\end{itemize}
These references generally only consider varieties `up to birational equivalence' or `up to codimension 2' (e.g.~\cite[Section 3.2]{GHKBirat15}), in which case they are equivalent to our definition. By contrast, we are interested in the special points and fine geometry of $V(\CA,\kk)$, so we need to be more explicit and careful about which definition we are using.
\end{warn}

\subsection{Deep points}


The goal of this paper is the study of the points in a cluster variety which are not in any cluster torus, which we call \emph{deep points}.


\begin{defn}
A point $p\in V(\CA,\kk)$ is a \textbf{deep point} if, for all clusters $\{x_1,\dotsc ,x_n\}\subset \CA$, there is a cluster variable $x_i\in \{x_1,\dotsc ,x_n\}$ such that $p(x_i)=0$.
\end{defn}

\noindent The set of all deep points in $V(\CA,\kk)$ is called the \textbf{deep locus} of $V(\CA,\kk)$.





\begin{prop}\label{prop: deepsingular}
A singular point in a cluster variety must be deep.
\end{prop}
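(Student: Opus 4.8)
The plan is to prove the contrapositive: every \emph{non-}deep point --- i.e., every point lying in at least one cluster torus --- is nonsingular.

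First I would recall, from the discussion preceding the Laurent phenomenon in this section, that for any cluster $\{x_1,\dotsc,x_n\}\subset\CA$ the associated cluster torus is an \emph{open} subvariety of $V(\CA,\kk)$ isomorphic to $(\kk^\times)^n$. Concretely, base-changing the Laurent identity $\CA[x_1^{-1},\dotsc,x_n^{-1}]=\mathbb{Z}[x_1^{\pm1},\dotsc,x_n^{\pm1}]$ along $\mathbb{Z}\to\kk$ gives $\CA_\kk[x_1^{-1},\dotsc,x_n^{-1}]=\kk[x_1^{\pm1},\dotsc,x_n^{\pm1}]$, where $\CA_\kk\coloneqq\CA\otimes_\mathbb{Z}\kk$ and $V(\CA,\kk)$ is the set of $\kk$-points of $\Spec(\CA_\kk)$.

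Next I would observe that $(\kk^\times)^n=\Spec\big(\kk[x_1^{\pm1},\dotsc,x_n^{\pm1}]\big)$ is nonsingular: its coordinate ring is a localization of a polynomial ring over a field and hence regular, or equivalently it is an open subscheme of $\mathbb{A}^n_\kk$ and hence smooth over $\kk$. Since nonsingularity is a local property and the cluster torus is open in $V(\CA,\kk)$, every point of the cluster torus is a nonsingular point of $V(\CA,\kk)$. In local-ring language: if $p$ lies in the cluster torus of $\{x_1,\dotsc,x_n\}$, then the prime $\mathfrak{m}_p\subset\CA_\kk$ determined by $p$ contains no $x_i$, so $(\CA_\kk)_{\mathfrak{m}_p}$ coincides with the localization of $\kk[x_1^{\pm1},\dotsc,x_n^{\pm1}]$ at $\mathfrak{m}_p$, which is regular. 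Running over all clusters, every non-deep point of $V(\CA,\kk)$ is nonsingular, and the contrapositive of this statement is exactly the claim.

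I do not expect a genuine obstacle here --- the argument is essentially formal. The one thing to handle carefully is the bookkeeping of scalars: $\CA$ is a $\mathbb{Z}$-algebra, so ``nonsingular point of $V(\CA,\kk)$'' must be read as nonsingularity of the $\kk$-scheme $\Spec(\CA\otimes_\mathbb{Z}\kk)$, and one should note that the facts invoked (a localization of a ring is an open subscheme of its spectrum, regularity passes to localizations, smoothness is local) require no finite-generation or Noetherian hypothesis on $\CA$.
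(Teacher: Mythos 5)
Your argument is correct and is the same as the paper's: prove the contrapositive by noting that a non-deep point lies in a cluster torus, which is an open nonsingular neighborhood. The extra care you take with base change and localization is fine but not needed beyond the one-line observation.
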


\begin{proof}
If $p\in V(\CA,\kk)$ is not deep, then it is contained in a cluster torus, and so there is an open neighborhood of $p$ which is non-singular.
\end{proof}

Deep points can be translated to the algebraic side. 
Define the \textbf{deep ideal} $\D(\CA)$ of a cluster algebra $\CA$ to be the ideal generated by the products of the elements in each cluster; 
that is,
\[ \D(\CA) \coloneqq \sum_{\text{clusters }\{x_1,x_2,\dotsc ,x_n\}} \CA x_1x_2 \dotsm x_n\]
As the following proposition shows, the deep locus is defined by the vanishing of this ideal.

\begin{prop}\label{prop: deepideal}
A point $p\in V(\CA,\kk)$ is deep if and only if $p(\D(\CA))=0$. Therefore, the deep locus of $V(\CA,\kk)$ is the Zariski closed subset defined by the vanishing of the deep ideal $\D(\CA)$.

\end{prop}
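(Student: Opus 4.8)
The plan is to prove both directions of the biconditional directly from the definitions, and then read off the statement about the deep locus as a formal consequence. Recall that a point $p \in V(\CA,\kk)$ is, by definition, a ring homomorphism $p : \CA \to \kk$, and that for any ideal $I \subseteq \CA$ we have $p(I) = 0$ if and only if $I \subseteq \ker(p)$. Since $\kk$ is a field, $\ker(p)$ is a prime (in fact maximal, if $p$ is surjective, but we won't need that) ideal.

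First I would handle the forward direction. Suppose $p$ is deep. Fix any cluster $\{x_1,\dotsc,x_n\}$. By the definition of deep point there is some $x_i$ with $p(x_i) = 0$, and since $p$ is a ring homomorphism, $p(x_1 x_2 \dotsm x_n) = p(x_1)\dotsm p(x_n) = 0$ as well. Hence $p$ annihilates each generator $x_1 \dotsm x_n$ of the deep ideal $\D(\CA)$, and since $\ker(p)$ is an ideal containing all these generators, it contains the ideal they generate; thus $p(\D(\CA)) = 0$.

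Conversely, suppose $p(\D(\CA)) = 0$. Fix any cluster $\{x_1,\dotsc,x_n\}$. Then $x_1 x_2 \dotsm x_n \in \D(\CA) \subseteq \ker(p)$, so $p(x_1) p(x_2) \dotsm p(x_n) = p(x_1 x_2 \dotsm x_n) = 0$ in $\kk$. Because $\kk$ is a field (hence an integral domain), a product vanishes only if one of the factors does, so $p(x_i) = 0$ for some $i$. As the cluster was arbitrary, $p$ is deep. Finally, the deep locus is by definition $\{p : p \text{ is deep}\} = \{p : p(\D(\CA)) = 0\}$, which is precisely the Zariski closed subset of $V(\CA,\kk)$ cut out by the ideal $\D(\CA)$, by the description of the Zariski topology given earlier.

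There is no real obstacle here; the only point requiring any care is the use of $\kk$ being a domain in the converse direction (this is where "one cluster variable vanishes" rather than merely "the product vanishes" comes from), and the observation that a ring homomorphism sends a product of elements to the product of their images, so that membership of the single generator $x_1\dotsm x_n$ in $\ker(p)$ is equivalent to the vanishing of the product $p(x_1)\dotsm p(x_n)$. I would present the argument as two short paragraphs mirroring the two implications, and close with the one-sentence deduction about the deep locus.
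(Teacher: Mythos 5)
Your proof is correct and follows essentially the same route as the paper's: the paper also chains the equivalences ``$p$ is deep'' $\Leftrightarrow$ ``$p$ kills each cluster monomial $x_1\dotsm x_n$'' $\Leftrightarrow$ ``$p(\D(\CA))=0$'', using that $\kk$ is a field for the converse direction and the definition of the Zariski topology for the final claim. Your write-up just makes explicit the same steps the paper leaves as ``basic properties of homomorphisms.''
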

\begin{proof}
The first claim follows from equivalences between the the following statements.
%
%
%
\begin{enumerate}
    \item $p$ is deep; that is, for each cluster $\{x_1,x_2, \dotsc ,x_n\}$, there exists $x_i$ such that $p(x_i)=0$.
    \item For each cluster $\{x_1,x_2, \dotsc ,x_n\}$, $p(x_1x_2\dotsm x_n)=0$.
    \item $p(\D(\CA))=0$.
\end{enumerate}
These all follow from basic properties of homomorphisms; note that $(2)\Rightarrow (1)$ uses that $\kk$ is a field.
The second claim follows from the definition of the Zariski topology.
\end{proof}

A cluster algebra may have deep points over some fields but not others (e.g.~Example \ref{ex: type 1 2 deep points}), so a lack of deep points over a specific field generally doesn't say much about the algebra. Rather, it is the absence of deep points over all fields which will have the strongest algebraic consequences.

\begin{prop}\label{prop: nodeep}
Given a cluster algebra $\CA$, 
the following are equivalent.
\begin{enumerate}
    \item The deep ideal $\D(\CA)$ of $\CA$ is trivial; that is, $1\in \D(\CA)$.
    \item $V(\CA,\kk)$ has no deep points, for all fields $\kk$.
    \item $V(\CA,\kk)$ is covered by finitely many cluster tori, for all fields $\kk$.
\end{enumerate}

\end{prop}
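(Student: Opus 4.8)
The plan is to prove the cycle of implications $(1)\Rightarrow(3)\Rightarrow(2)\Rightarrow(1)$, using Proposition \ref{prop: deepideal} to translate freely between the geometric statement about deep points and the algebraic statement about the deep ideal.

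For $(1)\Rightarrow(3)$: if $1\in\D(\CA)$, then by definition of $\D(\CA)$ as a sum of ideals $\CA x_1\dotsm x_n$ over all clusters, we can write $1=\sum_{j=1}^N a_j\, m_j$ where each $m_j$ is the product of the cluster variables in some cluster and $a_j\in\CA$. Only finitely many clusters appear in this expression; call them $C_1,\dotsc,C_N$ (with products $m_1,\dotsc,m_N$). I claim the cluster tori of $C_1,\dotsc,C_N$ cover $V(\CA,\kk)$ for every field $\kk$. Indeed, given any $p\in V(\CA,\kk)$, applying $p$ to the relation $1=\sum a_j m_j$ gives $1=\sum p(a_j)p(m_j)$ in $\kk$, so $p(m_j)\neq 0$ for at least one $j$; since $p(m_j)$ is a product of the images of the cluster variables in $C_j$ and $\kk$ is a field (hence a domain), $p$ is nonzero on every cluster variable of $C_j$, i.e.\ $p$ lies in the cluster torus of $C_j$. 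This finishes the implication; note that the finiteness is essential and comes for free from the fact that membership in an ideal generated by a possibly infinite family only involves finitely many generators.

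The implication $(3)\Rightarrow(2)$ is immediate: a point lying in \emph{some} cluster torus is by definition not deep, so if the cluster tori cover $V(\CA,\kk)$ there are no deep points over $\kk$; and $(3)$ asserts this for all $\kk$.

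For $(2)\Rightarrow(1)$, the cleanest route is the contrapositive: suppose $1\notin\D(\CA)$. Then $\D(\CA)$ is a proper ideal of the commutative ring $\CA$, so it is contained in some maximal ideal $\mathfrak{m}\subset\CA$. Let $\kk\coloneqq\CA/\mathfrak{m}$, a field, and let $p:\CA\twoheadrightarrow\kk$ be the quotient map, a point of $V(\CA,\kk)$. Then $p(\D(\CA))=0$, so by Proposition \ref{prop: deepideal} the point $p$ is deep; hence $V(\CA,\kk)$ has a deep point over this field, contradicting $(2)$. This closes the cycle. The only mild subtlety — and the step I'd flag as the one to be careful about rather than truly an obstacle — is the $(1)\Rightarrow(3)$ argument's use of finiteness and of $\kk$ being a domain; everything else is a routine ideal-theoretic argument, with the residue-field construction in $(2)\Rightarrow(1)$ being the one genuinely new ingredient. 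We record the proof:

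\begin{proof}
We show $(1)\Rightarrow(3)\Rightarrow(2)\Rightarrow(1)$.

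$(1)\Rightarrow(3)$: Suppose $1\in\D(\CA)$. By definition of $\D(\CA)$, we may write $1=\sum_{j=1}^N a_j m_j$, where $a_j\in\CA$ and each $m_j=x_1^{(j)}\dotsm x_n^{(j)}$ is the product of the cluster variables in a cluster $C_j$; only finitely many clusters $C_1,\dotsc,C_N$ occur. Let $\kk$ be any field and $p\in V(\CA,\kk)$. Applying $p$ gives $1=\sum_{j=1}^N p(a_j)p(m_j)$ in $\kk$, so $p(m_j)\neq 0$ for some $j$. Since $\kk$ is a domain and $p(m_j)$ is the product of the $p(x_i^{(j)})$, each $p(x_i^{(j)})$ is nonzero, so $p$ lies in the cluster torus of $C_j$. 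Hence the cluster tori of $C_1,\dotsc,C_N$ cover $V(\CA,\kk)$.

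$(3)\Rightarrow(2)$: A point in a cluster torus is, by definition, not deep; so if the cluster tori cover $V(\CA,\kk)$ there are no deep points over $\kk$. By $(3)$ this holds for all $\kk$, giving $(2)$.

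$(2)\Rightarrow(1)$: We prove the contrapositive. If $1\notin\D(\CA)$, then $\D(\CA)$ is a proper ideal of $\CA$, hence contained in a maximal ideal $\mathfrak{m}$. Put $\kk\coloneqq\CA/\mathfrak{m}$ and let $p:\CA\rightarrow\kk$ be the quotient map. Then $p(\D(\CA))=0$, so $p$ is deep by Proposition \ref{prop: deepideal}, and $V(\CA,\kk)$ has a deep point. This contradicts $(2)$.
\end{proof}
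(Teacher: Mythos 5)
Your proof is correct and follows essentially the same route as the paper: the cycle $(1)\Rightarrow(3)\Rightarrow(2)\Rightarrow(1)$, with the finite expression $1=\sum a_j m_j$ giving the finite torus cover, and the maximal-ideal/residue-field construction giving the contrapositive of $(2)\Rightarrow(1)$. The points you flag as needing care (finiteness of the sum, $\kk$ being a domain) are exactly the ones the paper's argument relies on as well.
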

\begin{proof}
If $\D(\CA)$ is non-trivial, then it is contained is some maximal ideal $\mathfrak{m}$. The quotient map
$ \CA\rightarrow \CA/\mathfrak{m} $
is a deep point in $V(\CA,\CA/\mathfrak{m})$, and so $(2)\Rightarrow (1)$.

Conversely, if the deep ideal $\D(\CA)$ is trivial, then we may write $1$ as
\[ 1 = \sum_{\{x_1,x_2,\dotsc ,x_n\}\in S} a_{\{x_1,x_2,\dotsc ,x_n\}} x_1x_2 \dotsm x_n\]
where each $a_{\{x_1,x_2,\dotsc ,x_n\}}\in \CA$ and $S$ is a finite set of clusters.
Applying $p\in V(\CA,\kk)$ to this equality,
\[ 1 =\sum_{\{x_1,x_2,\dotsc ,x_n\}\in S} p(a_{\{x_1,x_2,\dotsc ,x_n\}} )p(x_1)p(x_2) \dotsm p(x_n)\]
Since the right-hand side cannot vanish, there must be some cluster in $\{x_1,x_2,\dotsc ,x_n\}\in  S$ such that $p(x_1)p(x_2)\dotsm p(x_n)\neq0$, and so the point $p$ is in the cluster torus corresponding to $\{x_1,x_2,\dotsc ,x_n\}$. Therefore, the cluster tori indexed by $S$ cover $V(\CA,\kk)$, so $(1)\Rightarrow (3)$. Finally, $(3)\Rightarrow (2)$ is clear.
\end{proof}


\begin{thm}\label{thm: properties no deep points}
Let $\CA$ be a cluster algebra with no deep points for all $\kk$ (or equivalently, the deep ideal $\D(\CA)$ is trivial). Then

\begin{enumerate}
    \item $\CA$ is non-singular.
    \item $\CA$ is finitely generated.
    \item $\CA$ is equal to its upper cluster algebra.
    \item $\CA$ is a locally acyclic cluster algebra.
\end{enumerate}
\end{thm}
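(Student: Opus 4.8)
The common engine for all four statements is a single observation from Proposition \ref{prop: nodeep}: triviality of $\D(\CA)$ means there is a \emph{finite} set $S$ of clusters such that the products $f_{\mathbf x}\coloneqq x_1x_2\dotsm x_n$, over clusters $\mathbf x=\{x_1,\dotsc,x_n\}\in S$, generate the unit ideal of $\CA$. Geometrically, the distinguished opens $D(f_{\mathbf x})=\Spec\!\bigl(\CA[f_{\mathbf x}^{-1}]\bigr)$, $\mathbf x\in S$, form a finite open cover of $\Spec\CA$; and by the Laurent phenomenon each such open is $\Spec$ of a Laurent polynomial ring $\mathbb{Z}[x_1^{\pm1},\dotsc,x_n^{\pm1}]$, which is moreover exactly the cluster localization of $\CA$ obtained by freezing every variable of the cluster $\mathbf x$. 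The plan is to read off each of (1)--(4) from this picture, so that only routine commutative algebra and some bookkeeping about conventions remain.

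For (2) I would run the usual gluing argument. Pick $a_{\mathbf x}\in\CA$ with $\sum_{\mathbf x\in S}a_{\mathbf x}f_{\mathbf x}=1$, and for each $\mathbf x$ a finite set of $\mathbb{Z}$-algebra generators of $\CA[f_{\mathbf x}^{-1}]$, which after clearing denominators may be taken in $\CA$. Let $\CA_0\subseteq\CA$ be the $\mathbb{Z}$-subalgebra generated by all of these together with the $f_{\mathbf x}$ and the $a_{\mathbf x}$; then $\CA_0$ is finitely generated, $\CA_0[f_{\mathbf x}^{-1}]=\CA[f_{\mathbf x}^{-1}]$ for every $\mathbf x\in S$, and the $f_{\mathbf x}$ already generate the unit ideal of $\CA_0$. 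Given $c\in\CA$, some common power $f_{\mathbf x}^{N}c$ lies in $\CA_0$ for all $\mathbf x$ (here one uses that $\CA$ is a domain, so $\CA\hookrightarrow\CA[f_{\mathbf x}^{-1}]$); writing $1=\sum b_{\mathbf x}f_{\mathbf x}^{N}$ with $b_{\mathbf x}\in\CA_0$ then gives $c=\sum b_{\mathbf x}(f_{\mathbf x}^{N}c)\in\CA_0$, so $\CA=\CA_0$ is finitely generated.

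Statement (1) is then quick: it is the contrapositive of Proposition \ref{prop: deepsingular} (a non-deep point lies in a cluster torus, which is smooth), so $V(\CA,\kk)$ is non-singular for every $\kk$; and at the level of the ring, $\CA$ is now Noetherian by (2) and $\Spec\CA$ is covered by the regular schemes $\Spec\mathbb{Z}[x_1^{\pm1},\dotsc,x_n^{\pm1}]$, so $\CA$ is regular because regularity is a local property. Statement (3) uses the same mechanism as (2): the upper cluster algebra satisfies $\CA\subseteq\mathcal{U}(\CA)\subseteq\bigcap_{\mathbf x\in S}\CA[f_{\mathbf x}^{-1}]$ (intersecting over the sub-family $S$ of all clusters), and clearing denominators followed by $1=\sum b_{\mathbf x}f_{\mathbf x}^{N}$ shows $\bigcap_{\mathbf x\in S}\CA[f_{\mathbf x}^{-1}]=\CA$ --- this is just the sheaf axiom for $\mathcal{O}_{\Spec\CA}$ on the cover $\{D(f_{\mathbf x})\}$, which is trivial to check because $\CA$ is a domain. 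Hence $\mathcal{U}(\CA)=\CA$.

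Finally, (4) is essentially built into the setup: $\{\CA[f_{\mathbf x}^{-1}]:\mathbf x\in S\}$ is a finite family of cluster localizations of $\CA$ whose spectra cover $\Spec\CA$, and each member, being obtained from $\CA$ by freezing an entire cluster, has empty mutable quiver and is therefore trivially acyclic; by definition this exhibits $\CA$ as locally acyclic. (Alternatively, (2) and (3) could be deduced from (4) via the known finite-generation and upper-cluster-algebra properties of locally acyclic cluster algebras, but the direct arguments above are short and keep the proof self-contained.) The one place I would tread carefully --- and the only real subtlety --- is the convention-matching in (4): one must confirm that localizing at the product of a full cluster really is a cluster localization in the precise sense used to define local acyclicity, and that a seed with no mutable variables qualifies as acyclic. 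Both hold here because in this paper's conventions frozen variables are always inverted, so freezing an entire cluster returns exactly $\mathbb{Z}[x_1^{\pm1},\dotsc,x_n^{\pm1}]$. Bookkeeping aside, there is no serious obstacle: the theorem is a package of corollaries of the finite cover of $\Spec\CA$ by algebraic tori.
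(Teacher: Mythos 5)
Your proposal is correct, and its skeleton is the same as the paper's: Proposition \ref{prop: nodeep} gives a finite set of clusters whose products generate the unit ideal, hence a finite cover of $\Spec\CA$ by the localizations $\CA[x_1^{-1},\dotsc,x_n^{-1}]=\mathbb{Z}[x_1^{\pm1},\dotsc,x_n^{\pm1}]$; part (1) is the contrapositive of Proposition \ref{prop: deepsingular}; and part (4) is read off by observing that each such localization is the cluster localization freezing an entire cluster, which is vacuously acyclic. The one genuine difference is in how (2) and (3) are handled: the paper simply deduces them from (4) by citing the general theorems of \cite{MulLA} that locally acyclic cluster algebras are finitely generated and equal to their upper cluster algebras, whereas you reprove the special case directly --- the partition-of-unity gluing argument for finite generation, and the identity $\bigcap_{\mathbf x\in S}\CA[f_{\mathbf x}^{-1}]=\CA$ (valid because the $f_{\mathbf x}$ generate the unit ideal in the domain $\CA$) combined with $\mathcal{U}\subseteq\bigcap_{\mathbf x\in S}\CA[f_{\mathbf x}^{-1}]$ for $\CA=\mathcal{U}$. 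Both of your direct arguments are sound; what they buy is self-containedness, at the cost of redoing in this special case exactly the commutative algebra that \cite{MulLA} packages for general locally acyclic cluster algebras. Your caution about the conventions in (4) is well placed but resolves exactly as you say, since this paper's cluster algebras always invert frozen variables.
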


\begin{proof}
The non-singularity of $\CA$ (or equivalently, of $V(\CA,\kk)$ for all $\kk$) follows from Proposition \ref{prop: deepsingular}.

In the terminology of \cite{MulLA}, the localization 
$ \CA\rightarrow \CA[x_1^{-1},x_2^{-1},\dotsc ,x_n^{-1}]$
is the \emph{cluster localization} corresponding to freezing an entire cluster; the corresponding cluster has no mutable variables and is therefore vacuously acyclic. If the deep ideal of $\CA$ is trivial, then finitely many of these cluster localizations cover the cluster variety (Proposition \ref{prop: nodeep}) and so $\CA$ is locally acyclic. Local acyclicity implies finite generation and $\mathcal{A}=\mathcal{U}$ by \cite{MulLA}.
\end{proof}

\subsection{Deep points and upper cluster algebras}

The story up to this point can be repeated for 
the upper cluster algebra $\mathcal{U}$ of a cluster algebra $\mathcal{A}$. The \textbf{upper cluster variety} is the set 
\[ V(\mathcal{U},\kk) \coloneqq \Hom_{\mathrm{Ring}}(\mathcal{U},\kk) \coloneqq \{\text{ring homomorphisms $p:\mathcal{U}\rightarrow \kk$}\} \]
endowed with the Zariski topology. Each cluster $\{x_1,x_2,\dotsc ,x_n\}$ determines an open inclusion 
\[
(\kk^\times)^n \simeq \Hom_{\mathrm{Ring}}(\mathbb{Z} [x_1^{\pm1} ,x_2^{\pm1},\dotsc ,x_n^{\pm1}],\kk)\hookrightarrow \Hom_{\mathrm{Ring}}(\mathcal{U},\kk) \eqqcolon V(\mathcal{U},\kk)
\]
whose image we call a \textbf{cluster torus} in $V(\mathcal{U},\kk)$. The points in $V(\mathcal{U},\kk)$ which are not in any cluster torus are the \textbf{deep points}, which collectively form the \textbf{deep locus}, which is defined by the vanishing of a \textbf{deep ideal} $\D(\mathcal{U})$ in $\mathcal{U}$.

%
%

We can relate this back to the cluster algebra $\CA$ using the inclusion $\CA\hookrightarrow \mathcal{U}$. Applying the functor $\mathrm{Hom}_{\text{Ring}}(-,\kk)$ gives a morphism of varieties $V(\mathcal{U},\kk)\rightarrow V(\CA,\kk)$.
%
%

\begin{prop}
The map $V(\mathcal{U},\kk)\rightarrow V(\CA,\kk)$ restricts to an isomorphism on each cluster torus, and therefore restricts to an isomorphism between the complements of the deep loci.
%
\end{prop}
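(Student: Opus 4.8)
The plan is to extract everything from the Laurent phenomenon (for both $\CA$ and $\mathcal{U}$) together with the fact that being an isomorphism is a property local on the target. Fix a cluster $\{x_1,\dots,x_n\}\subset\CA$ and write $\phi\colon V(\mathcal{U},\kk)\to V(\CA,\kk)$ for the map induced by $\CA\hookrightarrow\mathcal{U}$. Inside the ambient field of fractions we have $\CA\hookrightarrow\mathcal{U}\hookrightarrow\mathbb{Z}[x_1^{\pm1},\dots,x_n^{\pm1}]$, the last inclusion because $\mathcal{U}$ is the intersection of all the cluster Laurent rings. Localizing all three rings at $x_1,\dots,x_n$ collapses this to $\CA[x_1^{-1},\dots,x_n^{-1}]=\mathcal{U}[x_1^{-1},\dots,x_n^{-1}]=\mathbb{Z}[x_1^{\pm1},\dots,x_n^{\pm1}]$: the outer equalities are the Laurent phenomenon, and the middle one holds because $\mathcal{U}[x_i^{-1}]$ is squeezed between the other two. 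Applying $\Hom_{\mathrm{Ring}}(-,\kk)$ to the commuting triangle of rings $\CA\to\mathcal{U}\to\mathbb{Z}[x_1^{\pm1},\dots,x_n^{\pm1}]$ yields a commuting triangle of varieties whose two maps out of $(\kk^\times)^n\simeq\Hom_{\mathrm{Ring}}(\mathbb{Z}[x_1^{\pm1},\dots,x_n^{\pm1}],\kk)$ are precisely the open immersions defining the cluster torus of $\{x_1,\dots,x_n\}$ in $V(\mathcal{U},\kk)$ and in $V(\CA,\kk)$. Since both of these are isomorphisms onto their images, $\phi$ carries the cluster torus in $V(\mathcal{U},\kk)$ isomorphically onto the cluster torus in $V(\CA,\kk)$, which is the first claim.

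Next I would identify the $\phi$-preimages of cluster tori. A homomorphism $q\colon\mathcal{U}\to\kk$ lies in the cluster torus of $\{x_1,\dots,x_n\}$ exactly when $q(x_1),\dots,q(x_n)$ are all nonzero; as each $x_i$ lies in $\CA$, this holds exactly when $\phi(q)=q|_\CA$ satisfies the same condition, i.e.\ when $\phi(q)$ lies in the cluster torus of $\{x_1,\dots,x_n\}$ in $V(\CA,\kk)$. Thus the $\phi$-preimage of the cluster torus of a cluster is the cluster torus of that same cluster. Taking unions over all clusters and recalling that the complement of the deep locus is exactly the union of all cluster tori (a point is non-deep iff it lies in some cluster torus), we get that $\phi^{-1}$ of the deep-locus complement in $V(\CA,\kk)$ is the deep-locus complement in $V(\mathcal{U},\kk)$, and $\phi$ restricts to a morphism between these two open subvarieties.

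This restricted morphism is, by the first paragraph, an isomorphism over every member of the open cover of its target by cluster tori; since being an isomorphism is local on the target, the restriction is an isomorphism, which is the second claim. (Concretely, its inverse sends a non-deep point $p$ to the unique lift obtained by extending $p$ along $\CA\hookrightarrow\CA[x_1^{-1},\dots,x_n^{-1}]$ for any cluster on which $p$ is nonvanishing and then restricting to $\mathcal{U}$; independence of the chosen cluster follows because two such lifts agree after further localizing at the union of the two clusters, where $\CA$ and $\mathcal{U}$ once more share a common localization.) Beyond invoking the Laurent phenomenon — which is doing all the real work — the only point demanding care is this compatibility of the two cluster-torus structures: ``isomorphism on each cluster torus'' is by itself not enough to conclude ``isomorphism on the union,'' and one genuinely needs that the cluster tori of $V(\mathcal{U},\kk)$ are the $\phi$-preimages of those of $V(\CA,\kk)$, so that the local isomorphisms glue.
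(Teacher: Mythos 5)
Your proof is correct and follows essentially the same route as the paper's: both identify the localizations of $\CA$ and $\mathcal{U}$ at a cluster with the common Laurent ring, deduce an isomorphism on each cluster torus, and then glue over the cover by cluster tori. Your extra step — checking that the $\phi$-preimage of each cluster torus in $V(\CA,\kk)$ is the corresponding cluster torus in $V(\mathcal{U},\kk)$, so that the local isomorphisms actually patch — is a worthwhile point of care that the paper's proof leaves implicit.
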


\begin{proof}
The localizations of $\CA$ and $\mathcal{U}$ at a cluster equal the Laurent ring, so they are equal.
\[ \mathcal{U}[x_1^{-1},x_2^{-1},\dotsc ,x_n^{-1}] = \mathbb{Z}[x_1^{\pm1},x_2^{\pm1},\dotsc ,x_n^{\pm1}] = \CA[x_1^{-1},x_2^{-1},\dotsc ,x_n^{-1}] \]
Therefore, the map $V(\mathcal{U},\kk)\rightarrow V(\CA,\kk)$ restricts to an isomorphism between the cluster tori
\[ 
V(\mathcal{U}[x_1^{-1},x_2^{-1},\dotsc ,x_n^{-1}],\kk) \xrightarrow{\sim} 
V(\CA[x_1^{-1},x_2^{-1},\dotsc ,x_n^{-1}],\kk) 
\]
Since this holds for each cluster, the morphism restricts to an isomorphism between the union of the cluster tori, which are the complements of the deep loci.
%
\end{proof}

The proposition tells us that any difference between a cluster algebra $\CA$ and its upper cluster algebra $\mathcal{U}$ corresponds to a difference between the deep loci. An example of this is the Markov cluster algebra (Section \ref{section: Markov}), where the map
$ V(\mathcal{U},\kk)\rightarrow V(\CA,\kk) $
collapses a line in the deep locus of $V(\mathcal{U},\kk)$ to a single deep point in $V(\CA,\kk)$, and is otherwise an isomorphism (see Remark \ref{rem: Markov Upper line to point}).

\begin{rem}
More generally, one can consider any \emph{intermediate cluster algebra} $\CA\subseteq R\subseteq \mathcal{U}$; examples include marked skein algebras \cite{Mul16} and the span of the theta functions \cite{GHKK}. We may define deep points in the intermediate cluster variety $V(R,\kk)$, and we have morphisms
\[ V(\mathcal{U},\kk)\rightarrow V(R,\kk) \rightarrow V(\CA,\kk) \]
which are isomorphisms away from the respective deep loci. 
\end{rem}

\draftnewpage

\section{Deep points of cluster algebras of polygons}\label{section: polygons}

We now turn to describing the deep locus of a number of specific cluster algebras, starting with cluster algebras of polygons.



\subsection{The cluster algebra(s) of a polygon}

We start with an algebra which is not quite a cluster algebra (by our definition), but can be made into a cluster algebra in two closely related ways.

For any integer $n\geq3$, define the \textbf{$(2,n)$-th Pl\"ucker algebra} as follows:
\[ P_{2,n}\coloneqq\mathbb{Z}[x_{i,j}, \forall 1\leq i<j\leq n] /\langle x_{i,k}x_{j,l}-(x_{i,j}x_{k,l}+x_{i,l}x_{j,k}), \forall1\leq i<j<k<l\leq n\rangle \]
%
The generators and relations of this algebra may be visualized as follows.
Let $\Delta_n$ be a convex $n$-gon with the vertices indexed clockwise from 1 to $n$. For each pair $1\leq i<j\leq n$, we identify the generator $x_{i,j}$ with the line segment between vertices $i$ and $j$. Inspired by this realization, we call $x_{i,j}$ an \textbf{edge} if $j=i+1$ or $(i,j)=(1,n)$; otherwise, $x_{i,j}$ is a \textbf{diagonal}. 
The relations in the algebra are parameterized by pairs of crossing diagonals, and can be visualized as in Figure \ref{fig: Ptolemy cluster variables}.\footnote{These relations are a special case of the \emph{Pl\"ucker relations} of a Grassmannian, the \emph{Ptolemy relations} in convex geometry, and the \emph{Kauffman skein relations} in knot theory.}

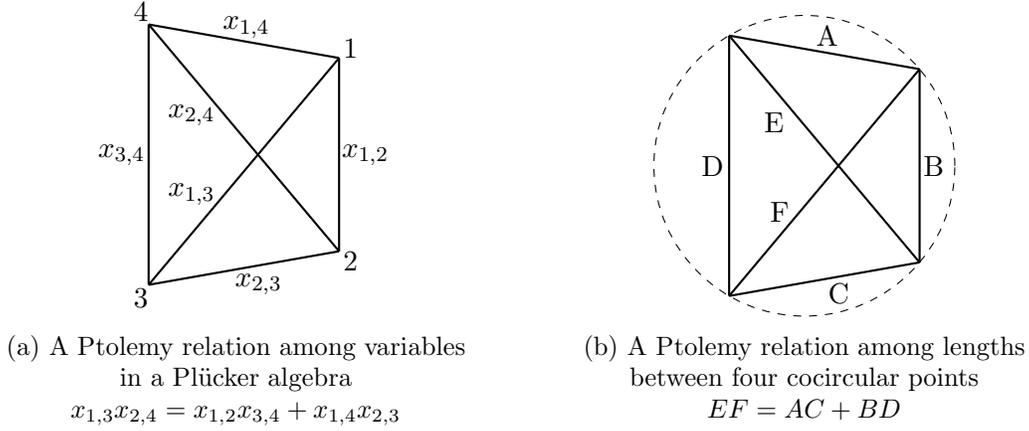
\begin{figure}[h!t]
\captionsetup[subfigure]{justification=centering}
\begin{subfigure}[b]{0.45\textwidth}
\centering
\begin{tikzpicture}[scale=2]
\draw[draw=none] (0,0) circle (1);
\node at (40:1.1) {1};
\node at (320:1.1) {2};
\node at (240:1.1) {3};
\node at (120:1.1) {4};
\draw[thick] (40:1)--(120:1) {}; 
\draw[thick] (120:1)--(240:1) {}; 
\draw[thick] (240:1)--(320:1) {}; 
\draw[thick] (320:1)--(40:1) {}; 
\draw[thick] (40:1)--(240:1) {}; 
\draw[thick] (320:1)--(120:1) {};
\node at (80:0.86) {$x_{1,4}$};
\node at (180:0.68) {$x_{3,4}$};
\node at (285:0.89) {$x_{2,3}$};
\node at (0:0.94) {$x_{1,2}$};
\node at (129:0.34) {$x_{2,4}$};
\node at (231:0.34) {$x_{1,3}$};
\end{tikzpicture}
\subcaption{
A Ptolemy relation among variables\\in a Pl\"ucker algebra
\\$x_{1,3}x_{2,4}=x_{1,2}x_{3,4}+x_{1,4}x_{2,3}$
}
\label{fig: Ptolemy cluster variables}
\end{subfigure}
\begin{subfigure}[b]{0.45\textwidth}
\centering
\begin{tikzpicture}[scale=2]
\draw[thin,dashed] (0,0) circle (1);
\draw[thick] (40:1)--(120:1) {}; 
\draw[thick] (120:1)--(240:1) {}; 
\draw[thick] (240:1)--(320:1) {}; 
\draw[thick] (320:1)--(40:1) {}; 
\draw[thick] (40:1)--(240:1) {}; 
\draw[thick] (320:1)--(120:1) {};
\node at (80:0.88) {A};
\node at (180:0.61) {D};
\node at (285:0.88) {C};
\node at (0:0.86) {B};
\node at (125:0.35) {E};
\node at (242:0.35) {F};
\end{tikzpicture}
\subcaption{
A Ptolemy relation among lengths\\between four cocircular points
\\$EF=AC+BD$
}
\label{fig: Ptolemy cocircular}
\end{subfigure}

\caption{The Ptolemy relations}
\label{fig: Ptolemy}
\end{figure}

The algebra $P_{2,n}$ is \emph{almost} a cluster algebra of type $A_{n-3}$. The issue lies in the edge elements; they behave like frozen variables, but our definition of cluster algebra requires that frozen variables have inverses. There are two natural ways to resolve this, both of which will be of interest.

\begin{itemize}
    \item The \textbf{cluster algebra of the $n$-gon (with boundary coefficients)} is the localization
    \[ \CA(\Delta_n)\coloneqq P_{2,n}[x_{i,j}^{-1}, \forall \text{ edges }x_{i,j}] \]
    \item The \textbf{cluster algebra of type $A_{n-3}$ (with no frozens)} is the quotient
    \[ \CA(A_{n-3}) \coloneqq P_{2,n}/ \langle x_{i,j}-1, \forall \text{ edges }x_{i,j} \rangle \]
\end{itemize}  
These two algebras are related by the quotient isomorphism
\[ \CA(\Delta_n)/ \langle x_{i,j}-1, \forall \text{ edges }x_{i,j} \rangle \simeq \CA(A_{n-3})\]
As the names imply, these algebras are both cluster algebras.

\begin{thm}[{\cite[Section 12]{FZ03}}]
The algebras $\CA(\Delta_n)$ and $\CA(A_{n-3})$ are both cluster algebras of type $A_{n-3}$. In both, the mutable cluster variables are the diagonals in $\Delta_n$ and the clusters are the triangulations of $\Delta_n$. The frozen variables in $\CA(\Delta_n)$ are the edges, while $\CA(A_{n-3})$ has no frozens.
\end{thm}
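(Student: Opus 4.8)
The plan is to reduce the statement to the already-established theory of cluster algebras of type $A_{n-3}$ and the standard combinatorics of triangulations of a polygon, rather than re-deriving everything from scratch. The cited result of Fomin–Zelevinsky provides the essential input: the cluster algebra associated to a once-punctured... no, here to a disk with $n$ marked points is of type $A_{n-3}$, with cluster variables indexed by diagonals, clusters indexed by triangulations, and exchange relations given exactly by the Ptolemy relations visualized in Figure \ref{fig: Ptolemy cluster variables}. So the main task is to match our concretely presented algebras $P_{2,n}$, $\CA(\Delta_n)$, and $\CA(A_{n-3})$ with that abstract cluster algebra.

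First I would fix a triangulation $T_0$ of $\Delta_n$ (say the ``fan'' triangulation from vertex $1$) and its associated quiver $Q_{T_0}$, whose mutable vertices are the $n-3$ diagonals of $T_0$ and whose frozen vertices are the $n$ edges, with arrows determined by the shared triangles. One checks this quiver has underlying mutable part a type $A_{n-3}$ Dynkin diagram. Then I would invoke Fomin–Zelevinsky to identify the cluster algebra $\mathcal{A}(Q_{T_0})$ with boundary frozen coefficients: its cluster variables are precisely the Plücker-type functions $x_{i,j}$ attached to diagonals, its exchange relations are the Ptolemy relations, and mutation of the seed corresponds to a diagonal flip. The key algebraic point is that $\CA(\Delta_n)$, defined as $P_{2,n}$ localized at the edges, is generated by the $x_{i,j}$ subject only to the Ptolemy relations (plus invertibility of edges), which is exactly the presentation one gets for the cluster algebra once one knows it is ``of finite type with known cluster variables'' — here one may cite the standard fact that for type $A$ the cluster algebra equals this Plücker-type presentation, or alternatively verify directly that the Ptolemy relations generate all relations among the $x_{i,j}$. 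The statement about $\CA(A_{n-3})$ then follows by specializing all edge variables to $1$: quotienting the exchange relations by $x_{i,j}=1$ for edges leaves a cluster algebra with no frozen variables and the same mutable exchange graph, which is the defining data of $\CA(A_{n-3})$.

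The remaining combinatorial assertions — that diagonals $\leftrightarrow$ mutable cluster variables and triangulations $\leftrightarrow$ clusters — reduce to the classical facts that (i) any two triangulations of $\Delta_n$ are connected by a sequence of flips, matching the connectedness of the cluster mutation graph; (ii) a diagonal flip corresponds exactly to a single mutation, by comparing the Ptolemy relation at the flipped diagonal with the exchange relation; and (iii) every mutable cluster variable obtained from $T_0$ by iterated mutation is some $x_{i,j}$ with $x_{i,j}$ a diagonal, by tracking the Laurent expansions or by induction on flips. Points (i) and (iii) together give that the set of mutable cluster variables is exactly $\{x_{i,j} : x_{i,j} \text{ a diagonal}\}$ and the clusters are exactly the triangulations.

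I expect the main obstacle to be item (iii) combined with making the presentation claim airtight: showing that \emph{no additional} cluster variables appear beyond the diagonal Plücker coordinates, and that the abstractly-defined cluster algebra is isomorphic to our explicitly-presented ring $\CA(\Delta_n)$ rather than some quotient or sub/overring of it. The cleanest route is to lean entirely on \cite{FZ03}, where this identification is carried out; our job is then bookkeeping — choosing compatible index conventions, checking the initial quiver is type $A_{n-3}$, and verifying that freezing versus localizing the edge variables produces $\CA(A_{n-3})$ versus $\CA(\Delta_n)$ respectively, which is immediate from the displayed quotient isomorphism $\CA(\Delta_n)/\langle x_{i,j}-1\rangle \simeq \CA(A_{n-3})$ already recorded above. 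Since the theorem is explicitly attributed to \cite[Section 12]{FZ03}, the proof can legitimately be a short paragraph pointing to that reference and indicating the dictionary between the two normalizations.
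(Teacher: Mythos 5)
The paper offers no proof of this theorem at all — it is stated as a citation to \cite[Section 12]{FZ03} — and your proposal amounts to exactly that citation plus the standard dictionary (fan triangulation gives a type $A_{n-3}$ quiver, flips are mutations, Ptolemy relations are exchange relations, localizing versus specializing the edge variables distinguishes $\CA(\Delta_n)$ from $\CA(A_{n-3})$). This is correct and consistent with the paper's treatment; the one point you rightly flag as needing the reference rather than elementary verification — that the Pl\"ucker presentation realizes the cluster algebra on the nose, with no extra cluster variables and no extra relations — is precisely what \cite{FZ03} supplies.
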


Since these cluster algebras are defined in terms of generators and relations, the associated cluster variety may be characterized in terms of these presentations.

\begin{coro}\label{coro: polygonvalues}
%
%
%
A point $p\in V(\CA(\Delta_n),\kk)$ (respectively, $V(\CA(A_{n-3}),\kk)$) is equivalent to a map
\[ p:\{\text{diagonals and edges in $\Delta_n$}\} \rightarrow \kk \]
such that the Ptolemy relations hold and edges are not sent to $0$ (respectively, edges are sent to $1$). Such a point is deep iff every triangulation of $\Delta_n$ contains a diagonal which is sent to $0$.
\end{coro}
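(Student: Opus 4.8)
The statement is essentially a translation exercise: I need to unwind the definitions of $V(\CA,\kk)$, $V(\CA(A_{n-3}),\kk)$, the Ptolemy relations, and deep points, using the explicit presentations of $P_{2,n}$, $\CA(\Delta_n)$, and $\CA(A_{n-3})$ given just above. The plan is to first handle the identification of points with Ptolemy-compatible functions on diagonals and edges, and then separately verify the characterization of deepness.

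For the first part, I would apply the functor $\Hom_{\mathrm{Ring}}(-,\kk)$ to the presentation $P_{2,n} = \mathbb{Z}[x_{i,j}]/\langle\text{Ptolemy}\rangle$. A ring homomorphism $p: \mathbb{Z}[x_{i,j}]\to\kk$ is exactly a choice of value $p(x_{i,j})\in\kk$ for each pair $i<j$, i.e.\ a function on the set of segments of $\Delta_n$; it descends to $P_{2,n}$ precisely when it kills every Ptolemy relator, i.e.\ when the Ptolemy relations hold among the values. Then $\CA(\Delta_n)$ is the localization of $P_{2,n}$ at the edge elements, so $p$ extends to $\CA(\Delta_n)$ iff each $p(x_{i,j})$ for $x_{i,j}$ an edge is invertible in $\kk$, i.e.\ nonzero (here I use that $\kk$ is a field). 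Dually, $\CA(A_{n-3}) = P_{2,n}/\langle x_{i,j}-1\rangle$, so $p$ factors through $\CA(A_{n-3})$ iff $p(x_{i,j})=1$ for every edge. This gives the claimed bijections in both cases.

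For the second part — the deepness criterion — I would invoke the cited theorem that the clusters of $\CA(\Delta_n)$ (resp.\ $\CA(A_{n-3})$) are exactly the triangulations of $\Delta_n$, with the mutable cluster variables being the diagonals in a given triangulation and (in the $\CA(\Delta_n)$ case) the frozen variables being all the edges. By definition, $p$ is deep iff for every cluster there is a cluster variable $x_i$ with $p(x_i)=0$. For a fixed triangulation $T$, the cluster variables are the diagonals of $T$ together with (in the bordered case) all edges; but edges are sent to a nonzero value (resp.\ to $1$) by the first part, so the vanishing cluster variable must be a diagonal of $T$. Hence $p$ is deep iff every triangulation contains a diagonal sent to $0$, which is the assertion. (Equivalently one could phrase this via Proposition~\ref{prop: deepideal} and the generators $x_1\cdots x_n$ of the deep ideal, but the direct argument from the definition is cleaner here.)

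The only mild subtlety — and the place I'd be most careful — is making sure the edge variables really cannot be the vanishing variable in a cluster: this relies on the fact that in $\CA(\Delta_n)$ the edges are frozen \emph{and invertible}, so any homomorphism to a field sends them to units, which is exactly why the localization in the definition of $\CA(\Delta_n)$ was taken. This is genuinely routine once the bijection of the first part is in hand, so I expect no real obstacle; the proof is short.
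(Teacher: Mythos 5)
Your proposal is correct and is exactly the routine unwinding the paper leaves implicit (the corollary is stated without proof, as an immediate consequence of the presentations of $P_{2,n}$, $\CA(\Delta_n)$, and $\CA(A_{n-3})$ and the identification of clusters with triangulations). Your care about the edge variables being units, hence never the vanishing cluster variable, is the right point to flag and is handled correctly.
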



\begin{rem}
If we choose a specific polygon $\Delta_n$ in the plane whose vertices lie on a common circle as in Figure \ref{fig: Ptolemy cocircular} (called a \emph{cocircular $n$-gon}), then sending each arc in $\Delta_n$ to its length satisfies the Ptolemy relations by \emph{Ptolemy's Theorem}. In this way, we may identify the set of cocircular $n$-gons up to congruence with a proper subset of the cluster variety $V(\CA(\Delta_n),\kk)$. 
\end{rem}

\begin{rem}
If we choose a $2\times n$ matrix $\mathsf{M}$ such that cyclically adjacent columns are linearly independent, then sending each $x_{i,j}$ to the determinant of the $i$th and $j$th columns of $\mathsf{M}$ satisfies the Ptolemy relations by the \emph{Pl\"ucker embedding}. In this way, we may identify the set of such matrices (up to left multiplication by $\mathrm{SL}(2,\kk)$) with the cluster variety $V(\CA(\Delta_n),\kk)$.
\end{rem}

\subsection{Vanishing lemmas}

By a \textbf{triangle} in $\Delta_n$, we mean a triple of the form $\{x_{i,j},x_{i,k},x_{j,k}\}$; that is, the line segments bounding a triangle of vertices in $\Delta_n$.
The following lemma demonstrates that a deep point of $\CA(\Delta_n)$ must kill an odd number of diagonals in each triangle in $\Delta_n$.





\begin{lemma}\label{lemma: trianglepoly}
Let $p\in V(\CA(\Delta_n),\kk)$ and let $\{x_{i,j},x_{i,k},x_{j,k}\}$ be a triangle in $\Delta_n$.
\begin{enumerate}
    \item If $p$ kills any two of $\{x_{i,j},x_{i,k},x_{j,k}\}$, then $p$ also kills the third.
    \item If $p$ is deep, then $p$ kills at least one of $\{x_{i,j},x_{i,k},x_{j,k}\}$.
\end{enumerate}
\end{lemma}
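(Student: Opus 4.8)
The plan is to prove both parts by a careful bookkeeping argument using the Ptolemy relations, passing to the quotient $\CA(A_{n-3})$ is not necessary since we can work directly in $\CA(\Delta_n)$ where edges are nonzero. For part (1), suppose $p$ kills $x_{i,j}$ and $x_{i,k}$; I want to show $p(x_{j,k})=0$. The idea is to find a crossing pair of diagonals, one of which is $x_{j,k}$, whose Ptolemy relation expresses $p(x_{j,k})$ times something nonzero as a combination of products each containing $x_{i,j}$ or $x_{i,k}$. Concretely, if $i,j,k$ are not three consecutive vertices, I can choose a fourth vertex $\ell$ so that the quadrilateral on $\{i,j,k,\ell\}$ has $x_{j,k}$ as a diagonal, and the Ptolemy relation reads $x_{j,k}\cdot x_{i,\ell} = (\text{product involving } x_{i,j}) + (\text{product involving } x_{i,k})$; applying $p$ and using that $p(x_{i,\ell})$ can be chosen nonzero (or, failing that, iterating) gives $p(x_{j,k})=0$. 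The main subtlety is the degenerate configurations where the needed auxiliary vertex or the needed ``generic'' value is forced to be zero; here I would argue that even in those cases one of the two standard Ptolemy relations on the relevant quadrilateral still does the job, possibly after first deducing that some intermediate segment also vanishes.

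For part (2), I would argue by contradiction: suppose $p$ is deep but $p(x_{i,j})$, $p(x_{i,k})$, $p(x_{j,k})$ are all nonzero for some triangle $T=\{i,j,k\}$. The triangle $T$ cuts $\Delta_n$ into (up to) three smaller sub-polygons, one on each side of an edge of $T$ (when a side of $T$ is already an edge of $\Delta_n$, that sub-polygon is degenerate). I would triangulate each of these sub-polygons arbitrarily; together with the three segments of $T$ this yields a triangulation $\mathcal{T}$ of all of $\Delta_n$. Since $p$ is deep, $\mathcal{T}$ must contain a diagonal killed by $p$, and by assumption that diagonal is not one of the three sides of $T$, so it lies strictly inside one of the sub-polygons. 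The plan is then to induct: a diagonal killed inside a sub-polygon forces, via Lemma part (1) applied repeatedly (or a direct induction on the number of vertices), the vanishing of one of the ``long'' segments bounding that sub-polygon, i.e.\ a side of $T$ — contradicting that $p(x_{i,j}), p(x_{i,k}), p(x_{j,k})$ are all nonzero. So the real content is a propagation lemma: \emph{if $p$ kills some diagonal of a sub-polygon of $\Delta_n$ cut off by a chord $x_{a,b}$, then $p$ kills $x_{a,b}$}, which itself follows by induction from part (1).

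I expect the main obstacle to be handling the boundary/degenerate cases cleanly in part (1) — in particular when the triangle $T$ uses one or two actual edges of $\Delta_n$, or when the only available auxiliary vertex makes the ``other'' segment in the Ptolemy relation also vanish, so that the relation reads $0 = 0$ and gives no information. I would deal with this by noting that $\CA(\Delta_n)$ is a domain (so cancellation is available after passing to the fraction field) and by choosing the auxiliary vertex and the crossing pair adaptively: among the possible Ptolemy relations witnessing $x_{j,k}$, at least one has a coefficient term that is a product of edges and segments not yet known to vanish, and a short induction on $n$ closes the remaining cases. Once part (1) is established in full generality, part (2) is a clean packaging argument as sketched above, with no further case analysis.
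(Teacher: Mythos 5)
Your proposal has a fixable gap in part (1) and a fatal error in part (2).

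For part (1), the entire content of the lemma is to exhibit a Ptolemy relation in which the coefficient of $x_{j,k}$ is \emph{guaranteed} nonzero, and your plan never does this. You place $x_{j,k}$ as a crossing diagonal of a quadrilateral on $\{i,j,k,\ell\}$, so its coefficient is the diagonal $x_{i,\ell}$, which may well vanish; your fallback (``choose adaptively, iterate, induct on $n$'') is a hope rather than an argument. The clean fix is to put $x_{j,k}$ as a \emph{side} of the quadrilateral with vertices $i,i+1,j,k$ (note $j\geq i+2$ since $x_{i,j}$ is killed and hence is not an edge): the Ptolemy relation reads $x_{i,j}x_{i+1,k}=x_{i,i+1}x_{j,k}+x_{i,k}x_{i+1,j}$, so applying $p$ gives $p(x_{i,i+1})p(x_{j,k})=0$, and $p(x_{i,i+1})\neq 0$ because $x_{i,i+1}$ is a boundary edge. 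This is the paper's argument, and it eliminates all of the degenerate cases you were worried about.

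For part (2), your ``propagation lemma'' --- \emph{if $p$ kills some diagonal of the sub-polygon cut off by a chord $x_{a,b}$, then $p$ kills $x_{a,b}$} --- is false. Take the deep point of the hexagon (Theorem \ref{thm: deeppolygon}/Figure \ref{fig: deephexagon}), for which $p(x_{i,j})=0$ exactly when $j-i$ is even. The chord $x_{1,4}$ has $p(x_{1,4})=-a_1a_3/a_2\neq 0$, yet both diagonals $x_{1,3}$ and $x_{2,4}$ of the quadrilateral $\{1,2,3,4\}$ it cuts off are killed. More structurally, a single vanishing diagonal in a sub-polygon only gives the relation $p(x_{1,2})p(x_{3,4})+p(x_{1,4})p(x_{2,3})=0$ via Ptolemy, which forces no individual segment to vanish. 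Knowing that one diagonal of one triangulation of a sub-polygon vanishes does not let you conclude anything about the bounding chord; to rule out a killed diagonal you must be able to \emph{re-triangulate} around it. That is exactly what the paper does: among all triangulations containing a non-vanishing triangle, take one minimizing the number of killed diagonals, locate adjacent triangles where one has a killed diagonal and the other does not, and use part (1) inside the resulting pentagon to produce a flip that strictly decreases the count --- contradicting minimality. You should replace your propagation step with an argument of this kind.
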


\begin{proof}
(1) Assume that $p(x_{i,j})=p(x_{i,k})=0$. Since $p$ cannot vanish on an edge, $x_{i,j}$ cannot be an edge and so $j\geq i+2$. 
%
Applying $p$ to the Ptolemy relation of the quadrilateral with vertices $i,i+1,j,k$ and using the fact that $p(x_{i,i+1})\neq0$,
\[ p(x_{j,k}) = \frac{p(x_{i,j})p(x_{i+1,k})-p(x_{i,k})p(x_{i+1,k})}{p(x_{i,i+1})} = 0 \]
An identical argument applies if $p$ kills any other pair in $\{x_{i,j},x_{i,k},x_{j,k}\}$.

(2) Assume, for contradiction, that $p$ is deep and there is some triangle in $\Delta_n$ on which it does not vanish. Therefore, the set of triangulations of $\Delta_n$ which contain a triangle on which $p$ does not vanish is non-empty. Among all such triangulations, choose a triangulation $S$ which contains the minimal possible number of diagonals killed by $p$.

Since $p$ kills diagonals in some triangles in $S$ but not others, there must be an adjacent pair of triangles in $S$ such that $p$ kills a diagonal in one triangle and no diagonals in the other. 
We focus on the pentagon consisting of these two triangles and the triangle on the other side of the diagonal killed by $p$.
Flipping and rotating indices in $\Delta_n$ as necessary,
we may choose indices $i<j<k<l<m$ for the pentagon so that $\overline{ijk}$ is the triangle on which $p$ is does not vanish, $\overline{ikl}$ is the adjacent triangle in $S$ containing a diagonal $x_{i,l}$ killed by $p$, and $\overline{ilm}$ is the other triangle in $S$ containing $x_{i,l}$ (Figure \ref{fig: first triangulation poly}).

By assumption, $p(x_{i,j}),p(x_{i,k})$, and $p(x_{j,k})$ are non-zero and $p(x_{i,l})=0$. 
We repeatedly apply part (1) to triangles in this pentagon.
\begin{itemize}
    \item Since $p(x_{i,j})\neq0$ and $p(x_{i,l})=0$, $p(x_{j,l})\neq0$.
    \item Since $p(x_{i,k})\neq0$ and $p(x_{i,l})=0$, $p(x_{k,l})\neq0$.
    \item Since $p(x_{j,k})\neq0$, at most one of $p(x_{j,m})$ and $p(x_{k,m})$ can be $0$.
\end{itemize}
Therefore, either $p(x_{j,l})$ and $p(x_{j,m})$ are non-zero, or $p(x_{i,k})$ and $p(x_{k,m})$ are non-zero (Figure \ref{fig: second triangulation poly}).

As a consequence, replacing $\{x_{i,k},x_{i,l}\}$ in $S$ with $\{x_{j,l},x_{j,m}\}$ or $\{x_{i,k},x_{k,m}\}$ produces a triangulation of $\Delta_n$ with strictly fewer arcs killed by $p$ and at least one triangle on which $p$ does not vanish. This contradicts the minimality of $S$.
\end{proof}

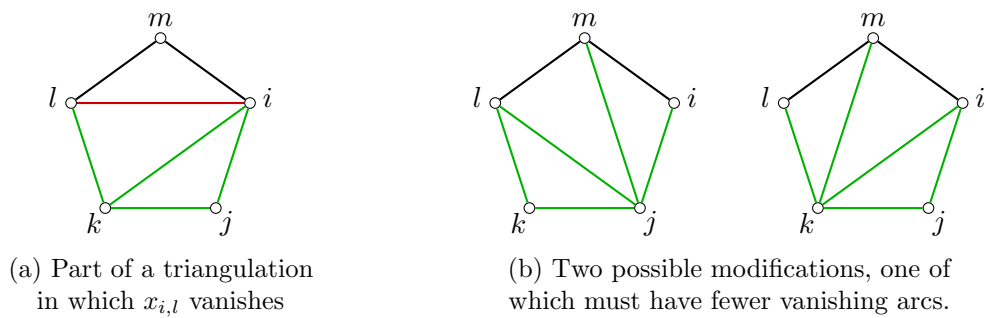
\begin{figure}[h!tb]
\captionsetup[subfigure]{justification=centering}
\centering
\begin{subfigure}[c]{0.35\linewidth}
\centering
\begin{tikzpicture}[inner sep=0.5mm,scale=1.25,auto]
	\node (1) at (90:1) [circle,draw] {};
	\node (2) at (90-72:1) [circle,draw] {};
	\node (3) at (90-2*72:1) [circle,draw] {};
	\node (4) at (90-3*72:1) [circle,draw] {};
	\node (5) at (90-4*72:1) [circle,draw] {};
	\node at (90:1.2) {$m$};
	\node at (90-72:1.2) {$i$};
	\node at (90-2*72:1.2) {$j$};
	\node at (90-3*72:1.2) {$k$};
	\node at (90-4*72:1.2) {$l$};
	\draw[dark green,thick] (2) to (3);
	\draw[dark green,thick] (3) to (4);
	\draw[dark green,thick] (2) to (4);
	\draw[dark green,thick] (4) to (5);
	\draw[dark red,thick] (2) to (5);
	\draw[thick] (5) to (1);
	\draw[thick] (1) to (2);
\end{tikzpicture}
\subcaption{Part of a triangulation\\in which $x_{i,l}$ vanishes}
\label{fig: first triangulation poly}
\end{subfigure}
\begin{subfigure}[c]{0.55\linewidth}
\centering
\begin{tikzpicture}[inner sep=0.5mm,scale=1.25,auto]
	\node (1) at (90:1) [circle,draw] {};
	\node (2) at (90-72:1) [circle,draw] {};
	\node (3) at (90-2*72:1) [circle,draw] {};
	\node (4) at (90-3*72:1) [circle,draw] {};
	\node (5) at (90-4*72:1) [circle,draw] {};
	\node at (90:1.2) {$m$};
	\node at (90-72:1.2) {$i$};
	\node at (90-2*72:1.2) {$j$};
	\node at (90-3*72:1.2) {$k$};
	\node at (90-4*72:1.2) {$l$};
	\draw[dark green,thick] (2) to (3);
	\draw[dark green,thick] (3) to (4);
	\draw[dark green,thick] (4) to (5);
	\draw[thick] (5) to (1);
	\draw[thick] (1) to (2);
	\draw[dark green,thick] (3) to (5);
	\draw[dark green,thick] (3) to (1);
\end{tikzpicture}
\hspace{.5cm}
\begin{tikzpicture}[inner sep=0.5mm,scale=1.25,auto]
	\node (1) at (90:1) [circle,draw] {};
	\node (2) at (90-72:1) [circle,draw] {};
	\node (3) at (90-2*72:1) [circle,draw] {};
	\node (4) at (90-3*72:1) [circle,draw] {};
	\node (5) at (90-4*72:1) [circle,draw] {};
	\node at (90:1.2) {$m$};
	\node at (90-72:1.2) {$i$};
	\node at (90-2*72:1.2) {$j$};
	\node at (90-3*72:1.2) {$k$};
	\node at (90-4*72:1.2) {$l$};
	\draw[dark green,thick] (2) to (3);
	\draw[dark green,thick] (3) to (4);
	\draw[dark green,thick] (4) to (5);
	\draw[thick] (5) to (1);
	\draw[thick] (1) to (2);
	\draw[dark green,thick] (4) to (1);
	\draw[dark green,thick] (4) to (2);
\end{tikzpicture}
\subcaption{Two possible modifications, one of\\which must have fewer vanishing arcs.}
\label{fig: second triangulation poly}
\end{subfigure}
\caption{Modifying a triangulation to reduce the number of vanishing arcs.}
\label{fig: triangulation modification poly}
\end{figure}


As a consequence, deep points of $\CA(\Delta_n)$ must kill a specific set of diagonals.

\begin{lemma}\label{lemma: evenrule}
Let $p\in V(\CA(\Delta_n),\kk)$. Then $p$ is deep iff, for all $i,j$,
\begin{equation}\label{eq: evenrule}
p(x_{i,j})=0\text{ iff $j-i$ is even}
\end{equation}
\end{lemma}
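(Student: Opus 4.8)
The plan is to prove both implications using Lemma \ref{lemma: trianglepoly} together with the combinatorics of the polygon. The non-trivial direction is the forward implication: assuming $p$ is deep, we must show $p(x_{i,j})=0$ precisely when $j-i$ is even. Note first that edges $x_{i,i+1}$ (and $x_{1,n}$) have $j-i=1$ or $j-i=n-1$; since a deep point cannot vanish on an edge, the rule is at least consistent on edges when $n$ is odd, and we should expect the statement to force $n$ to be even for a deep point to exist at all — indeed if $n$ is odd, $x_{1,n}$ is an edge with $n-1$ even, contradicting \eqref{eq: evenrule}, so the lemma will implicitly show $\CA(\Delta_n)$ has no deep points for $n$ odd. (This matches the introduction, where $A_{n-3}$ has a deep point only when $n-3\equiv 3\bmod 4$, i.e. $n\equiv 2 \bmod 4$, so $n$ even.)

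For the forward direction, I would first establish the "even $\Rightarrow$ vanishes" half. Fix a diagonal $x_{i,j}$ with $j-i$ even. Build a "fan" triangulation by connecting $i$ to every other vertex, so that the triangles are $\overline{i,i+1,i+2}, \overline{i,i+2,i+3},\dots$; more usefully, triangulate the sub-polygon on vertices $i, i+1, \dots, j$ by a fan from $i$, giving triangles $\overline{i,k,k+1}$ for $i+1 \le k \le j-1$, and triangulate the complementary sub-polygon on $j, j+1, \dots, i$ (cyclically) by a fan from $j$. This is a genuine triangulation of $\Delta_n$ containing the diagonal $x_{i,j}$. Since $p$ is deep it must kill some diagonal $x_{a,b}$ in this triangulation; the fan structure means consecutive diagonals from $i$ differ in the triangle $\overline{i,k,k+1}$, and walking Lemma \ref{lemma: trianglepoly}(1)–(2) along this fan shows that the parity of "killed vs.~not killed" alternates as one moves through consecutive fan diagonals — any diagonal $x_{i,k}$ is killed iff $k-i$ has a fixed parity. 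Pinning that parity down: the first two triangles force, via part (1) and part (2), that exactly one of $x_{i,i+2}$-type is killed, and induction along the fan propagates a parity pattern. The cleaner approach: observe any diagonal $x_{i,j}$ lies in some triangle $\overline{i,j,k}$ inside a triangulation; Lemma \ref{lemma: trianglepoly}(2) forces an odd number (so $1$ or $3$) of $\{x_{i,j}, x_{i,k}, x_{j,k}\}$ to vanish, and by choosing $k$ to be a vertex adjacent to the edge of $\Delta_n$ we can inductively reduce the "gap" $j-i$. Precisely: I'd induct on $j - i \ge 2$. Base case $j-i = 2$: the triangle $\overline{i,i+1,j}$ has two edges $x_{i,i+1}, x_{i+1,j}=x_{i+1,i+2}$ which are genuine polygon edges (both nonzero), so part (2) forces $p(x_{i,j}) = 0$; here $j-i=2$ is even. ✓ For the inductive step with $j-i$ even $>2$: pick $k$ with $i < k < j$, $k - i$ even, $j-k$ even; by induction $p(x_{i,k}) = p(x_{k,j}) = 0$, and then part (1) gives $p(x_{i,j})=0$... but wait, that requires $k-i$ and $j-k$ both even which needs $j-i$ even, fine. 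For $j-i$ odd, pick $k$ between them with $k-i=2$: then $p(x_{i,k})=0$ (base case) but we can't immediately conclude; instead, suppose for contradiction $p(x_{i,j})=0$ with $j-i$ odd — then in triangle $\overline{i, i+1, j}$ (valid when $j > i+1$) we'd have $p(x_{i,i+1})\ne 0$, $p(x_{i,j})=0$, forcing $p(x_{i+1,j})\ne 0$ by part (1); iterating, $p(x_{i+1,j})\ne 0$ with $j-(i+1)$ even — but we'll have shown even-gap diagonals vanish, contradiction. So the two halves must be proved together by a simultaneous induction on $j-i$: "even gap $\Rightarrow$ killed" and "odd gap $\Rightarrow$ not killed", using part (1) of Lemma \ref{lemma: trianglepoly} to pass $x_{i,j} \rightsquigarrow x_{i+1,j}$ across triangle $\overline{i,i+1,j}$ (peeling one vertex), with the edge $x_{i,i+1}$ always nonzero as the "anchor."

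Conversely, suppose $p$ satisfies \eqref{eq: evenrule}. I must check $p$ is deep, i.e. every triangulation $T$ of $\Delta_n$ contains a diagonal $x_{i,j}$ with $j-i$ even. Since $n$ must be even for \eqref{eq: evenrule} to be consistent on the edge $x_{1,n}$ (as $n-1$ odd — good, so no constraint violated; actually $n$ even makes $n-1$ odd, consistent), we use a parity/counting argument: $2$-color the vertices of $\Delta_n$ by parity of their index. A diagonal $x_{i,j}$ has $j-i$ even iff $i,j$ have the same color. A triangulation of $\Delta_n$ has $n-2$ triangles and $n-3$ diagonals; suppose toward contradiction that $T$ uses only "odd-gap" (bichromatic) diagonals. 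Then every triangle in $T$ has its three vertices colored by at most two colors, with each edge bichromatic — impossible, since a triangle with all edges bichromatic would need all three vertices distinctly... no, with two colors and all edges bichromatic, a triangle's three vertices can't be properly 2-colored (odd cycle $C_3$). Hence every triangle has a monochromatic edge, which is either a polygon edge (impossible, since polygon edges $x_{i,i+1}$ are always bichromatic) or a monochromatic diagonal — contradiction. So $T$ must contain a same-color diagonal $x_{i,j}$ with $j-i$ even, which $p$ kills. Thus $p$ is deep. The main obstacle I anticipate is bookkeeping the simultaneous induction in the forward direction cleanly — getting the base cases ($j=i+2$ even gap, and $j=i+1$ or the edge $x_{1,n}$ odd gap) and the vertex-peeling step to interlock without circularity — but Lemma \ref{lemma: trianglepoly} does all the real work and the polygon edges serve as the nonvanishing anchors that drive every step.
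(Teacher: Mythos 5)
Your argument is correct and is essentially the paper's proof: the forward direction is the same induction on the gap $j-i$, peeling off one vertex via a triangle containing a polygon edge and using both parts of Lemma \ref{lemma: trianglepoly} to conclude that exactly one of the two remaining sides vanishes, so the vanishing pattern alternates with parity (the paper peels vertex $j$ via the triangle $\{x_{i,j-1},x_{i,j},x_{j-1,j}\}$ where you peel vertex $i$, an immaterial difference). For the converse you supply a clean $2$-coloring proof of the combinatorial fact that every triangulation of an even-gon contains an even-gap diagonal, which the paper simply asserts; the exploratory false starts aside, the final simultaneous induction you settle on is exactly what is needed.
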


\begin{proof}
Since every triangulation of $\Delta_n$ contains a diagonal $x_{i,j}$ with $j-i$ even, a point satisfying \eqref{eq: evenrule} is deep. 
Next, we prove by induction on $j-i$ that a deep point $p$ must satisfy \eqref{eq: evenrule}.
If $j-i=1$, then $x_{i,j}$ is an edge of $\Delta_n$ and so $p(x_{i,j})\neq0$. If $j-i>1$, 
apply Lemma \ref{lemma: trianglepoly} to the triangle $\{x_{i,j-1},x_{i,j},x_{j-1,j}\}$. 
Since $x_{j-1,j}$ is an edge, $p(x_{j-1,j})\neq0$ and so exactly one of $p(x_{i,j-1})$ and $p(x_{i,j})$ is 0. If \eqref{eq: evenrule} holds for $p(x_{i,j-1})$, then it holds for $p(x_{i,j})$, completing the induction.
\end{proof}

\subsection{Deep points}

We can now classify the deep points of both $\CA(\Delta_n)$ and $\CA(A_n)$.

\begin{thm}\label{thm: deeppolygon}
Let $n \geq 3$.
\begin{enumerate}
    \item If $n$ is odd, then the cluster variety $V(\CA(\Delta_n),\kk)$ has no deep points.
    \item If $n$ is even, then a choice of values on the edges of $\Delta_n$
    \[ p:\{\text{edges in $\Delta_n$}\} \rightarrow \kk^\times\]
    extends to a deep point of $V(\CA(\Delta_n),\kk)$ iff
    \begin{equation}\label{eq: alternating product}
    \frac{p(x_{1,2})p(x_{3,4}) \dotsm p(x_{n-1,n})}{p(x_{1,n})p(x_{2,3}) \dotsm p(x_{n-2,n-1})} = (-1)^{\frac{n+2}{2}}
    \end{equation}
%
    and this extension is unique, with values on an arbitrary diagonal $x_{i,j}$ given by
    \begin{equation}\label{eq: arbitrary diagonal}
        p(x_{i,j}) = \left\{\begin{array}{cc}
        (-1)^{\frac{j-i-1}{2}} \frac{p(x_{i,i+1})p(x_{i+2,i+3})\dotsm p(x_{j-1,j})}{p(x_{i+1,i+2})\dotsm p(x_{j-2,j-1})} & \text{if $i \not\equiv j\bmod{2}$} \\
        0 & \text{if $i \equiv j\bmod{2}$}
        \end{array}\right\}
    \end{equation}   
    As a consequence, the deep locus of $V(\CA(\Delta_n),\kk)$ freely determined by its values on all but one edge, and is therefore isomorphic to $(\kk^\times)^{n-1}$.
\end{enumerate}
\end{thm}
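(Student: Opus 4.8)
The plan is to leverage Lemma \ref{lemma: evenrule}, which already pins down exactly which diagonals a deep point must kill (those $x_{i,j}$ with $j-i$ even), so that the only remaining freedom is the values on the odd-distance diagonals, together with the edges. So I would split into the two parities of $n$.

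For $n$ odd: a deep point must kill every $x_{i,j}$ with $j-i$ even, but in an odd polygon the element $x_{1,n}$ is an edge (since $(1,n)$ is always an edge) yet $n-1$ is even, forcing $p(x_{1,n})=0$, contradicting that edges are not killed. Hence there are no deep points, proving (1). (Alternatively, invoke the Example from the introduction identifying the exact residues mod $4$ — but the clean argument is just that one edge has even index-distance.)

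For $n$ even, the heart of the argument is a bookkeeping computation. First I would verify the formula \eqref{eq: arbitrary diagonal}: define $p(x_{i,j})$ by that alternating product of edge-values when $i\not\equiv j$ and by $0$ otherwise, and check that this assignment satisfies every Ptolemy relation $p(x_{i,k})p(x_{j,l}) = p(x_{i,j})p(x_{k,l}) + p(x_{i,l})p(x_{j,k})$ for $i<j<k<l$. Here I would do a case analysis on the parities of the four indices: if the indices are not all of alternating parity, at least one of the three products vanishes on each side and the identity reduces to a shorter instance or is trivial; the one genuinely nontrivial case is when $i,j,k,l$ alternate in parity, where both sides are nonzero alternating products of edge values and the sign $(-1)^{\bullet}$ and the telescoping of numerator/denominator must be matched — this reduces to the elementary identity expressing a ``long'' alternating product in terms of two shorter ones, i.e. the fact that $\frac{p(x_{i,j})p(x_{k,l})}{p(x_{i,k})p(x_{j,l})}$ and $\frac{p(x_{i,l})p(x_{j,k})}{p(x_{i,k})p(x_{j,l})}$ are monomials in the edge values summing to $1$ after the sign twist. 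This is exactly the step I expect to be the main obstacle: it is not deep, but it requires care with the exponents $\frac{j-i-1}{2}$ and with which edge-values appear in numerator vs. denominator, and getting the sign $(-1)^{\frac{n+2}{2}}$ to drop out of the $x_{1,n}$ relation.

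Next I would observe that formula \eqref{eq: arbitrary diagonal}, applied to the edge $x_{1,n}$ (using the cyclic reindexing so that it is read as a long diagonal ``wrapping around''), or equivalently applied to the relation closing up the polygon, forces precisely the constraint \eqref{eq: alternating product}; conversely, given edge values satisfying \eqref{eq: alternating product}, the assignment \eqref{eq: arbitrary diagonal} is well-defined (consistent around the polygon) and gives a genuine point of $V(\CA(\Delta_n),\kk)$ by the previous paragraph, which is deep by Lemma \ref{lemma: evenrule} since it kills exactly the even-distance diagonals. Uniqueness is immediate: Lemma \ref{lemma: evenrule} forces the even-distance values, and an easy induction on $j-i$ using the triangle $\{x_{i,j-1},x_{i,j},x_{j-1,j}\}$ Ptolemy relation (as in the proof of that lemma) forces the odd-distance values to be given by \eqref{eq: arbitrary diagonal} once the edges are fixed. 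Finally, the map sending a deep point to its tuple of edge values lands in the hypersurface \eqref{eq: alternating product} inside $(\kk^\times)^n$, which is the graph of a monomial function of the first $n-1$ edge values, hence isomorphic to $(\kk^\times)^{n-1}$; combined with the fact that this map is a bijection (surjective by the construction, injective by uniqueness) and its inverse is given by the Laurent-monomial formulas \eqref{eq: arbitrary diagonal}, it is an isomorphism of varieties, giving the deep locus $\cong (\kk^\times)^{n-1}$.
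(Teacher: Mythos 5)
Your argument is correct in outline, but it takes a genuinely different route from the paper's. The paper proves part (2) by induction on $n$: the base case $n=4$ is a single Ptolemy relation, and for $n>4$ it restricts a deep point to the two sub-polygons $\Delta_{i,j}$ and $\Delta_{j,i}$ cut off by an odd diagonal $x_{i,j}$, applies the inductive hypothesis to each (the restrictions are deep by Lemma \ref{lemma: trianglepoly}), and extracts both Formula \eqref{eq: arbitrary diagonal} and Condition \eqref{eq: alternating product} by equating the two resulting expressions for $p(x_{i,j})$; the gluing principle you would need for existence is essentially Corollary \ref{coro: polygoncut}. You instead verify directly that the explicit Laurent-monomial assignment \eqref{eq: arbitrary diagonal} satisfies every Ptolemy relation, which has the merit of making the existence direction (``if'' in the iff) completely explicit, at the cost of a multi-case parity-and-sign computation that you outline but do not execute. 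That computation does go through, but one detail of your description of it is off: in the case where $i,j,k,l$ alternate in parity, both $k-i$ and $l-j$ are even, so the left-hand side $p(x_{i,k})p(x_{j,l})$ of the Ptolemy relation is \emph{zero} (not ``nonzero''), and the ratios you write down have vanishing denominators; the identity to check in that case is the two-term cancellation $0=p(x_{i,j})p(x_{k,l})+p(x_{i,l})p(x_{j,k})$, which is exactly where the sign $(-1)^{\frac{j-i-1}{2}}$ is used, and the wrap-around instances of it (those involving the edge $x_{1,n}$) are exactly where Condition \eqref{eq: alternating product} is needed. Your treatment of part (1), of uniqueness via Lemma \ref{lemma: evenrule} and induction on $j-i$, and of the identification of the deep locus with $(\kk^\times)^{n-1}$ matches the paper.
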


\begin{proof}
Fix a deep point $p\in V(\CA(\Delta_n),\kk)$ and consider a number of cases.
%

If $n$ is odd, then $n-1$ is even, and so Lemma \ref{lemma: evenrule} forces $p(x_{1,n})=0$. However, $x_{1,n}$ is an edge in $\Delta_n$, and so this is impossible. Therefore, $V(\CA(\Delta_n),\kk)$ has no deep points when $n$ is odd.

If $n=4$, the two triangulations of $\Delta_4$ each contain a unique diagonal, and so $p$ is deep iff $p(x_{1,3})=p(x_{2,4})=0$. A deep point of $\CA(\Delta_4)$ is then determined by its non-zero values on the edges, subject to the Ptolemy relation $0= p(x_{1,2}) p(x_{3,4}) + p(x_{2,3}) p(x_{1,4})$. Rewriting yields  \eqref{eq: alternating product}:
\[ \frac{p(x_{1,2}) p(x_{3,4})}{p(x_{2,3}) p(x_{1,4})} = -1 = (-1)^{\frac{4+2}{2}}. \]

If $n>4$ is even, assume (for induction) that the theorem holds for all $\Delta_{n'}$ with $n'<n$, and consider a deep point $p\in V(\CA(\Delta_n),\kk)$. 
Lemma \ref{lemma: evenrule} forces $p(x_{i,j})=0$ iff $j-i$ is even. 
Consider $x_{i,j}$ with $j-i$ odd, and let $\Delta_{i,j}$ be the convex hull of the vertices $i,i+1,\dotsc,j-1,j$. The restriction of $p$ to edges and diagonals in $\Delta_{i,j}$ defines a point in $V(\CA(\Delta_{i,j}),\kk)$; furthermore, since $p$ kills at least one diagonal in every triangle (Lemma \ref{lemma: trianglepoly}), this restriction is deep. By the inductive hypothesis on the $(j-i-1)$-gon $\Delta_{i,j}$, we know that $p$ satisfies Condition \eqref{eq: alternating product} around the edges of $\Delta_{i,j}$:
\[ 
\frac{p(x_{i,i+1})p(x_{i+2,i+3}) \dotsm p(x_{j-1,j})}{p(x_{i,j})p(x_{i+1,i+2}) \dotsm p(x_{j-2,j-1})} = (-1)^{\frac{j-i+1}{2}}
\]
Multiplying both sides by $(-1)^{\frac{j-i-1}{2}}p(x_{i,j})$ gives Formula \eqref{eq: arbitrary diagonal} for $p(x_{i,j})$ when $j-i$ is odd. Since the values on the edges determines the value on each diagonal, $p$ is uniquely determined by its values on the edges.


To verify Condition \eqref{eq: alternating product} for $\Delta_n$, consider the convex hull $\Delta_{j,i}$ of the vertices $j,j+1,\dotsc,n,1,\dotsc,i$. By the same argument as before, $p$ restricts to a deep point in $V(\CA(\Delta_{j,i}),\kk)$. By another application of the inductive hypothesis, $p$ satisfies Condition \eqref{eq: alternating product} around the edges of $\Delta_{j,i}$:
\[ 
\frac{p(x_{j,j+1})p(x_{j+2,j+3}) \dotsm p(x_{i-1,i})}{p(x_{i,j})p(x_{j+1,j+2}) \dotsm p(x_{i-2,i-1})} = (-1)^{\frac{n-i+j+1}{2}}
\]
Multiplying both sides by $(-1)^{\frac{n-i+j+1}{2}}p(x_{i,j})$ gives another formula for $p(x_{i,j})$. Since the two formulas must be equal,
\[ 
(-1)^{\frac{j-i+1}{2}}\frac{p(x_{i,i+1})p(x_{i+2,i+3}) \dotsm p(x_{j-1,j})}{p(x_{i+1,i+2}) \dotsm p(x_{j-2,j-1})} 
=
(-1)^{\frac{n-i+j+1}{2}}\frac{p(x_{j,j+1})p(x_{j+2,j+3}) \dotsm p(x_{i-1,i})}{p(x_{j+1,j+2}) \dotsm p(x_{i-2,i-1})}
\]
Moving the values of $p$ to the left and the signs to the right proves that $p$ satisfies Condition \eqref{eq: alternating product} for the edges in the boundary of $\Delta_n$, completing the induction.
\end{proof}

\begin{ex}
Consider the hexagon with values $a_1,a_2,\dotsc,a_6$ on the six edges.
\[
\begin{tikzpicture}[scale=.75, xscale=-1]
    \draw[fill=black!10,thick] 
    (0*60:2)
    to (1*60:2)
    to (2*60:2)
    to (3*60:2)
    to (4*60:2)
    to (5*60:2)
    to (6*60:2)
    ;
    \node (a1) at (30+0*60:2.15) {$a_1$};
    \node (a2) at (30+1*60:2.15) {$a_2$};
    \node (a3) at (30+2*60:2.15) {$a_3$};
    \node (a4) at (30+3*60:2.15) {$a_4$};
    \node (a5) at (30+4*60:2.15) {$a_5$};
    \node (a6) at (30+5*60:2.15) {$a_6$};
\end{tikzpicture}
\]
By Theorem \ref{thm: deeppolygon}, these values extend to a (necessarily unique) deep point if and only if
\begin{equation}\label{eq: hexagon}
\frac{a_1a_3a_5}{a_2a_4a_6} = 1
\end{equation}
Assuming this holds, the values at this deep point on the diagonals are depicted in Figure \ref{fig: deephexagon}.

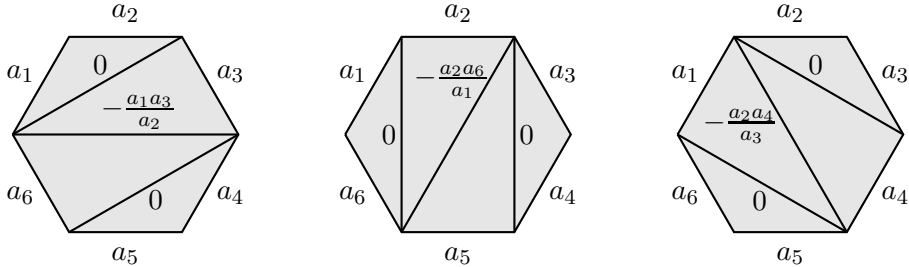
\begin{figure}[h!t]
\[
\begin{tikzpicture}[xscale=-1,scale=.75]
    \draw[fill=black!10,thick] 
    (0*60:2)
    to (1*60:2)
    to (2*60:2)
    to (3*60:2)
    to (4*60:2)
    to (5*60:2)
    to (6*60:2)
    ;
    \node (a1) at (30+0*60:2.15) {$a_1$};
    \node (a2) at (30+1*60:2.15) {$a_2$};
    \node (a3) at (30+2*60:2.15) {$a_3$};
    \node (a4) at (30+3*60:2.15) {$a_4$};
    \node (a5) at (30+4*60:2.15) {$a_5$};
    \node (a6) at (30+5*60:2.15) {$a_6$};
    \draw[thick] (5*60:2) to (3*60:2);
    \node at (245:1.27) {$0$};
    \draw[thick] (3*60:2) to (6*60:2);
    \node at (120:0.46) {$-\frac{a_1a_3}{a_2}$};
    \draw[thick] (6*60:2) to (2*60:2);
    \node at (70:1.31) {$0$};
\end{tikzpicture}
\hspace{1cm}
\begin{tikzpicture}[xscale=-1,scale=.75]
    \draw[fill=black!10,thick] 
    (0*60:2)
    to (1*60:2)
    to (2*60:2)
    to (3*60:2)
    to (4*60:2)
    to (5*60:2)
    to (6*60:2)
    ;
    \node (a1) at (30+0*60:2.15) {$a_1$};
    \node (a2) at (30+1*60:2.15) {$a_2$};
    \node (a3) at (30+2*60:2.15) {$a_3$};
    \node (a4) at (30+3*60:2.15) {$a_4$};
    \node (a5) at (30+4*60:2.15) {$a_5$};
    \node (a6) at (30+5*60:2.15) {$a_6$};
    \draw[thick] (4*60:2) to (2*60:2);
    \node at (180:1.22) {$0$};
    \draw[thick] (2*60:2) to (5*60:2);
    \node at (83:0.95) {$-\frac{a_2a_6}{a_1}$};
    \draw[thick] (5*60:2) to (1*60:2);
    \node at (0:1.22) {$0$};
\end{tikzpicture}
\hspace{1cm}
\begin{tikzpicture}[xscale=-1,scale=.75]
    \draw[fill=black!10,thick] 
    (0*60:2)
    to (1*60:2)
    to (2*60:2)
    to (3*60:2)
    to (4*60:2)
    to (5*60:2)
    to (6*60:2)
    ;
    \node (a1) at (30+0*60:2.15) {$a_1$};
    \node (a2) at (30+1*60:2.15) {$a_2$};
    \node (a3) at (30+2*60:2.15) {$a_3$};
    \node (a4) at (30+3*60:2.15) {$a_4$};
    \node (a5) at (30+4*60:2.15) {$a_5$};
    \node (a6) at (30+5*60:2.15) {$a_6$};
    \draw[thick] (3*60:2) to (1*60:2);
    \node at (110:1.31) {$0$};
    \draw[thick] (1*60:2) to (4*60:2);
    \node at (10:0.9) {$-\frac{a_2a_4}{a_3}$};
    \draw[thick] (4*60:2) to (0*60:2);
    \node at (295:1.29) {$0$};
\end{tikzpicture}
\]
\caption{A general deep point in the cluster algebra of a hexagon}
\label{fig: deephexagon}
\end{figure}

Note that Condition \eqref{eq: hexagon} implies that $-\frac{a_1a_3}{a_2} = -\frac{a_4a_6}{a_5}$. More generally, every diagonal is defined by two equivalent formulas corresponding to the edges on either side. 
Setting all boundary values $a_i=1$ gives the unique deep point in the coefficient-free cluster algebra of type $A_3$ (Figure \ref{fig: deepA3}).\qedhere
\end{ex}

A key tool of the proof and key takeaway from the proposition is that a point $p$ of $\CA(\Delta_n)$ is deep iff, for any diagonal which cuts $\Delta_n$ into two even-sided polygons, the restrictions of $p$ to each of those polygons are deep points in the respective cluster varieties. Stated precisely:

\begin{coro}\label{coro: polygoncut}
Let $x_{i,j}$ be a diagonal in $\Delta_n$ with $j-i$ odd, and let $\Delta_{i,j}$ and $\Delta_{j,i}$ be the convex hulls of $\{i,i+1,\dotsc ,j-1,j\}$ and $\{j,j+1,\dotsc ,n,1,\dotsc ,i-1,i\}$, respectively. Then $p\in V(\CA(\Delta_n),\kk)$ is deep iff the restrictions of $p$ to $\Delta_{i,j}$ and $\Delta_{j,i}$ define deep points in $V(\CA(\Delta_{i,j}),\kk)$ and $V(\CA(\Delta_{j,i}),\kk)$.
\end{coro}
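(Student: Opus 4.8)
The plan is to deduce Corollary \ref{coro: polygoncut} directly from the explicit description of deep points furnished by Theorem \ref{thm: deeppolygon}, together with Lemma \ref{lemma: evenrule}. First I would observe that since $x_{i,j}$ is a diagonal with $j-i$ odd, the polygons $\Delta_{i,j}$ and $\Delta_{j,i}$ both have an even number of vertices (they have $j-i+1$ and $n-(j-i)+1$ vertices respectively, and both $n$ must then be even for there to be any deep point at all — if $n$ is odd, both sides of the equivalence are vacuously false by part (1) of Theorem \ref{thm: deeppolygon}, so I may assume $n$ even). Thus Theorem \ref{thm: deeppolygon} applies to all three polygons $\Delta_n$, $\Delta_{i,j}$, $\Delta_{j,i}$, and in each case a deep point is determined by its values on the edges subject to the single alternating-product constraint \eqref{eq: alternating product}, with all remaining diagonal values forced by \eqref{eq: arbitrary diagonal}.

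For the forward direction, suppose $p$ is deep on $\Delta_n$. The restriction to the edges and diagonals of $\Delta_{i,j}$ is a point of $V(\CA(\Delta_{i,j}),\kk)$ since the Ptolemy relations internal to $\Delta_{i,j}$ are a subset of those of $\Delta_n$; and it is deep because by Lemma \ref{lemma: trianglepoly}(2) $p$ kills a diagonal in every triangle of $\Delta_n$, in particular in every triangle of $\Delta_{i,j}$, so every triangulation of $\Delta_{i,j}$ contains a killed diagonal. The same argument applies to $\Delta_{j,i}$. (Indeed this direction is essentially immediate and was already used inside the proof of Theorem \ref{thm: deeppolygon}.)

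For the converse, suppose the restrictions of $p$ to $\Delta_{i,j}$ and $\Delta_{j,i}$ are deep. I would first check that $p$ genuinely satisfies all Ptolemy relations of $\Delta_n$ — the ones internal to each sub-polygon hold by hypothesis, and any Ptolemy relation involving a diagonal crossing $x_{i,j}$ must be verified; here the cleanest route is to note that the values of $p$ on all diagonals crossing $x_{i,j}$ are pinned down by \eqref{eq: arbitrary diagonal} applied in $\Delta_n$ (since $p$ kills $x_{i,j}$ by \eqref{eq: evenrule}, and more generally kills every even diagonal of $\Delta_n$ because a diagonal of $\Delta_n$ that is ``even'' restricts to an ``even'' diagonal or edge of one of the sub-polygons — the parity is preserved along each side since the vertex labels $i,i+1,\dots,j$ are consecutive), so one reduces to checking a closed-form identity among the edge values, which follows from the two instances of \eqref{eq: alternating product} for $\Delta_{i,j}$ and $\Delta_{j,i}$ multiplying to give \eqref{eq: alternating product} for $\Delta_n$ — exactly the sign bookkeeping $(-1)^{\frac{j-i+1}{2}}\cdot(-1)^{\frac{n-i+j+1}{2}} = (-1)^{\frac{n+2}{2}}$ carried out in the last paragraph of the proof of Theorem \ref{thm: deeppolygon}. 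Once $p$ is known to be a point of $V(\CA(\Delta_n),\kk)$ whose edge values satisfy \eqref{eq: alternating product}, Theorem \ref{thm: deeppolygon} declares it deep. The main obstacle — really the only place requiring care — is confirming that the two sub-polygon constraints assemble into the single $\Delta_n$ constraint with the correct sign and that no Ptolemy relation of $\Delta_n$ straddling the cut is left unchecked; both are handled by the same parity computation that already appears in the proof of the theorem, so in the write-up I would simply cite that computation rather than redo it.
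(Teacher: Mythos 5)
Your forward direction is correct and is exactly what the paper does inside the proof of Theorem \ref{thm: deeppolygon}: by Lemma \ref{lemma: trianglepoly} a deep point kills a side of every triangle, Lemma \ref{lemma: evenrule} forces $p(x_{i,j})\neq 0$ so the restrictions are genuine points of the sub-polygon varieties, and the killed side of any triangle in a triangulation of $\Delta_{i,j}$ must then be a diagonal of $\Delta_{i,j}$.

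The converse as written has a genuine gap, plus a local error. The error: you assert that ``$p$ kills $x_{i,j}$ by \eqref{eq: evenrule}.'' It does not --- $j-i$ is odd by hypothesis, and indeed $p(x_{i,j})$ \emph{must} be non-zero for the restrictions to define points of $V(\CA(\Delta_{i,j}),\kk)$ and $V(\CA(\Delta_{j,i}),\kk)$ at all, since $x_{i,j}$ is an invertible edge there. Relatedly, your parity observation only covers diagonals lying inside one of the two sub-polygons; a diagonal of $\Delta_n$ crossing $x_{i,j}$ does not restrict to either sub-polygon, so nothing you have said pins down its value. The gap: your concluding inference ``once $p$ is a point of $V(\CA(\Delta_n),\kk)$ whose edge values satisfy \eqref{eq: alternating product}, Theorem \ref{thm: deeppolygon} declares it deep'' is false. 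The theorem says such edge values extend to a \emph{unique} deep point; it does not say every point of $V(\CA(\Delta_n),\kk)$ with those edge values is deep. Example \ref{ex: badcut} (Figure \ref{fig: badcut}) exhibits a point of $V(\CA(\Delta_6),\kk)$ with all edge values equal to $1$ --- so \eqref{eq: alternating product} holds --- that is not deep. To close the argument you must show that $p$ agrees with the unique deep extension of its edge values on \emph{every} diagonal. For diagonals contained in $\Delta_{i,j}$ or $\Delta_{j,i}$ this follows from deepness of the restrictions (Theorem \ref{thm: deeppolygon} forces them to obey \eqref{eq: arbitrary diagonal}); for a diagonal $x_{k,l}$ crossing $x_{i,j}$, apply $p$ to the Ptolemy relation of the quadrilateral with vertices $i,k,j,l$, namely $x_{i,j}x_{k,l}=x_{i,k}x_{j,l}+x_{k,j}x_{i,l}$: since $p(x_{i,j})\neq 0$ and all other factors are values on the two sub-polygons, $p(x_{k,l})$ is determined, hence coincides with the value taken by the deep extension. (Your preliminary worry about verifying the Ptolemy relations of $\Delta_n$ is moot: the corollary assumes $p\in V(\CA(\Delta_n),\kk)$ from the outset, so there is nothing to glue.)
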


\begin{rem}
As an application, the non-zero values of Formula \eqref{eq: arbitrary diagonal} follow from Condition \eqref{eq: alternating product} for $\Delta_{i,j}$, and Condition \eqref{eq: alternating product} for $\Delta_n$ follows from Condition \eqref{eq: alternating product} for $\Delta_{i,j}$ and $\Delta_{j,i}$.
\end{rem}

Theorem \ref{thm: deeppolygon} gives a classification of deep points in coefficient-free cluster algebras of type $A$, by specializing the values of the edges to $1$ and
shifting the indexing by $3$.

\begin{thm}\label{thm: deepAn}
If $n\equiv 3\bmod{4}$ (or $n\equiv 1\bmod{4}$ and $\operatorname{char}(\kk)=2$), then the cluster variety $V(\CA(A_{n}),\kk)$ has a unique deep point whose values on the diagonals in $\Delta_{n+3}$ are given by
\begin{equation}\label{eq: polygondiagonal}
	p(x_{i,j}) = \left\{\begin{array}{cc}
	(-1)^{\frac{j-i-1}{2}} & \text{if $i \not\equiv j\bmod{2}$} \\
	0 & \text{if $i \equiv j\bmod{2}$}
	\end{array}\right\}
\end{equation}  
For all other choices of $n$ and $\kk$, the cluster variety $V(\CA(A_{n}),\kk)$ has no deep points.
\end{thm}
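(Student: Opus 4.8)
The plan is to deduce this statement directly from Theorem~\ref{thm: deeppolygon} by imposing the edge relations. Recall the quotient presentation $\CA(A_n) = \CA(\Delta_{n+3})/\langle x_{i,j}-1 : x_{i,j}\text{ an edge}\rangle$. By Corollary~\ref{coro: polygonvalues}, a point of $V(\CA(A_n),\kk)$ is exactly a Ptolemy-compatible assignment on the diagonals and edges of $\Delta_{n+3}$ with value $1$ on every edge; thus $V(\CA(A_n),\kk)$ is the closed subvariety of $V(\CA(\Delta_{n+3}),\kk)$ cut out by the edges-equal-$1$ conditions. Since both algebras have the triangulations of $\Delta_{n+3}$ as clusters and the diagonals as the mutable (hence relevant) variables, a point of this subvariety is deep as a point of $V(\CA(A_n),\kk)$ precisely when every triangulation contains a diagonal sent to $0$ — i.e.\ precisely when it is the restriction of a deep point of $V(\CA(\Delta_{n+3}),\kk)$.

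First I would dispose of the even case: if $n$ is even then $n+3$ is odd, so Theorem~\ref{thm: deeppolygon}(1) says $V(\CA(\Delta_{n+3}),\kk)$ has no deep points at all, hence neither does $V(\CA(A_n),\kk)$. So assume $n$ is odd, so that $n+3$ is even and Theorem~\ref{thm: deeppolygon}(2) applies. It tells us the all-ones edge assignment on $\Delta_{n+3}$ extends to a (necessarily unique) deep point if and only if Condition~\eqref{eq: alternating product} holds for $\Delta_{n+3}$. With every edge value equal to $1$ the left-hand side of \eqref{eq: alternating product} is $1$, so the condition reduces to
\[ 1 = (-1)^{\frac{(n+3)+2}{2}} = (-1)^{\frac{n+5}{2}} \quad\text{in }\kk. \]

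Now I would split on $\operatorname{char}(\kk)$. If $\operatorname{char}(\kk)\neq 2$, this holds exactly when $\frac{n+5}{2}$ is even, i.e.\ $n\equiv 3\bmod 4$. If $\operatorname{char}(\kk)=2$, then $-1=1$ in $\kk$, so the condition holds automatically for every odd $n$; combined with the characteristic-$\neq 2$ case this yields precisely the hypothesis ``$n\equiv 3\bmod 4$, or $n\equiv 1\bmod 4$ and $\operatorname{char}(\kk)=2$.'' In all other cases there is no deep point. Finally, when a deep point exists Theorem~\ref{thm: deeppolygon}(2) guarantees it is unique, and substituting the edge values $1$ into Formula~\eqref{eq: arbitrary diagonal} gives exactly Formula~\eqref{eq: polygondiagonal} for its values on the diagonals of $\Delta_{n+3}$.

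I do not expect a genuine obstacle here: the theorem is essentially a corollary of Theorem~\ref{thm: deeppolygon}. The only point requiring care is the identification in the first paragraph — that being deep in $\CA(A_n)$ is the same as being deep in $\CA(\Delta_{n+3})$ along the edges-equal-$1$ locus — and after that the entire content is parity-and-characteristic bookkeeping for the exponent of $-1$ produced by \eqref{eq: alternating product}.
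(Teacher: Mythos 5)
Your proposal is correct and follows essentially the same route as the paper: specialize Theorem \ref{thm: deeppolygon} to the all-ones edge assignment, reduce Condition \eqref{eq: alternating product} to $1=(-1)^{\frac{(n+3)+2}{2}}$, and read off the parity/characteristic dichotomy and Formula \eqref{eq: polygondiagonal}. The only difference is that you spell out the identification of deep points of $V(\CA(A_n),\kk)$ with deep points of $V(\CA(\Delta_{n+3}),\kk)$ on the edges-equal-$1$ locus, which the paper takes for granted; this is a welcome clarification, not a divergence.
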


\begin{proof}
Since the value of $p$ on each edge must be $1$, Condition \eqref{eq: alternating product} becomes $1=(-1)^{\frac{(n+3)+2}{2}}$, which holds iff $n\equiv 3\bmod{4}$ (or $n\equiv 1\bmod{4}$ and $\operatorname{char}(\kk)=2$). Formula \eqref{eq: arbitrary diagonal} reduces to \eqref{eq: polygondiagonal}.
\end{proof}

\begin{rem}\label{rem: polygonsingular}
Comparing with the classification of singularities in \cite[Theorem A]{BFMS23}, we see that the deep points of $\CA(A_n)$ are precisely the singularities. By contrast, $V(\CA(\Delta_n),\mathbb{C})$ is non-singular (\cite[Corollary 7.9]{MulLA}), and so the deep points in Theorem \ref{thm: deeppolygon} are not singularities.
\end{rem}



\draftnewpage

\section{Deep points of cluster algebras of rank 2}

In this section, we describe deep points of cluster algebras of rank 2, with arbitrary coefficients.

\subsection{Cluster algebras of rank 2}

We consider a cluster algebra $\CA$ with extended exchange matrix 
\begin{equation}\label{eq: rank2matrix}
\begin{bmatrix}
0 & b \\
-c & 0 \\
-\gvec{e}_2 & \gvec{e}_1 
\end{bmatrix}
\end{equation}
for some positive integers $b,c$ and some column vectors $\gvec{e}_1,\gvec{e}_2\in \mathbb{Z}^f$. We refer to this as a \textbf{cluster algebra of $(b,c)$-type}; or simply a \textbf{cluster algebra of rank $2$} when we don't wish to specify $b$ and $c$. Note that $f$ may be $0$, in which case $\gvec{e}_1,\gvec{e}_2$ are empty and there are no frozen variables.

Since $\CA$ is acyclic, there is a presentation due to \cite{BFZ05} 
\begin{equation}\label{eq: rank2pres}:
\CA \simeq  \mathbb{Z}[x_0,x_1,x_2,x_3,y_1^{\pm1},y_2^{\pm1},\dotsc ,y_f^{\pm1}]/
\langle x_0x_2 - y^{[\gvec{e}_1]_+} x_1^b - y^{[\gvec{e}_1]_-} , x_1x_3 - y^{[\gvec{e}_2]_+}x_2^c -y^{[\gvec{e}_2]_-}\rangle
\end{equation}
Note that $x_0$ and $x_3$ are being used to denote the mutations of $x_2$ and $x_1$, respectively. We extend this convention to recursively define $x_i$ for all $i\in \mathbb{Z}$ by the rule that the clusters are of the form $\{x_i,x_{i+1},y_1,y_2,\dotsc ,y_f\}$. The mutation relations become the family of equations (below left):
\[ 
x_{i-1}x_{i+1} = 
\left\{
\begin{array}{cc}
y^{[\gvec{e}_i]_+} x_i^b +y^{[\gvec{e}_i]_-} & \text{if $i$ is odd} \\
y^{[\gvec{e}_i]_+} x_i^c +y^{[\gvec{e}_i]_-} & \text{if $i$ is even} \\
\end{array}
\right\}
\hspace{.75cm}
\gvec{e}_{i+1}+\gvec{e}_{i-1} = 
\left\{
\begin{array}{cc}
-b[\gvec{e}_i]_- & \text{if $i$ is odd} \\
-c[\gvec{e}_i]_- & \text{if $i$ is even} \\
\end{array}
\right\}
\]
for a sequence of vectors $(\gvec{e}_i,i\in \mathbb{Z})$ in $\mathbb{Z}^f$ recursively defined by the equations (above right).


The presentation \eqref{eq: rank2pres} identifies the cluster variety of $\CA$ with the solutions to a pair of equations:
\[ 
\{ (x_0,x_1,x_2,x_3,y_1,y_2,\dotsc ,y_f)\in \kk^{4}\times (\kk^\times)^f
\mid x_0x_2 = y^{[\gvec{e}_1]_+} x_1^b + y^{[\gvec{e}_1]_-} , x_1x_3 = y^{[\gvec{e}_2]_+}x_2^c + y^{[\gvec{e}_2]_-} \} 
\]
Given a cluster $\{x_i,x_{i+1},y_1,y_2,\dotsc ,y_f\}$, the associated cluster torus is the subset of $V(\CA,\kk)$ where $p(x_i),p(x_{i+1})$ are not zero. Therefore, the deep locus consists of the subset of $V(\CA,\kk)$ such that at least one of $p(x_i),p(x_{i+1})$ vanishes for all $i\in \mathbb{Z}$.


\subsection{Vanishing lemmas}

Before we classify deep points in cluster algebras of $(b,c)$-type, we need a few lemmas which characterize which sets of cluster variables can vanish at the same time.

\begin{lemma}
Let $\CA$ be a cluster algebra of $(b,c)$-type, and let $p\in V(\CA,\kk)$. If $p(x_i)=0$, then $p(x_{i-1})\neq 0$ and $p(x_{i+1})\neq0$.
\end{lemma}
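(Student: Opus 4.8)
The plan is to use the mutation relations to show that consecutive cluster variables cannot simultaneously vanish. Suppose $p(x_i) = 0$; I want to show $p(x_{i+1}) \neq 0$ (the case of $x_{i-1}$ is symmetric, obtained by running the mutation sequence in the other direction). Assume for contradiction that $p(x_i) = p(x_{i+1}) = 0$ as well. The idea is to look at the exchange relation that has $x_i x_{i+1}$-adjacent neighbors on one side; more precisely, recall that the clusters are $\{x_j, x_{j+1}, y_1, \dots, y_f\}$ and the mutation relations read $x_{j-1} x_{j+1} = y^{[\gvec{e}_j]_+} x_j^{b \text{ or } c} + y^{[\gvec{e}_j]_-}$.

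First I would apply the relation centered at index $i$, namely $x_{i-1} x_{i+1} = y^{[\gvec{e}_i]_+} x_i^{\bullet} + y^{[\gvec{e}_i]_-}$ where $\bullet \in \{b, c\}$ depending on the parity of $i$. Evaluating at $p$ and using $p(x_i) = 0$, the first term on the right vanishes, so $p(x_{i-1}) p(x_{i+1}) = p(y)^{[\gvec{e}_i]_-}$. Since each $y_k$ is a frozen variable whose inverse lies in $\CA$, we have $p(y_k) \in \kk^\times$, so the right-hand side is a nonzero element of $\kk$. Hence $p(x_{i-1}) p(x_{i+1}) \neq 0$, which forces both $p(x_{i-1}) \neq 0$ and $p(x_{i+1}) \neq 0$. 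This immediately gives the conclusion, and in fact shows the contradiction hypothesis $p(x_{i+1}) = 0$ is impossible without even needing to assume $p(x_{i+1}) = 0$ separately.

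So the real structure of the argument is: just directly evaluate the exchange relation $x_{i-1}x_{i+1} = y^{[\gvec{e}_i]_+} x_i^{\bullet} + y^{[\gvec{e}_i]_-}$ at $p$, kill the $x_i$ term since $p(x_i) = 0$, and observe that the surviving monomial in the frozen variables is a unit in $\kk$, hence nonzero; therefore the product $p(x_{i-1})p(x_{i+1})$ is nonzero and both factors are nonzero. The only subtlety — and the main thing to get right — is making sure the monomial $y^{[\gvec{e}_i]_-}$ really is a monomial (possibly empty, i.e. equal to $1$) in the $y_k$ and their inverses, with no $x$-variables, so that it evaluates to a unit; this is exactly what the presentation in \eqref{eq: rank2pres} and the recursion defining $\gvec{e}_i$ guarantee, since $[\gvec{e}_i]_- \in \mathbb{Z}_{\geq 0}^f$ and the $y_k$ are invertible in $\CA$. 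There is no genuine obstacle here — the lemma is essentially an immediate consequence of the exchange relations having a "constant" (unit) term whenever the coefficient variables are invertible — so the proof is short.

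\begin{proof}
Suppose $p(x_i) = 0$. Consider the exchange relation at index $i$, which (depending on the parity of $i$) reads
\[
x_{i-1} x_{i+1} = y^{[\gvec{e}_i]_+} x_i^{b} + y^{[\gvec{e}_i]_-}
\qquad\text{or}\qquad
x_{i-1} x_{i+1} = y^{[\gvec{e}_i]_+} x_i^{c} + y^{[\gvec{e}_i]_-}.
\]
Applying $p$ and using $p(x_i) = 0$, the first term on the right-hand side vanishes (as $b, c \geq 1$), so
\[
p(x_{i-1})\, p(x_{i+1}) = p(y)^{[\gvec{e}_i]_-} = \prod_{k=1}^{f} p(y_k)^{[\gvec{e}_i]_{-,k}}.
\]
Each $y_k$ is invertible in $\CA$, so $p(y_k) \in \kk^\times$, and therefore the right-hand side is a nonzero element of $\kk$. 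Since $\kk$ is a field, $p(x_{i-1}) \neq 0$ and $p(x_{i+1}) \neq 0$.
\end{proof}
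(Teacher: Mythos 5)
Your proof is correct and is essentially the same as the paper's: both evaluate the exchange relation centered at $x_i$, kill the $x_i$ term, and observe that the surviving term $p(y)^{[\gvec{e}_i]_-}$ is a unit because the frozen variables are invertible. No issues.
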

\begin{proof}
Assume $i$ is even, and apply $p$ to the mutation relation
$ x_{i-1}x_{i+1} = y^{[\gvec{g}_i]_+}x_i^c + y^{[\gvec{e}_i]_-} $ to get
\[ p(x_{i-1})p(x_{i+1}) = p(y)^{[\gvec{e}_i]_+}p(x_i)^c + p(y)^{[\gvec{e}_i]_-} \]
Since $p(x_i)=0$ and each coordinate of $p(y)$ is non-zero, the product $p(x_{i-1})p(x_{i+1})$ must be non-zero. Therefore, $p(x_{i-1})\neq0$ and $p(x_{i+1})\neq0$. The case of $i$ odd is the same.
\end{proof}

The lemma implies that at most one of $x_i$ and $x_{i+1}$ can vanish at any point in $V(\CA,\kk)$. Since $\{x_i,x_{i+1}\}$ is a cluster, at least one of them must vanish at each deep point. Thus, at a deep point, the sequence of cluster variables must alternate between vanishing and non-vanishing.


\begin{coro}\label{coro: ranktwodeeppoint}
Let $\CA$ be a cluster algebra of $(b,c)$-type.
Then a point $p\in V(\CA,\kk)$ is deep iff one of the following two mutually exclusive conditions hold.
\begin{enumerate}
    \item For all $i\in \mathbb{Z}$, $p(x_{2i})=0$ and $p(x_{2i+1})\neq0$.
    \item For all $i\in \mathbb{Z}$, $p(x_{2i})\neq 0$ and $p(x_{2i+1}) = 0$.
\end{enumerate}
\end{coro}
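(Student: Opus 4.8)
The plan is to combine the preceding lemma with the basic observation that at every deep point at least one variable in each cluster $\{x_i, x_{i+1}\}$ must vanish. First I would observe that, since $\{x_i, x_{i+1}, y_1, \dotsc, y_f\}$ is a cluster for every $i \in \mathbb{Z}$, the definition of deep point forces $p(x_i) = 0$ or $p(x_{i+1}) = 0$ for every $i$. Combined with the lemma --- which says $p(x_i) = 0$ implies $p(x_{i-1}) \neq 0$ and $p(x_{i+1}) \neq 0$ --- this means that if $p(x_i) = 0$ then we are forced to have $p(x_{i+1}) \neq 0$ (from the lemma) and hence $p(x_{i+2}) = 0$ (from the cluster condition applied to $\{x_{i+1}, x_{i+2}\}$, using that $p(x_{i+1}) \neq 0$). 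Running this argument in both directions along $\mathbb{Z}$ shows that the set of indices killed by $p$ is exactly one of the two cosets $2\mathbb{Z}$ or $2\mathbb{Z}+1$.

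The remaining point is that $p$ kills \emph{some} $x_i$ to begin with: this is immediate since $p$ is deep and $\{x_0, x_1, y_1, \dotsc, y_f\}$ is a cluster, so at least one of $p(x_0), p(x_1)$ vanishes, pinning down which of the two cases holds. Conversely, I would check that a point satisfying (1) or (2) is deep, which is trivial: every cluster of $\CA$ has the form $\{x_i, x_{i+1}, y_1, \dotsc, y_f\}$, and among any two consecutive indices one is even and one is odd, so in either case the product $p(x_i)p(x_{i+1})$ vanishes. Finally, (1) and (2) are mutually exclusive because, e.g., (1) asserts $p(x_0) = 0$ while (2) asserts $p(x_0) \neq 0$.

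There is no real obstacle here; the content is entirely in the lemma, and the corollary is a short bookkeeping argument. The only thing to be slightly careful about is making the induction along $\mathbb{Z}$ explicit in both directions (increasing and decreasing $i$) rather than just one, and noting that the alternation propagates from a single vanishing $x_i$ to all of $\mathbb{Z}$. I would write it as: given $p$ deep, pick any $i$; the cluster condition on $\{x_i, x_{i+1}\}$ and the lemma together give that $p(x_j) = 0$ for $j \equiv i \pmod 2$ and $p(x_j) \neq 0$ for $j \not\equiv i \pmod 2$, by induction on $|j - i|$, and then determine the parity of $i$ from whether $p(x_0)$ vanishes.
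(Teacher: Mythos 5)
Your proposal is correct and matches the paper's argument: the paper likewise combines the preceding lemma (at most one of $x_i,x_{i+1}$ vanishes) with deepness (at least one of each cluster pair vanishes) to force the alternation, leaving the converse and mutual exclusivity implicit. Your version just makes the two-directional induction and the trivial converse explicit.
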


The vanishing of cluster variables is not independent. In most cases, the vanishing of $x_{i-2}$ and $x_i$ is enough to imply the vanishing of $x_{i+2}$. This is easiest to state precisely in terms of ideals rather than points.

\begin{lemma}\label{lemma:x0x2x4}
Let $\CA$ be a cluster algebra of $(b,c)$-type. For all $i\in \mathbb{Z}$,
\begin{enumerate}
    \item If $c>1$, then $ x_{2i+2}$ is in the $\CA$-ideal $\langle x_{2i-2},x_{2i}\rangle$ generated by $x_{2i-2}$ and $x_{2i}$.
    \item If $c=1$, then the $\CA$-ideals $\langle x_{2i-2},x_{2i},x_{2i+2}\rangle$ and $\langle x_{2i-2},x_{2i},b\rangle$ are equal.
\end{enumerate}
\end{lemma}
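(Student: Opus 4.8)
The plan is to work directly with the mutation relations from the presentation \eqref{eq:rank2pres}, treating everything as an identity of ideals in $\CA$ rather than a statement about points. The key objects are the two families of relations
\[ x_{i-1}x_{i+1} = y^{[\gvec{e}_i]_+}x_i^{b} + y^{[\gvec{e}_i]_-} \quad (i \text{ odd}), \qquad x_{i-1}x_{i+1} = y^{[\gvec{e}_i]_+}x_i^{c} + y^{[\gvec{e}_i]_-} \quad (i \text{ even}), \]
together with the fact that the $y_j$ are units. Since the two statements concern only even-indexed $x$'s, and the relevant recursion passes through odd-indexed variables, the first step is to combine two consecutive relations to express $x_{2i+2}$ in terms of $x_{2i-2}$, $x_{2i}$, and the odd variable $x_{2i+1}$ linking them. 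Concretely, from the odd-index relation at $i=2i+1$ we get $x_{2i}x_{2i+2} = y^{[\gvec{e}_{2i+1}]_+}x_{2i+1}^{b} + y^{[\gvec{e}_{2i+1}]_-}$, and from the even-index relations at $2i$ and $2i+2$ we can solve for $x_{2i+1}$ in terms of adjacent even variables; iterating one more step expresses $x_{2i+1}^{c}$ (or $x_{2i+1}^{b}$, depending on parity bookkeeping — here $c$ is the exponent attached to even-indexed variables) modulo $\langle x_{2i-2}, x_{2i}\rangle$.

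**Main computation.** The cleaner route is: work modulo the ideal $I = \langle x_{2i-2}, x_{2i}\rangle$. From the even-index relation $x_{2i-1}x_{2i+1} = y^{[\gvec{e}_{2i}]_+}x_{2i}^{c} + y^{[\gvec{e}_{2i}]_-}$ and $x_{2i} \equiv 0 \bmod I$, we get $x_{2i-1}x_{2i+1} \equiv y^{[\gvec{e}_{2i}]_-} \bmod I$, a unit times a monomial; in particular $x_{2i+1}$ is a unit modulo $I$ (with inverse $y^{-[\gvec{e}_{2i}]_-}x_{2i-1}$), and similarly $x_{2i-1}$ is a unit mod $I$. Now use the odd-index relation $x_{2i}x_{2i+2} = y^{[\gvec{e}_{2i+1}]_+}x_{2i+1}^{b} + y^{[\gvec{e}_{2i+1}]_-}$; since $x_{2i} \equiv 0$, this reads $0 \equiv y^{[\gvec{e}_{2i+1}]_+}x_{2i+1}^{b} + y^{[\gvec{e}_{2i+1}]_-} \bmod I$, i.e. $x_{2i+1}^{b} \equiv -y^{[\gvec{e}_{2i+1}]_- - [\gvec{e}_{2i+1}]_+} \bmod I$. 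Wait — this is the relation one step over; the point is to similarly extract the relation governing $x_{2i+2}$. Redo with $x_{2i+2}$: from the even-index relation at $2i+2$, $x_{2i+1}x_{2i+3} = y^{[\gvec{e}_{2i+2}]_+}x_{2i+2}^{c} + y^{[\gvec{e}_{2i+2}]_-}$. Meanwhile the odd-index relation at $2i+1$ gives $x_{2i}x_{2i+2} = y^{[\gvec{e}_{2i+1}]_+}x_{2i+1}^{b} + y^{[\gvec{e}_{2i+1}]_-}$, so modulo $I$ (where $x_{2i} \equiv 0$) we obtain $y^{[\gvec{e}_{2i+1}]_+}x_{2i+1}^{b} \equiv -y^{[\gvec{e}_{2i+1}]_-}$, hence $x_{2i+1}^{b}$ is (a unit times) $-1$ modulo $I$ — but more useful is that $x_{2i+1}$ satisfies $x_{2i+1}^{b} \equiv (\text{unit}) \bmod I$. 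Combining: $x_{2i+2}$ appears only through $x_{2i+2}^{c}$, and one shows $x_{2i+2}^{c} \in I$, from which in the domain $\CA$ (actually even without domain, since $\CA/I$ need not be reduced) one cannot immediately conclude $x_{2i+2} \in I$ when $c>1$. The correct mechanism must be subtler: I expect the actual argument tracks the *linear* appearance. Let me restate the plan honestly.

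**Honest plan.** Use the two relations that bracket $x_{2i+1}$:
\[ x_{2i-1}x_{2i+1} = y^{[\gvec{e}_{2i}]_+}x_{2i}^{c} + y^{[\gvec{e}_{2i}]_-}, \qquad x_{2i+1}x_{2i+3} = y^{[\gvec{e}_{2i+2}]_+}x_{2i+2}^{c} + y^{[\gvec{e}_{2i+2}]_-}. \]
Modulo $I=\langle x_{2i-2},x_{2i}\rangle$ the first shows $x_{2i+1}$ is a unit times a $y$-monomial, say $x_{2i+1} \equiv u \bmod I$ with $u$ a unit. Substitute into the odd relation $x_{2i}x_{2i+2} = y^{[\gvec{e}_{2i+1}]_+}x_{2i+1}^{b} + y^{[\gvec{e}_{2i+1}]_-}$: reducing mod $I$, $0 \equiv y^{[\gvec{e}_{2i+1}]_+}u^{b} + y^{[\gvec{e}_{2i+1}]_-}$, which is a constraint purely among units — consistency forces (after the sign-coherence recursion $\gvec{e}_{2i+1}+\gvec{e}_{2i-1} = -b[\gvec{e}_{2i}]_-$) that this holds precisely, with the "$+1$" contributing the $-1$ and, when $b=1$, leaving a factor of $b$; this is where the $\langle x_{2i-2},x_{2i},b\rangle$ alternative in case $c=1$ enters by a symmetric argument with the roles of $b,c$ and parity swapped. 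Then, knowing $x_{2i+1}\equiv u$, the even relation at $2i+2$ becomes $u x_{2i+3} \equiv y^{[\gvec{e}_{2i+2}]_+}x_{2i+2}^{c} + y^{[\gvec{e}_{2i+2}]_-} \bmod I$; to conclude $x_{2i+2}\in I$ when $c>1$ one instead observes $x_{2i+2}$ enters *linearly* in the odd relation $x_{2i}x_{2i+2}=\dots$, so $x_{2i+2}\cdot(\text{unit}) \equiv x_{2i}\cdot(\dots) + (\text{expression in }x_{2i+1}^{b}) $, and the latter expression, by the constraint just derived, lies in $I$; dividing by the unit gives $x_{2i+2}\in I$.

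**Expected obstacle.** The delicate point is the exponent/parity bookkeeping in the recursion $\gvec{e}_{i+1}+\gvec{e}_{i-1} = -b[\gvec{e}_i]_-$ (resp. $-c$), which is what makes the unit constraint $y^{[\gvec{e}_{2i+1}]_+}u^{b} + y^{[\gvec{e}_{2i+1}]_-}$ collapse correctly and explains the asymmetric appearance of "$b$" only in the $c=1$ case (and, dually, why one would also have a "$c$" statement if $b=1$). I would handle this by first doing the coefficient-free case $f=0$ (where $y$-monomials are trivial and the relations are just $x_{i-1}x_{i+1}=x_i^{c}+1$ etc.), pinning down where the $-1$ and the factor $b$ come from, then checking that reintroducing the $y$-monomials changes nothing because they are units satisfying exactly the compatibility recursion above. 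The case split $c>1$ versus $c=1$ reflects whether $x_{2i+2}^{c}$ genuinely captures $x_{2i+2}$ linearly (it does, via the *odd* relation, regardless of $c$) versus whether the leftover unit-constraint is an honest identity ($c>1$, giving the clean ideal equality $x_{2i+2}\in\langle x_{2i-2},x_{2i}\rangle$) or degenerates so that the integer $b$ survives as an obstruction ($c=1$).
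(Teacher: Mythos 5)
Your proposal does not close the essential gap, and the step where you claim to extract $x_{2i+2}$ is incorrect. You write that ``$x_{2i+2}$ enters linearly in the odd relation $x_{2i}x_{2i+2}=\dotsb$, so $x_{2i+2}\cdot(\text{unit})\equiv\dotsb$''\,--- but the coefficient of $x_{2i+2}$ in that relation is $x_{2i}$, which is \emph{zero} modulo $I=\langle x_{2i-2},x_{2i}\rangle$, not a unit. Reducing that relation mod $I$ yields only $0\equiv y^{[\gvec{e}_{2i+1}]_+}x_{2i+1}^b+y^{[\gvec{e}_{2i+1}]_-}$, which carries no information about $x_{2i+2}$ at all; there is no relation in the presentation in which $x_{2i+2}$ appears with a coefficient that is a unit mod $I$ (it appears either multiplied by $x_{2i}$, or as $x_{2i+2}^c$, which is exactly the power you correctly noted cannot be deflated to $x_{2i+2}$ when $c>1$). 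Working purely in the quotient $\CA/I$ is therefore structurally incapable of producing the membership $x_{2i+2}\in I$: one must exhibit an explicit identity in $\CA$ itself.

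The paper's proof supplies exactly the missing mechanism. It multiplies the relation $x_{2i}x_{2i+2}=y^{[\gvec{e}_{2i+1}]_+}x_{2i+1}^b+y^{[\gvec{e}_{2i+1}]_-}$ by $x_{2i-1}^b$, expands $(x_{2i-1}x_{2i+1})^b=x_{2i}^c\,g(x_{2i})+y^{b[\gvec{e}_{2i}]_-}$, and uses the recursion $\gvec{e}_{2i-1}+\gvec{e}_{2i+1}=-b[\gvec{e}_{2i}]_-$ to recognize the leftover constant as $y^{\ast}\,x_{2i-2}x_{2i}$; every term on the right is then divisible by $x_{2i}$, and \emph{cancelling $x_{2i}$ in the domain $\CA$} gives $x_{2i-1}^b x_{2i+2}=y^{[\gvec{e}_{2i+1}]_+}x_{2i}^{c-1}g(x_{2i})+y^{\ast}x_{2i-2}$. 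Combined with the invertibility of $x_{2i-1}^b$ modulo $I$ (which you did establish), this yields the result. Your bookkeeping for the factor of $b$ is also off: it is not produced by a ``symmetric argument with $b,c$ swapped'' in the case $b=1$, but is the constant term $b\,y^{[\gvec{e}_{2i}]_++(b-1)[\gvec{e}_{2i}]_-}$ of the polynomial $g$ coming from the binomial expansion, which survives modulo $I$ precisely when $c=1$ so that $x_{2i}^{c-1}g(x_{2i})=g(x_{2i})$.
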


\begin{proof}
Define a polynomial $g(x_{2i})$ in $x_{2i}$ with coefficients in $\mathbb{Z}[y_1^{\pm1},y_2^{\pm1},\dotsc ,y_f^{\pm1}]$ by the rule
\[ x_{2i-1}^b x_{2i+1}^b = (y^{[\gvec{e}_{2i}]_+}x_{2i}^c+y^{[\gvec{e}_{2i}]_-})^b =  x_{2i}^c g(x_{2i})  + y^{b[\gvec{e}_{2i}]_-}\]
Note that the constant term of $g(x_{2i})$ is $b y^{[\gvec{e}_{2i}]_+ + (b-1)[\gvec{e}_{2i}]_-}$.
Then
\[
\begin{aligned}
x_{2i-1}^b x_{2i} x_{2i+2}
&= x_{2i-1}^b \left( y^{[\gvec{e}_{2i+1}]_+}x_{2i+1}^b+y^{[\gvec{e}_{2i+1}]_-} \right) 
= y^{[\gvec{e}_{2i+1}]_+} ( x_{2i-1}^b x_{2i+1}^b ) + y^{[\gvec{e}_{2i+1}]_-}x_{2i-1}^b \\
&=  y^{[\gvec{e}_{2i+1}]_+} ( x_{2i}^c g(x_{2i}) + y^{b[\gvec{e}_{2i}]_-} ) + y^{[\gvec{e}_{2i+1}]_-}x_{2i-1}^b \\
&=  y^{[\gvec{e}_{2i+1}]_+}  x_{2i}^c g(x_{2i}) + y^{b[\gvec{e}_{2i}]_-+[\gvec{e}_{2i+1}]_+}  + y^{[\gvec{e}_{2i+1}]_-}x_{2i-1}^b \\
\end{aligned}
\]
Since $\gvec{e}_{2i-1} +\gvec{e}_{2i+1} = -b[\gvec{e}_{2i}]_-$, this can be rewritten as
\[
\begin{aligned}
x_{2i-1}^b x_{2i} x_{2i+2}
&= y^{[\gvec{e}_{2i+1}]_+}  x_{2i}^c g(x_{2i}) + y^{[\gvec{e}_{2i+1}]_--\gvec{e}_{2i-1}}  + y^{[\gvec{e}_{2i+1}]_-}x_{2i-1}^b \\
&= y^{[\gvec{e}_{2i+1}]_+}  x_{2i}^c g(x_{2i}) + y^{[\gvec{e}_{2i+1}]_--[\gvec{e}_{2i-1}]_+}(y^{[\gvec{e}_{2i-1}]_-}  + y^{[\gvec{e}_{2i-1}]_+}x_{2i-1}^b) \\
&= y^{[\gvec{e}_{2i+1}]_+}  x_{2i}^c g(x_{2i}) + y^{[\gvec{e}_{2i+1}]_--[\gvec{e}_{2i-1}]_+}x_{2i-2}x_{2i} \\
\end{aligned}
\]
Canceling $x_{2i}$ from both sides yields
\begin{align*}
x_{2i-1}^b x_{2i+2} 
&= y^{[\gvec{e}_{2i+1}]_+}  x_{2i}^{c-1} g(x_{2i}) + y^{[\gvec{e}_{2i+1}]_--[\gvec{e}_{2i-1}]_+}x_{2i-2}
\end{align*}
Using the fact that $y^{-b[\gvec{e}_{2i}]_-}(x_{2i-1}^bx_{2i+1}^b-x_{2i}^cg(x_{2i}))=1$, we get a formula for $x_{2i+2}$.
\[
\begin{aligned}
x_{2i+2} 
&=
y^{-b[\gvec{e}_{2i}]_-}(x_{2i-1}^bx_{2i+1}^b-x_{2i}^cg(x_{2i})) x_{2i+2} 
\\ &
=
y^{-b[\gvec{e}_{2i}]_-}(x_{2i-1}^bx_{2i+1}^bx_{2i+2}-x_{2i}^cx_{2i+2}g(x_{2i})) \\
&=
y^{-b[\gvec{e}_{2i}]_-}(y^{[\gvec{e}_{2i+1}]_+}  x_{2i}^{c-1}x_{2i+1}^b g(x_{2i}) + y^{[\gvec{e}_{2i+1}]_--[\gvec{e}_{2i-1}]_+}x_{2i-2}x_{2i+1}^b-x_{2i}^cx_{2i+2}g(x_{2i})) \\
&=
y^{-b[\gvec{e}_{2i}]_-}(
x_{2i}^{c-1}g(x_{2i})
(y^{[\gvec{e}_{2i+1}]_+}  x_{2i+1}^b -x_{2i}x_{2i+2})
+ y^{[\gvec{e}_{2i+1}]_--[\gvec{e}_{2i-1}]_+}x_{2i-2}x_{2i+1}^b
) \\
&=
y^{-b[\gvec{e}_{2i}]_-}(
-y^{[\gvec{e}_{2i+1}]_-}
x_{2i}^{c-1}g(x_{2i})
+ y^{[\gvec{e}_{2i+1}]_--[\gvec{e}_{2i-1}]_+}x_{2i-2}x_{2i+1}^b
) \\
\end{aligned}
\]
If $c>1$, then $x_{2i+2}$ is in the ideal generated by $x_{2i-2}$ and $x_{2i}$. If $c=1$, then $x_{2i+2}$ is congruent to
\[ 
-y^{-b[\gvec{e}_{2i}]_-+[\gvec{e}_{2i+1}]_-}
g(x_{2i})
\equiv -y^{-b[\gvec{e}_{2i}]_-+[\gvec{e}_{2i+1}]_-}
(b y^{[\gvec{e}_{2i}]_+ + (b-1)[\gvec{e}_{2i}]_-})
= -b y^{[\gvec{e}_{2i+1}]_-+\gvec{e}_{2i}}
\bmod \langle x_{2i-2}, x_{2i} \rangle. 
\]
Since $-y^{[\gvec{e}_{2i+1}]_-+\gvec{e}_{2i}}$ is invertible in $\CA$, we have the equality $\langle x_{2i-2},x_{2i},x_{2i+2}\rangle=\langle x_{2i-2},x_{2i},b\rangle$.
\end{proof}

\begin{lemma}\label{lemma:ranktwoideals}
Let $\CA$ be a cluster algebra of $(b,c)$-type.
\begin{enumerate}
    \item The $\CA$-ideal $\langle \dotsc ,x_{-2},x_0,x_2,x_4,\dotsc \rangle$ generated by the even cluster variables is equal to
    \[
    \left\{
    \begin{array}{cc}
    \langle x_0,x_2,b\rangle & \text{if }c=1 \\
    \langle x_0,x_2\rangle & \text{if }c>1 \\
    \end{array}
    \right\}
    \]
    \item The $\CA$-ideal $\langle \dotsc,x_{-1},x_1,x_3,x_5,\dotsc \rangle$ generated by the odd cluster variables is equal to
    \[
    \left\{
    \begin{array}{cc}
    \langle x_1,x_3,c\rangle & \text{if }b=1 \\
    \langle x_1,x_3\rangle & \text{if }b>1 \\
    \end{array}
    \right\}
    \]\end{enumerate}
\end{lemma}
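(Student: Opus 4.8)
The plan is to bootstrap the whole statement from Lemma~\ref{lemma:x0x2x4} by two easy inductions, after first reducing to part (1) and producing a ``descending'' companion to Lemma~\ref{lemma:x0x2x4} via the symmetries of the rank~$2$ cluster pattern. Part (2) follows from part (1) by the relabelling $x_i\mapsto x_{i+1}$: this presents $\CA$ as a cluster algebra of $(c,b)$-type whose \emph{even} cluster variables $x_{2i+1}$ are exactly the odd cluster variables of the original $\CA$, so applying part (1) to it — with the roles of $b$ and $c$ interchanged, so that its ``$b$'' is our $c$ — gives precisely $\langle x_1,x_3\rangle$ if $b>1$ and $\langle x_1,x_3,c\rangle$ if $b=1$. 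Hence it suffices to treat part (1). I will also use the reflection $x_i\mapsto x_{-i}$, which presents $\CA$ again as a cluster algebra of $(b,c)$-type (the parity of each exchange index, and hence the exponent pattern in the exchange relations, is unchanged under $i\mapsto -i$); feeding this relabelling into Lemma~\ref{lemma:x0x2x4} yields the descending analogues: for every $i$, if $c>1$ then $x_{2i-2}\in\langle x_{2i},x_{2i+2}\rangle$, and if $c=1$ then $\langle x_{2i-2},x_{2i},x_{2i+2}\rangle=\langle x_{2i},x_{2i+2},b\rangle$.

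Now write $I\coloneqq\langle\dotsc,x_{-2},x_0,x_2,x_4,\dotsc\rangle$. Suppose first $c>1$; I claim $I=\langle x_0,x_2\rangle$. The inclusion $\langle x_0,x_2\rangle\subseteq I$ is immediate, and for the reverse inclusion it is enough to show $x_{2i}\in\langle x_0,x_2\rangle$ for all $i$. In the forward direction this is an induction with base cases $x_0,x_2$ and inductive step ``$x_{2k-2},x_{2k}\in\langle x_0,x_2\rangle$ implies $x_{2k+2}\in\langle x_{2k-2},x_{2k}\rangle\subseteq\langle x_0,x_2\rangle$'', supplied by Lemma~\ref{lemma:x0x2x4}(1); in the backward direction the same induction runs using the descending analogue (so $x_{2k-2}\in\langle x_{2k},x_{2k+2}\rangle\subseteq\langle x_0,x_2\rangle$) in place of Lemma~\ref{lemma:x0x2x4}(1).

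Suppose next $c=1$; I claim $I=\langle x_0,x_2,b\rangle$. For ``$\supseteq$'', note $x_0,x_2\in I$ and, by Lemma~\ref{lemma:x0x2x4}(2) with $i=1$, $b\in\langle x_0,x_2,x_4\rangle\subseteq I$. For ``$\subseteq$'' it again suffices to prove $x_{2i}\in\langle x_0,x_2,b\rangle$ for all $i$. The forward induction has base cases $x_0,x_2$, and the step exploits that $b$ already lies in the target ideal: if $x_{2k-2},x_{2k}\in\langle x_0,x_2,b\rangle$ then $x_{2k+2}\in\langle x_{2k-2},x_{2k},x_{2k+2}\rangle=\langle x_{2k-2},x_{2k},b\rangle\subseteq\langle x_0,x_2,b\rangle$ by Lemma~\ref{lemma:x0x2x4}(2); the backward induction is identical using the descending analogue.

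The two inductions are routine. The only point requiring genuine care is the legitimacy of the two relabellings — the shift $i\mapsto i+1$ (converting $(b,c)$-type to $(c,b)$-type and swapping even and odd indices) and the reflection $i\mapsto -i$ (preserving $(b,c)$-type) — which I would justify by the standard observation that in each case the relabelled doubly-infinite sequence of cluster variables generates the same subalgebra of the ambient Laurent ring and satisfies exchange relations of the stated $(b,c)$- or $(c,b)$-type, so that Lemma~\ref{lemma:x0x2x4}, having been proved for \emph{every} such cluster algebra, applies to it verbatim. I expect this symmetry bookkeeping to be the main (and only mild) obstacle; everything after that is a two-line induction.
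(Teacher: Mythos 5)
Your proposal is correct and follows essentially the same route as the paper: iterate Lemma \ref{lemma:x0x2x4} forward, use a symmetry (your explicit reflection $x_i\mapsto x_{-i}$ is exactly what the paper's terse ``by symmetry'' means) to iterate backward, and deduce part (2) from part (1) via the reindexing $x_i\mapsto x_{i+1}$ to a $(c,b)$-type algebra. Your write-up just makes explicit the details the paper compresses into ``iterating this logic'' and ``the case of $c=1$ is identical.''
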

\begin{proof}
If $c>1$, then $x_4\in \langle x_0,x_2\rangle$ by Lemma \ref{lemma:x0x2x4}, and so $\langle x_0,x_2,x_4\rangle = \langle x_0,x_2\rangle$. Iterating this logic, $\langle x_0,x_2,x_4,x_6,\dotsc \rangle=\langle x_0,x_2\rangle$. By symmetry, $x_{-2} \in \langle x_0,x_2\rangle$ by Lemma \ref{lemma:x0x2x4}; iterating this logic in the negative direction, 
$ \langle \dotsc ,x_{-2},x_0,x_2,x_4,\dotsc \rangle = \langle x_0,x_2\rangle $. The case of $c=1$ is identical.  

The odd cluster variables case then follows from a `reindexing' isomorphism $\CA\rightarrow \CA'$ to a cluster algebra of $(c,b)$-type which sends $x_i$ to $x_{i+1}$.
\end{proof}

\subsection{Deep points}

We can now parametrize the deep locus of cluster algebras of rank 2.

\begin{thm}
Let $\CA$ be a cluster algebra of $(b,c)$-type.
\begin{itemize}
	\item Assume either $c>1$ or $\operatorname{char}(\kk)$ divides $b$. For each $\alpha \in \kk^\times$ and each $\beta\in (\kk^\times)^f$ such that $\alpha^b+\beta^{-\gvec{e}_1}=0$, there is a deep point in $V(\CA,\kk)$ defined by
	\[ (x_0,x_1,x_2,x_3,y_1,y_2,\dotsc ,y_f) \mapsto (0,\alpha, 0 , \alpha^{-1}\beta^{[\gvec{e}_2]_-} , \beta_1,\beta_2, \dotsc ,\beta_f) \]

	\item Assume either $b>1$ or $\operatorname{char}(\kk)$ divides $c$. For each $\alpha \in \kk^\times$ and each $\beta\in (\kk^\times)^f$ such that $\alpha^c+\beta^{-\gvec{e}_2}=0$, there is a deep point in $V(\CA,\kk)$ defined by 
	\[ (x_0,x_1,x_2,x_3,y_1,y_2, \dotsc ,y_f) \mapsto (\alpha^{-1}\beta^{[\gvec{e}_1]_-} , 0,\alpha, 0, \beta_1,\beta_2,\dotsc ,\beta_f) \]
\end{itemize}
Every deep point in $V(\CA,\kk)$ is one of these two mutually exclusive types.
\end{thm}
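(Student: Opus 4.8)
The plan is to combine the structural description of deep points from Corollary \ref{coro: ranktwodeeppoint} with the ideal-theoretic computations in Lemmas \ref{lemma:x0x2x4} and \ref{lemma:ranktwoideals}. By Corollary \ref{coro: ranktwodeeppoint}, a deep point $p$ falls into exactly one of two cases: either all even-indexed cluster variables vanish (and all odd ones are nonzero), or vice versa. By symmetry (the reindexing isomorphism to a $(c,b)$-type cluster algebra sending $x_i\mapsto x_{i+1}$ used in the proof of Lemma \ref{lemma:ranktwoideals}), it suffices to analyze the first case; the second follows formally, with the roles of $b$ and $c$, and of $\gvec{e}_1$ and $\gvec{e}_2$, swapped.

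So suppose $p$ is a deep point with $p(x_{2i})=0$ for all $i$ and $p(x_{2i+1})\neq 0$ for all $i$. First I would observe that $p$ kills the ideal $\langle \dotsc, x_{-2},x_0,x_2,\dotsc\rangle$ generated by the even cluster variables, hence by Lemma \ref{lemma:ranktwoideals}(1) it kills $\langle x_0,x_2\rangle$ if $c>1$ and $\langle x_0,x_2,b\rangle$ if $c=1$. In the $c=1$ case this forces $p(b)=0$, i.e. $\operatorname{char}(\kk)\mid b$; conversely if $c>1$ no such constraint appears. This recovers the hypothesis ``$c>1$ or $\operatorname{char}(\kk)\mid b$'' as exactly the condition for this case to be nonvacuous. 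Conversely, I need to check that once $p(x_0)=p(x_2)=0$ (and, when $c=1$, $p(b)=0$), the remaining mutation relations automatically propagate vanishing to all other even cluster variables: this is precisely the content of Lemma \ref{lemma:x0x2x4}, iterated in both directions as in the proof of Lemma \ref{lemma:ranktwoideals}. So the deep points of this type are exactly the points of $V(\CA,\kk)$ with $p(x_0)=p(x_2)=0$, provided $c>1$ or $\operatorname{char}(\kk)\mid b$.

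Next I would solve the two defining equations of the presentation \eqref{eq: rank2pres} under the constraints $x_0=x_2=0$. Setting $x_0=x_2=0$, $y_j=\beta_j$, and $x_1=\alpha$, the first relation $x_0x_2=y^{[\gvec{e}_1]_+}x_1^b+y^{[\gvec{e}_1]_-}$ becomes $0=\beta^{[\gvec{e}_1]_+}\alpha^b+\beta^{[\gvec{e}_1]_-}$, which after dividing by $\beta^{[\gvec{e}_1]_+}$ (a unit) is $\alpha^b+\beta^{-\gvec{e}_1}=0$; in particular this forces $\alpha\neq 0$, consistent with $x_1$ being a nonvanishing cluster variable. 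The second relation $x_1x_3=y^{[\gvec{e}_2]_+}x_2^c+y^{[\gvec{e}_2]_-}$ becomes $\alpha\cdot x_3=\beta^{[\gvec{e}_2]_-}$, which since $\alpha\neq 0$ has the unique solution $x_3=\alpha^{-1}\beta^{[\gvec{e}_2]_-}$. Thus the data $(\alpha,\beta)$ with $\alpha^b+\beta^{-\gvec{e}_1}=0$ parametrizes exactly these deep points via the stated formula, and every deep point of the first type arises this way.

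Finally I would invoke the reindexing symmetry to conclude the analysis of the second case, giving the formula with $x_1=x_3=0$, the constraint $\alpha^c+\beta^{-\gvec{e}_2}=0$, the hypothesis ``$b>1$ or $\operatorname{char}(\kk)\mid c$'', and $x_0=\alpha^{-1}\beta^{[\gvec{e}_1]_-}$. Mutual exclusivity of the two types is immediate from Corollary \ref{coro: ranktwodeeppoint}, since a cluster variable cannot be simultaneously zero and nonzero. I do not anticipate a serious obstacle: the real work is already done in the vanishing lemmas, and the remaining step is the bookkeeping of verifying that the point described by the explicit formula is a genuine ring homomorphism (equivalently, a solution to the two presentation equations), which is the routine substitution above. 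The one point requiring a little care is making sure the $c=1$ (resp.\ $b=1$) edge case is handled by tracking the extra generator $b$ (resp.\ $c$) through Lemma \ref{lemma:ranktwoideals}, so that the stated characteristic hypothesis is seen to be both necessary and sufficient.
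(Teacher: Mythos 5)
Your proposal is correct and follows essentially the same route as the paper: reduce via Corollary \ref{coro: ranktwodeeppoint} and Lemma \ref{lemma:ranktwoideals} to the conditions $p(x_0)=p(x_2)=0$ (plus $\operatorname{char}(\kk)\mid b$ when $c=1$), then solve the two presentation equations to get the constraint $\alpha^b+\beta^{-\gvec{e}_1}=0$ and the formula for $p(x_3)$, with the second case handled by the $b\leftrightarrow c$ symmetry. The paper writes out the second case as "identical" rather than citing the reindexing isomorphism explicitly, but this is not a substantive difference.
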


\begin{proof}
Consider $p\in V(\CA,\kk)$. By Corollary \ref{coro: ranktwodeeppoint}, $p$ is deep iff its kernel either contains every even cluster variable, or every odd cluster variable. By Lemma \ref{lemma:ranktwoideals}, $p$ is deep iff 
\begin{enumerate}
	\item its kernel contains $x_0$ and $x_2$ (and $b$, if $c=1$), or 
	\item its kernel contains $x_1$ and $x_3$ (and $c$, if $b=1$).
\end{enumerate}
The kernel of $p$ contains an integer iff the characteristic of the target field $\kk$ divides that integer, so the parenthetical conditions above may be replaced by the assumptions in the theorem.

The presentation \eqref{eq: rank2pres} implies that $p$ is determined by the values on $x_0,x_1,x_2,x_3,y_1,y_2,\dotsc ,y_f$, subject to the two equations
\[ 
p(x_0)p(x_2) = p(y)^{[\gvec{e}_1]_+}p(x_1)^b + p(y)^{[\gvec{e}_1]_-}
\text{ and }
p(x_1)p(x_3) = p(y)^{[\gvec{e}_2]_+}p(x_2)^b + p(y)^{[\gvec{e}_2]_-}
\]
If we set $p(x_0)=p(x_2)=0$, the remaining values must satisfy the equations
\[ 
0 = p(y)^{[\gvec{e}_1]_+}p(x_1)^b + p(y)^{[\gvec{e}_1]_-}
\text{ and }
p(x_1)p(x_3) = p(y)^{[\gvec{e}_2]_-}
\]
Note that both equations force $p(x_1)\neq0$. We may rewrite these equations as
\[ 
p(x_1)^b + p(y)^{-\gvec{e}_1} =0
\text{ and }
p(x_3) = p(y)^{[\gvec{e}_2]_-}p(x_1)^{-1}
\]
Using the second equation as the definition of $p(x_3)$, we see that a deep point which kills the even cluster variables is equivalent to a choice of $p(x_1)$ and $p(y)$ satisfying the first equation above; this gives the first half of the theorem. The second half of the theorem is identical.
\end{proof}

It is worth considering the case when there are no frozens, as the results are particularly simple.


\begin{coro}
Let $\CA$ be the cluster algebra of $(b,c)$-type with no frozens. 
\begin{itemize}
	\item Assume either $c>1$ or $\operatorname{char}(\kk)$ divides $b$. For each $\alpha \in \kk^\times$ such that $\alpha^b=-1$, there is a deep point in $V(\CA,\kk)$ defined by
	\[
	p(x_i) = 
	\left\{
	\begin{array}{cc}
	0 & \text{if $i$ is even} \\
	\alpha & \text{if }i\equiv 1 \bmod{4} \\
	\alpha^{-1} & \text{if }i\equiv 3 \bmod{4} \\
	\end{array}
	\right\}
	\]
	\item Assume either $b>1$ or $\operatorname{char}(\kk)$ divides $c$. For each $\alpha \in \kk^\times$ such that $\alpha^c=-1$, there is a deep point in $V(\CA,\kk)$ defined by
	\[
	p(x_i) = 
	\left\{
	\begin{array}{cc}
	0 & \text{if $i$ is odd} \\
	\alpha & \text{if }i\equiv 2 \bmod{4} \\
	\alpha^{-1} & \text{if }i\equiv 0 \bmod{4} \\
	\end{array}
	\right\}
	\]
\end{itemize}
Every deep point in $V(\CA,\kk)$ is one of these two mutually exclusive types.
\end{coro}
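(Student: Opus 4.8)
The plan is to obtain this as the special case $f=0$ of the preceding theorem, and then to propagate the values of $p$ along the entire sequence $(x_i)_{i\in\ZZ}$ using the coefficient-free mutation relations. First I would note that with no frozen variables the vectors $\gvec{e}_1,\gvec{e}_2$ are empty, so $\beta$ disappears and the empty products $\beta^{-\gvec{e}_1}$, $\beta^{[\gvec{e}_2]_-}$ are all equal to $1$. Hence the defining condition $\alpha^b+\beta^{-\gvec{e}_1}=0$ of the theorem's first bullet becomes simply $\alpha^b=-1$, and the corresponding deep point sends $(x_0,x_1,x_2,x_3)\mapsto(0,\alpha,0,\alpha^{-1})$. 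The hypotheses ``$c>1$ or $\operatorname{char}(\kk)\mid b$'' (resp.\ ``$b>1$ or $\operatorname{char}(\kk)\mid c$'') are carried over verbatim, and the claim that every deep point is of exactly one of the two types is immediate from the theorem, since once $f=0$ the only remaining parameter is $\alpha$.

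It then remains to check that the explicit $4$-periodic formula for $p(x_i)$ is exactly the one forced by the values $(0,\alpha,0,\alpha^{-1})$ on $x_0,x_1,x_2,x_3$. For this I would run an induction in both directions along $\ZZ$ using $x_{i-1}x_{i+1}=x_i^b+1$ for $i$ odd and $x_{i-1}x_{i+1}=x_i^c+1$ for $i$ even. Suppose $p(x_{2k})=0$ and $p(x_{2k+1})=\gamma$ with $\gamma^b=-1$. The relation at $i=2k+1$ reads $p(x_{2k})p(x_{2k+2})=\gamma^b+1=0$, consistent with $p(x_{2k+2})=0$; the relation at $i=2k+2$ reads $p(x_{2k+1})p(x_{2k+3})=p(x_{2k+2})^c+1=1$, forcing $p(x_{2k+3})=\gamma^{-1}$. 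Since in any field $(-1)^{-1}=-1$, we have $(\gamma^{-1})^b=(\gamma^b)^{-1}=-1$, so the inductive hypothesis is preserved. Thus the odd-index values alternate $\alpha,\alpha^{-1},\alpha,\dotsc$, i.e.\ $p(x_i)=\alpha$ for $i\equiv1\bmod 4$ and $p(x_i)=\alpha^{-1}$ for $i\equiv3\bmod 4$, while the even-index values are all $0$; the same computation run in the negative direction handles $i<0$. The second bullet follows by the mirror argument interchanging the roles of $b$ and $c$ (equivalently, by the reindexing isomorphism $\CA\to\CA'$ of Lemma \ref{lemma:ranktwoideals}).

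The argument is essentially bookkeeping, and I expect the only point requiring any care to be the sign: it is precisely the identity $(-1)^{-1}=-1$ that makes the pattern $4$-periodic rather than exponentially growing, so that $\alpha^{-1}$ again satisfies $(\alpha^{-1})^b=-1$ and the induction closes. This is the ``main obstacle,'' but it is a minor one.
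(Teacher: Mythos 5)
Your proposal is correct and follows exactly the route the paper intends: the corollary is stated without proof as the $f=0$ specialization of the preceding theorem, and your propagation of the values along all of $\ZZ$ via the relations $x_{i-1}x_{i+1}=x_i^b+1$ ($i$ odd) and $x_{i-1}x_{i+1}=x_i^c+1$ ($i$ even) correctly fills in the only detail the specialization leaves open, namely that $p(x_{2k+3})=p(x_{2k+1})^{-1}$ forces the $4$-periodic pattern. Your sign check that $(\alpha^{-1})^b=-1$, keeping the odd relation $0=p(x_{2k+1})^b+1$ satisfied at every step, is the right consistency verification.
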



\begin{ex}
The  cluster algebra of type $A_2$ has extended exchange matrix
\[
\begin{bmatrix}
0 & 1 \\
-1 & 0
\end{bmatrix}
\]
Since $b=c=1$, neither of the conditions in the theorem are possible. Thus, the cluster algebra of type $A_2$ has no deep points, recovering a special case of Theorem \ref{thm: deepAn}.
\end{ex}

\begin{ex}\label{ex: type 1 2 deep points}
Consider a cluster algebra with no frozens and exchange matrix
\[
\begin{bmatrix}
0 & 1 \\
-c & 0
\end{bmatrix}
\]
for any choice of $c>1$. 
Then the first condition holds and the equation $\alpha^b+1=0$ implies that $\alpha=-1$. Therefore, there is a unique deep point of the first kind, which is defined by
	\[
	p(x_i) = 
	\left\{
	\begin{array}{cc}
	0 & \text{if $i$ is even} \\
	-1 & \text{if $i$ is odd} \\
	\end{array}
	\right\}
	\]
There are only deep points of the second kind when $\operatorname{char}(\kk)$ divides $c$, in which case there is a unique deep point of the second kind, which is defined by
	\[
	p(x_i) = 
	\left\{
	\begin{array}{cc}
	0 & \text{if $i$ is odd} \\
	-1 & \text{if $i$ is even} \\
	\end{array}
	\right\}
	\qedhere
	\]
\end{ex}

\begin{ex}\label{ex: kronecker}
The $2$-Kronecker cluster algebra (with no frozens) has extended exchange matrix
\[
\begin{bmatrix}
0 & 2 \\
-2 & 0
\end{bmatrix}
\]
Each square root $\alpha$ of $-1$ in $\kk$ gives a deep point of the first type, defined by 
	\[
	p(x_i) = 
	\left\{
	\begin{array}{cc}
	0 & \text{if $i$ is even} \\
	\alpha & \text{if }i\equiv 1 \bmod{4} \\
	\alpha^{-1} & \text{if }i\equiv 3 \bmod{4} \\
	\end{array}
	\right\}
	\]
Similarly, each square root $\alpha$ of $-1$ in $\kk$ gives a deep point of the second type, defined by 
	\[
	p(x_i) = 
	\left\{
	\begin{array}{cc}
	0 & \text{if $i$ is odd} \\
	\alpha & \text{if }i\equiv 2 \bmod{4} \\
	\alpha^{-1} & \text{if }i\equiv 0 \bmod{4} \\
	\end{array}
	\right\}
	\]
So, over $\mathbb{C}$ there are 4 deep points, over $\mathbb{R}$ there are no deep points, and over a field of characteristic 2 there are 2 deep points.
\end{ex}

\begin{rem}\label{rem: kronecker}
Over $\mathbb{C}$, the 2-Kronecker cluster algebra (with no frozens) is non-singular (e.g.~by \cite[Corollary 7.9]{MulLA})). Therefore, Example \ref{ex: kronecker} provides examples of non-singular deep points.
\end{rem}

\draftnewpage

\section{Deep points of the Markov cluster algebra}\label{section: Markov}

In this section, we classify the deep points of the Markov cluster algebra and its upper cluster algebra, and consider the relation between the two.

\subsection{The Markov cluster algebra}

One of the most well-known and well-studied cluster algebras is the \textbf{Markov cluster algebra}, the cluster algebra associated to the \textbf{Markov quiver} (Figure \ref{fig: MarkovQuiver}). 

\begin{figure}[h!t]
	\begin{tikzpicture}
		\node[mutable] (x) at (90:1) {$x$};
		\node[mutable] (y) at (210:1) {$y$};
		\node[mutable] (z) at (-30:1) {$z$};
		\draw[-angle 90,relative, out=15,in=165] (x) to node[inner sep=.1cm] (xy1m) {} (y);
		\draw[-angle 90,relative, out=-15,in=-165] (x) to node[inner sep=.1cm] (xy2m) {} (y);
		\draw[-angle 90,relative, out=15,in=165] (y) to node[inner sep=.1cm] (yz1m) {} (z);
		\draw[-angle 90,relative, out=-15,in=-165] (y) to node[inner sep=.1cm] (yz2m) {} (z);
		\draw[-angle 90,relative, out=15,in=165] (z) to node[inner sep=.1cm] (zx1m) {} (x);
		\draw[-angle 90,relative, out=-15,in=-165] (z) to node[inner sep=.1cm] (zx2m) {} (x);
    \end{tikzpicture}
    \caption{The Markov quiver $Q$ with cluster $\{x,y,z\}$.}
    \label{fig: MarkovQuiver}
\end{figure}
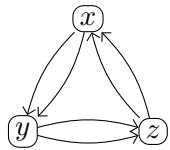

The Markov cluster algebra arises in many applications (as the cluster algebra of a once-punctured torus \cite[Example 4.6]{FST08}, in the study of \emph{Markov triples} \cite[Appendix B]{FG07}, etc), and is the smallest example of many pathological behaviors in cluster algebras; e.g.~it is not equal to its own cluster algebra \cite{BFZ05}, it is not finitely generated, and it is non-Noetherian \cite{MulLA}.

\subsection{Mutation-cyclic cluster algebras of rank 3}

We start with some general results about cluster algebras of rank 3 which are \textbf{mutation-cyclic}; that is, not acyclic.\footnote{A sharp characterization of mutation-cyclic quivers on three vertices appears in \cite{BBH11}.}
Ironically, the abundance of cycles in the mutation-equivalent quivers implies a lack of cycles in the exchange graph.


\begin{thm}[{\cite[Theorem 10.1]{War14}}]\label{thm: 3tree}
The exchange graph of a mutation-cyclic skew-symmetric cluster algebra of rank 3 is a 3-regular tree.
\end{thm}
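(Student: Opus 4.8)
The plan is to show that a mutation-cyclic skew-symmetric rank 3 cluster algebra has an exchange graph that is both a 3-regular graph and acyclic (a tree), and then combine those two facts. The 3-regularity is essentially automatic: every seed has exactly 3 mutable vertices (rank 3, and mutation-cyclicity implies no frozen variables interfere with this count since we're in the coefficient-free skew-symmetric setting), so mutating in each of the three directions gives three distinct neighbors. The content is in showing there are no cycles, equivalently, that distinct mutation sequences (up to the obvious relations $\mu_k\mu_k = \mathrm{id}$) never return to the same seed; in the language of the exchange graph, that the graph is simply connected. Since a connected 3-regular graph that is simply connected is the 3-regular tree, this suffices.

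First I would recall the structure theory of skew-symmetric rank 3 exchange matrices. A skew-symmetric $3\times 3$ integer matrix is determined by the triple $(a,b,c)$ of off-diagonal entries above the diagonal, and the quiver is mutation-cyclic precisely when, throughout the mutation class, the quiver remains a 3-cycle with all three weights $\geq 2$ (this is the Beineke–Br\"ustle–Hille criterion cited in the footnote; more precisely one wants the ``non-degenerate'' case $|a|,|b|,|c|\geq 2$ and $abc$ having the right sign, or one of the boundary cases). The key quantitative input is a Markov-type invariant: for the Markov quiver itself the weights stay constantly $(2,2,2)$, and in general one tracks how the weight triple grows under mutation. The crucial lemma — which I expect to be the main obstacle — is a monotonicity/growth statement: starting from a seed and performing a reduced mutation sequence (one with no immediate backtracking $\mu_k\mu_k$), the ``size'' of the seed (measured by, say, the sum of degrees of denominators of the cluster variables, or equivalently by the $\mathbf{c}$-vectors / $\mathbf{g}$-vectors, or by Warkentin's explicit combinatorial measure on the cone of quivers) strictly increases after the first step and never decreases thereafter. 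This forces reduced mutation sequences of length $\geq 1$ to leave the starting seed, and more globally forces distinct reduced sequences to reach distinct seeds.

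Concretely, I would follow Warkentin's approach: set up the three mutation maps as transformations on the space of seeds (or on a suitable invariant like the pair (quiver weights, cluster variable degrees)), and prove that after mutating at $k$ and then at $j\neq k$, one cannot ``undo'' the effect — the new seed is strictly larger and the only way to decrease size is to mutate back at $j$. This is the rank 3 analogue of the classical fact that the Stern–Brocot / Markov tree is a binary tree. The induction is on the length of the mutation word: given a reduced word $\mu_{k_\ell}\cdots\mu_{k_1}$ with $k_i \neq k_{i+1}$, show the size strictly increases at each step past the first, using the skew-symmetry and the explicit mutation formulas for the 3-cycle weights (which remain a 3-cycle, with weights transforming by the tropical-like rule governing mutation-cyclic triples). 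Once this monotonicity is in hand, injectivity of reduced words into seeds is immediate, which says precisely that the universal cover of the exchange graph maps isomorphically onto it, i.e. the exchange graph is a tree; combined with 3-regularity this gives the 3-regular tree.

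The main obstacle, as noted, is establishing the strict growth lemma with the correct handling of the boundary cases of mutation-cyclic triples (those not of the form $(2,2,2)$ but still mutation-cyclic, and those where one weight equals $2$). One must check that even in these edge cases the size is strictly monotonic along reduced paths; the $(2,2,2)$ Markov case is the extremal/slowest-growing one and the others grow faster, so once the Markov case is controlled the rest follows a fortiori, but this comparison needs to be made carefully. I would isolate this as a standalone lemma about the dynamics of the three mutation maps on the weight triple and cluster-variable degrees, prove it by a direct (if slightly involved) computation with the mutation formulas, and then deduce Theorem \ref{thm: 3tree} formally from it together with the trivial 3-regularity observation.
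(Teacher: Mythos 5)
The paper does not prove this statement at all: it is imported verbatim as \cite[Theorem 10.1]{War14}, so there is no in-paper argument to compare yours against. Judged on its own terms, your outline correctly identifies the two ingredients (3-regularity, which is the standard fact that adjacent clusters share exactly $n-1$ variables, plus acyclicity) and correctly locates all of the real content in a strict-growth lemma along reduced mutation sequences. This is indeed the strategy of the cited source, which tracks the weight triple $(a,b,c)$ of the 3-cycle quiver (all weights staying $\geq 2$ by mutation-cyclicity) together with a measure of the seed such as denominator vectors, and shows that a reduced sequence can never return to its starting seed.

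However, as written your proposal is a plan rather than a proof: the monotonicity lemma --- that after the first step of a reduced mutation sequence the size of the seed strictly increases and, crucially, that at each subsequent seed the \emph{unique} size-decreasing direction is the one you just came from --- is asserted and flagged as ``the main obstacle'' but never established. That lemma is the entire theorem; without it nothing rules out a cycle in the exchange graph. The boundary cases you mention (weight triples with an entry equal to $2$, and the extremal Markov triple $(2,2,2)$ where the quiver weights do not grow at all and one must fall back on the cluster variables or $\mathbf{c}$-/$\mathbf{g}$-vectors to separate seeds) are exactly where a hand-waving ``the others grow faster, so it follows a fortiori'' is not yet an argument. To complete the proof you would need to write out the mutation rule for the weight triple of a cyclic quiver, verify that mutation-cyclicity is preserved, and carry out the induction on the length of the reduced word with an explicit inequality at each step --- or simply continue to cite Warkentin, as the paper does.
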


\noindent Equivalently, there is a unique reduced\footnote{By \emph{reduced}, we mean that the sequence never mutates the same vertex twice in a row.} sequence of mutations between any two clusters.

The theorem allows us to partition the cluster variables in such a cluster algebra into three types. For the sake of terminology, let us choose an initial cluster in $\CA$ and denote the cluster variables $\{x,y,z\}$.
We recursively define the \textbf{$x$-type} cluster variables in $\CA$ to consist of $x$ itself and any mutation of an $x$-type cluster variable; \textbf{$y$-type} and \textbf{$z$-type} cluster variables are defined similarly.

\begin{prop}
Every cluster variable in a mutation-cyclic skew-symmetric cluster algebra of rank 3 is exactly one of the three types, and every cluster consists of one variable of each type.
\end{prop}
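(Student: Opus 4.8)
The statement has two parts: every cluster variable has a well-defined type (exactly one of $x$-, $y$-, $z$-type), and every cluster has exactly one variable of each type. The natural framework is the 3-regular tree $T$ from Theorem \ref{thm: 3tree}, whose vertices are the clusters of $\CA$ and whose edges are mutations. The key observation is that, because $T$ is a tree, the three edges at the root cluster $\{x,y,z\}$ partition the remaining vertices of $T$ into three ``branches'' $T_x, T_y, T_z$ (where $T_x$ is the component of $T \setminus \{\text{edge toward } \{x,y,z\}\}$ reached by mutating $x$, etc.). I would first show that a cluster variable $w$ is $x$-type precisely when $w = x$ or $w$ first appears as the new variable produced by some mutation whose edge lies in the branch $T_x$; and similarly for $y$ and $z$. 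This reformulation makes ``type'' a property of where in the tree a variable is born, which is the cleanest route to well-definedness.

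The forward-looking structure is as follows. First I would set up the bijection between clusters and vertices of $T$ and record that each edge of $T$ is labeled by the variable being mutated (equivalently, each edge has two endpoints that share all but one cluster variable, and that differing variable is replaced). Then I would prove by induction on the distance in $T$ from the root that every cluster $C$ contains exactly three cluster variables, one of $x$-type, one of $y$-type, one of $z$-type: the base case is the root; for the inductive step, if $C$ is obtained from an adjacent cluster $C'$ (closer to the root, satisfying the hypothesis) by mutating a variable of type $\tau \in \{x,y,z\}$, the new variable must again be declared type $\tau$, and I must check this is consistent — i.e., the new variable is genuinely of type $\tau$ under the recursive definition (immediate: it is a mutation of a $\tau$-type variable) and is not also of another type.

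The main obstacle is exactly that last ``not also of another type'' point: a priori the recursive definition could assign two types to the same cluster variable if that variable arises both as a mutation descendant of $x$ along one path and of $y$ along another. Here is where the tree structure (uniqueness of reduced mutation sequences, Theorem \ref{thm: 3tree}) does the real work. I would argue that if a cluster variable $w \neq x,y,z$ belongs to clusters $C_1$ and $C_2$, then the unique reduced path in $T$ from $C_1$ to $C_2$ never mutates $w$ (otherwise $w$ would have to be ``re-created'' by a later mutation, contradicting that the path is reduced and that in a 3-regular tree a variable once removed is never restored — this last fact may need a short separate argument, e.g. that mutating a variable $w$ out and then needing to mutate it back in would force revisiting, impossible in a tree). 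Consequently the set of clusters containing a fixed non-initial variable $w$ spans a connected subtree $T_w$ of $T$, and $T_w$ lies entirely within one of the three branches $T_x, T_y, T_z$ (since to leave that branch one must pass through the root, whose clusters are $\{x,y,z\}$ and contain no such $w$). Whichever branch contains $T_w$ determines the unique type of $w$. Combining this uniqueness with the inductive count above gives both halves of the proposition: each variable has exactly one type, and the inductive step shows each cluster has one variable of each type.

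Finally I would note that the argument is symmetric in $x, y, z$ and does not depend on the choice of initial cluster in an essential way — a different initial cluster just relabels the branches — though the statement as phrased only asserts the trichotomy relative to the fixed initial cluster, so no additional work is needed there. I expect the whole proof to be short once the ``a removed variable is never restored'' lemma is isolated and proved from Theorem \ref{thm: 3tree}; that lemma is the crux.
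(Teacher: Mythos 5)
Your core argument is the same as the paper's: by Theorem \ref{thm: 3tree} every cluster is reached from the initial one by a unique reduced mutation sequence, each mutation replaces one variable by a variable that (by the recursive definition) inherits its type, and an induction along the path shows each cluster carries one variable of each type. The paper's proof is exactly this, stated in two sentences, and nothing more.

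Where you diverge is in worrying about the ``exactly one type'' clause, i.e.\ whether a single cluster variable could be assigned two types by arising as a mutation descendant of $x$ along one path and of $y$ along another. This is a legitimate concern that the paper's proof does not explicitly address (the paper implicitly treats cluster variables as slots in the exchange tree, for which well-definedness is automatic). Your proposed fix --- that the set of clusters containing a fixed non-initial variable $w$ spans a connected subtree contained in a single branch --- is the right shape of statement, but you have not proved it, and you correctly flag that the ``a removed variable is never restored'' lemma is the crux. Be aware that this lemma is not a formal consequence of the exchange graph being a tree: it is a statement about when two distinct vertices of the tree can carry the same element of the algebra, and ruling that out requires input beyond combinatorics (e.g.\ distinctness of cluster variables via $g$-vectors or a direct algebraic argument). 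So as written your plan either needs that extra input or should retreat to the paper's implicit convention that ``type'' is assigned to occurrences along the unique reduced path, in which case the two-sentence induction already suffices and your extra lemma is not needed.
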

\begin{proof}
By Theorem \ref{thm: 3tree}, every cluster in $\CA$ can be obtained from the initial cluster by a unique reduced sequence of mutations. Each mutation replaces a single cluster variable by another variable of the same type, and so each cluster in the sequence consists of one variable of each type. 
\end{proof}

This has the following application for the deep points of the cluster algebra of any mutation-cyclic quiver with three vertices.

\def\x{\overline{x}}
\def\y{\overline{y}}
\def\z{\overline{z}}

\begin{lemma}\label{lemma: 3killtype}
Let $\CA$ be a mutation-cyclic skew-symmetric cluster algebra of rank 3. 
\begin{enumerate}
    \item If $p\in V(\CA,\kk)$ is deep, then either 
    \begin{enumerate}	
    	\item $p$ kills every cluster variable of one type and no other cluster variables, or 
	\item $p$ kills every cluster variable. 
	\end{enumerate}
    \item Two deep points in $p\in V(\CA,\kk)$ with the same values on one cluster must be equal.
\end{enumerate}
%
\end{lemma}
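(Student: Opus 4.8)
The plan is to leverage the tree structure of the exchange graph (Theorem \ref{thm: 3tree}) together with the three-fold partition of cluster variables into $x$-, $y$-, and $z$-types, and to track how vanishing propagates across a mutation.

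\textbf{Part (1).} Suppose $p\in V(\CA,\kk)$ is deep. The starting observation is an analogue of the rank-2 vanishing lemma: if $\{a,b,c\}$ is a cluster and we mutate $a$ to $a'$, then the exchange relation reads $aa' = M_+ + M_-$ where $M_\pm$ are monomials in $b,c$ (and the frozen variables, which are invertible). Hence if $p(b)=p(c)=0$ then $p(aa')$ equals the sum of the two ``constant'' terms of these monomials; in the skew-symmetric mutation-cyclic rank-3 case there are no frozens, both $M_+$ and $M_-$ are genuine monomials in $b$ and $c$ with positive total degree (mutation-cyclicity forces every arrow multiplicity to be nonzero, so $a$ really does appear in a $3$-cycle and the exchange monomials involve \emph{both} neighbors), so $p(aa')=0$ gives nothing — that case cannot be excluded this way and in fact leads to (b). More usefully: if $p$ kills exactly one variable in some cluster, it kills the same-type variable in every cluster. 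Concretely, fix a cluster $\{a,b,c\}$ with, say, $p(a)=0$ and $p(b),p(c)\neq 0$. Mutating at $b$: the exchange relation $bb' = (\text{monomial in } a,c) + (\text{monomial in } a,c)$ — both monomials are divisible by $a$ since $a$ appears in the 3-cycle with nonzero multiplicity — so $p(bb')=0$, forcing $p(b')=0$; but then the cluster $\{a,b',c\}$ has two variables killed, and by the same exchange-relation argument applied at $c$ (or directly), every variable in the subtree obtained by never mutating $a$ gets killed. Iterating the first move instead shows $p(a)=0$ is inherited: mutating at $b$ keeps $a$ in the cluster, mutating at $c$ keeps $a$ in the cluster, and mutating at $a$ replaces it by $a'$ with $aa' = (\text{monomial in }b,c) + (\text{monomial in }b,c)$, both nonzero at $p$, so $p(a')=0$ only if the sum is $0$ — which it need not be. So the dichotomy is: either (a) the set of killed variables is closed under ``stay'' mutations and coincides with all variables of the type of $a$ — and one checks no other variable is killed, since a second killed variable would by the propagation argument force a whole second type to be killed, and then (using that each cluster has one variable of each type) some cluster would have $\geq 2$ killed variables adjacent across a mutation, cascading to kill everything — or (b) $p$ kills every cluster variable. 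The clean way to organize this: let $K = \{$types $t$ : $p$ kills every variable of type $t\}$. Propagation shows that if $p$ kills any variable of type $t$ it kills all of them, so $K$ records exactly the killed variables. Deepness says every cluster contains a killed variable, i.e. $K\neq\emptyset$. If $|K|=1$ we are in case (a); if $|K|\geq 2$, an exchange relation at a cluster containing two killed variables shows the third is killed too (its exchange monomials involve the two killed neighbors), so $|K|=3$, case (b). The case $|K|=1$ but some extra variable killed is ruled out because that extra variable has some type $t'\notin K$, contradicting that $t'\notin K$ means \emph{no} variable of type $t'$ is killed.

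\textbf{Part (2).} Suppose $p,q$ are deep and agree on one cluster $\{x,y,z\}$. By Theorem \ref{thm: 3tree} every cluster variable is obtained from $x,y,z$ by a sequence of mutations, and each mutation expresses the new variable as a Laurent monomial identity — more precisely, via repeated exchange relations, each cluster variable lies in the subring $\CA$ and its value at any point is a \emph{universal rational function} of the initial values, well-defined whenever no denominators vanish. The subtlety is that at deep points some initial-adjacent variables vanish, so one cannot naively invert. Instead, argue by induction on the distance in the exchange tree from $\{x,y,z\}$: the exchange relation $aa' = M_+ + M_-$ determines $p(a')$ from $p(a)$ (if $p(a)\neq0$) and the values of the other two variables in the cluster; when $p(a)=0$, part (1) tells us $p$ kills every variable of $a$'s type, so $p(a')=0=q(a')$ automatically (as $q$ also kills that whole type, agreeing with $p$ on which type is killed because they agree on the cluster $\{x,y,z\}$, which pins down which of $x,y,z$ — hence which type — is killed, or that none is and we're in case (b) where everything is $0$ for both). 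When $p(a)\neq0$, then $q(a)\neq0$ too (induction), and $p(a')=(M_+ + M_-)/p(a)$ evaluated via the already-equal neighbor values equals $q(a')$. So $p$ and $q$ agree on every cluster variable, hence on a generating set of $\CA$, hence are equal.

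\textbf{Main obstacle.} The delicate point is handling the exchange relation $aa' = M_+ + M_-$ when the \emph{pivot} variable $a$ is nonzero but we still want to control $p(a')$ — this is fine, division is legal — versus when a \emph{neighbor} is zero. The genuinely careful part of Part (1) is verifying that ``$p$ kills some variable of type $t$'' forces ``$p$ kills \emph{all} of type $t$'': this requires checking that in a mutation-cyclic rank-3 quiver, whenever we mutate at a variable $a$, at least one of the two exchange monomials — in fact both — is divisible by whichever neighbor we're trying to propagate vanishing from, which is exactly the mutation-cyclicity hypothesis (every pair of vertices is joined by arrows, all multiplicities nonzero, in the relevant quivers along the tree). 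I would spend most of the rigor budget there; the rest is bookkeeping on the tree.
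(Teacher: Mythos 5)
Your Part (1) contains a genuine error at its central step. In a mutation-cyclic rank-3 cluster algebra, every quiver in the mutation class is a directed $3$-cycle, so at a cluster $\{a,b,c\}$ with orientation $a\to b\to c\to a$ the exchange relation at $b$ reads $bb'=a^{r}+c^{s}$: one exchange monomial is a pure power of $a$ and the other a pure power of $c$. Your claim that ``both monomials are divisible by $a$'' is false, and the conclusion you draw from it is the opposite of the truth: if $p(a)=0$ and $p(b),p(c)\neq0$, then $p(b)p(b')=0+p(c)^{s}\neq0$, so $p(b')\neq0$. (Had your claim been correct, every deep point would cascade to kill \emph{all} cluster variables, contradicting the Type (X) deep points of the Markov algebra.) This breaks your propagation of vanishing along a type, and hence the well-definedness of your set $K$. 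Compounding this, the one place where vanishing genuinely does propagate --- from $a$ to its mutation $a'$ --- is exactly the step you flag and then abandon (``$p(a')=0$ only if the sum is $0$ --- which it need not be''). The missing idea is that this step is forced by \emph{deepness}, not by the exchange relation: the cluster $\{a',b,c\}$ must contain a killed variable, and since $p(b),p(c)\neq0$ the only candidate is $a'$. With that in place the induction along the exchange tree goes through as in the paper: mutation at the killed variable propagates vanishing (by deepness), mutation at a non-killed variable produces a non-killed variable (by the exchange relation), and the two-variables-killed case collapses to all-variables-killed via $p(b)p(b')=p(a)^{r}+p(c)^{s}$.

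Your Part (2) is essentially the paper's argument and is fine once Part (1) is repaired: induct on distance in the exchange tree, compute $p(a')=\bigl(p(a)\bigr)^{-1}\bigl(M_++M_-\bigr)$ when the pivot is nonvanishing, and appeal to Part (1) when it vanishes. Just make sure the appeal in the vanishing case is to the corrected statement, since as written it inherits the gap above.
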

\begin{proof}
Let $\{\x,\y,\z\}$ be a cluster in $\CA$, in which $\x$ is the $x$-type variable, $\y$ is the $y$-type variable, and $\z$ is the $z$-type variable. 
Since $\CA$ is mutation-cyclic, the quiver associated to $\{\x,\y,\z\}$ is cyclic and the mutation relation for $\y$ will be of the form $\y\y' = \x^b + \z^c$ for some $b,c>0$.

Since $p$ is deep, it must kill at least one of $\x,\y,\z$.
Suppose $p$ kills at least two of $\x,\y,\z$; without loss of generality, assume $p(\x)=p(\y)=0$.  Applying $p$ to $\y\y' = \x^b + \z^c$ yields $p(\z)^c=0$, so $p(\z)=0$. Therefore, if $p$ kills two cluster variables in a cluster, it must kill the third. Since adjacent clusters have two variables in common with this cluster, $p$ must kill all three cluster variables in those clusters as well. 
Iteratively, $p$ must then kill all three cluster variables in every cluster.

Next, suppose $p$ kills only one of $\x,\y,\z$; without loss of generality, assume $p(\x)=0$ and $p(\y),p(\z)\neq0$. Since $p$ is deep, the mutated variable $\x'$ in the cluster $\{\x',\y,\z\}$ must also be sent to zero by $p$. Mutating at $\y$, we have $\y\y'=\x^b+\z^c$, and so $p(\y')= \frac{p(\z)^c}{p(\y)}\neq0$; the mutation of $\z$ is similarly not killed. Iteratively, we see that all of the $x$-type variables are killed by $p$, and none of the $y$- or $z$-type variables are killed by $p$. 

Let $p,p'$ be deep points with the same values on $\{\x,\y,\z\}$. If $p(\x)=p(\y)=p(\z)=0$, then $p$ and $p'$ kill every cluster variable; since this is a generating set of $\CA$, they must coincide. Suppose they both kill cluster variables of one type and no other cluster variables; without loss of generality, assume they both kill all $x$-type cluster variables. Mutating at $\y$, we have $\y\y'=\x^b+\z^c$, and so $p(\y')= \frac{p(\z)^c}{p(\y)}= \frac{p'(\z)^c}{p'(\y)} = p'(\y')$. Therefore, $p$ and $p'$ have the same values on all adjacent clusters as well. Iteratively, we see that $p$ and $p'$ have the same values on all cluster variables; since this is a generating set of $\CA$, they must coincide.
\end{proof}

\begin{warn}
It is not true that two (not necessarily deep) points in $V(\CA,\kk)$ with the same values on a cluster must be equal. 
For example, the Markov cluster algebra has multiple complex points which send the initial cluster $(x,y,z)$ to $(0,1,i)$, but only one of these points is deep.
\end{warn}

\begin{warn}
Lemma \ref{lemma: 3killtype} does not guarantee the existence of any deep points! General mutation-cyclic rank 3 cluster algebras are poorly behaved (e.g.~they are always non-Noetherian) and poorly understood (e.g.~no explicit presentation is known), so a parametrization of their deep points eludes us. We are only able to circumvent this for the Markov cluster algebra via its upper cluster algebra, which is much better behaved (see next section).
\end{warn}

\subsection{The Markov upper cluster algebra}

We now turn to the Markov upper cluster algebra.
Letting $x,y,z$ denote the initial cluster in the Markov cluster algebra $\CA$, we refer to  
\[ M \coloneqq \frac{x^2+y^2+z^2}{xyz}\]
as the \textbf{Markov element}. 
This element satisfies many remarkable properties. For example, it is independent on the choice of initial cluster; that is, for any other cluster $\{\x,\y,\z\}$, 
\[ M = \frac{\x^2+\y^2+\z^2}{\x\y\z}\]
As a consequence, $M$ is Laurent in every cluster, and so it is in the upper cluster algebra $\mathcal{U}$.

\begin{thm}[{\cite[Theorem 6.2.2]{MM15}}]
Let $\CA$ be the Markov cluster algebra, and let $\mathcal{U}$ be its upper cluster algebra. Then the initial cluster $\{x,y,z\}$ and the Markov element $M$ generate $\mathcal{U}$, with presentation
\[ \mathcal{U} \simeq \mathbb{Z}[x,y,z,M]/  \langle xyzM-x^2-y^2-z^2\rangle \]
\end{thm}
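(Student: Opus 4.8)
The plan is to exhibit the evident ring homomorphism from the presented ring $R:=\mathbb{Z}[x,y,z,M]/\langle xyzM-x^2-y^2-z^2\rangle$ to $\mathcal{U}$ and prove it is an isomorphism, treating injectivity and surjectivity separately. Defining the map is easy: $x,y,z$ are cluster variables, hence lie in $\CA\subseteq\mathcal{U}$; the Markov element $M$ lies in $\mathcal{U}$ by its cluster‑independence noted above; and the relation $xyzM=x^2+y^2+z^2$ already holds in the Laurent ring $L:=\mathbb{Z}[x^{\pm1},y^{\pm1},z^{\pm1}]$ of the initial cluster, hence in the subring $\mathcal{U}\subseteq L$. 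So there is a well-defined homomorphism $\Phi\colon R\to\mathcal{U}$.

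Injectivity is the easy half. First, $R$ is an integral domain: viewed as a polynomial of degree $1$ in $M$ over the UFD $\mathbb{Z}[x,y,z]$, the element $xyzM-(x^2+y^2+z^2)$ has coefficients $xyz$ and $x^2+y^2+z^2$ with no common factor (none of $x,y,z$ divides $x^2+y^2+z^2$), hence it is primitive and, being of degree $1$, irreducible. Localizing $R$ at $xyz$ inverts $x,y,z$ and forces $M=(x^2+y^2+z^2)/xyz$, so $R[(xyz)^{-1}]=L$; since $R$ is a domain and $xyz\neq 0$ in $R$, the localization map $R\hookrightarrow L$ is injective. This map factors as $R\xrightarrow{\Phi}\mathcal{U}\hookrightarrow L$, so $\Phi$ is injective, and we henceforth regard $R$ as a subring of $L$ containing $\CA$.

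Surjectivity is the crux. Since the four clusters $\{x,y,z\}$, $\{x',y,z\}$, $\{x,y',z\}$, $\{x,y,z'\}$ — where $x'=(y^2+z^2)/x$, etc. — are among all clusters, $\mathcal{U}$ is contained in the upper bound
\[
\overline{\mathcal{U}}\;:=\;\mathbb{Z}[x^{\pm1},y^{\pm1},z^{\pm1}]\cap\mathbb{Z}[x'^{\pm1},y^{\pm1},z^{\pm1}]\cap\mathbb{Z}[x^{\pm1},y'^{\pm1},z^{\pm1}]\cap\mathbb{Z}[x^{\pm1},y^{\pm1},z'^{\pm1}],
\]
so it suffices to prove $\overline{\mathcal{U}}=R$: then $R\subseteq\mathcal{U}\subseteq\overline{\mathcal{U}}=R$ forces all three to coincide. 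The inclusion $R\subseteq\overline{\mathcal{U}}$ is a direct check using the identities $x'=yzM-x$, $y'=xzM-y$, $z'=xyM-z$ — each immediate from the defining relation, e.g.\ $x'=(y^2+z^2)/x=(xyzM-x^2)/x=yzM-x$ — which show that $x,y,z,M$ lie in each of the four Laurent rings. For the reverse inclusion one takes $u\in\overline{\mathcal{U}}$, expands $u=\sum_i u_i(y,z)\,x^i$ with $u_i\in\mathbb{Z}[y^{\pm1},z^{\pm1}]$, and notes that Laurentness in $\{x',y,z\}$ forces $(y^2+z^2)^{-i}\mid u_i$ for all $i<0$, with symmetric divisibilities coming from the other two neighbors; one then clears poles one coordinate at a time, at each step subtracting an element of $R$ built from powers of $M$ (converting a pole of order $m$ along $\{x=0\}$ into a term involving $M^m$ via the identities above) until nothing singular remains, so $u\in\mathbb{Z}[x,y,z]\subseteq R$. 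The main obstacle is organizing this elimination so that it terminates — controlling how the three families of divisibility conditions interact as denominators are cleared — which is precisely the computation carried out in \cite[Theorem~6.2.2]{MM15}, whose argument (or the general machinery for computing upper cluster algebras developed there) may be invoked directly.
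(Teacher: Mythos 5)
The paper offers no proof of this statement at all --- it is imported wholesale from \cite[Theorem 6.2.2]{MM15} --- so there is no internal argument to compare against. Your skeleton is sound, and the portions you actually carry out are correct and complete: $\Phi$ is well defined because $x,y,z\in\CA\subseteq\mathcal{U}$, $M\in\mathcal{U}$ by the mutation-invariance the paper records just before the theorem, and the defining relation already holds in the initial Laurent ring $L$; your injectivity argument (primitivity and degree one in $M$ over the UFD $\mathbb{Z}[x,y,z]$ gives irreducibility, so $R$ is a domain, and $R\hookrightarrow R[(xyz)^{-1}]=L$ factors through $\Phi$) is a clean, self-contained addition. The sandwich $R\subseteq\mathcal{U}\subseteq\overline{\mathcal{U}}$ is also the right reduction, and the check $R\subseteq\overline{\mathcal{U}}$ via $x'=yzM-x$, $y'=xzM-y$, $z'=xyM-z$ is fine.

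The only place the proposal is not self-contained is exactly where the theorem's mathematical content lives: the inclusion $\overline{\mathcal{U}}\subseteq R$. The divisibility conditions you extract from Laurentness in the three neighboring clusters are the correct starting point, but the ``clear poles one coordinate at a time'' step is the hard part --- subtracting an element of $R$ to remove the pole along $x=0$ can a priori disturb the divisibility conditions in $y$ and $z$, and showing the elimination terminates is precisely the computation of \cite{MM15}. You acknowledge this and cite it, which matches what the paper itself does for the entire statement; so as a justification of a quoted result your treatment is more than adequate, but if the goal were a from-scratch proof, the surjectivity step would remain a genuine gap requiring the full argument of \cite{MM15}.
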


\noindent Thus, a homomorphism $\mathcal{U}\rightarrow \kk$ is determined by its values on $(x,y,z,M)$; conversely, any choice of values for $(x,y,z,M)$ extends to a homomorphism $\mathcal{U}\rightarrow \kk$ iff it kills $xyzM-x^2-y^2-z^2$. Therefore, $V(\mathcal{U},\kk)$ may be identified with the set of solutions to $xyzM=x^2+y^2+z^2$ inside $\kk^4$.

The proof of Theorem 6.2.2 in \cite{MM15} relied on a description of the deep locus of $\mathcal{U}$, albeit in a more algebraic language. We restate and reprove this description in the language of varieties.



\begin{thm}\label{thm: UMarkovdeep}
Let $\mathcal{U}$ be the Markov upper cluster algebra. 
\begin{itemize}
    \item[(A)] For each $\alpha\in \kk$, there is a deep point $p\in V(\mathcal{U},\kk)$ which sends 
    $(x,y,z,M) \mapsto (0,0,0,\alpha) $.
    \item[(X)] For each $\beta,\gamma\in \kk$ with $\beta^2+\gamma^2=0$, there is a deep point $p\in V(\mathcal{U},\kk)$ which sends 
    \[(x,y,z,M) \mapsto (0,\beta,\gamma,0) \]
    \item[(Y)] For each $\beta,\gamma\in \kk$ with $\beta^2+\gamma^2=0$, there is a deep point $p\in V(\mathcal{U},\kk)$ which sends 
    \[(x,y,z,M) \mapsto (\beta,0,\gamma,0) \]
    \item[(Z)] For each $\beta,\gamma\in \kk$ with $\beta^2+\gamma^2=0$, there is a deep point $p\in V(\mathcal{U},\kk)$ which sends 
    \[(x,y,z,M) \mapsto (\beta,\gamma,0,0) \]
\end{itemize}
Every deep point in $V(\mathcal{U},\kk)$ is one of these four types, which are pairwise disjoint except for the point which sends $(x,y,z,M)$ to $(0,0,0,0)$.
\end{thm}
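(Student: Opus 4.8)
The plan is to work entirely inside the explicit hypersurface $V(\mathcal{U},\kk) = \{(x,y,z,M)\in \kk^4 \mid xyzM = x^2+y^2+z^2\}$, using the description of clusters and cluster variables for mutation-cyclic rank 3 cluster algebras (Lemma \ref{lemma: 3killtype}) to translate "deep" into a vanishing condition. Recall that Lemma \ref{lemma: 3killtype} tells us a deep point either kills every cluster variable, or kills exactly the cluster variables of one of the three types ($x$-type, $y$-type, $z$-type). So I will organize the argument around these cases. Throughout, the key leverage is that the initial cluster $\{x,y,z\}$ together with $M$ generates $\mathcal{U}$, so a point is determined by $(p(x),p(y),p(z),p(M))$, and by Lemma \ref{lemma: 3killtype}(2) a deep point is determined by its values on the initial cluster alone.

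First I would verify that each of the four listed families actually consists of deep points. For type (A), the point sends $(x,y,z,M)\mapsto(0,0,0,\alpha)$; this satisfies $xyzM - x^2-y^2-z^2 = 0$, so it defines a genuine point of $V(\mathcal{U},\kk)$, and since it kills the entire initial cluster it kills a generating set of cluster variables "locally" — more carefully, I would invoke Lemma \ref{lemma: 3killtype} in the form: a point killing $x,y,z$ must be the (unique) deep point killing all cluster variables, provided such a deep point exists; existence is exactly the construction here. For types (X), (Y), (Z), say (X): the point sends $(x,y,z,M)\mapsto(0,\beta,\gamma,0)$ with $\beta^2+\gamma^2 = 0$; the defining equation becomes $0 = 0 + \beta^2 + \gamma^2$, which holds, so this is a point of $V(\mathcal{U},\kk)$. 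To see it is deep, I would check that it kills all $x$-type cluster variables and nothing else: mutating at $y$ or $z$ in the initial cluster, the exchange relation (which for the cyclic Markov quiver has the shape $\bar y \bar y' = \bar x^2 + \bar z^2$ up to relabeling) produces a value $\gamma^2/\beta$ or $\beta^2/\gamma$ which is nonzero when $\beta,\gamma\neq 0$ — and when one of $\beta,\gamma$ is zero, then (since $\beta^2+\gamma^2=0$ forces the field to have the relevant square root of $-1$, or $\mathrm{char}=2$) both vanish and we fall back into type (A). So away from the origin, type (X) points kill exactly the $x$-type variables; combined with the determinacy in Lemma \ref{lemma: 3killtype}(2), this shows they are deep.

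Next, the harder direction: every deep point arises this way. Let $p$ be deep with values $(a,b,c,m) = (p(x),p(y),p(z),p(M))$. By Lemma \ref{lemma: 3killtype}(1), either $p$ kills all of $x,y,z$ or it kills exactly one of them (the one matching the type of variable it annihilates). If $p(x)=p(y)=p(z)=0$, then the defining relation $xyzM = x^2+y^2+z^2$ is automatically satisfied for any value $m$ of $M$, so $p$ is a type (A) point. If $p$ kills exactly one initial cluster variable, say $p(x)=0$ and $p(y),p(z)\neq 0$ (the $x$-type case; the other two are symmetric under the evident relabeling automorphisms of the Markov quiver): plugging into $xyzM = x^2+y^2+z^2$ gives $0 = 0 + b^2 + c^2$, i.e. $b^2+c^2=0$ with $b,c\neq 0$. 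I also need $p(M)=0$: here I would use that $M$ is an actual regular function and compute $m = p(M)$ from the Laurent expression $M = (x^2+y^2+z^2)/(xyz)$ — but that has $x$ in the denominator, so instead I should use that $M$ is Laurent in some cluster not containing an $x$-type variable's reciprocal; since $p$ does not kill $y$ or $z$, and mutating at $y$ and $z$ never kills the resulting non-$x$-type variables (as computed above), $p$ lies in a cluster torus for every cluster obtained by mutating away the $x$-coordinate — wait, that is not possible since $p$ is deep. The cleaner route: $M = (\bar x^2 + \bar y^2 + \bar z^2)/(\bar x \bar y \bar z)$ for \emph{every} cluster $\{\bar x,\bar y,\bar z\}$; choosing a cluster adjacent to the initial one obtained by mutating at $y$, we get $\bar x = x$, so $x$ still appears in the numerator and denominator, and taking a suitable limit / using continuity along the cluster torus where $p(x)\to 0$ while $p(y),p(z)$ stay nonzero forces $m = p(M)$ to equal $\lim (b^2+c^2 + x^2)/(xbc)$. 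Hmm — this is getting delicate. The \textbf{main obstacle} I anticipate is precisely pinning down $p(M)=0$ in the type (X)/(Y)/(Z) cases: one cannot just substitute into the Laurent formula because of the vanishing denominator. The fix is to note that the defining equation $xyzM = x^2+y^2+z^2$ already does the work once we know $x$ and one of $y,z$... no — with $p(x) = 0$ and $p(y),p(z)\neq 0$ the equation reads $0 = b^2 + c^2$, which does not mention $m$ at all. So $m$ is genuinely unconstrained by the presentation equation! This means I must get $p(M) = 0$ from a \emph{different} relation — e.g., from the fact that in $\mathcal{U}$ one has $xM = \frac{x^2+y^2+z^2}{yz}$, and the right side is a regular function (it lies in $\mathcal{U}$, being Laurent in the cluster $\{x,y,z\}$ and in its neighbors). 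Then $p(xM) = p(x)p(M) = 0$, while $p\!\left(\frac{x^2+y^2+z^2}{yz}\right) = (b^2+c^2)/(bc) = 0$ — consistent but still uninformative about $m$. I would instead use $yM = \frac{x^2+y^2+z^2}{xz}$: is this in $\mathcal{U}$? It is Laurent in $\{x,y,z\}$ but to be in $\mathcal{U}$ it must be Laurent in \emph{all} clusters, which I would need to check — and in fact I expect $yM \in \mathcal{U}$ precisely fails, which is the crux. The honest resolution, which I would carry out, is: use Lemma \ref{lemma: 3killtype}(2) — a deep point of $x$-type is determined by its values on the initial cluster, namely by $(0,b,c)$ with $b^2+c^2 = 0$; since the type-(X) family I constructed already realizes every such triple and includes a well-defined (necessarily matching) value of $M$, the value $m$ of $p$ on $M$ must agree with the constructed one, which is $0$. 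So the determinacy lemma is what forces $p(M) = 0$, not the presentation equation.

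Finally I would record the disjointness claim: the four families have pairwise intersection only at $(0,0,0,0)$. Type (A) is $\{x=y=z=0\}$; types (X),(Y),(Z) each require two coordinates among $x,y,z$ to be (generally) nonzero and $M = 0$, and they pairwise intersect where the "nonzero" coordinates are forced to vanish, i.e. at the origin; (A) meets each of (X),(Y),(Z) only where $\beta=\gamma=0$, again the origin. This is a short direct check on the coordinates. Assembling the existence check (paragraph two), the classification via Lemma \ref{lemma: 3killtype} plus the determinacy argument (paragraph three), and this disjointness remark completes the proof.
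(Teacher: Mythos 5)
There is a genuine gap, and you located it yourself: forcing $p(M)=0$ for a deep point of type (X) (and, dually, showing that the listed type-(X) points really kill \emph{every} $x$-type cluster variable, not just $x$ itself). The missing ingredient is the linear exchange identity in $\mathcal{U}$: since $M=(\x^2+\y^2+\z^2)/(\x\y\z)$ holds for every cluster $\{\x,\y,\z\}$ and $\x\x'=\y^2+\z^2$, one has $\y\z M=\x+\x'$, i.e.\ $\x'=\y\z M-\x$. This is the relation that does all the work. For the classification direction: if $p$ is deep with $p(x)=0$ and $p(y),p(z)\neq0$, deepness applied to the cluster $\{x',y,z\}$ forces $p(x')=0$, and then $0=p(x)+p(x')=p(y)p(z)p(M)$ gives $p(M)=0$. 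For the existence direction: a point with $p(x)=p(M)=0$ kills $\x'$ whenever it kills $\x$, so iterating along the exchange tree it kills every $x$-type variable, hence one variable in every cluster; your quadratic relation $\x\x'=\y^2+\z^2$ cannot do this, since evaluating it at a point with $p(\x)=0$ yields $0=p(\y)^2+p(\z)^2$ and says nothing about $p(\x')$.

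Your proposed fix via Lemma \ref{lemma: 3killtype}(2) does not work. That lemma is proved for $V(\CA,\kk)$ and its proof rests on the cluster variables generating $\CA$; they do not generate $\mathcal{U}$, where $M$ is an independent generator. Determinacy of a deep point of $V(\mathcal{U},\kk)$ by its values on one cluster is in fact \emph{false}: the type-(A) family $(x,y,z,M)\mapsto(0,0,0,\alpha)$ is a one-parameter family of distinct deep points all taking the value $0$ on every cluster variable (this is exactly the line that gets collapsed under $V(\mathcal{U},\kk)\to V(\CA,\kk)$). So knowing that your constructed type-(X) point realizes the triple $(0,\beta,\gamma)$ does not, by itself, pin down $p(M)$ for an arbitrary deep point with those cluster values. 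Everything else in your write-up (the case split via Lemma \ref{lemma: 3killtype}(1) applied to the restriction to $\CA$, the derivation of $\beta^2+\gamma^2=0$ from the presentation, and the disjointness check) is fine.
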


\begin{proof}
For any cluster $\{\x,\y,\z\}$, the mutation $\x'$ satisfies $\x'=\y\z M-\x$. As a result, any point which kills $\x$ and either $M$ or $\y\z$ must also kill $\x'$. Iteratively applying this to the maps in the statement of the proposition shows that points of Type (A) kill every cluster variable, points of Type (X) kill every $x$-type variable, points of Type (Y) kill every $y$-type variable, and points of Type (Z) kill every $z$-type variable.

To show every deep point is of these types, consider a deep point $p\in V(\mathcal{U},\kk)$ with $p(x)=0$. The equation $xyzM=x^2+y^2+z^2$ implies that $p(y)^2+p(z)^2=0$. If $p(y)=0$, $p(z)=0$, so $p$ is of Type (A). If $p(y)\neq0$, then $p(z)\neq0$, and $p(x')=0$. The equation $x'=yzM-x$ implies that $p(yzM)=0$. Since $p(y) p(z)\neq0$, $p(M)=0$, and $p$ is of Type (X). By a similar argument, deep points which kill $y$ or $z$ but not $x$ must be of Types (Y) or (Z), respectively.
%
%
\end{proof}

\subsection{Deep points of the Markov cluster algebra}

We are now in a position to classify deep points in the Markov cluster algebra itself, with Theorem \ref{thm: UMarkovdeep} providing a source of deep points in $V(\CA,\kk)$ and and Lemma \ref{lemma: 3killtype} providing a bound.

\begin{prop}
Let $\CA$ be the Markov cluster algebra.
\begin{itemize}
    \item[(X)] For each $\beta,\gamma\in \kk$ with $\beta^2+\gamma^2=0$, there is a unique deep point $p\in V(\mathcal{A},\kk)$ which kills every $x$-type cluster variable and sends $(y,z)$ to $(\beta,\gamma)$.
    \item[(Y)] For each $\beta,\gamma\in \kk$ with $\beta^2+\gamma^2=0$, there is a unique deep point $p\in V(\mathcal{A},\kk)$ which kills every $y$-type cluster variable and sends $(x,z)$ to $(\beta,\gamma)$.
    \item[(Z)] For each $\beta,\gamma\in \kk$ with $\beta^2+\gamma^2=0$, there is a unique deep point $p\in V(\mathcal{A},\kk)$ which kills every $z$-type cluster variable and sends $(x,y)$ to $(\beta,\gamma)$.
\end{itemize}
Every deep point in $V(\mathcal{A},\kk)$ is one of these three types, which are pairwise disjoint except for the point which sends every cluster variable to $0$.
\end{prop}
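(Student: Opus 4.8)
The plan is to combine the two main tools already established: Theorem \ref{thm: UMarkovdeep}, which classifies the deep points of $\mathcal{U}$, and Lemma \ref{lemma: 3killtype}, which constrains the deep points of $\CA$. The morphism $V(\mathcal{U},\kk)\rightarrow V(\CA,\kk)$ coming from $\CA\hookrightarrow \mathcal{U}$ will transport the known deep points of $\mathcal{U}$ into $V(\CA,\kk)$, and Lemma \ref{lemma: 3killtype} will show we have found all of them.

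First I would address \textbf{existence}. Take a deep point $q\in V(\mathcal{U},\kk)$ of Type (X) from Theorem \ref{thm: UMarkovdeep}, so $q$ sends $(x,y,z,M)$ to $(0,\beta,\gamma,0)$ with $\beta^2+\gamma^2=0$. Composing with $\CA\hookrightarrow \mathcal{U}$ gives a point $p\in V(\CA,\kk)$. The computation in the proof of Theorem \ref{thm: UMarkovdeep} shows $q$ kills every $x$-type cluster variable and no others (when $(\beta,\gamma)\neq(0,0)$; it kills all cluster variables when $(\beta,\gamma)=(0,0)$), and since every cluster variable of $\CA$ is an element of $\mathcal{U}$, the same holds for $p$. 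In particular $p$ vanishes on one cluster variable of every cluster — each cluster of $\CA$ has exactly one $x$-type variable by the Proposition preceding Lemma \ref{lemma: 3killtype} — so $p$ is deep, kills every $x$-type variable, and sends $(y,z)$ to $(\beta,\gamma)$. Types (Y) and (Z) are symmetric.

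Next, \textbf{uniqueness} within each type: this is exactly part (2) of Lemma \ref{lemma: 3killtype}. Two deep points of $\CA$ that both kill all $x$-type variables and agree on $(y,z)$ agree on the whole cluster $\{x,y,z\}$ (both send $x$ to $0$), hence are equal. Then \textbf{exhaustiveness}: let $p\in V(\CA,\kk)$ be any deep point. By part (1) of Lemma \ref{lemma: 3killtype}, either $p$ kills every cluster variable, or $p$ kills exactly the cluster variables of one type. In the first case $p$ sends $(x,y,z)$ to $(0,0,0)$, which is simultaneously the $(\beta,\gamma)=(0,0)$ instance of Types (X), (Y), and (Z). In the second case, say $p$ kills all $x$-type variables; then $p(x)=0$, and applying $p$ to the relation $xyzM - x^2-y^2-z^2$ — or rather, to the honest cluster-algebra relation exhibiting $x' = $ (mutation of $x$ in the $y$-direction or $z$-direction), e.g. the identity $yy' = x^2+z^2$ in the appropriate quiver, or directly the Markov relation in $\mathcal{U}$ restricted along $\CA\hookrightarrow\mathcal{U}$ — gives $p(y)^2+p(z)^2=0$, so $(\beta,\gamma)\coloneqq(p(y),p(z))$ satisfies the required constraint and $p$ is the (unique) point of Type (X) with these values. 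The $y$- and $z$-type cases are symmetric.

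Finally, \textbf{disjointness}: a Type (X) point has $p(x)=0$ and kills all $x$-type variables, while a Type (Y) point kills all $y$-type variables, so a point of both types kills both an $x$-type and a $y$-type variable in the initial cluster, hence by part (1) of Lemma \ref{lemma: 3killtype} kills all three, forcing $(x,y,z)\mapsto(0,0,0)$; the only overlap is that single point, and likewise for the other pairs. The main obstacle I anticipate is a bookkeeping one rather than a conceptual one: being careful that "kills every $x$-type cluster variable" is a statement that genuinely transfers along $\CA\hookrightarrow\mathcal{U}$ (it does, since cluster variables of $\CA$ are literally elements of $\mathcal{U}$ and the $x$-type labelling is the same), and making sure the relation used to derive $p(y)^2+p(z)^2=0$ is available in $\CA$ itself and not merely in $\mathcal{U}$ — this is fine because the relevant exchange relation $yy'=x^2+z^2$ (for the appropriate mutation-cyclic quiver on $\{x,y,z\}$) is an identity in $\CA$.
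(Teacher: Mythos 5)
Your proof follows essentially the same route as the paper: existence by restricting the Type (X)/(Y)/(Z) deep points of $V(\mathcal{U},\kk)$ from Theorem \ref{thm: UMarkovdeep} along $\CA\hookrightarrow\mathcal{U}$, uniqueness from Lemma \ref{lemma: 3killtype}(2), and exhaustiveness from Lemma \ref{lemma: 3killtype}(1) together with an exchange relation forcing $\beta^2+\gamma^2=0$. One small correction: the exchange relation that yields $p(y)^2+p(z)^2=0$ when $p$ kills all $x$-type variables is $xx'=y^2+z^2$ (so that $0=p(x)p(x')=p(y)^2+p(z)^2$), not $yy'=x^2+z^2$, which with $p(x)=0$ only gives $p(y)p(y')=p(z)^2$; the paper uses the former.
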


\begin{proof}
A deep point in $V(\CA,\kk)$ which kills every $x$-type cluster variable must also kill $xx'=y^2+z^2$, and so it must be of Type (X).
To show such a deep point exists for all $\beta,\gamma$, observe that the deep point $p\in V(\mathcal{U},\kk)$ which sends $(x,y,z,M)$ to $(0,\beta,\gamma,0)$ (Type (X) in Theorem \ref{thm: UMarkovdeep}) restricts to a deep point in $V(\mathcal{A},\kk)$ which kills every $x$-type variable and sends $(y,z)$ to $(\beta,\gamma)$. Uniqueness follows from Lemma \ref{lemma: 3killtype}.2. Similar arguments hold for the other types.

By Lemma \ref{lemma: 3killtype}.1, every deep point of $\CA$ must kill every cluster variable of one type, and so it must be one of the three types stated above.
\end{proof}

\begin{rem}\label{rem: Markov Upper line to point}
We now consider the geometric relation between $V(\CA,\kk)$ and $V(\mathcal{U},\kk)$. The map
\[ V(\mathcal{U},\kk) \rightarrow V(\CA,\kk) \]
induces an isomorphism between deep points of Types (X), (Y), and (Z). 
However, since the deep points of Type (A) in $V(\mathcal{U},\kk)$ kill every cluster variable, they are all sent to the same point in $V(\CA,\kk)$: the point which sends every cluster variable to $0$.
\end{rem}




\begin{ex}
Over the complex numbers (i.e.~$\kk=\mathbb{C}$), the solution set of $\beta^2+\gamma^2=0$ breaks apart into two lines $\gamma=\pm i \beta$. As a consequence, the deep locus in $V(\mathcal{U},\mathbb{C})$ consists of seven lines, which are disjoint except for a common intersection point. Under the map
\[ V(\mathcal{U},\mathbb{C})\rightarrow V(\CA,\mathbb{C}) \]
six of these deep lines are sent bijectively to the six deep lines in $V(\CA,\mathbb{C})$, while the seventh is collapsed to the common intersection point.
\end{ex}

\draftnewpage

\section{Cluster algebras of unpunctured marked surfaces}

Another of the most well-understood classes of cluster algebras consists of those associated to a \emph{triangulable marked surface}. 
These cluster algebras were introduced in \cite{FST08} and their connection to \emph{Kauffman skein algebras} was explored in \cite{MW13,Mul16}. In this section, we will review the basics of commutative skein algebras of marked surfaces with boundary; for details on cluster algebras of marked surfaces with boundary, see \cite{FST08}.

\begin{rem}
We include the generality of marked surfaces with punctures when we can do so without much added difficulty. However, we will restrict to the unpunctured case when talking about skein algebras and cluster algebras, to avoid dealing with `tagged arcs' as in \cite{FST08}). 

This is not just an expository simplification; the deep locus of a punctured marked surface can behave substantially differently from the unpunctured case. For example, the deep locus of a once-punctured octagon is not equidimensional (contrasting the unpunctured case by Theorem \ref{thm: deepsurfacediss}); we hope to investigate the punctured case further in future work.
\end{rem}

\def\MM{\mathcal{M}}

\subsection{Marked surfaces}

A \textbf{marked surface} consists a smooth, oriented, compact surface-with-boundary $\SS$, together with a finite set of \textbf{marked points} $\MM$.
Marked points in the interior of $\SS$ are called \textbf{punctures}, and so a marked surface is \textbf{unpunctured} if $\MM\subset \partial\SS$. 
A connected marked surface is determined up to homeomorphism by a few numbers:
\begin{itemize}
    \item The genus $g$ of the underlying surface.
    \item The number $b$ of boundary components.
    \item The number $p$ of punctures.
    \item The unordered numbers $m_1,m_2,\dotsc,m_b$ of marked points on each boundary component.
\end{itemize}

A \textbf{marked curve} in a marked surface $\SS$ is an immersion of a compact, connected curve-with-boundary\footnote{Note that a compact, connected curve-with-boundary must be diffeomorphic to either an interval or a circle.} into $\SS$ such that
\begin{itemize}
    \item any endpoints map to $\MM$,
    \item the interior maps to $\SS\smallsetminus \MM$,
    \item the preimage of any point in $\SS\smallsetminus \MM$ is at most two points, and
    \item the set of \emph{crossings} (points in $\SS\smallsetminus \MM$ with at least two preimages) is finite.
\end{itemize}  
Intuitively, a marked curve is a curve drawn on the surface $\SS$ which may cross itself finitely many times (but only transversely) and either ends at marked points or forms a closed loop.
A \textbf{marked multicurve} is an immersion of finitely many compact curves-with-boundary satisfying these properties. We consider two marked curves (resp.~multicurves) \textbf{homotopic} if they are related by orientation-reversal and homotopies through the class of marked curves  (resp.~multicurves).\footnote{That is, homotopies which are marked curves (resp.~multicurves) at all times. In particular, the immersion condition prohibits homotopies which change the \emph{framing} in the sense of knot theory.}
%

Some additional terminology for marked curves:
\begin{itemize}
    \item A \textbf{marked arc} is a marked curve with two endpoints (i.e.~an immersed interval).
    \item A \textbf{loop} is a marked curve with no end points (i.e.~an immersed circle).
    \item A marked curve is \textbf{simple} if it has no crossings and is not contractible.
    \item A multicurve (i.e.~a finite set of marked curves) is \textbf{compatible} if there are no crossings and every curve is simple.
    \item A \textbf{boundary curve} is a marked curve homotopic to a marked curve contained in $\partial \SS$.\footnote{Since the interior of a marked curve cannot pass through a marked point, a boundary marked curve must either be a simple marked arc connecting adjacent marked points along the boundary, or a simple loop homotopic to an unmarked boundary component.}
\end{itemize}

\begin{ex}
A simple but important class of marked surfaces are the \textbf{(topological) polygons}. These are discs with no punctures and marked points along the boundary. Such a marked surface with $m$-many marked points is homeomorphic to any convex $m$-gon in the plane; under such a homemorphism, each edge and diagonal determines a marked arc.
\end{ex}

\begin{ex}
A less trivial example is the \textbf{$(2,2)$-annulus}, which is what we call the annulus with 2 marked points on each boundary component. It will be convenient to depict the annulus as a rectangle with two opposite sides identified (drawn dashed in Figure \ref{fig: annulus 2 2 simple marked arcs}). Up to homotopy, this marked surface has a unique simple loop and four boundary arcs.
\end{ex}

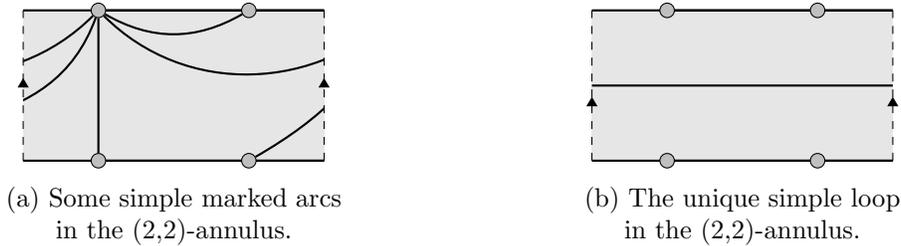
\begin{figure}[h!tb]
\captionsetup[subfigure]{justification=centering}
\centering
    

\begin{subfigure}[c]{0.45\linewidth}
\centering
\begin{tikzpicture}
   \path[fill=black!10] (-2,1) to (2,1) to (2,-1) to (-2,-1) to (-2,1);
    \draw[thick] (-2,1) to (2,1) (-2,-1) to (2,-1);
    \draw[dashed] (-2,-1) to (-2,1) (2,-1) to (2,1);
    \draw[-Triangle,thin] (-2,0) to (-2,0.1);
    \draw[-Triangle,thin] (2,0) to (2,0.1);
    
    \node[dot] (1) at (-1,1) {};
    \node[dot] (2) at (1,1) {};
    \node[dot] (3) at (1,-1) {};
    \node[dot] (4) at (-1,-1) {};
    \draw[thick] (1) to (2);
    \draw[thick] (2) to (2,1);
    \draw[thick] (3) to (4);
    \draw[thick] (3) to (2,-1);

    \draw[thick,out=-30,in=210] (1) to (2);
    \draw[thick] (1) to (4);
    \clip (-2,-1) rectangle (2,1);
    \draw[thick,out=-45,in=225] (1) to (3,1);
    \draw[thick,out=-45,in=225] (-5,1) to (1);
    \draw[thick,out=30,in=240] (3) to (3,1);
    \draw[thick,out=20,in=250] (-5,-1) to (1);
\end{tikzpicture}
\subcaption{Some simple marked arcs\\in the (2,2)-annulus.}
\label{fig: annulus 2 2 marked arcs}
\end{subfigure}
\begin{subfigure}[c]{0.45\linewidth}
\centering
\begin{tikzpicture}
   \path[fill=black!10] (-2,1) to (2,1) to (2,-1) to (-2,-1) to (-2,1);
    \draw[thick] (-2,1) to (2,1) (-2,-1) to (2,-1);
    \draw[dashed] (-2,-1) to (-2,1) (2,-1) to (2,1);
    \draw[-Triangle,thin] (-2,-.25) to (-2,-0.15);
    \draw[-Triangle,thin] (2,-.25) to (2,-0.15);
    
    \node[dot] (1) at (-1,1) {};
    \node[dot] (2) at (1,1) {};
    \node[dot] (3) at (1,-1) {};
    \node[dot] (4) at (-1,-1) {};
    \draw[thick] (1) to (2);
    \draw[thick] (2) to (2,1);
    \draw[thick] (3) to (4);
    \draw[thick] (3) to (2,-1);

    \draw[thick] (-2,0) to (2,0);
    
\end{tikzpicture}
\subcaption{The unique simple loop\\in the (2,2)-annulus.}
\label{fig: annulus 2 2 loop}
\end{subfigure}
\caption{Marked curves in the (2,2)-annulus.}
\label{fig: annulus 2 2 simple marked arcs}
\end{figure}

A \textbf{triangulation} of a marked surface $\SS$ is a simple marked multicurve which divides the surface into a union of topological triangles, together with the set of boundary arcs.\footnote{Including the boundary arcs in a triangulation is non-standard, but will simplify the connection to cluster algebras.}
 If a marked surface admits a triangulation, we say it is \textbf{triangulable}.

\begin{prop} [{\cite[Prop.~2.10]{FST08}}]\label{prop: triangulationcount}
Let $\SS$ be a connected marked surface with genus $g$, $b$-many marked points, $p$-many punctures, and $m$-many marked points on the boundary. Then $\SS$ is triangulable iff the set of marked points $\MM$ is not empty, every component of the boundary $\partial \SS$ contains at least one marked point, and the quantity
\[ n(\SS) \coloneqq 6g + 3b + 3p + 2m - 6 \]
is positive. Every triangulation of $\SS$ has $n(\SS)$-many marked arcs (counting boundary arcs).
\end{prop}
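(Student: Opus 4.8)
The plan is to prove Proposition \ref{prop: triangulationcount} by reducing to the Euler characteristic of the surface, following the standard argument for ideal triangulations. This is essentially classical, so I will sketch the bookkeeping rather than belabor it.

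First I would dispose of the existence criterion. If $\MM = \emptyset$ there is nothing to anchor the arcs of a triangulation, so no triangulation exists; similarly, if some boundary component $C$ has no marked points, then $C$ is an unmarked circle, and any arc of a marked multicurve has its endpoints in $\MM$ and hence misses $C$, so the region of $\SS$ adjacent to $C$ can never be cut into triangles. Conversely, assuming $\MM\neq\emptyset$ and every boundary component is marked, one can build a triangulation by a standard inductive cut-and-paste: pick a marked arc that is not a boundary arc and is non-separating or that reduces complexity, cut along it, and repeat; this terminates because each cut strictly decreases $n(\SS)$, and when no further non-boundary arc can be added the pieces are forced to be triangles. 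The positivity of $n(\SS)$ is exactly the condition that there is ``room'' for at least one triangle, i.e. that $\SS$ is not a monogon or bigon (the degenerate cases $n(\SS)\le 0$).

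The core of the proof is the count $n(\SS) = 6g + 3b + 3p + 2m - 6$. Fix a triangulation $T$ and let $F$ be the number of triangles, $E$ the number of arcs (counting boundary arcs), and $V$ the number of vertices, where $V = p + m$ (punctures plus boundary marked points, each counted once). Since every triangle has three sides, and every interior arc borders two triangles while every boundary arc borders one, $3F = 2(E - E_{\partial}) + E_{\partial} = 2E - E_{\partial}$, where $E_\partial = m$ is the number of boundary arcs (on a marked boundary component with $m_i$ marked points there are $m_i$ boundary arcs, summing to $m$). The Euler characteristic of the underlying closed-up surface is $\chi = V - E + F = 2 - 2g - b$ (a compact oriented genus-$g$ surface with $b$ boundary circles, treating punctures as vertices of the triangulated surface-with-boundary). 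Combining $V - E + F = 2 - 2g - b$ with $3F = 2E - m$ and $V = p + m$ gives two linear equations in $E$ and $F$; solving yields $E = 6g + 3b + 3p + 2m - 6 = n(\SS)$, which proves both that every triangulation has $n(\SS)$ arcs and (since $E > 0$ always, as $T$ is nonempty) that triangulability forces $n(\SS) > 0$.

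The main obstacle is getting the Euler-characteristic bookkeeping exactly right in the presence of boundary and punctures: one must be careful that punctures count as genuine vertices of the cell structure, that boundary marked points are vertices too, that boundary arcs are edges bordering a single triangle, and that there are no ``unexpected'' faces outside the triangles (this is where the hypothesis that every boundary component is marked re-enters — an unmarked boundary circle would leave an annular face). Once the cell structure $(V, E, F)$ is set up correctly on the surface-with-boundary, everything is linear algebra. I would cite \cite[Prop.~2.10]{FST08} for the precise statement and only indicate this Euler-characteristic computation as the mechanism, since the result is not original to this paper.
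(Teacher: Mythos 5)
The paper does not prove this proposition at all --- it is imported verbatim from \cite[Prop.~2.10]{FST08} --- so there is no in-paper argument to compare against. Your Euler-characteristic computation is the standard proof of the arc count and it is correct: with $V=p+m$, $E_\partial=m$, $3F=2E-m$ and $V-E+F=2-2g-b$, one solves to get $E=6g+3b+3p+2m-6$. This is in fact exactly the computation the paper itself runs (in the reverse direction) in the proof of Proposition \ref{prop: polydisscounts}, so your mechanism is consistent with how the authors use the result. The existence direction is only sketched in your write-up, but since you explicitly defer to \cite{FST08} for that, this is acceptable.

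One concrete slip worth fixing: your parenthetical claim that the degenerate monogon and bigon are ``the cases $n(\SS)\le 0$'' is false under this paper's convention. Because $n(\SS)$ here counts boundary arcs, the unpunctured bigon has $n(\SS)=6\cdot 0+3\cdot 1+3\cdot 0+2\cdot 2-6=1>0$ yet admits no triangulation; it is only in the FST08 convention (which omits the $m$ boundary arcs, giving $n=-1$ for the bigon) that positivity cleanly excludes it. So either your existence argument must exclude the bigon by hand, or you should note that the positivity criterion as literally stated in this paper has that edge case. This does not affect the arc-count formula, which is the part of the proposition the paper actually relies on.
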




\begin{rem}
Every boundary marked arc is in every triangulation, and there are $m$-many of them. Therefore, every triangulation contains 
$ 6g + 3b + 3p + m - 6$
many non-boundary marked arcs; this matches the formula in \cite[Prop.~2.10]{FST08} and is the number of mutable cluster variables in the corresponding cluster algebra.
\end{rem}

\begin{ex}
Two triangulations of the $(2,2)$-annulus are given in Figure \ref{fig: annulus 2 2 triangulation}. Note that these triangulations each have 4 non-boundary arcs; together with the 4 boundary arcs, this matches
\[
n(\SS) = 6g + 3b + 3p +2m -6 = 6\cdot 0 + 3 \cdot 2 + 3 \cdot 0 + 2\cdot 4 -6 = 8
\qedhere
\]
\end{ex}

    


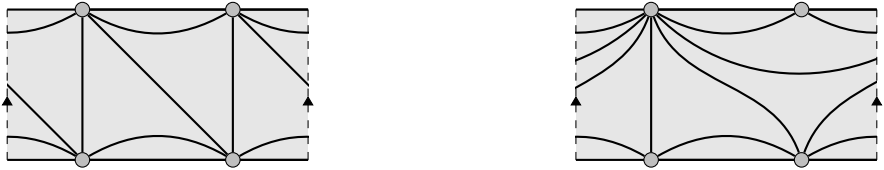
\begin{figure}[h!tb]
\captionsetup[subfigure]{justification=centering}
\centering
\begin{subfigure}[c]{0.45\linewidth}
\centering
\begin{tikzpicture}
    \path[fill=black!10] (-2,1) to (2,1) to (2,-1) to (-2,-1) to (-2,1);
    \draw[thick] (-2,1) to (2,1) (-2,-1) to (2,-1);
    \draw[dashed] (-2,-1) to (-2,1) (2,-1) to (2,1);
    \draw[-Triangle,thin] (-2,-.25) to (-2,-0.15);
    \draw[-Triangle,thin] (2,-.25) to (2,-0.15);
    
    \node[dot] (1) at (-1,1) {};
    \node[dot] (2) at (1,1) {};
    \node[dot] (3) at (1,-1) {};
    \node[dot] (4) at (-1,-1) {};
    \draw[thick] (1) to (2);
    \draw[thick] (2) to (2,1);
    \draw[thick] (3) to (4);
    \draw[thick] (3) to (2,-1);

    \draw[thick] (1) to (3);
    \draw[thick] (1) to (4);
    \draw[thick] (2) to (3);
    \clip (-2,-1) rectangle (2,1);
    \draw[thick] (2) to (3,-1);
    \draw[thick] (-3,1) to (4);
    
    \draw[thick,out=-30,in=210] (-3,1) to (1);
    \draw[thick,out=-30,in=210] (1) to (2);
    \draw[thick,out=-30,in=210] (2) to (3,1);
    \draw[thick,out=30,in=150] (-3,-1) to (4);
    \draw[thick,out=30,in=150] (4) to (3);
    \draw[thick,out=30,in=150] (3) to (3,-1);
\end{tikzpicture}
\end{subfigure}
\begin{subfigure}[c]{0.45\linewidth}
\centering
\begin{tikzpicture}
   \path[fill=black!10] (-2,1) to (2,1) to (2,-1) to (-2,-1) to (-2,1);
    \draw[thick] (-2,1) to (2,1) (-2,-1) to (2,-1);
    \draw[dashed] (-2,-1) to (-2,1) (2,-1) to (2,1);
    \draw[-Triangle,thin] (-2,-.25) to (-2,-0.15);
    \draw[-Triangle,thin] (2,-.25) to (2,-0.15);
    
    \node[dot] (1) at (-1,1) {};
    \node[dot] (2) at (1,1) {};
    \node[dot] (3) at (1,-1) {};
    \node[dot] (4) at (-1,-1) {};
    \draw[thick] (1) to (2);
    \draw[thick] (2) to (2,1);
    \draw[thick] (3) to (4);
    \draw[thick] (3) to (2,-1);

    \draw[thick,out=290,in=110] (1) to (3);
    \draw[thick] (1) to (4);
    \clip (-2,-1) rectangle (2,1);
    \draw[thick,out=-45,in=225] (1) to (3,1);
    \draw[thick,out=-45,in=225] (-5,1) to (1);
    \draw[thick,out=70,in=250] (3) to (3,1);
    \draw[thick,out=70,in=250] (-3,-1) to (1);

    \draw[thick,out=-30,in=210] (-3,1) to (1);
    \draw[thick,out=-30,in=210] (1) to (2);
    \draw[thick,out=-30,in=210] (2) to (3,1);
    \draw[thick,out=30,in=150] (-3,-1) to (4);
    \draw[thick,out=30,in=150] (4) to (3);
    \draw[thick,out=30,in=150] (3) to (3,-1);
\end{tikzpicture}
\end{subfigure}
\caption{Two marked triangulations of the $(2,2)$-annulus.}
\label{fig: annulus 2 2 triangulation}
\end{figure}

\subsection{The skein algebra of an unpunctured marked surface}

\def\Multi{\mathrm{Multi}}
\def\Sk{\operatorname{Sk}}

In \cite{GSV05, FG06, FST08}, it was shown how a triangulable marked surface $\SS$ determines a cluster algebra $\CA(\SS)$ with boundary coefficients. However, it will be more practical to work with the \emph{skein algebra} of the marked surface, a closely related algebra generated by marked curves with topologically-defined relations.



Given a marked surface $\SS$, let $\mathbb{Z}\Multi(\SS)$ denote the abelian group of $\mathbb{Z}$-linear combinations of marked multicurves in $\SS$ up to homotopy.
Define the \textbf{skein algebra} $\Sk_1(\SS)$ to be the quotient of $\mathbb{Z}\Multi(\SS)$ by the subgroup generated by the families of relations depicted in Figure \ref{fig: skeinrel}.\footnote{While we use the term `skein algebra' here for brevity, $\Sk_1(\SS)$ would more properly be called the \emph{marked Kauffman skein algebra of $(\SS,\MM)$ at $q=1$} to distinguish it from other variations.}  In each figure, the dashed circle represents a small contractible neighborhood in $\SS$, and all elements considered agree outside this circle.
We let $\langle\gamma\rangle$ denote the element of $\Sk_1(\SS)$ corresponding to a marked multicurve $\gamma$.

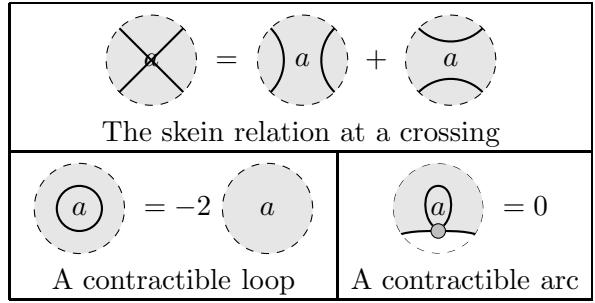
\begin{figure}[h!t]
\begin{tabular}{|c|c|}
\hline
\multicolumn{2}{|c|}{
$
\begin{tikzpicture}[scale=.15,baseline=(invisible.base)]
	\path[use as bounding box] (0,0) circle (5);
   \draw[fill=black!10,dashed] (0,0) circle (4);
    \draw[thick] (-2.83,-2.83) to (2.83,2.83);
    \draw[thick] (-2.83,2.83) to (2.83,-2.83);
    \node[opacity=0] (invisible) at (0,0) {$a$};
\end{tikzpicture}
=
\begin{tikzpicture}[scale=.15,baseline=(invisible.base)]
 	\path[use as bounding box] (0,0) circle (5);
   \draw[fill=black!10,dashed] (0,0) circle (4);
    \draw[thick] (-2.83,-2.83) to [out=45,in=-45] (-2.83,2.83);
    \draw[thick] (2.83,-2.83) to [out=135,in=-135] (2.83,2.83);
    \node[opacity=0] (invisible) at (0,0) {$a$};
\end{tikzpicture}
+
\begin{tikzpicture}[scale=.15,baseline=(invisible.base)]
	\path[use as bounding box] (0,0) circle (5);
    \draw[fill=black!10,dashed] (0,0) circle (4);
    \draw[thick] (-2.83,-2.83) to [out=45,in=135] (2.83,-2.83);
    \draw[thick] (-2.83,2.83) to [out=-45,in=-135] (2.83,2.83);
    \node[opacity=0] (invisible) at (0,0) {$a$};
\end{tikzpicture}
$} \\
\multicolumn{2}{|c|}{
The skein relation at a crossing
}
\\
\hline
$
\begin{tikzpicture}[scale=.15,baseline=(invisible.base)]
	\path[use as bounding box] (0,0) circle (5);
    \draw[fill=black!10,dashed] (0,0) circle (4);
    \draw[thick] (0,0) circle (2);
    \node[opacity=0] (invisible) at (0,0) {$a$};
\end{tikzpicture}
=-2
\begin{tikzpicture}[scale=.15,baseline=(invisible.base)]
	\path[use as bounding box] (0,0) circle (5);
    \draw[fill=black!10,dashed] (0,0) circle (4);
    \node[opacity=0] (invisible) at (0,0) {$a$};
\end{tikzpicture}
$
&
$
\begin{tikzpicture}[scale=.15,baseline=(invisible.base)]
	\path[use as bounding box] (0,0) circle (5);
	\clip (0,0) circle (4);
	\draw[fill=black!10,thick] (-5,-3) to [in=180,out=30] (0,-2) to [in=150,out=0] (5,-3) to [line to] (5,5) to (0,5) to (-5,5);
	\node (1) at (0,-2) [dot] {};
	\draw[thick] (1) to [out=45,in=0] (0,2) to [out=180,in=135] (1);
	\draw[dashed] (0,0) circle (4);
    \node[opacity=0] (invisible) at (0,0) {$a$};
\end{tikzpicture}
=0
$
\\
A contractible loop
&
A contractible arc
\\
\hline
\end{tabular}
\caption{The $q=1$ marked Kauffman skein relations 
}
\label{fig: skeinrel}
\end{figure}

Given two marked multicurves $\gamma$ and $\lambda$, their \textbf{superposition} $\gamma\cup \lambda$ is the union of the corresponding immersed curves. This may not be a marked multicurve, as the crossings may no longer be transverse or have more than two preimages. This can be resolved by replacing $\gamma$ and $\lambda$ with equivalent marked multicurves $\gamma'$ and $\lambda'$ whose superposition is a marked multicurve. While the resulting multicurve $\gamma' \cup \lambda'$ depends on the choice of $\gamma'$ and $\lambda'$, the corresponding element $\langle \gamma' \cup \lambda'\rangle $ in the skein algebra does not \cite[Proposition 3.5]{Mul16}. This gives a well-defined \textbf{superposition product} on the skein algebra $\Sk_1(\SS)$.



\begin{thm}[{\cite[Corollary 6.16]{Mul16}}]
    Let $\SS$ be an unpunctured, triangulable marked surface. Then the superposition product makes $\Sk_1(\SS)$ into a commutative domain. 
\end{thm}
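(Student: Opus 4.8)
The plan is to show two separate facts: that $\Sk_1(\SS)$ is commutative, and that it has no zero divisors. Commutativity is essentially a topological observation: for two marked multicurves $\gamma,\lambda$, the superposition product $\langle\gamma\rangle\langle\lambda\rangle$ is computed by resolving $\gamma'\cup\lambda'$, and the superposition of immersed curves is symmetric in the two factors up to homotopy, so $\langle\gamma\rangle\langle\lambda\rangle=\langle\lambda\rangle\langle\gamma\rangle$. This relies on the fact, cited from \cite[Proposition 3.5]{Mul16}, that the skein element $\langle\gamma'\cup\lambda'\rangle$ does not depend on the choice of representatives $\gamma',\lambda'$ used to make the superposition transverse; once that independence is granted, commutativity is immediate because $\gamma'\cup\lambda'=\lambda'\cup\gamma'$ as unordered unions. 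So the only real content is the domain property.

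For the domain property, the natural approach is to produce a filtration on $\Sk_1(\SS)$ whose associated graded ring is (a subring of) a polynomial or Laurent-polynomial ring, hence a domain; a ring whose associated graded with respect to an exhaustive, separated filtration is a domain is itself a domain. The filtration should be built from geometric intersection numbers: fix a triangulation $T$ of $\SS$, and for a multicurve $\gamma$ record the vector of (minimal) geometric intersection numbers of $\gamma$ with the arcs of $T$. One then shows that multiplication is compatible with the resulting order (the top term of a product is the ``sum'' of the top terms, i.e.\ resolving a superposition can only decrease intersection number with $T$, and the leading terms multiply as in the torus), and that the leading terms span a polynomial-type algebra. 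This is precisely the strategy by which one proves $\Sk_1(\SS)$ embeds in a quantum-torus-like algebra (the ``$\mathbf{G}$-invariant'' or Dehn–Thurston coordinates picture), and in the $q=1$ case the target is an honest commutative Laurent polynomial ring, which is manifestly a domain.

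Concretely, the key steps in order: (1) record commutativity from the symmetry of superposition plus the cited well-definedness; (2) fix a triangulation $T$ and define, for each basis multicurve, its intersection vector with $T$, inducing a filtration of $\Sk_1(\SS)$ indexed by these vectors under an appropriate term order; (3) show the skein relations are filtered, i.e.\ resolving a crossing against a fixed arc of $T$ produces terms of intersection vector $\le$ that of the original, with the equality case controlled; (4) identify the associated graded algebra with the image of a map to a commutative Laurent-polynomial ring (or directly exhibit an injective algebra map $\Sk_1(\SS)\hookrightarrow \mathbb{Z}[t_1^{\pm 1},\dots,t_N^{\pm 1}]$ built from the cluster-algebra / lambda-length coordinates, which exists because each marked curve is a Laurent polynomial in the arc-coordinates of $T$); (5) conclude that $\Sk_1(\SS)$, being a subring of a domain (or having domain associated graded), is a domain.

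The main obstacle is Step (3)/(4): controlling the leading behavior of the superposition product, i.e.\ proving that when one writes out the skein-relation resolutions of $\gamma'\cup\lambda'$, the unique top-intersection-vector term survives and equals the product of the leading terms, with no cancellation. Equivalently, one must verify that the embedding into the Laurent-polynomial ring of arc-coordinates is multiplicative — that the skein superposition product goes to the product of Laurent polynomials — which is exactly the content that makes $\Sk_1(\SS)$ a subalgebra of a commutative domain. This is where the unpunctured hypothesis is genuinely used (tagged arcs in the punctured case would force one into a ring with idempotent-like relations that is not a domain), and where one leans hardest on \cite{Mul16}; in a fully self-contained write-up this step would require the careful diagrammatic argument that resolving all crossings of $\gamma'\cup\lambda'$ against $T$ yields a single dominant monomial. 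Everything else is bookkeeping.
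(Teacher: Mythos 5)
This statement is quoted in the paper with a citation to \cite[Corollary 6.16]{Mul16}; the paper gives no proof of its own, so there is nothing internal to compare your argument against. Judged on its own terms, your outline follows what is essentially the strategy of the cited proof: commutativity comes for free from the well-definedness of superposition (\cite[Proposition 3.5]{Mul16}) together with the symmetry of the unordered union, and the domain property is obtained by fixing a triangulation $T$ and embedding $\Sk_1(\SS)$ (or its localization at the arcs of $T$) into a Laurent polynomial ring in the arc coordinates, the no-zero-divisors property then being inherited from the target. In \cite{Mul16} this is packaged as: the localization of the skein algebra at a triangulation is isomorphic to a (quantum, here commutative) torus, and the localization map is injective because simple multicurves are linearly independent and the product of two simple multicurves has a nonzero leading term in the intersection-number filtration.

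That said, what you have written is a plan rather than a proof, and you have correctly located where all the content sits: steps (3)--(4). Two points deserve emphasis. First, the claim that the dominant term of the resolved superposition survives without cancellation (equivalently, that the associated graded of the intersection-number filtration is a domain, or that the map to the Laurent ring is a multiplicative injection) is exactly the nontrivial diagrammatic lemma; nothing in your sketch substitutes for it, and it cannot be waved away as ``bookkeeping.'' Second, to make the filtration argument literally work one must know that a simple multicurve in minimal position is determined by its normal coordinates relative to $T$ (including corner data at marked points), so that the graded pieces are genuinely one-dimensional; this is needed for ``leading terms multiply as in the torus'' to even be a well-posed assertion. If your intent is to cite \cite{Mul16} for these facts, then your write-up is an accurate summary of the cited proof; if the intent is to be self-contained, the proof is not yet there.
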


\begin{ex}
If $\SS$ is a polygon with $m$-many marked points, we may index the marked points $1,2,\dotsc ,m$. This identifies the simple marked arcs with edges and diagonals in $\Delta_m$, and induces an isomorphism 
\[ \Sk_1(\Delta_n) \simeq P_{2,n} \]
As a consequence, the localization $\Sk_1(\Delta_n)[\partial \Sigma^{-1}]$ is a cluster algebra of type $A$, and its deep points are characterized by Theorem \ref{thm: deeppolygon}.
\end{ex}

\subsection{Cluster algebras of marked surfaces}



Generalizing how the cluster algebra of a polygon was defined as a localization of the Pl\"ucker algebra in Section \ref{section: polygons}, the cluster algebra of an unpunctured marked surface can be defined as a localization of the skein algebra.

\begin{thm}
Let $\SS$ be an unpunctured, triangulable marked surface. Then the localization 
\[ \Sk_1(\SS)[\partial\SS^{-1}] \]
of the skein algebra $\Sk_1(\SS)$ at the set of boundary arcs $\partial \SS$ is a cluster algebra. The frozen cluster variables are the boundary arcs, the mutable cluster variables are the simple non-boundary arcs, and the clusters are the triangulations of $\SS$.
%
%
\end{thm}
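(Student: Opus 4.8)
The plan is to reduce the statement to the already-established theory of cluster algebras from surfaces due to Fomin--Shapiro--Thurston, using the skein-theoretic description as a bridge. First I would recall that $\mathrm{Sk}_1(\SS)$ is generated as a ring by the classes $\langle\gamma\rangle$ of simple marked arcs (this follows from the skein relations, which resolve any crossing and discard contractible components, so that any multicurve is a $\mathbb{Z}$-linear combination of compatible collections of simple arcs and loops; and on an unpunctured triangulable surface every simple loop can be written in terms of arcs by cutting along a triangulation). Second, I would identify, for each triangulation $T$ of $\SS$, the sub-skein-algebra structure: the Ptolemy-type skein relation at a crossing shows that for an arc $\gamma$ crossing a triangulation arc $\tau$ exactly once, $\langle\gamma\rangle\langle\tau\rangle = \langle\gamma'\rangle + \langle\gamma''\rangle$, where the right-hand terms are the two arcs obtained by smoothing; this is exactly the exchange relation of \cite{FST08}, matching the signed adjacency matrix of $T$. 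Thus the classes of the arcs in $T$, together with the exchange relations obtained this way, realize the seed datum of the cluster algebra $\CA(\SS)$ attached to $\SS$ in \cite{FST08}.

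Next I would invoke the key structural input: for surface cluster algebras, the cluster algebra $\CA(\SS)$ equals its upper cluster algebra and in fact coincides with the span of all compatible multicurves inside the ambient field of rational functions --- this is the content of the results of Muller--Williams and Muller (\cite{MW13,Mul16}) identifying $\mathrm{Sk}_1(\SS)$ (after inverting boundary arcs) with $\CA(\SS)$. Concretely: the Laurent phenomenon places every $\langle\gamma\rangle$ in every cluster's Laurent ring, so $\mathrm{Sk}_1(\SS)[\partial\SS^{-1}]\subseteq\mathcal{U}(\SS)$; conversely the bangles/bracelets (here just compatible multicurves, since $q=1$) form a spanning set, and local acyclicity of surface cluster algebras (away from the degenerate small cases, which are handled separately) gives $\CA(\SS)=\mathcal{U}(\SS)$, forcing equality $\mathrm{Sk}_1(\SS)[\partial\SS^{-1}] = \CA(\SS)$ as subalgebras of the function field. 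Once this identification is in hand, the statement about which curves are frozen, mutable, or clusters is immediate from \cite[Prop.~2.10]{FST08} and Proposition \ref{prop: triangulationcount}: flips of triangulations are mutations, the boundary arcs lie in every triangulation (hence are frozen, and we have inverted them), and the non-boundary simple arcs are precisely the mutable cluster variables.

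The main obstacle I expect is the small-surface/special-case bookkeeping and the precise matching of signs and coefficients. On the topological side, one must check that no simple arc crosses a given triangulation arc more than once "in the same triangle", so that the skein relation genuinely produces the two-term exchange relation rather than something more complicated --- this requires choosing the crossing-minimal representative and is the sort of lemma proved in \cite{Mul16}. On the combinatorial side, there are a few low-complexity surfaces (e.g. the surfaces where $n(\SS)$ is small, or where the surface is not connected, or has components of the boundary with no marked point) where either the cluster algebra is not locally acyclic in the naive sense or where $\mathrm{Sk}_1$ needs the loop relations more carefully; the unpunctured hypothesis and triangulability hypothesis are exactly what rule out the worst of these, and the once-punctured-torus/Markov phenomenon is deliberately excluded. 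So the honest content of the proof is: (i) cite \cite{Mul16} for $\mathrm{Sk}_1(\SS)\cong P$-type presentation and for $\mathrm{Sk}_1(\SS)[\partial\SS^{-1}]\cong\CA(\SS)$; (ii) cite \cite{FST08} for the triangulations-as-clusters and flips-as-mutations dictionary; (iii) observe that inverting the boundary arcs is exactly the "boundary coefficients" normalization. I would present it in roughly that three-step order, with the bulk of the work being the careful invocation of \cite{Mul16,MW13,FST08} rather than new computation.

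\begin{proof}
This is \cite[Theorem 6.9 and Section 9]{Mul16}, which we summarize. By the skein relations of Figure \ref{fig: skeinrel}, any marked multicurve can be rewritten as a $\mathbb{Z}$-linear combination of compatible marked multicurves, and (since $\SS$ is unpunctured and triangulable) cutting along a triangulation expresses any simple loop in terms of simple arcs; hence $\Sk_1(\SS)$ is generated as a ring by the classes of simple marked arcs. Fix a triangulation $T$. For a simple non-boundary arc $\tau\in T$, the flip $\tau\mapsto\tau'$ replaces $\tau$ by the unique other diagonal $\tau'$ of the quadrilateral formed by the two triangles of $T$ adjacent to $\tau$; applying the skein relation at the single crossing of $\tau$ and $\tau'$ (in a crossing-minimal representative) yields
\[
\langle\tau\rangle\langle\tau'\rangle \;=\; \langle\alpha_1\rangle\langle\alpha_3\rangle \;+\; \langle\alpha_2\rangle\langle\alpha_4\rangle,
\]
where $\alpha_1,\dots,\alpha_4$ are the sides of that quadrilateral (arcs of $T$). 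This is exactly the exchange relation attached by \cite{FST08} to the signed adjacency matrix of $T$, so the classes of the arcs of $T$, with these exchange relations, form a seed for the cluster algebra $\CA(\SS)$ of \cite{FST08}, with boundary arcs frozen. By the Laurent phenomenon every $\langle\gamma\rangle$ lies in the Laurent ring of every cluster, so $\Sk_1(\SS)[\partial\SS^{-1}]\subseteq\mathcal{U}(\SS)$; conversely, compatible multicurves span $\Sk_1(\SS)$, and surface cluster algebras are locally acyclic \cite{MulLA}, so $\CA(\SS)=\mathcal{U}(\SS)$ and the three rings coincide inside the ambient field. Thus $\Sk_1(\SS)[\partial\SS^{-1}]=\CA(\SS)$, with the boundary arcs as frozen variables (now inverted, giving the boundary-coefficient normalization), the simple non-boundary arcs as mutable cluster variables, and, by Proposition \ref{prop: triangulationcount} together with \cite[Prop.~2.10]{FST08}, the triangulations of $\SS$ as the clusters, with flips corresponding to mutations.
\end{proof}
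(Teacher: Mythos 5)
Your proof takes essentially the same route as the paper: sandwich $\Sk_1(\SS)[\partial\SS^{-1}]$ between $\CA(\SS)$ and $\mathcal{U}(\SS)$ (the content of \cite[Theorem 7.15]{Mul16}) and then collapse the sandwich by showing $\CA(\SS)=\mathcal{U}(\SS)$. The only imprecision is your blanket citation of \cite{MulLA} for local acyclicity: that reference only covers surfaces with at least two marked points, and the case $\lvert\MM\rvert=1$ (a single boundary component with one marked point) requires the separate result of \cite{CLS15}, which the paper invokes explicitly; your prose acknowledges that "degenerate small cases" need separate handling, but the formal proof should cite the missing case.
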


We call this \textbf{the cluster algebra of $\SS$ (with boundary coefficients)} and denote it by $\mathcal{A}(\SS)$.

\begin{proof}
Let $\mathcal{A}(\SS)$ be the Fomin-Shapiro-Thurston cluster algebra of $\SS$ and let $\mathcal{U}(\SS)$ be its upper cluster algebra.  
When $\lvert \MM \rvert \geq 2$, $\mathcal{A}(\SS)=\mathcal{U}(\SS)$ by \cite[Theorems 4.1 and 10.6]{MulLA}. When $\lvert \MM \rvert =1$, $\mathcal{A}(\SS)=\mathcal{U}(\SS)$ by \cite{CLS15}.
By \cite[Theorem 7.15]{Mul16},
there are containments 
\[\mathcal{A}(\SS)\subseteq \Sk_1(\SS)[\partial\SS^{-1}] \subseteq \mathcal{U}(\SS)\]
Therefore, all three algebras are equal.
%
\end{proof}

Since the skein algebra is generated by the simple marked curves and the relations are generated by the $q=1$ marked Kauffman skein relations, the cluster variety may be characterized as follows.

\begin{coro}
Let $\SS$ be an unpunctured, triangulable marked surface. A point $p\in V(\CA(\SS),\kk)$ is equivalent to a map 
\[ p:\{\text{simple marked curves in $\SS$ up to homotopy}\} \rightarrow \kk\]
such that the $q=1$ marked Kauffman skein relations hold (Figure \ref{fig: skeinrel}) and boundary arcs are not sent to $0$. Such a point is deep iff every triangulation contains an arc which is sent to $0$.
\end{coro}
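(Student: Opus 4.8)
The plan is to deduce this from the presentation of the skein algebra recalled just above (that $\Sk_1(\SS)$ is generated by simple marked curves with relations generated by the $q=1$ marked Kauffman skein relations) together with the identity $\CA(\SS)=\Sk_1(\SS)[\partial\SS^{-1}]$, and then to unwind the definition of a deep point. First I would pass from $\CA(\SS)$ to $\Sk_1(\SS)$: applying $\Hom_{\mathrm{Ring}}(-,\kk)$ to the localization map and arguing exactly as in the definition of a cluster torus, a point $p\in V(\CA(\SS),\kk)$ is the same datum as a ring homomorphism $\Sk_1(\SS)\to\kk$ sending every boundary arc to a unit of $\kk$, i.e.\ to a nonzero element. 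Then, writing $\Sk_1(\SS)\cong\ZZ[\,\langle\gamma\rangle:\gamma\text{ a simple marked curve}\,]/I$ with $I$ generated by the $q=1$ marked Kauffman skein relations of Figure \ref{fig: skeinrel} (each such relation, interpreted inside $\Sk_1(\SS)$, being a $\ZZ$-polynomial identity among classes of simple marked curves once every multicurve appearing in it is rewritten as the superposition product of its simple components), a ring homomorphism $\Sk_1(\SS)\to\kk$ is precisely an assignment of a value in $\kk$ to each simple marked curve for which every $q=1$ marked Kauffman skein relation holds in $\kk$. Combining the two observations yields the first assertion of the corollary.

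For the second assertion I would unwind deepness. By definition $p$ is deep iff for every cluster it kills one of the variables in that cluster; by the theorem quoted above, the clusters of $\CA(\SS)$ are the triangulations of $\SS$, with cluster variables the arcs they contain. Since the boundary arcs are frozen, they are invertible in $\CA(\SS)$, so $p$ sends every boundary arc to a nonzero element of $\kk$ and in particular cannot kill one. Hence $p$ kills some cluster variable of a triangulation $T$ iff $p$ kills some non-boundary arc of $T$, which --- because $p$ never kills a boundary arc --- is equivalent to saying that $T$ contains \emph{some} arc sent to $0$. Therefore $p$ is deep iff every triangulation of $\SS$ contains an arc sent to $0$.

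The only genuinely non-formal ingredient is the presentation of $\Sk_1(\SS)$ used in the first part, in particular that the relation ideal is generated exactly by the skein moves; this rests on the facts that simple multicurves form a $\ZZ$-basis of $\Sk_1(\SS)$ and that the superposition product is well defined \cite{Mul16}, both of which are subsumed in the results quoted immediately before the corollary. Granting that, the argument is routine bookkeeping, so I expect no real obstacle here.
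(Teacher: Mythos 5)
Your proposal is correct and follows exactly the route the paper intends: the paper offers no separate proof of this corollary, deriving it immediately from the presentation of $\Sk_1(\SS)$ by simple marked curves modulo the $q=1$ skein relations together with the identification $\CA(\SS)=\Sk_1(\SS)[\partial\SS^{-1}]$, which is precisely your argument. Your unwinding of deepness via the clusters-are-triangulations correspondence and the invertibility of the frozen boundary arcs is likewise the intended (and routine) bookkeeping.
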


From this point on, we identify points of $V(\CA(\SS),\kk)$ with such functions without comment.

\subsection{Cutting and gluing}\label{section: cutting}

Given a simple marked arc $s$ in a marked surface $\SS$, we may construct a new marked surface $\SS\smallsetminus s$ by \textbf{cutting} along $s$; that is, removing $s$ and adding two copies of $s$ (one to each side of the original $s$) (see Figure \ref{fig: annulus 2 2 cutting}). 
More generally, given a compatible collection of non-boundary marked arcs $S$, we construct $\SS\smallsetminus S$ by cutting along the arcs in $S$ in any order.

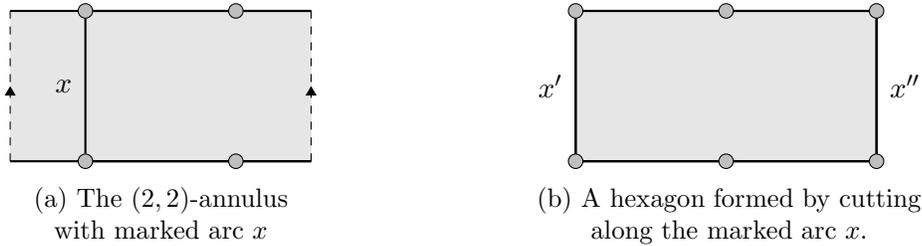
\begin{figure}[h!tb]
\captionsetup[subfigure]{justification=centering}
\centering
\begin{subfigure}[c]{0.45\linewidth}
\centering
\begin{tikzpicture}
    \path[fill=black!10] (-2,1) to (2,1) to (2,-1) to (-2,-1) to (-2,1);
    \draw[thick] (-2,1) to (2,1) (-2,-1) to (2,-1);
    \draw[dashed] (-2,-1) to (-2,1) (2,-1) to (2,1);
    \draw[-Triangle,thin] (-2,-.1) to (-2,0);
    \draw[-Triangle,thin] (2,-.1) to (2,0);
    
    \node[dot] (1) at (-1,1) {};
    \node[dot] (2) at (1,1) {};
    \node[dot] (3) at (1,-1) {};
    \node[dot] (4) at (-1,-1) {};
    \draw[thick] (1) to node [midway, left=1pt] {$x$} (4);
    \draw[thick] (1) to (2);
    \draw[thick] (2) to (2,1);
    \draw[thick] (3) to  (4);
    \draw[thick] (3) to (2,-1);

\end{tikzpicture}
\subcaption{The $(2,2)$-annulus\\with marked arc $x$}
\label{fig: annulus 2 2 identified sides}
\end{subfigure}
\begin{subfigure}[c]{0.45\linewidth}
\centering
\begin{tikzpicture}
    \draw[fill=black!10,thick] (-3,1) to (1,1) to (1,-1) to (-3,-1) to (-3,1);
    \node[dot] (1) at (-3,1) {};
    \node[dot] (2) at (-1,1) {};
    \node[dot] (3) at (1,1) {};
    \node[dot] (4) at (1,-1) {};
    \node[dot] (5) at (-1,-1) {};
    \node[dot] (6) at (-3,-1) {};
    \draw[thick] (2) to  (3);
    \draw[thick] (1) to  (2);
    \draw[thick] (4) to  (5);
    \draw[thick] (6) to  (5);
    \draw[thick] (6) to node [midway, left=1pt]  {$x'$} (1);
    \draw[thick] (4) to node [midway, right=1pt] {$x''$} (3);
\end{tikzpicture}
\subcaption{A hexagon formed by cutting \\along the marked arc $x$.}
\label{fig: annulus 2 2 poly diss}
\end{subfigure}
\caption{A cutting of the (2,2)-annulus.}
   \label{fig: annulus 2 2 cutting}
\end{figure}

The clusters algebras of $\SS$ and $\SS\smallsetminus S$ are related as follows.


\begin{prop}\label{prop: cutting algebras}
Let $\SS$ be a triangulable marked surface, and let $S$ be a compatible collection of non-boundary marked arcs in $\SS$. Then the map $\SS\smallsetminus S\rightarrow S$ induces an isomorphism
\[  \CA(\SS\smallsetminus S)/\langle s'-s'', \forall s\in S\rangle \xrightarrow{\sim} \CA(\SS)[S^{-1}] \]
Here, $s'$ and $s''$ denote the two preimages of $s$ in $\SS\smallsetminus S$.
\end{prop}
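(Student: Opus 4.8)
\emph{Reduction and construction of the map.}
The plan is to reduce to the case $|S|=1$ and then sandwich both sides between a common Laurent ring. Cutting along one arc $s\in S$ leaves the other arcs of $S$ a compatible collection of non-boundary arcs in $\SS\smallsetminus s$, and $(\SS\smallsetminus s)\smallsetminus(S\smallsetminus\{s\})=\SS\smallsetminus S$, so it suffices to treat $S=\{s\}$; write $s',s''$ for the two arcs created by the cut. Using $\CA(\SS)=\Sk_1(\SS)[\partial\SS^{-1}]$ I would first note that the gluing map $\pi\colon\SS\smallsetminus s\to\SS$ is a homeomorphism away from $s$, hence carries marked multicurves to marked multicurves and preserves both the local $q=1$ Kauffman relations of Figure~\ref{fig: skeinrel} and the superposition product; so it induces a ring homomorphism $\Sk_1(\SS\smallsetminus s)\to\Sk_1(\SS)$ sending each of $s',s''$ to $s$ and the other boundary arcs to the corresponding boundary arcs of $\SS$. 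Inverting boundary arcs on both sides and additionally inverting $s$ on the target gives $\Phi\colon\CA(\SS\smallsetminus s)\to\CA(\SS)[s^{-1}]$; since $\Phi(s'-s'')=0$ it descends to $\bar\Phi\colon\CA(\SS\smallsetminus s)/\langle s'-s''\rangle\to\CA(\SS)[s^{-1}]$.

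\emph{Surjectivity.}
Since $\CA(\SS)[s^{-1}]$ is generated by classes of simple marked curves together with the inverses of the boundary arcs and of $s$, it suffices to show each $\langle\gamma\rangle$ lies in the image, by induction on the geometric intersection number $i(\gamma,s)$. If $i(\gamma,s)=0$ then $\gamma$ lifts to a simple marked curve in $\SS\smallsetminus s$. If $i(\gamma,s)>0$, put $\gamma$ in minimal position, let $p$ be the crossing with $s$ nearest an endpoint of $s$, and resolve it: in $\Sk_1(\SS)$ one gets $\langle\gamma\rangle\langle s\rangle=\langle\gamma\cup s\rangle=\langle R_1\rangle+\langle R_2\rangle$, where each $R_j$ is obtained by grafting the crossing-free initial segment of $s$ onto $\gamma$ at $p$, so (having discarded the resolved crossing $p$) meets $s$ in at most $i(\gamma,s)-1$ points. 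Resolving the remaining self-crossings of $R_j$ (which does not raise its intersection number with $s$) and factoring into simple curves, the inductive hypothesis plus invertibility of $\langle s\rangle$ gives $\langle\gamma\rangle\in\operatorname{im}(\bar\Phi)$.

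\emph{Injectivity, up to one algebraic input.}
Fix a triangulation $T$ of $\SS$ containing $s$; then $\bar T:=(T\smallsetminus\{s\})\cup\{s',s''\}$ is a triangulation of $\SS\smallsetminus s$, and identifying $s'=s''$ gives $\mathbb{Z}[\bar T^{\pm}]/\langle s'-s''\rangle\cong\mathbb{Z}[T^{\pm}]$. Localizing $\CA(\SS\smallsetminus s)$ at $\bar T$ and passing to this quotient yields $\bar\Psi\colon\CA(\SS\smallsetminus s)/\langle s'-s''\rangle\to\mathbb{Z}[T^{\pm}]$; composing $\bar\Phi$ with the injection $\iota\colon\CA(\SS)[s^{-1}]\hookrightarrow\mathbb{Z}[T^{\pm}]$ furnished by the Laurent phenomenon gives a second such map, and the two agree, because a curve disjoint from $s$ crosses $\bar T$ exactly as its image crosses $T$, so its Laurent expansions in $\bar T$ and in $T$ coincide once $s'=s''=s$. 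Hence $\ker\bar\Phi=\ker\bar\Psi$; and since $\bar\Psi$ is the localization map of $\CA(\SS\smallsetminus s)/\langle s'-s''\rangle$ at a cluster torus, it — and therefore $\bar\Phi$ — is injective as soon as that ring is a domain, i.e.\ as soon as $\langle s'-s''\rangle$ is a prime ideal of $\CA(\SS\smallsetminus s)$.

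\emph{The main obstacle.}
I expect this primeness to be the hard part. I would deduce it from $\CA(\SS\smallsetminus s)=\mathcal{U}(\SS\smallsetminus s)$ (valid for unpunctured triangulable surfaces, applied to each component when $s$ separates) by showing that $\langle s'-s''\rangle$ is the contraction to $\CA(\SS\smallsetminus s)$ of the prime ideal it generates in $\mathbb{Z}[\bar T^{\pm}]$: if $a\in\CA(\SS\smallsetminus s)$ and $a=(s'-s'')b$ with $b$ a Laurent polynomial, then $b=a/(s'-s'')$, and since $s',s''$ are frozen they occur in every cluster, so the divisorial valuation cut out by $\{s'=s''\}$ is unchanged by mutation; hence $s'-s''$ divides $a$ in the Laurent ring of every cluster, forcing $b\in\mathcal{U}(\SS\smallsetminus s)=\CA(\SS\smallsetminus s)$, so $a\in\langle s'-s''\rangle$ already in $\CA(\SS\smallsetminus s)$. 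Together with the previous two paragraphs this proves the isomorphism when $|S|=1$, and the induction of the first paragraph then extends it to an arbitrary compatible collection $S$.
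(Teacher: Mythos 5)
Your construction of the map and your surjectivity argument follow the same route as the paper, which simply outsources both to the skein-theoretic cutting results of \cite{Mul16}: your induction on $i(\gamma,s)$, resolving crossings of $\gamma\cup s$ one at a time and absorbing powers of the invertible element $\langle s\rangle$, is essentially a re-derivation of \cite[Corollary 4.13]{Mul16} (note that your procedure consumes one factor of $\langle s\rangle$ per crossing, which is consistent with that result's exponent $k>1$; the claim that resolving the remaining crossings of $R_j$ does not raise the intersection number with $s$ is exactly the delicate point that reference handles). Where you genuinely diverge is injectivity: the paper gets it from the skein-algebra localization isomorphism, whereas you reduce it to primeness of $\langle s'-s''\rangle$ in $\CA(\SS\smallsetminus s)$ via the comparison $\iota\circ\bar\Phi=\bar\Psi$ and the equality $\mathcal{A}=\mathcal{U}$. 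That comparison itself is fine but not free — two ring maps agreeing on the cluster $\bar T$ need not agree on all of $\CA(\SS\smallsetminus s)$, so you do need the locality of the Laurent expansion of a curve with respect to a triangulation containing $s$ (or some equivalent) to identify the two maps on the generating simple curves.

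The genuine gap is in the primeness argument. You assert that ``the divisorial valuation cut out by $\{s'=s''\}$ is unchanged by mutation'' because $s'$ and $s''$ are frozen. Frozenness only guarantees that the locus $\{s'=s''\}$ is defined in every cluster torus; it does not show that the height-one primes $(s'-s'')\subset\mathbb{Z}[\bar T^{\pm}]$ and $(s'-s'')\subset\mathbb{Z}[\bar T'^{\pm}]$ induce the \emph{same} valuation on the common fraction field. For adjacent clusters differing by mutation at $x$ (with $xx'=M_1+M_2$), the two tori are glued along $\{x\neq0,\,x'\neq0\}$, so the valuations agree precisely when the divisor $\{s'-s''=0\}$ is not contained in $\{x'=0\}$ in one torus, nor in $\{x=0\}$ in the other; equivalently, no cluster variable is divisible by $s'-s''$ in the Laurent ring of an adjacent cluster. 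This is true — for instance, positivity of Laurent expansions shows every cluster variable is nonzero at the point of the torus where all variables of $\bar T$ equal $1$, which lies on $\{s'=s''\}$ — but it is a necessary input, not a consequence of frozenness, and without it the step ``$s'-s''$ divides $a$ in the Laurent ring of every cluster'' does not follow. With that supplied, your contraction argument does show $\langle s'-s''\rangle$ is the restriction of a prime of $\mathbb{Z}[\bar T^{\pm}]$, hence prime, and the rest of your injectivity argument and the induction on $|S|$ go through.
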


\begin{proof}
The topological map $\SS\smallsetminus S \rightarrow \SS$ is an immersion, and so it sends marked curves in $\SS\smallsetminus S$ to marked curves in $\SS$. Since the skein relations are local, this induces a morphism
\[ \Sk_1(\SS\smallsetminus S) \rightarrow \Sk_1(\SS) \]
For each $s\in S$, the two boundary arcs $s'$ and $s''$ in $\SS\smallsetminus S$ map to $s$. It follows that the above morphism factors through the quotient
\[ \Sk_1(\SS\smallsetminus S)/ \langle s'-s'', \forall s\in S\rangle \rightarrow \Sk_1(\SS) \]
By \cite[Corollary 4.13]{Mul16}, for each simple marked arc $x$ in $\SS$, there is some $k>0$ such that for any $ \langle y \rangle \in \Sk_1 (\SS) $, $\langle x^k \rangle \langle y \rangle$ does not intersect $x$. These exponents are multiplicative, so for any marked multiarc $\gamma$ in $\SS$, there is a monomial $S^m$ in $S$ such that $\langle S^m\rangle \langle \gamma\rangle$ is in the image of $\Sk_1(\SS\smallsetminus S)$. Therefore, the map
\[ \Sk_1(\SS\smallsetminus S)[\partial^{-1}]/ \langle s'-s'', \forall s\in S\rangle \rightarrow \Sk_1(\SS)[\partial^{-1},S^{-1}] \]
is an isomorphism. This translates to cluster algebras as the desired isomorphism
\[ \mathcal{A}(\SS\smallsetminus S)/ \langle s'-s'', \forall s\in S\rangle \xrightarrow{\sim} \mathcal{A}(\SS)[S^{-1}] \qedhere\]
\end{proof}


The isomorphism from the proposition fits into the following diagram, in which the map on the left is the localization map and the map on the right is the quotient map.
\begin{equation}\label{eq: cuttingalgebras}
\begin{tikzcd}
	{\CA(\SS)} & {\CA(\SS\smallsetminus S)} \\
	{\CA(\SS)[S^{-1}]} & {\CA(\SS\smallsetminus S)/\langle s'-s'', \forall s\in S\rangle}
	\arrow["\sim"', from=2-2, to=2-1]
	\arrow[two heads, from=1-2, to=2-2]
	\arrow[hook, from=1-1, to=2-1]
\end{tikzcd}
\end{equation}
The isomorphism can be translated to cluster varieties via the following terminology.
\begin{itemize}
    \item A point in $V(\CA(\SS),\kk)$ is \textbf{non-zero on $S$} if it is non-zero on each arc in $S$.
    \item A point in $V(\CA(\SS\smallsetminus S),\kk)$ is \textbf{gluable along $S$} if it has the same value on both preimages of each edge in $S$.
\end{itemize}
The first type of points are in bijection with homomorphisms $\CA(\SS)[S^{-1}]\rightarrow \kk$ and the second type of points are in bijection with homomorphisms $\CA(\SS\smallsetminus S)/\langle s'-s'', \forall s\in S\rangle\rightarrow \kk$. Therefore, Proposition \ref{prop: cutting algebras} implies the following.

\begin{prop}\label{prop: cutting subvarieties}
Let $\SS$ be a triangulable marked surface, and let $S$ be a compatible collection of non-boundary marked arcs in $\SS$.
Then restricting values from marked curves in $\SS$ to marked curves in $\SS\smallsetminus S$ gives an isomorphism between:
\begin{enumerate}
    \item The open subvariety of $V(\CA(\SS),\kk)$ consisting of points which are non-zero on $S$.
    \item The closed subvariety of $V(\CA(\SS\smallsetminus S),\kk)$ consisting of points which are {gluable along $S$}.
\end{enumerate}
\end{prop}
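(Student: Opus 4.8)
The plan is to apply the contravariant functor $\Hom_{\mathrm{Ring}}(-,\kk)$ to the ring isomorphism
$\CA(\SS\smallsetminus S)/\langle s'-s'',\ \forall s\in S\rangle\xrightarrow{\sim}\CA(\SS)[S^{-1}]$ of Proposition \ref{prop: cutting algebras} and to identify each of the two sides with the claimed subvariety, using only the universal properties of localization and quotient together with the description of the Zariski topology in terms of ideals.

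First I would identify side (1) with $V(\CA(\SS)[S^{-1}],\kk)$. A homomorphism $p\colon\CA(\SS)\to\kk$ sends each $s\in S$ to a non-zero element of the field $\kk$, hence to a unit, precisely when it factors (uniquely) through the localization map $\CA(\SS)\hookrightarrow\CA(\SS)[S^{-1}]$. Thus $\Hom_{\mathrm{Ring}}(\CA(\SS)[S^{-1}],\kk)$ is identified with the set of points of $V(\CA(\SS),\kk)$ that are non-zero on $S$; this set is open, being the intersection of the (finitely many) complements of the vanishing loci of the functions $f_s$ for $s\in S$, and it carries the structure of the affine variety of $\CA(\SS)[S^{-1}]$, exactly as a cluster torus does. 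Dually, I would identify side (2) with $V\bigl(\CA(\SS\smallsetminus S)/\langle s'-s'',\ \forall s\in S\rangle,\kk\bigr)$: a homomorphism $q\colon\CA(\SS\smallsetminus S)\to\kk$ factors (uniquely) through the quotient by $\langle s'-s'',\ \forall s\in S\rangle$ exactly when $q(s')=q(s'')$ for every $s\in S$, i.e.\ exactly when $q$ is gluable along $S$; this set is Zariski closed, being the common vanishing locus of the functions $f_{s'}-f_{s''}$, and it is the affine variety of the quotient ring.

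Applying $\Hom_{\mathrm{Ring}}(-,\kk)$ to the isomorphism of Proposition \ref{prop: cutting algebras} then gives a bijection between these two subvarieties which is an isomorphism of varieties, since an isomorphism of rings induces a homeomorphism of $\kk$-point sets for the Zariski topology and matches up their rings of functions. It remains to check that this isomorphism is the ``restriction of values'' map of the statement. Chasing the commutative diagram \eqref{eq: cuttingalgebras}, the composite $\CA(\SS\smallsetminus S)\to\CA(\SS\smallsetminus S)/\langle s'-s'',\ \forall s\in S\rangle\xrightarrow{\sim}\CA(\SS)[S^{-1}]$ sends the class of a marked curve $\gamma$ in $\SS\smallsetminus S$ to the image of $\gamma$ under the immersion $\SS\smallsetminus S\to\SS$ (this is how the map $\Sk_1(\SS\smallsetminus S)\to\Sk_1(\SS)$ was defined in the proof of Proposition \ref{prop: cutting algebras}). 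Hence a point $p$ on side (1) corresponds to the point $q$ on side (2) with $q(\gamma)=p(\text{image of }\gamma)$, which is precisely restriction of values from marked curves in $\SS$ to marked curves in $\SS\smallsetminus S$.

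I do not expect a serious obstacle here: the content is entirely carried by Proposition \ref{prop: cutting algebras}. The only point requiring care is bookkeeping — verifying that ``non-zero on $S$'' corresponds under that proposition to inverting exactly the arcs of $S$ (the two preimages $s',s''$ are boundary/frozen arcs of $\SS\smallsetminus S$ and so are automatically invertible, hence impose no extra open condition), and that ``gluable along $S$'' is exactly the closed condition cutting out the image of the quotient map. Once diagram \eqref{eq: cuttingalgebras} is in hand, both identifications are immediate.
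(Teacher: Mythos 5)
Your proposal is correct and matches the paper's own argument: the paper likewise identifies the points non-zero on $S$ with $\Hom(\CA(\SS)[S^{-1}],\kk)$ and the gluable points with $\Hom(\CA(\SS\smallsetminus S)/\langle s'-s''\rangle,\kk)$, then applies $\Hom_{\mathrm{Ring}}(-,\kk)$ to the isomorphism of Proposition \ref{prop: cutting algebras} via diagram \eqref{eq: cuttingalgebras}. Your extra check that the induced bijection is the restriction-of-values map is a reasonable addition that the paper leaves implicit.
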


\noindent We refer to this as the \textbf{cutting isomorphism} between the two subvarieties:
\begin{equation}\label{eq: cutting}
\{ \text{points in $V(\CA(\SS),\kk)$ non-zero on $S$}\} 
\xrightarrow{\sim}
\{\text{points in $V(\CA(\SS\smallsetminus S),\kk)$ gluable along $S$} \}
\end{equation}
Additionally, we refer to the action of the inverse map as \textbf{gluing} points.

\draftnewpage

\section{Deep points of unpunctured marked surfaces}
\label{section: deepmarkedsurfaces}

In this section, we classify and parameterize the deep points of unpunctured and triangulable marked surfaces. Our strategy is to reduce to the polygonal case by cutting along collections of arcs called \emph{polygonal dissections}.




\subsection{Vanishing lemma}

The first step in characterizing deep points of $\CA(\SS)$ is to characterize which marked arcs can simultaneously vanish. 
A \textbf{triangle of arcs} in $\SS$ is a compatible triple of marked arcs $\{x,y,z\}$ which bound a disk in $\SS$. The following lemma shows that, for connected surfaces, a deep point must kill an odd number of arcs in each triangle, generalizing Lemma \ref{lemma: trianglepoly}.





\begin{lemma}\label{lemma: trianglesurf}
Let $\SS$ be a connected, unpunctured, triangulable marked surface, let $p\in V(\CA(\SS),\kk)$, and let $\{x,y,z\}$ be a triangle of arcs in $\SS$.
\begin{enumerate}
    \item If $p$ kills any two of $\{x,y,z\}$, then $p$ also kills the third.
    \item If $p$ is deep, then $p$ kills at least one of $\{x,y,z\}$.
\end{enumerate}
\end{lemma}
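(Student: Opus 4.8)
The plan is to mirror the two-part structure of Lemma~\ref{lemma: trianglepoly}, replacing the Ptolemy relations of a polygon by the $q=1$ skein relations (Figure~\ref{fig: skeinrel}) applied to embedded quadrilaterals in $\SS$. Throughout, let $D$ be the disk bounded by $\{x,y,z\}$, with corners $P_x,P_y,P_z$ (so $x$ is the side opposite $P_x$, etc.); since $\SS$ is unpunctured, each corner is a marked point on $\partial\SS$. Degenerate cases in which corners or edges of $D$ are identified in $\SS$ are generic-case-equivalent and require only cosmetic changes, so I will suppress them below.

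\textbf{Part (1).} Suppose $p$ kills two of the three arcs; any two of them share a corner of $D$, say $p(y)=p(z)=0$ with $y,z$ both incident to $P_x$. Let $e$ be the boundary arc of $\SS$ at $P_x$ lying on the side of $z$ away from $D$, with other endpoint $P'$; pushing $z$ past $e$ produces a simple arc $f=P'P_y$ so that $\{z,e,f\}$ bounds a thin disk $D'$ on the non-$D$ side of $z$, and $Q\coloneqq D\cup D'$ is an embedded quadrilateral in $\SS$ with corners $P_x,P_z,P_y,P'$ in cyclic order, sides $y,x,f,e$, and diagonals $z$ and a new arc $c=P_zP'$. Resolving the two diagonals of $Q$ gives the skein relation $\langle z\rangle\langle c\rangle = \langle x\rangle\langle e\rangle + \langle y\rangle\langle f\rangle$ in $\CA(\SS)$. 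Applying $p$ and using $p(z)=p(y)=0$ yields $0=p(x)p(e)$, and since $e$ is a boundary arc $p(e)\neq 0$, so $p(x)=0$. The cases in which $p$ kills a different pair follow by relabeling the corners of $D$.

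\textbf{Part (2).} I would argue by contradiction, following the polygon proof of Lemma~\ref{lemma: trianglepoly}(2). Assuming $p$ is deep but nowhere zero on some triangle of arcs, the set of triangulations of $\SS$ containing a triangle on which $p$ is nowhere zero is nonempty (any extension of $\{x,y,z\}$ works), so choose such a triangulation $T$ with the minimal number of arcs killed by $p$. Since $p$ is deep, $T$ contains a killed arc, hence a triangle with a killed edge; since $\SS$ is connected, the dual graph of $T$ is connected, so walking from a nowhere-zero triangle to one with a killed edge produces an adjacent pair $\Delta,\Delta'$ sharing an edge $e_0$, with $\Delta$ containing a killed edge $d$ and $\Delta'$ containing none (so $p(e_0)\neq 0$, forcing $d\neq e_0$). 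Letting $\Delta''$ be the triangle across $d$, I would run the same sequence of flips inside $\Delta'\cup\Delta\cup\Delta''$ used in the polygon argument — repeatedly invoking part (1) to control which edges vanish — to produce a triangulation with strictly fewer killed arcs still containing a nowhere-zero triangle, contradicting minimality.

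\textbf{Main obstacle.} The genuinely new difficulty, absent for polygons, is that $\Delta'\cup\Delta\cup\Delta''$ need not be an embedded pentagon: distinct triangles of a triangulation of a surface may be glued along more than one edge. The first useful observation is that $\Delta'\neq\Delta''$, since $\Delta''$ contains the killed arc $d$ while $\Delta'$ contains no killed arc; beyond that I expect to spend the bulk of the effort either choosing the adjacent pair $\Delta,\Delta'$ (among all valid ones) so that the configuration is embedded, or checking directly that in the remaining identified configurations an even simpler flip lowers the count. An alternative route, which I would fall back on if the degeneracy analysis proves unwieldy, is to cut $\SS$ along the nowhere-zero arcs of $D$ and induct on $n(\SS)$ with base case the polygon (Lemma~\ref{lemma: trianglepoly}), at the cost of tracking how deepness of $p$ distributes over the pieces produced by the cut.
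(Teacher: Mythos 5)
Your part (1) is correct and is essentially the paper's argument: attach a boundary arc at the shared endpoint of the two killed arcs to complete the triangle to a quadrilateral (whose sides need not be distinct), and apply the skein relation for its two crossing diagonals; the invertibility of the boundary arc then forces the third arc of the triangle to vanish.

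Part (2), however, is a plan rather than a proof: you set up the minimal triangulation $T$ and locate the adjacent pair $\Delta,\Delta'$ exactly as in Lemma \ref{lemma: trianglepoly}(2), but then defer the core step --- the flip argument inside $\Delta'\cup\Delta\cup\Delta''$ --- and flag the possible non-embeddedness of that pentagon as an unresolved obstacle, offering only two candidate workarounds. That is a genuine gap in the proposal as written. It is worth noting that the obstacle is less serious than you fear, and the paper dispatches it without any embeddedness hypothesis: the only inputs to the reduction are two applications of part (1), one to the triple consisting of two sides of the pentagon together with the flip $h$ of the shared edge $c$, and one to the triple consisting of a side together with the two remaining flipped arcs $i,j$. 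Each of these triples is a triangle of some triangulation obtained from $T$ by one or two flips (flips always exist and are simple for unpunctured surfaces, since there are no self-folded triangles), hence is a \emph{triangle of arcs} in the sense of the lemma --- a compatible triple bounding a disk --- regardless of how the three original triangles are identified along their edges in $\SS$. Since part (1) was already proved for such triangles with possibly coincident quadrilateral sides, the counting argument (one of the two flip moves strictly decreases the number of killed arcs while preserving a nowhere-vanishing triangle) goes through verbatim, and no case analysis of degenerate gluings is needed. Your fallback of cutting along the non-vanishing arcs and inducting would require exactly the cutting/gluing machinery the paper only develops \emph{after} this lemma, so it is not available at this point in the development.
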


\begin{proof}
(1) Assume that $p(x)=p(y)=0$; note that neither $x$ nor $y$ can be boundary arcs. Let $a$ be a boundary arc with an endpoint at the shared vertex of $x$ and $y$. Then $a,x,z$ form three of the four edges of a quadrilateral (Figure \ref{fig: quadsurf}); let $b$ be the other edge, and let $c$ be the other diagonal.\footnote{Despite the figure, the arcs $a,b,x,z$ need not be distinct.}

\begin{figure}[h!tb]
\begin{center}
\begin{tikzpicture}[inner sep=0.5mm,scale=.5,auto]
    \path[use as bounding box] (-2.25,-2.25) rectangle (2.25,2.25);
	\node (1) at (2,2) [circle,draw] {};
	\node (2) at (2,-2) [circle,draw] {};
	\node (3) at (-2,-2) [circle,draw] {};
	\node (4) at (-2,2) [circle,draw] {};
	\draw[red,thick] (4) to node[above] {$x$} (1);
	\draw[red,thick] (2) to node[pos=0.75, below=2pt] {$y$} (4);
	\draw[thick] (1) to node[right] {$z$} (2);
	\draw[dark green,thick] (3) to node[left] {$a$} (4);
	\draw[thick] (2) to node[below] {$b$} (3);
	\draw[thick] (1) to node[pos=0.25, below=2pt] {$c$} (3);
\end{tikzpicture}
\end{center}
\caption{Extending a triangle to a quadrilateral in $\SS$}
\label{fig: quadsurf}
\end{figure}
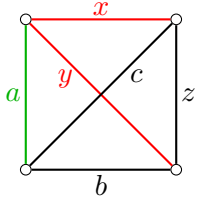

These arcs satisfy the skein relation $cy = az + bx$.
Applying $p$ to this relation gives 
\[ p(c)p(y) = p(a)p(z) + p(b)p(x) \]
Since $p(x)=p(y)=0$ and $p(a)\neq0$ (as $a$ is a boundary arc), this forces $p(z)=0$.

(2) Consider $p\in V(\CA(\SS),\kk)$ and assume that there is a triangle in $\SS$ with no vanishing arcs. 
We claim there is a triangulation of $T$ on which $p$ doesn't vanish. 
To show this, choose a triangulation $T$ of $\SS$ containing the non-vanishing triangle. Either $p$ does not vanish on all of $T$ (and we are done), or $p$ kills at least one arc in $T$.
%
%
%

Assume $p$ kills at least one arc in $T$.
Since $\SS$ is connected, we may find adjacent triangles $\{a,b,c\}$ and $\{c,d,e\}$ in $T$ such that $p$ does not kill any of $\{a,b,c\}$ and kills at least one of $\{c,d,e\}$. Since $p(c)\neq0$, part (1) implies that $p$ kills exactly one of $d$ and $e$; relabeling as needed, we assume that $p(d)\neq0$ and $p(e)=0$. 
Let $\{e,f,g\}$ be the other triangle in $T$ containing the arc $e$, and consider the pentagon in $\SS$ formed by these three triangles (Figure \ref{fig: first triangulation}); let $h,i,j$ be the other arcs in the figure (as in Figure \ref{fig: second triangulation}).
We apply part (1) to two triangles in this pentagon.
\begin{itemize}
    \item Consider the triangle $\{a,e,h\}$. Since $p(a)\neq0$ and $p(e)=0$, $p(h)\neq0$.
    \item Consider the triangle $\{b,i,j\}$. Since $p(b)\neq0$, at least one of $p(i),p(j)$ is not $0$.
\end{itemize}
Therefore, either $p(c) p(i)\neq0$ or $p(h) p(j)\neq0$. If we replace the arcs $\{c,e\}$ in $T$ with either $\{c,i\}$ or $\{h,j\}$, we obtain a triangulation of $\SS$ in which $p$ kills one fewer arc than $T$. Iterating this argument produces a triangulation of $\SS$ on which $p$ does not vanish, and so $p$ cannot be deep.
%
%
\end{proof}


\begin{figure}[h!tb]
\captionsetup[subfigure]{justification=centering}
\centering
\begin{subfigure}[c]{0.35\linewidth}
\centering
\begin{tikzpicture}[inner sep=0.5mm,scale=1.25,auto]
	\node (1) at (90:1) [circle,draw] {};
	\node (2) at (90-72:1) [circle,draw] {};
	\node (3) at (90-2*72:1) [circle,draw] {};
	\node (4) at (90-3*72:1) [circle,draw] {};
	\node (5) at (90-4*72:1) [circle,draw] {};
	\draw[dark green,thick] (2) to node[right] {$a$} (3);
	\draw[dark green,thick] (3) to node[below] {$b$} (4);
	\draw[dark green,thick] (2) to node[below right] {$c$} (4);
	\draw[dark green,thick] (4) to node[left] {$d$} (5);
	\draw[dark red,thick] (2) to node[above] {$e$} (5);
	\draw[thick] (5) to node[above left] {$f$} (1);
	\draw[thick] (1) to node[above right] {$g$} (2);
\end{tikzpicture}
\subcaption{Part of a triangulation\\in which $e$ vanishes}
\label{fig: first triangulation}
\end{subfigure}
\begin{subfigure}[c]{0.55\linewidth}
\centering
\begin{tikzpicture}[inner sep=0.5mm,scale=1.25,auto]
	\node (1) at (90:1) [circle,draw] {};
	\node (2) at (90-72:1) [circle,draw] {};
	\node (3) at (90-2*72:1) [circle,draw] {};
	\node (4) at (90-3*72:1) [circle,draw] {};
	\node (5) at (90-4*72:1) [circle,draw] {};
	\draw[dark green,thick] (2) to node[right] {$a$} (3);
	\draw[dark green,thick] (3) to node[below] {$b$} (4);
	\draw[dark green,thick] (4) to node[left] {$d$} (5);
	\draw[thick] (5) to node[above left] {$f$} (1);
	\draw[thick] (1) to node[above right] {$g$} (2);
	\draw[dark green,thick] (4) to node[left] {$i$} (1);
	\draw[dark green,thick] (4) to node[below right] {$c$} (2);
\end{tikzpicture}
\hspace{.5cm}
\begin{tikzpicture}[inner sep=0.5mm,scale=1.25,auto]
	\node (1) at (90:1) [circle,draw] {};
	\node (2) at (90-72:1) [circle,draw] {};
	\node (3) at (90-2*72:1) [circle,draw] {};
	\node (4) at (90-3*72:1) [circle,draw] {};
	\node (5) at (90-4*72:1) [circle,draw] {};
	\draw[dark green,thick] (2) to node[right] {$a$} (3);
	\draw[dark green,thick] (3) to node[below] {$b$} (4);
	\draw[dark green,thick] (4) to node[left] {$d$} (5);
	\draw[thick] (5) to node[above left] {$f$} (1);
	\draw[thick] (1) to node[above right] {$g$} (2);
	\draw[dark green,thick] (3) to node[below left] {$h$} (5);
	\draw[dark green,thick] (3) to node[right] {$j$} (1);
\end{tikzpicture}
\subcaption{Two possible modifications, one of\\which must have fewer vanishing arcs.}
\label{fig: second triangulation}
\end{subfigure}
\caption{Modifying a triangulation to reduce the number of vanishing arcs.}
\label{fig: triangulation modification}
\end{figure}
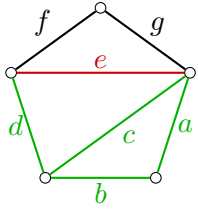
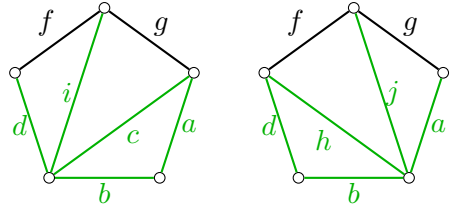

\begin{warn}
Unlike the polygonal case, this lemma is not enough to show that every deep point kills the same set of arcs (as in Lemma \ref{lemma: evenrule}); in fact, different deep points of $\CA(\SS)$ can kill different sets of arcs. 
This results in the deep locus of $\CA(\SS)$ having multiple components (Theorem \ref{thm: deepsurfacediss}).
%
%
\end{warn}

\subsection{Polygonal dissections}


To reduce to the polygonal case, we want to find collections of arcs in $\SS$ whose cutting is a polygon.


\begin{prop}\label{prop: polydiss}
Let $\SS$ be a connected, triangulable marked surface. Then there exists a compatible collection of marked arcs $D$ such that the cutting $\SS\smallsetminus D$ (as defined in Section \ref{section: cutting}) is a polygon.
\end{prop}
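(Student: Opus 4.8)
The plan is to extract $D$ from a triangulation of $\SS$ by a spanning-tree argument on its dual graph. Since $\SS$ is triangulable, fix a triangulation $T$ (in the sense of Proposition \ref{prop: triangulationcount}, so that $T$ contains the boundary arcs), and form the \emph{dual graph} $G$ of $T$: the vertices of $G$ are the triangles of $T$, and the edges of $G$ are the non-boundary arcs of $T$, where the edge dual to an arc $s$ joins the triangle (or triangles) lying on the two sides of $s$, so that $s$ contributes a loop if the same triangle lies on both sides. Because $\SS$ is connected, $G$ is connected.

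Next I would choose a spanning tree $\Gamma$ of $G$ and set $D$ to be the collection of non-boundary arcs of $T$ whose dual edges do not lie in $\Gamma$. As a subcollection of the triangulation $T$, this $D$ is automatically compatible and consists of non-boundary arcs, so the cutting $\SS \smallsetminus D$ of Section \ref{section: cutting} is defined. Cutting is a local operation that preserves the triangle decomposition and merely turns each arc of $D$ into two boundary arcs, so the surviving arcs of $T$ (the boundary arcs together with the arcs dual to edges of $\Gamma$) form a triangulation of $\SS \smallsetminus D$ whose dual graph is $G$ with the edges of $D$ deleted, i.e.\ the tree $\Gamma$.

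Finally I would show that a connected marked surface carrying a triangulation whose dual graph is a tree must be a polygon, arguing by induction on the number $F$ of triangles: for $F = 1$ the surface is a single triangle; for the inductive step a leaf of the tree corresponds to a triangle meeting the rest of the surface along exactly one arc, so $\SS \smallsetminus D$ is obtained from a disc (by the inductive hypothesis) by gluing a triangle along a single boundary arc, and gluing two discs along one boundary arc yields a disc. Since $\SS \smallsetminus D$ is then a disc, no vertex is interior, so it has no punctures (equivalently, the arcs around any puncture of $\SS$ form a cycle in $G$, and a spanning tree cannot contain an entire cycle, so at least one of those arcs lies in $D$ and the puncture is opened up); being a disc with a nonempty finite set of marked points, all on its boundary, $\SS\smallsetminus D$ is a polygon.

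The step demanding the most care is the middle one: checking that cutting $\SS$ along $D$ has exactly the combinatorial effect of deleting the dual edges of $D$ from $G$ and converting those arcs into boundary arcs, without otherwise altering the triangulation. Once that is in hand, the remaining ingredients are a routine induction and the classification of connected, simply connected surfaces-with-boundary.
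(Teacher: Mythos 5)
Your proposal is correct and is essentially the paper's own proof: choose a triangulation, take a spanning tree of its dual graph, let $D$ be the arcs dual to the non-tree edges, and observe that the cut surface is triangulated with tree dual graph, hence a polygon. You simply spell out the details (the effect of cutting on the dual graph, the induction on leaves of the tree, and the disappearance of punctures) that the paper leaves implicit.
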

Let us call such a set $D$ of arcs a \textbf{polygonal dissection} of $\SS$. 

\begin{proof}
Choose a triangulation of $\SS$. The dual graph of such a triangulation is connected, and therefore it admits a spanning tree. Choose a spanning tree, and let $D$ denote the set of arcs in the triangulation which are not in the spanning tree. Then the triangulation descends to a triangulation of $\SS\smallsetminus D$ whose dual graph is a tree; therefore, $\SS\smallsetminus D$ is a polygon.
\end{proof}


\begin{prop}\label{prop: polydisscounts}
Let $\SS$ be a connected, triangulable marked surface with genus $g$, $b$-many boundary components, $p$-many punctures, and $m$-many boundary marked points. Then every polygonal dissection of $\SS$ cuts along 
\[
d(\SS) \coloneqq 2g + b + p - 1
\]
many arcs and produces a 
\[
\delta(\SS) \coloneqq 4g + 2b + 2p + m - 2 
\]
sided polygon.
\end{prop}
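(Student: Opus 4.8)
The plan is to count arcs and polygon-sides by combining the triangulation-counting formula from Proposition \ref{prop: triangulationcount} with the combinatorics of the spanning-tree construction in the proof of Proposition \ref{prop: polydiss}. Fix a connected, triangulable marked surface $\SS$, and let $D$ be a polygonal dissection obtained as in that proof: choose a triangulation $T$ of $\SS$, take a spanning tree of the dual graph, and let $D$ be the set of (non-boundary) arcs of $T$ not in the tree. Since the cutting $\SS \smallsetminus D$ is a polygon, I will first pin down $\delta(\SS)$ by counting the sides of that polygon, then recover the size of $D$ by comparing the triangulation-count of $\SS$ with that of the polygon $\SS\smallsetminus D$.

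First I would count the triangles. Every triangulation of $\SS$ divides $\SS$ into some number $t$ of topological triangles. A standard Euler-characteristic count gives $t$ in terms of $g,b,p,m$; concretely, using that a triangulation of $\SS$ has $n(\SS) = 6g+3b+3p+2m-6$ arcs (Proposition \ref{prop: triangulationcount}) and that each triangle has $3$ sides while each non-boundary arc borders $2$ triangles and each boundary arc borders $1$, one gets $3t = 2(n(\SS)-m) + m = 2n(\SS) - m$, hence $t = (2n(\SS)-m)/3 = 4g+2b+2p+m-4$. The polygon $\SS\smallsetminus D$ is triangulated by the image of $T$ into $t$ triangles as well (cutting does not change the number of triangles), so $\SS \smallsetminus D$ is a polygon triangulated into $t$ triangles; a polygon triangulated into $t$ triangles has $t+2$ sides. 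Therefore $\delta(\SS) = t+2 = 4g+2b+2p+m-2$, independent of the choice of $D$ — this is the second formula.

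Next I would count $|D|$. On one hand, the total number of arcs in the triangulation $T$ of $\SS$ is $n(\SS)$, of which $m$ are boundary arcs, leaving $n(\SS)-m = 6g+3b+3p+m-6$ non-boundary arcs; the spanning tree of the dual graph (which has $t$ vertices) has $t-1$ edges, each an arc of $T$, and these are precisely the non-boundary arcs of $T$ not in $D$ (note a boundary arc borders only one triangle so is never a dual edge). Hence $|D| = (n(\SS)-m) - (t-1)$. Substituting $n(\SS)-m = 6g+3b+3p+m-6$ and $t-1 = 4g+2b+2p+m-5$ gives $|D| = 2g+b+p-1 = d(\SS)$. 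Alternatively — and this is cleaner to present — one can bypass $T$ entirely: the polygon $\SS\smallsetminus D$ has $\delta(\SS)$ boundary marked points and is a sphere-with-$1$-boundary-component, so $n(\SS\smallsetminus D) = 3\cdot 1 + 2\delta(\SS) - 6 = 2\delta(\SS)-3$. Cutting along each of the $|D|$ arcs of $D$ turns one arc into two, so the triangulation count satisfies $n(\SS\smallsetminus D) = n(\SS) + |D|$ provided cutting along a single non-boundary arc increases the arc-count of any fixed triangulation by exactly $1$ (which it does: the arc is duplicated, and no other arc is affected). Solving $n(\SS) + |D| = 2\delta(\SS) - 3$ for $|D|$, using $\delta(\SS) = 4g+2b+2p+m-2$ from the previous step, yields $|D| = 2\delta(\SS)-3 - n(\SS) = 2g+b+p-1 = d(\SS)$, and this does not depend on which polygonal dissection $D$ was used.

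The main obstacle is getting the bookkeeping in the cutting operation exactly right: I must be careful that cutting along a non-boundary arc $s$ increases the arc count of a compatible triangulation by precisely $1$ (the two copies $s',s''$ replace $s$), that it leaves the number of triangles unchanged, and that after cutting along all of $D$ the result is genuinely connected and genuinely a disc — all of which is guaranteed by Proposition \ref{prop: polydiss} and the spanning-tree argument, but should be invoked explicitly. Once the triangle count $t$ and the two relations $\delta(\SS) = t+2$, $n(\SS)+|D| = n(\SS\smallsetminus D)$ are in hand, the rest is arithmetic with the formula $n(\SS) = 6g+3b+3p+2m-6$.
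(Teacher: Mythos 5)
Your proposal is correct and follows essentially the same route as the paper's proof: both reduce to counting the number of triangles (you by the incidence double-count $3t=2n(\SS)-m$, the paper by the Euler characteristic $\chi(\SS)=V-E+F$, which are equivalent bookkeeping), conclude $\delta(\SS)=t+2$, and then solve $n(\SS)+|D| = 2\delta(\SS)-3$ for $|D|$ exactly as the paper does. Your spanning-tree count of $|D|$ is a redundant extra route (and only applies to dissections built that way), but your ``cleaner'' alternative is the paper's argument and covers arbitrary polygonal dissections once $D$ is extended to a triangulation $T$, just as the paper assumes.
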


\begin{proof}
Let $D$ be a polygonal dissection of $\SS$ and choose a triangulation $T$ of $\SS$ such that $D\subset T$. We can compute the Euler characteristic of $\SS$ as $\chi(\SS) = V-E+F = (m+p)-|T|+F$. From Proposition \ref{prop: triangulationcount}, a triangulation of $\SS$ consists of $|T|= n(\SS) = 6g + 3b + 3p + 2m - 6$ arcs, so $F=\chi(\SS)+n(\SS)-m-p = \chi(\SS) + (6g + 3b + 2p + m - 6)$. Using the Euler characteristic formula $\chi (\SS) = 2 - 2g - b$, we have $F= (2-2g-b) + (6g + 3b + 2p + m - 6) = 4g+2b+2p +m -4 $. 

Observe that a triangulation of the polygon $\SS\smallsetminus D$ has $n(\SS)+d(\SS)$ arcs which divide $\SS\smallsetminus D$ into $F$ triangles. Hence, the polygon has $\delta(\SS) = F+2 = 4g+2b+2p+m-2$ vertices and sides. A triangulation of a polygon of $F+2$ sides will have $2(F+2)-3$ arcs, so 
solving the equation $n(\SS)+d(\SS) = 2(F+2)-3$, we find that the number of arcs in a polygonal dissection is $d(\SS) = 2g+b+p-1$.
\end{proof}

In what follows, we will need more than the existence of polygonal dissections; we need polygonal dissections on which any given point of $\CA(\SS)$ is non-zero. Since a point in $V(\CA(\SS),\kk)$ can never kill exactly two of the three arcs in a triangle (Lemma \ref{lemma: trianglesurf}), it will suffice to prove the following.

\begin{lemma}\label{lemma: polydissavoid}
Let $\SS$ be a connected, unpunctured, triangulable marked surface, and let $\cV$ be a set of simple, non-boundary marked arcs which does not contain exactly two of the three arcs in any triangle in $\SS$. Then there exists a polygonal dissection $D$ of $\SS$ consisting of arcs not in $\cV$.
\end{lemma}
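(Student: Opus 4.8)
The plan is to sharpen the proof of Proposition~\ref{prop: polydiss}. There a polygonal dissection was obtained as the set $D$ of arcs of a triangulation $T$ that lie outside a chosen spanning tree $\tau$ of the dual graph $G_T$. Since $D\subseteq T$, it is enough to arrange $\cV\cap T\subseteq\tau$; and as the elements of $\cV$ are non-boundary arcs, $\cV\cap T$ is a set of edges of $G_T$, so this is possible precisely when $\cV\cap T$ is acyclic in $G_T$ (an acyclic edge set in a connected graph extends to a spanning tree, and then $D:=T\setminus\tau$ is a polygonal dissection disjoint from $\cV$). So the lemma reduces to producing a triangulation $T$ of $\SS$ for which $\cV\cap T$ is acyclic in $G_T$.

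I would obtain this from a triangulation with \emph{no full triangle}, calling a triangle of $T$ \textbf{full} if all three of its sides lie in $\cV$. If $T$ has no full triangle, then by hypothesis every triangle of $T$ has at most one side in $\cV$ (it cannot have exactly two, and does not have three), so each vertex of $G_T$ is incident to at most one edge of $\cV\cap T$; thus $\cV\cap T$ is a matching, in particular acyclic. To produce such a $T$, I would take a triangulation minimizing the number $N(T)$ of full triangles and show $N(T)=0$. If not, note that $\cV$ contains no boundary arc while $\SS$ --- being unpunctured and triangulable --- has nonempty boundary, triangulated by boundary arcs, so at least one triangle of $T$ is not full; since $G_T$ is connected there is a full triangle $\Delta$ glued across an arc $e$ to a non-full triangle $\Delta'$. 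Then $e\in\cV$, so by hypothesis $\Delta'$ has exactly one side in $\cV$, namely $e$, and its other two sides avoid $\cV$. Flipping $e$, each of the two triangles of the new triangulation $T'$ containing the new diagonal $e'$ has one side among the two $\cV$-sides of $\Delta$, one side among the two non-$\cV$-sides of $\Delta'$, and the side $e'$; were $e'$ in $\cV$, one of these triangles would have exactly two sides in $\cV$, against the hypothesis. Hence $e'\notin\cV$, neither new triangle is full, and since a flip alters only the triangles incident to $e$ or $e'$ we get $N(T')=N(T)-1$, contradicting minimality. Therefore $N(T)=0$, which finishes the proof.

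The step that will need the most care is the flip --- checking that flipping the non-boundary arc $e$ is always legitimate and that $N$ drops by exactly one. The genuine subtlety is degenerate local configurations: in the unpunctured setting there are no self-folded triangles, but the two triangles adjacent along $e$ might be glued to one another along a second arc as well, so that $\Delta\cup\Delta'$ is not an embedded quadrilateral. In that case the flip is still defined, and I would check directly that the two resulting pieces are honest triangles of $\SS$ to which the ``no two of three'' hypothesis applies and that removing $\Delta$ from the list of full triangles is not offset by a new full triangle elsewhere. Everything else --- the spanning-tree extension, the matching observation, and the bookkeeping of $N$ under a flip --- is routine.
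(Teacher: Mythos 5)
Your proof is correct, but it takes a genuinely different route from the paper's. The paper argues directly at the level of polygonal dissections: starting from an arbitrary dissection $D_0$, if some $\alpha\in D_0\cap\cV$, then (since boundary arcs of $\SS$ are never in $\cV$) the boundary of the polygon $\SS\smallsetminus D_0$ contains adjacent edges $\alpha\in\cV$ and $\beta\notin\cV$; the third arc $\gamma$ of the triangle they span is forced out of $\cV$ by the hypothesis, and swapping $\alpha$ for $\gamma$ yields a dissection with strictly fewer arcs in $\cV$. Iterating this single ``slide'' move finishes the proof (and the same move is reused later in Proposition \ref{prop: uniquevanishingclassforeachsubset}). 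Your argument instead works with triangulations: you use flips to eliminate full triangles, observe that the remaining $\cV$-arcs form a matching in the dual graph, and then extend that matching to a spanning tree so that the complementary arcs give the desired dissection. This is longer and needs the care you flag around degenerate flip configurations --- though these in fact resolve themselves by your own hypothesis: if $\Delta$ and $\Delta'$ shared a second arc, that arc would be a side of the full triangle $\Delta$, hence in $\cV$, forcing the non-full $\Delta'$ to have two, and therefore three, sides in $\cV$, a contradiction; so the quadrilateral is honest and the flip is legitimate. What your route buys is a slightly stronger intermediate statement of independent interest: a triangulation in which every triangle has at most one side in $\cV$, with $\cV\cap T$ realized as a matching in the dual graph. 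What the paper's route buys is brevity and a single local modification that stays entirely within the class of polygonal dissections.
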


\begin{proof}
Choose a polygonal dissection $D_0$ of $\SS$, and suppose there is some arc $p \in D_0\cap \cV$. 
Because $\cV$ does not contain boundary arcs in $\SS$, some but not all edges in the boundary of $\SS\smallsetminus D_0$ come from arcs in $\cV$. Choose $\alpha\in \cV$ and $\beta\not\in \cV$ to be arcs in $\SS$ which become adjacent edges in the boundary of $\SS\smallsetminus D_0$, and let $\gamma$ be the arc in $\SS$ that completes a triangle with $\alpha$ and $\beta$ in $\SS\smallsetminus D_0$. By the assumption on triangles, we know that $\gamma\not\in \cV$. 
Then $D_1 \coloneqq (D_0 \cup \{\gamma\})\smallsetminus \{\alpha \}$ is a polygonal dissection of $\SS$ containing strictly fewer arcs in $\cV$ than $D_0$.
Repeated applications of this process will eventually produce a polygonal dissection $D$ disjoint from $\cV$.
\end{proof}

The special case of $\cV\coloneqq\{a \text{ such that }p(a)=0\}$ is the following.

\begin{coro}\label{coro: deeppolydiss}
Let $\SS$ be a connected, unpunctured, triangulable marked surface. For every point $p\in V(\CA(\SS),\kk)$, there is a polygonal dissection $D$ of $\SS$ on which $p$ is non-zero.

\end{coro}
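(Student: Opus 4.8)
The plan is to deduce this directly from the two preceding results, since it is essentially a repackaging of Lemma \ref{lemma: polydissavoid}. The only content is to check that the hypothesis of that lemma applies to the vanishing set of an arbitrary point $p$.

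First I would set $\cV \coloneqq \{a \text{ a simple marked arc in } \SS \text{ such that } p(a)=0\}$. Since $p$ lies in $V(\CA(\SS),\kk)$ and boundary arcs are frozen cluster variables (hence invertible), $p$ cannot vanish on any boundary arc, so $\cV$ consists of simple non-boundary marked arcs, as required by Lemma \ref{lemma: polydissavoid}. Next I would verify that $\cV$ does not contain exactly two of the three arcs of any triangle in $\SS$: this is exactly Lemma \ref{lemma: trianglesurf}(1), which says that if $p$ kills two of the three arcs bounding a disk, it kills the third as well; so the number of arcs of any triangle lying in $\cV$ is $0$, $1$, or $3$, never $2$.

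With the hypothesis verified, Lemma \ref{lemma: polydissavoid} furnishes a polygonal dissection $D$ of $\SS$ consisting of arcs not in $\cV$; by the definition of $\cV$, this means $p(a)\neq 0$ for every $a\in D$, i.e.~$p$ is non-zero on $D$. This completes the argument.

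There is essentially no obstacle here: the work has already been done in Lemma \ref{lemma: trianglesurf} (the triangle rule, the analogue of Lemma \ref{lemma: trianglepoly}) and in Lemma \ref{lemma: polydissavoid} (the local-move argument exchanging a bad dissection arc for a good one while staying a polygonal dissection). The only point worth a sentence of care is the observation that $p$ never vanishes on boundary arcs, so that $\cV$ meets the standing hypotheses of Lemma \ref{lemma: polydissavoid}; everything else is a direct citation.
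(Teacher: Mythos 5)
Your proposal is correct and is exactly the paper's argument: the paper introduces Lemma \ref{lemma: polydissavoid} precisely so that the corollary follows by taking $\cV$ to be the vanishing set of $p$, with the triangle hypothesis supplied by Lemma \ref{lemma: trianglesurf}(1) and the non-boundary condition by the invertibility of the frozen boundary arcs. Nothing is missing.
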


This can be interpreted geometrically as follows.
By Proposition \ref{prop: cutting subvarieties}, each polygonal dissection $D$ determines an open embedding
\begin{equation}\label{eq: polydissinclusion}
\{ \text{points in $V(\CA(\SS\smallsetminus D),\kk)$ gluable along $D$}\}\hookrightarrow V(\CA(\SS),\kk)
\end{equation}
The corollary is equivalent to the fact that these open charts collectively cover the cluster variety.






\subsection{Cutting and gluing deep points}

We now know that every point in the cluster variety $V(\CA(\SS),\kk)$ is non-zero on some polygonal dissection, and is therefore contained in an open neighborhood given by gluing the cluster variety of a polygon (Equation \eqref{eq: polydissinclusion}). Since Theorem \ref{thm: deeppolygon} gives a characterization of the deep points in the latter, we need to understand how cutting and gluing affect deepness.
The immediate result is the following.



\begin{prop}
The cutting isomorphism restricts to an inclusion on deep points; that is,
\[ 
\{ \text{deep points in $V(\CA(\SS),\kk)$ non-zero on $S$}\} 
\hookrightarrow
\{\text{deep points in $V(\CA(\SS\smallsetminus S),\kk)$ gluable along $S$} \}
\]
\end{prop}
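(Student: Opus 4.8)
The claim is that if $p \in V(\CA(\SS),\kk)$ is a deep point non-zero on $S$, then its image $\bar p$ under the cutting isomorphism — a point in $V(\CA(\SS\smallsetminus S),\kk)$ gluable along $S$ — is again deep. The plan is to unwind what "deep" means on each side: $p$ deep means every triangulation $T$ of $\SS$ contains an arc killed by $p$; we want to show every triangulation $T'$ of $\SS\smallsetminus S$ contains an arc killed by $\bar p$. The key observation is that triangulations of $\SS\smallsetminus S$ and triangulations of $\SS$ that contain $S$ are in natural bijection: given a triangulation $T'$ of $\SS\smallsetminus S$, gluing the paired boundary arcs $s', s''$ back together for each $s \in S$ produces a triangulation $T$ of $\SS$ with $S \subseteq T$, and conversely.

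First I would fix a triangulation $T'$ of $\SS\smallsetminus S$ and form the corresponding triangulation $T = T'/\!\!\sim$ of $\SS$ (obtained by identifying $s'$ with $s''$ for each $s \in S$), which contains $S$. Since $p$ is deep, $T$ contains some arc $a$ with $p(a) = 0$. Because $p$ is non-zero on every arc of $S$, $a \notin S$, so $a$ comes from a genuine arc $a'$ in $T'$ (an arc of $\SS\smallsetminus S$ lying over $a$; if $a$ is met on both sides by the dissection it could lift to more than one arc, but any lift works). By the defining property of the cutting isomorphism — restriction of values from marked curves in $\SS$ to marked curves in $\SS\smallsetminus S$, as in Proposition \ref{prop: cutting subvarieties} — we have $\bar p(a') = p(a) = 0$. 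Hence $T'$ contains an arc killed by $\bar p$. Since $T'$ was arbitrary, $\bar p$ is deep.

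The one point requiring a little care is the correspondence between triangulations on the two sides and the behavior of arcs under the cutting map: an arc of $\SS\smallsetminus S$ maps to an arc of $\SS$ (via the immersion $\SS\smallsetminus S \to \SS$), and conversely every arc of $\SS$ disjoint from $S$ lifts to $\SS\smallsetminus S$; boundary arcs of $\SS\smallsetminus S$ created by the cut map to arcs of $S$, which by hypothesis $p$ does not kill, so these never cause trouble. Given this, the argument is essentially formal, and I do not expect a genuine obstacle — the content is entirely in the already-established cutting isomorphism (Propositions \ref{prop: cutting algebras} and \ref{prop: cutting subvarieties}) together with the bijection on triangulations, which is standard for cutting marked surfaces along arcs. (Note the inclusion need not be surjective: a deep point of $V(\CA(\SS\smallsetminus S),\kk)$ gluable along $S$ might kill one of the arcs $s', s''$, hence fail to be non-zero on $S$, and so not lie in the image; this is why the statement is only an inclusion, and one should resist the temptation to claim an isomorphism.)
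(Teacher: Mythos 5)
Your proof is correct and follows essentially the same route as the paper: identify triangulations of $\SS\smallsetminus S$ with triangulations of $\SS$ containing $S$, use deepness of $p$ to find a vanishing arc, and note it cannot lie in $S$ so it survives the cut. You actually make explicit a small step the paper leaves implicit (that the vanishing arc is not in $S$ because $p$ is non-zero on $S$), and your remark on non-surjectivity matches the paper's subsequent counterexample.
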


\begin{proof}
Let $p$ be a deep point of $V(\CA(\SS),\kk)$ which is non-zero on $S$, and let $p'$ be its image under the cutting isomorphism. Any triangulation of $\SS\smallsetminus S$ is the image of a triangulation of $\SS$ which contains $S$. Since $p$ is deep, it must vanish on at least one arc in the triangulation of $\SS\smallsetminus S$, and so $p'$ must vanish on at least one arc in the triangulation of $\SS$.
\end{proof}



Unfortunately, this restriction is not always a bijection! As the following example shows, the cutting of a non-deep point may become deep.

\begin{ex}\label{ex: badcut}
For any non-zero $a\in \kk$, the values depicted in Figure \ref{fig: badcut} uniquely determine a point in $V(\CA(\Delta_6),\kk)$ which is not deep (since no arc in the given triangulation vanishes), but which becomes deep when cut along a certain diagonal (since any triangulation of the disconnected surface must contain one of the two vanishing diagonals in the right-hand component). 
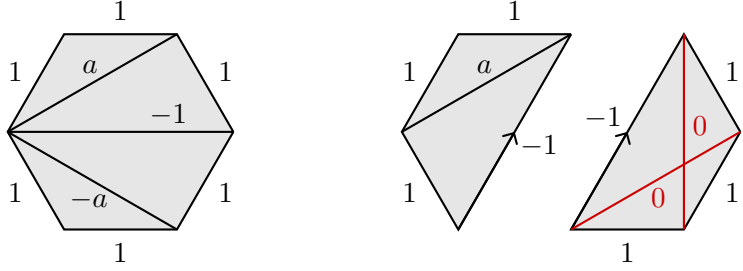
\begin{figure}[h!t]
\[
\begin{tikzpicture}[xscale=1,scale=.75,baseline={(0,0)}]
    \draw[fill=black!10,thick] 
    (0*60:2)
    to (1*60:2)
    to (2*60:2)
    to (3*60:2)
    to (4*60:2)
    to (5*60:2)
    to (6*60:2)
    ;
    \node (a1) at (30+0*60:2.15) {$1$};
    \node (a2) at (30+1*60:2.15) {$1$};
    \node (a3) at (30+2*60:2.15) {$1$};
    \node (a4) at (30+3*60:2.15) {$1$};
    \node (a5) at (30+4*60:2.15) {$1$};
    \node (a6) at (30+5*60:2.15) {$1$};
    \draw[thick] (3*60:2) to (1*60:2);
    \node at (115:1.27) {$a$};
    \draw[thick] (3*60:2) to (0*60:2);
    \node at (20:0.9) {$-1$};
    \draw[thick] (3*60:2) to (5*60:2);
    \node at (245:1.33) {$-a$};
\end{tikzpicture}
\hspace{2cm}
\begin{tikzpicture}[xscale=1,scale=.75,baseline={(0,0)}]
    \begin{scope}[xshift=-1cm]
    \draw[fill=black!10,thick] 
    (1*60:2)
    to (2*60:2)
    to (3*60:2)
    to (4*60:2)
    to (1*60:2)
    ;
    \draw[-angle 90,thick] (4*60:2) to (0,0);
    \node (a6) at (30+5*60:.5) {$-1$};
    \node (a2) at (30+1*60:2.15) {$1$};
    \node (a3) at (30+2*60:2.15) {$1$};
    \node (a4) at (30+3*60:2.15) {$1$};
    \draw[thick] (3*60:2) to (1*60:2);
    \node at (115:1.27) {$a$};
    \end{scope}
    \begin{scope}[xshift=1cm]
    \draw[fill=black!10,thick] 
    (0*60:2)
    to (1*60:2)
    to (4*60:2)
    to (5*60:2)
    to (6*60:2)
    ;
    \draw[-angle 90,thick] (4*60:2) to (0,0);
    \node (a1) at (30+0*60:2.15) {$1$};
    \node (a3) at (30+2*60:.5) {$-1$};
    \node (a5) at (30+4*60:2.15) {$1$};
    \node (a6) at (30+5*60:2.15) {$1$};
    \draw[thick,dark red] (5*60:2) to (1*60:2);
    \node[dark red] at (365:1.29) {$0$};
    \draw[thick,dark red] (4*60:2) to (0*60:2);
    \node[dark red] at (295:1.29) {$0$};
    \end{scope}
\end{tikzpicture}
\]
\caption{A non-deep point (left) which becomes deep when cut (right)}
\label{fig: badcut}
\end{figure}
%
%
%
%
\end{ex}


The problem is that deep points of a disconnected surface behave counterintuitively. If $\SS$ is a disjoint union of two components $\SS_1$ and $\SS_2$, then a point $p$ of $V(\CA(\SS),\kk)$ is equivalent to a pair of points $p_1$ and $p_2$ of $V(\CA(\SS_1),\kk)$ and $V(\CA(\SS_2),\kk)$, respectively; that is,
\[ V(\CA(\SS), \kk) \simeq V(\CA(\SS_1),\kk) \times V(\CA(\SS_2),\kk) \]
Since a triangulation of $\SS$ is a union of triangulations of $\SS_1$ and $\SS_2$, we see that a point $p$ of $V(\CA(\SS),\kk)$ is deep iff \emph{either} of the points $p_1$ or $p_2$ are deep. 

We introduce a definition which behaves better for disconnected surfaces and which won't be used beyond the following lemma. Let us say a point $p$ in $V(\CA(\SS),\kk)$ is \textbf{deep-on-components} if the restriction of $p$ to each connected component of $\SS$ is deep; that is, it kills a marked arc in each triangulation of each connected component. Every deep-on-components point is deep, but the converse fails (for example, the deep point on the right side of Figure \ref{fig: badcut} is not deep-on-components).

Many prior results extend to disconnected surfaces by replacing \emph{deep} with \emph{deep-on-components}. 
For example, applying Lemma \ref{lemma: trianglesurf} to each connected component of $\SS$, we see that a point $p$ of $V(\CA(\SS),\kk)$ is deep-on-components if and only if it kills an odd number of arcs in any triangle in $\SS$.





\begin{lemma}\label{lemma: deepcut}
Let $\SS$ be an unpunctured, triangulable marked surface. For any compatible collection of non-boundary arcs $S$ in $\SS$, the cutting isomorphism \eqref{eq: cutting} restricts to a bijection between:
\begin{enumerate}
    \item The deep-on-components points of $V(\CA(\SS),\kk)$ which are non-zero on $S$.
    \item The deep-on-components points of $V(\CA(\SS\smallsetminus S),\kk)$ which are gluable along $S$.
\end{enumerate}
\end{lemma}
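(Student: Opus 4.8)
The plan is to deduce the lemma from the triangle characterization of deep-on-components points recorded just above: a point of a (possibly disconnected) unpunctured triangulable marked surface is deep-on-components if and only if it kills at least one (equivalently, an odd number) of the three arcs in every triangle of arcs. Since both sides of the asserted bijection are, by definition, subsets of the two sides of the cutting isomorphism \eqref{eq: cutting}, which is already known to be a bijection sending points non-zero on $S$ to points gluable along $S$, it is enough to prove the following: for a point $p\in V(\CA(\SS),\kk)$ non-zero on $S$, with cutting $p'\in V(\CA(\SS\smallsetminus S),\kk)$, the point $p$ is deep-on-components on $\SS$ if and only if $p'$ is deep-on-components on $\SS\smallsetminus S$. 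By cutting the arcs of $S$ one at a time (each remaining arc of $S$ lifting to an arc of the partially cut surface), we may assume $S=\{s\}$ is a single non-boundary arc.

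For the forward implication, let $\Delta'$ be a triangle of arcs in $\SS\smallsetminus s$. The gluing map $\SS\smallsetminus s\to\SS$ is injective on the open disk bounded by $\Delta'$, so it carries $\Delta'$ to a triangle of arcs $\Delta$ in $\SS$, matching the three edges of $\Delta'$ with the three edges of $\Delta$ and preserving the relevant values: if an edge of $\Delta'$ is one of the two copies $s',s''$ of $s$, then $p'$ sends it to $p(s)\neq 0$, consistently with $p$ not killing $s$. If $p$ is deep-on-components it kills an edge of $\Delta$; that edge is not $s$, so it corresponds to an edge of $\Delta'$ killed by $p'$. Hence $p'$ kills an edge of every triangle of $\SS\smallsetminus s$, so $p'$ is deep-on-components.

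For the reverse implication, suppose $p'$ is deep-on-components and let $\Delta=\{x,y,z\}$ be a triangle of arcs in $\SS$; we must show $p$ kills one of $x,y,z$. If $s\in\{x,y,z\}$, say $s=z$, then $\Delta$ cuts to a triangle $\{\tilde x,\tilde y,s'\}$ of $\SS\smallsetminus s$ on which $p'$ kills an edge; since $p'(s')=p(s)\neq0$, that edge is $\tilde x$ or $\tilde y$, so $p$ kills $x$ or $y$. If $s$ is disjoint from $\Delta$, then $\Delta$ lifts directly to a triangle of $\SS\smallsetminus s$ with the same values and the same conclusion follows. The remaining case, in which $s$ \emph{crosses} $\Delta$, is the crux: then $\Delta$ is incompatible with $s$ and does not descend to $\SS\smallsetminus s$, so the triangle-to-triangle correspondence breaks, and I expect this to be the main obstacle. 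The intended resolution is by contraposition together with the reduction technique from the proof of Lemma \ref{lemma: trianglesurf}(2): if $p$ killed none of $x,y,z$, then $\Delta$ is a non-vanishing triangle in its component $C$, which is connected, so $p|_C$ admits a non-vanishing triangulation; the delicate point is to upgrade this to a non-vanishing triangle of $C$ that is \emph{compatible with $s$} — equivalently, to show that a point non-zero on $S$ is deep-on-components precisely when it kills an arc in every $S$-compatible triangle. I would establish this either by running the triangle-exchange moves of Lemma \ref{lemma: trianglesurf}(2) while only ever discarding vanishing arcs (which are automatically not in $S$), or, if that is not enough, by routing a polygonal dissection through $s$ using Lemma \ref{lemma: polydissavoid} and invoking the polygonal classification (Lemma \ref{lemma: evenrule}) on the resulting polygon. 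Such an $s$-compatible non-vanishing triangle lifts to a non-vanishing triangle of $\SS\smallsetminus s$, contradicting deep-on-componentness of $p'$. Combining the two implications with the bijectivity of the cutting isomorphism gives the lemma.
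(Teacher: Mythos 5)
Your forward direction and the reduction to a single arc are fine and match the paper. The problem is the reverse direction, where you correctly locate the crux --- a triangle of $\SS$ that crosses $s$ does not descend to $\SS\smallsetminus s$ --- but then leave it unresolved: you name two candidate strategies and carry out neither, so as written the proof has a genuine gap exactly at the step you flag as ``the main obstacle.'' Moreover, your first strategy is doubtful: the exchange moves in the proof of Lemma \ref{lemma: trianglesurf}(2) do discard non-vanishing arcs (the replacement $\{c,e\}\mapsto\{h,j\}$ removes the non-vanishing arc $c$), and nothing in that argument controls whether the resulting non-vanishing triangulation, or any non-vanishing triangle in it, is compatible with $s$; a triangle can wind around handles and cross $s$ arbitrarily often, so there is no local move that fixes this.

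The paper closes this gap with a different two-stage argument that you should compare against. First it proves the claim when $\SS$ is a polygon, where the cutting isomorphism factors into single-arc cuts each splitting a polygon in two, and Corollary \ref{coro: polygoncut} (a consequence of the explicit classification in Theorem \ref{thm: deeppolygon}, not of triangle-exchange combinatorics) says deepness glues across such a cut. Then, for connected $\SS$ with $\SS\smallsetminus S$ a union of polygons (the general case reduces to this by enlarging $S$ via Corollary \ref{coro: deeppolydiss}), it lifts a given triangle $\{\alpha,\beta,\gamma\}$ to the universal cover $\widehat{\SS}$: by compactness the lifted triangle lies in a finite union of polygon lifts, which is itself a polygon, and the already-established polygon case plus Lemma \ref{lemma: trianglepoly} forces an odd number of $p(\alpha),p(\beta),p(\gamma)$ to vanish. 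The universal-cover step is precisely what substitutes for your missing ``upgrade to an $s$-compatible non-vanishing triangle,'' and your second strategy (routing a polygonal dissection and invoking the polygonal classification) is pointing in this direction but would still need the covering-space argument, or something equivalent, to handle triangles that cross the dissection.
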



\begin{proof}
Fix a point $p\in V(\CA(\SS),\kk)$ which is non-zero on $S$, and let $p'\in V(\CA(\SS\smallsetminus S),\kk)$ be its image under the cutting isomorphism.

$(1)\Rightarrow (2)$. If $p$ is deep-on-components, then it kills at least one marked arc in every triangle of $\SS$ (by Lemma \ref{lemma: trianglesurf} for each connected component). 
If $\SS'$ is a connected component of $\SS\smallsetminus S$, then $p'$ kills at least one marked arc in every triangle in $\SS'$; in particular, it kills at least one marked arc in every triangulation of $\SS'$. Therefore, $p'$ is deep-on-components.

$(2)\Rightarrow (1)$. Assume that $p'$ is deep-on-components, and consider various cases.

First, assume $\SS$ is a polygon; note that this forces $\SS\smallsetminus S$ to be a disjoint union of polygons. 
Index the elements of $S$ as $\alpha_1,\alpha_2,\dotsc ,\alpha_n$. For any $i$ between $0$ and $n$, let $V_i$ denote the set of points in $V(\CA(\SS\smallsetminus \{\alpha_1,\alpha_2,\dotsc ,\alpha_i\}),\kk)$ which are gluable along $\{\alpha_1,\alpha_2,\dotsc ,\alpha_i\}$ and non-zero on $\{\alpha_{i+1},\alpha_{i+2},\dotsc ,\alpha_n\}$. Then the cutting isomorphism \eqref{eq: cutting} factors as a composition of isomorphisms
\[ V_0 \xrightarrow{\sim} V_1 \xrightarrow{\sim} \dotsm \xrightarrow{\sim} V_{n-1} \xrightarrow{\sim} V_n \]
where each $V_i\rightarrow V_{i+1}$ is (a restriction of) the cutting isomorphism along $\alpha_{i+1}$. Each of these cuts divides a polygon into two smaller polygons; by Corollary \ref{coro: polygoncut}, a point which is deep on both smaller polygons corresponds to a point which is deep on their gluing. As a consequence, a deep-on-components point in $V_{i+1}$ glues to a deep-on-components point in $V_i$. Iterating along the composition above, the deep-on-components point $p'$ glues to a deep-on-components point $p$.

Next, assume that $\SS$ is connected and $\SS\smallsetminus S$ is a disjoint union of polygons. Then the universal cover $\widehat{\SS}$ of $\SS$ can be covered by the lifts of these polygons. Consider three marked arcs $\{\alpha,\beta,\gamma\}$ which bound a triangle in $\SS$, and choose lifts $\{\widehat{\alpha},\widehat{\beta},\widehat{\gamma}\}$ which bound a triangle in $\widehat{\SS}$. Since this triangle is compact, it is contained in the union of finitely many lifts of components in $\SS\smallsetminus S$. Let $\SS'$ denote this union; since it is a finite union of polygons in a simply connected space, it is also a polygon.
Since the point $p'$ is deep-on-components, it gives a deep point on each of the lifts of these components. By the previous case, these deep points glue together to a deep point $\widehat{p}$ on the polygon $\SS'$. By Lemma \ref{lemma: trianglepoly}, an odd number of $\widehat{p}(\widehat{\alpha}),\widehat{p}(\widehat{\beta}),\widehat{p}(\widehat{\gamma})$ must vanish, and so an odd number of $p(\alpha),p(\beta),p(\gamma)$ must vanish. Therefore, $p$ kills an odd number of marked arcs in every triangle, and so it is deep-on-components.

Next, assume that $\SS$ is connected. By Corollary \ref{coro: deeppolydiss}, there are polygonal dissections of each connected component of $\SS\smallsetminus S$ on which $p'$ is non-zero. 
By redefining $S$ to be the union of the original $S$ together with the arcs in these polygonal dissections, we reduce to the previous case.

Finally, if $\SS$ is disconnected, then we may apply the previous case to each component of $\SS$ and deduce that $p$ is deep-on-components.
\end{proof}

In what follows, we only need the specialization of Lemma \ref{lemma: deepcut} in the case of polygonal dissections.

\begin{coro}\label{coro: deepcutpoly}
Let $\SS$ be a connected, unpunctured, triangulable marked surface. For any polygonal dissection $D$ in $\SS$, the cutting isomorphism \eqref{eq: cutting} restricts to a bijection between:
\begin{enumerate}
    \item The deep points in $V(\CA(\SS),\kk)$ which are non-zero on $D$.
    \item The deep points in $V(\CA(\SS\smallsetminus D),\kk)$ which are gluable along $D$.
\end{enumerate}
\end{coro}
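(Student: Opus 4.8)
The plan is to deduce this immediately from Lemma \ref{lemma: deepcut} by observing that, under the hypotheses here, the notions of \emph{deep} and \emph{deep-on-components} coincide on both sides of the cutting isomorphism.

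First I would record the relevant connectedness facts. By the definition of a polygonal dissection (Proposition \ref{prop: polydiss}), the cutting $\SS\smallsetminus D$ is a polygon, which is in particular connected; and $\SS$ itself is connected by hypothesis. For a connected marked surface there is a single connected component, so a point is deep-on-components precisely when it is deep. Consequently, with $S \coloneqq D$, item (1) of Lemma \ref{lemma: deepcut} describes exactly the set in item (1) of the corollary, and likewise for item (2).

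Then I would simply invoke Lemma \ref{lemma: deepcut} with $S = D$: it asserts that the cutting isomorphism \eqref{eq: cutting} restricts to a bijection between the deep-on-components points of $V(\CA(\SS),\kk)$ that are non-zero on $D$ and the deep-on-components points of $V(\CA(\SS\smallsetminus D),\kk)$ that are gluable along $D$. Rewriting ``deep-on-components'' as ``deep'' using the previous paragraph yields precisely the claimed bijection, realized by the same map.

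There is no genuine obstacle: all of the substantive work — in particular the delicate $(2)\Rightarrow(1)$ direction of Lemma \ref{lemma: deepcut}, which passes to the universal cover and glues deep points of polygons — has already been carried out. The only subtlety worth flagging is that the connectedness of $\SS\smallsetminus D$ must really be used, so that the counterintuitive behavior of deep points on disconnected surfaces (illustrated in Example \ref{ex: badcut}) does not enter; this is exactly what the reduction to a polygon via a polygonal dissection buys us.
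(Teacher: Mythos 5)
Your proposal is correct and matches the paper's intent exactly: the corollary is stated there as the specialization of Lemma \ref{lemma: deepcut} to $S=D$, with the key observation being precisely yours — that $\SS$ and the polygon $\SS\smallsetminus D$ are both connected, so \emph{deep} and \emph{deep-on-components} coincide on both sides. Nothing further is needed.
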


\subsection{Comparing polygonal dissections}

While Corollary \ref{coro: deepcutpoly} allows us to construct deep points of $\CA(\SS)$ by gluing deep points of polygons, a single deep point of $\CA(\SS)$ may be non-zero on many polygonal dissections, and can therefore be constructed via many different gluings. 

In this section, we use relative cohomology classes in $H^1(\SS,\MM;\mathbb{Z}_2)$ to characterize when a deep point is non-zero on a polygonal dissection, and when two polygonal dissections determine the same set of deep points.
Note that every marked arc in $\SS$ is a 1-cycle relative to the marked points $\MM$, and so elements of $H^1(\Sigma,\MM;\mathbb{Z}_2)$ have a well-defined value on each marked arc in $\SS$.

\begin{prop}\label{prop: cohomologyclass}
Let $\SS$ be a connected, unpunctured, triangulable marked surface with an even number of marked points.
\begin{enumerate}
    \item Given a deep point $p$ of $V(\CA(\SS),\kk)$, there is a unique cohomology class
    $
    \chi_p\in H^1(\SS,\MM;\mathbb{Z}_2)
    $
    such that the value of $\chi_p$ on each simple marked arc $a$ in $\SS$ is
    \begin{equation}\label{eq: cohomologyclass}
    \chi_p(a) \coloneqq \left\{
    \begin{array}{cc}
    0 & \text{if $p(a)=0$} \\
    1 & \text{if $p(a)\neq0$}
    \end{array}
    \right\}
    \end{equation}
    \item Given a polygonal dissection $D$ of $\Sigma$, there is a unique cohomology class 
    $
    \chi_D\in H^1(\SS,\MM;\mathbb{Z}_2)
    $
    such that $\chi_D$ is non-zero on each marked arc in $D$ and each boundary arc of $\SS$.
    \item A deep point $p$ is non-zero on a polygonal dissection $D$ if and only if $\chi_p=\chi_D$.
\end{enumerate}
\end{prop}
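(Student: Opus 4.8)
The plan is to model $H^1(\SS,\MM;\mathbb{Z}_2)$ by the cochains of a triangulation. The crucial preliminary point is that every vertex of a triangulation of $\SS$ is a marked point, so the relative cellular chain complex has $C_0(\SS,\MM;\mathbb{Z}_2)=0$; dually $C^0(\SS,\MM;\mathbb{Z}_2)=0$, and hence for any triangulation $T$ one has $H^1(\SS,\MM;\mathbb{Z}_2)=Z^1(T)$ on the nose --- it is the group of $\mathbb{Z}_2$-valued functions $\chi$ on the arcs of $T$ satisfying $\chi(x)+\chi(y)+\chi(z)=0$ for every triangle $\{x,y,z\}$ of $T$, with no coboundaries to quotient out. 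Since $\mathbb{Z}_2$ is a field, such a class is the same as a linear functional on $H_1(\SS,\MM;\mathbb{Z}_2)$, which is generated by the arcs of any one triangulation; moreover the relation $[x]+[y]+[z]=0$ holds in $H_1(\SS,\MM;\mathbb{Z}_2)$ whenever $x,y,z$ bound an embedded triangle in $\SS$, regardless of which triangulation (if any) they belong to.

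For (1), fix a triangulation $T$ and define $\chi_p$ on its arcs by \eqref{eq: cohomologyclass}. By Lemma \ref{lemma: trianglesurf}, a deep point $p$ kills an odd number of the arcs of each triangle, so an even number of them receive the value $1$; thus $\chi_p\in Z^1(T)=H^1(\SS,\MM;\mathbb{Z}_2)$. To see that this class is independent of $T$ --- and that its value on an arbitrary simple marked arc $a$ is $[\,p(a)\ne 0\,]$ --- it is enough to check invariance under a single flip. If $T''$ is obtained from $T$ by flipping an arc $a$ to $a'$, the two cocycles $\chi_p^{T}$ and $\chi_p^{T''}$ (each given by \eqref{eq: cohomologyclass} on its own arcs) agree on all shared arcs, and both evaluate on $[a]=[b_1]+[b_2]$ --- where $b_1,b_2$ are the two sides of the flip quadrilateral completing a triangle with $a$ --- to $\chi_p^{T}(b_1)+\chi_p^{T}(b_2)$, using the triangle relation; so they agree as functionals on the generating set $\{[e]:e\in T\}$. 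Connectedness of the flip graph of triangulations then gives the independence, and choosing $T$ to contain a prescribed simple arc $a$ yields $\chi_p([a])=\chi_p^T(a)=[\,p(a)\ne 0\,]$. Uniqueness is immediate, since the arcs of a triangulation generate $H_1(\SS,\MM;\mathbb{Z}_2)$ and \eqref{eq: cohomologyclass} pins down the values there.

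For (2), extend $D$ to a triangulation $T$ of $\SS$. Cutting along $D$ takes $T$ to a triangulation $\bar T$ of the polygon $P\coloneqq\SS\smallsetminus D$ and sets up a bijection between $\mathbb{Z}_2$-cochains $\chi$ on $T$ and cochains $\bar\chi$ on $\bar T$ with $\bar\chi(s')=\bar\chi(s'')$ for all $s\in D$; it preserves the cocycle condition because it matches up triangles. Under this bijection the cocycles on $T$ that equal $1$ on every arc of $D$ and every boundary arc of $\SS$ correspond precisely to the cocycles on $\bar T$ equal to $1$ on every boundary arc of the polygon $P$. So it suffices to show $P$ admits exactly one such cocycle. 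Existence: because $m$ is even, $\delta(\SS)=4g+2b+m-2$ is even, and the function $x_{i,j}\mapsto[\,j-i\text{ odd}\,]$ is readily checked to be a cocycle on $\bar T$ equal to $1$ on all boundary arcs (it is $\chi_q$ for the deep point $q$ of $P$ supplied by Theorem \ref{thm: deeppolygon}). Uniqueness: the difference $\eta$ of two such cocycles vanishes on all of $\partial P$, so it represents a class in $H^1(P,\partial P;\mathbb{Z}_2)\cong H_1(P;\mathbb{Z}_2)=0$ by Lefschetz duality for the disk $P$; since the polygon triangulation $\bar T$ has no vertices off $\partial P$ we have $C^0(P,\partial P;\mathbb{Z}_2)=0$, so $\eta=0$. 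Transporting back produces the unique $\chi_D\in Z^1(T)=H^1(\SS,\MM;\mathbb{Z}_2)$ that is $1$ on $D$ and on the boundary arcs.

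Part (3) is then formal. If $\chi_p=\chi_D$ then $\chi_p$ is $1$ on every arc of $D$, so by \eqref{eq: cohomologyclass} $p$ is non-zero on $D$. Conversely, if $p$ is non-zero on $D$, then $\chi_p$ equals $1$ on every arc of $D$, and also on every boundary arc (these are inverted in $\CA(\SS)$, so $p$ cannot kill them); hence $\chi_p$ is a cohomology class equal to $1$ on $D$ and on the boundary arcs, and the uniqueness from (2) forces $\chi_p=\chi_D$. The only real work is the uniqueness in (2) --- the observation that a disk triangulated with all vertices on its boundary carries a unique $\mathbb{Z}_2$-cocycle that is constantly $1$ on the boundary --- together with the flip bookkeeping in (1); everything else falls out of Lemma \ref{lemma: trianglesurf} and the vanishing of the relative $0$-chains.
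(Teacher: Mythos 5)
Your proof is correct, and its overall architecture matches the paper's: model $H^1(\SS,\MM;\mathbb{Z}_2)$ by $\mathbb{Z}_2$-cocycles on a triangulation (no $0$-cochains to quotient by, since every vertex is marked), get the cocycle condition for $\chi_p$ from Lemma \ref{lemma: trianglesurf}, and propagate Formula \eqref{eq: cohomologyclass} to all simple arcs by flip-connectedness of triangulations. Part (3) is handled identically in both arguments. The only genuine divergence is in part (2): the paper writes down an explicit formula for $\chi_D(a)$ on a triangulation $T\supset D$ (in terms of the parities of the two polygons into which $a$ cuts $\SS\smallsetminus D$) and asserts the cocycle condition is straightforward, whereas you cut along $D$, identify the relevant cocycles on $T$ with cocycles on a triangulated polygon that equal $1$ on its boundary, produce one via the parity rule $x_{i,j}\mapsto[\,j-i\text{ odd}\,]$, and prove uniqueness from $H^1(P,\partial P;\mathbb{Z}_2)=0$ together with $C^0(P,\partial P;\mathbb{Z}_2)=0$. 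Your route buys an explicit proof of the uniqueness claim in (2), which the paper's proof uses (in deducing (3)) but does not spell out; the paper's route buys a closed-form description of $\chi_D$ on any triangulation containing $D$. Both are valid, and the transported parity cocycle is in fact the same class as the paper's formula.
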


\begin{proof}
A (marked) triangulation $T$ of $\SS$ determines a simplicial decomposition of $\SS$ whose 0-cells are the marked points $\MM$, whose 1-cells are the marked arcs in $T$, and whose 2-cells are the triangles in $\SS\smallsetminus T$. Since singular and simplicial cohomology are isomorphic in this case, we may compute the cohomology groups $H^\bullet(\SS,\MM;\mathbb{Z}_2)$ as the cohomology of the reduced simplicial cochain complex 
\[
\underbrace{\mathbb{Z}_2^\MM / \mathbb{Z}_2^\MM}_{\simeq0} \rightarrow \mathbb{Z}_2^T \rightarrow \mathbb{Z}_2^{\SS\smallsetminus T}
\]
As a consequence, elements of $H^1(\SS,\MM;\mathbb{Z}_2)$ may be realized as functions $T\rightarrow \mathbb{Z}_2$ such that the sum of the values around each triangle is 0 (the 1-cocycle condition). In particular:

\begin{itemize}
    \item Given a deep point $p$ of $V(\CA(\SS),\kk)$, choose a triangulation $T$ and define a function $\chi_p: T\rightarrow\mathbb{Z}_2$ by applying Formula \eqref{eq: cohomologyclass} to each marked arc in $T$. The sum around each triangle in $T$ is $0$ by Lemma \ref{lemma: trianglesurf}, so this defines an element $\chi_p \in H^1(\SS,\MM;\mathbb{Z}_2)$. 

    Given a non-boundary arc $a$ in $T$, let $a'$ be the \emph{flip} of $a$; that is, the unique-up-to-homotopy marked arc such that $T\smallsetminus \{a\} \cup \{a'\}$ is a triangulation and $a$ is not homotopic to $a'$. Choose arcs $b,c\in T$ such that $a',b,c$ bound a triangle. By Lemma \ref{lemma: trianglesurf}, $p(a')=0$ iff an even number of $p(b)$ and $p(c)$ are $0$; similarly, by the cocycle condition, $\chi_p(a')=0$ iff an even number of $\chi_p(b)$ and $\chi_p(c)$ are $0$. Therefore, $\chi_p(a')=0$ iff $p(a)=0$, so Formula \eqref{eq: cohomologyclass} holds for $a'$. Since any simple marked arc in $\SS$ can be realized by a sequence of flips, Formula \eqref{eq: cohomologyclass} holds for all simple marked arcs in $\SS$.

    \item Consider a polygonal dissection $D$ of $\SS$. For any triangulation $T$ of $\SS$ which contains $D$, define a function $\chi_D: T\rightarrow \mathbb{Z}_2$ by the rule that, for each $a$ in $T$,
    \[
    \chi_D(a) \coloneqq 
    \left\{
    \begin{array}{cc}
    0 & \text{if $\SS\smallsetminus (D\cup \{a\})$ consists of two odd-sided polygons} \\
    1 & \text{if $\SS\smallsetminus (D\cup \{a\})$ consists of two even-sided polygons} \\
    1 & \text{if $a$ is in $D$ or $\partial \SS$} \\
    \end{array}
    \right\}
    \]
    It is straight-forward to show this satisfies the 1-cocycle condition (for example, by induction on the size of the smaller polygon in $\SS\smallsetminus (D\cup \{a\})$), so it defines an element in $H^1(\SS,\MM;\mathbb{Z}_2)$.
\end{itemize}
Since each $\chi_p$ is non-zero on every boundary arc in $\SS$, the uniqueness condition in (2) implies that $\chi_p=\chi_D$ if and only if $\chi_p$ is non-zero on the arcs in $D$; equivalently, if $p$ is non-zero on $D$.
\end{proof}

As a consequence, two polygonal dissections can be used to construct the same deep points if and only if they have the same cohomology class. We use this to define an equivalence relation.

\begin{defn}\label{defn: congruent}
Let $\SS$ be as in Proposition \ref{prop: cohomologyclass}.
Then two polygonal dissections of $\SS$ are \textbf{congruent} if their relative cohomology classes are equal.
\end{defn}

\begin{prop}\label{prop: polydisscount}
Let $\SS$ be a connected, unpunctured, triangulable marked surface with an even number of marked points. Then the function $D\mapsto \chi_D$ defines a bijection between
\begin{enumerate}
    \item polygonal dissections $D$ of $\SS$ up to congruence, and
    \item elements $\chi$ in $H^1(\SS,\MM;\mathbb{Z}_2)$ with value $1$ on every boundary marked arc in $\SS$.
\end{enumerate}
The size of each set is $2^{2g+b-1}$, where $g$ is the genus and $b$ is the number of boundary components.
\end{prop}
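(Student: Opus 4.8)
The plan is to verify the three claims in sequence: that $D\mapsto\chi_D$ gives a well-defined injection on congruence classes, that it is surjective onto the set of classes $\chi$ with value $1$ on every boundary arc, and that this set has size $2^{2g+b-1}$. The first claim is essentially a tautology: by Definition \ref{defn: congruent} two polygonal dissections are congruent exactly when $\chi_{D_1}=\chi_{D_2}$, so the assignment $D\mapsto\chi_D$ descends to a well-defined injection on the set of congruence classes; and by Proposition \ref{prop: cohomologyclass}(2) each $\chi_D$ does take the value $1$ on every boundary marked arc, so the image lands in set (2).

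The heart of the argument is surjectivity, and this is where I expect the real work to be. Given $\chi\in H^1(\SS,\MM;\mathbb{Z}_2)$ with $\chi(a)=1$ for every boundary arc $a$, the goal is to produce a polygonal dissection $D$ with $\chi_D=\chi$. The key point is that $\chi$, being a cohomology class, satisfies $\chi(x)+\chi(y)+\chi(z)=0$ in $\mathbb{Z}_2$ for every triangle of arcs $\{x,y,z\}$ in $\SS$ — any such triangle is a face of some triangulation of $\SS$, where this is precisely the $1$-cocycle condition exploited in the proof of Proposition \ref{prop: cohomologyclass}. Consequently, in each triangle of arcs the number of arcs killed by $\chi$ is odd (hence never exactly two), and none of those arcs is a boundary arc. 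So $\cV\coloneqq\{\text{simple non-boundary arcs }a\text{ with }\chi(a)=0\}$ satisfies the hypothesis of Lemma \ref{lemma: polydissavoid}, which hands us a polygonal dissection $D$ disjoint from $\cV$, i.e.\ with $\chi(d)=1$ for all $d\in D$. Since $\chi$ is then non-zero on every arc of $D$ and on every boundary arc of $\SS$, the uniqueness clause of Proposition \ref{prop: cohomologyclass}(2) forces $\chi=\chi_D$. Thus the only real obstacle — finding a polygonal dissection of prescribed cohomology class — is resolved by reducing it, via the cocycle identity, to the already-established Lemma \ref{lemma: polydissavoid}.

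For the cardinality, I would argue as follows. Set (2) is non-empty, since it contains $\chi_D$ for any polygonal dissection and these exist by Proposition \ref{prop: polydiss}; and it is a coset of the subgroup $W\coloneqq\{\chi\in H^1(\SS,\MM;\mathbb{Z}_2):\chi|_{\partial\SS}=0\}$, so $|\text{set }(2)|=|W|$. Now $W$ is the kernel of the restriction map $H^1(\SS,\MM;\mathbb{Z}_2)\to H^1(\partial\SS,\MM;\mathbb{Z}_2)$, which on simplicial cochains is evaluation on boundary arcs. In the long exact cohomology sequence of the triple $(\SS,\partial\SS,\MM)$, the term $H^0(\partial\SS,\MM;\mathbb{Z}_2)$ preceding $H^1(\SS,\partial\SS;\mathbb{Z}_2)$ vanishes (each boundary component of $\SS$ carries a marked point), so $W\cong H^1(\SS,\partial\SS;\mathbb{Z}_2)$; by Lefschetz duality this is $H_1(\SS;\mathbb{Z}_2)$, which for a genus-$g$ surface with $b\ge 1$ boundary components is $\mathbb{Z}_2^{2g+b-1}$. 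Hence $|W|=2^{2g+b-1}$, and via the bijection both sets in the proposition have this cardinality.
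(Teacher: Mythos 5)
Your proof is correct and follows essentially the same route as the paper's: injectivity is immediate from the definition of congruence, surjectivity comes from applying Lemma \ref{lemma: polydissavoid} to the set of arcs killed by $\chi$ and then invoking the uniqueness in Proposition \ref{prop: cohomologyclass}(2), and the count uses the long exact sequence of the triple $(\SS,\partial\SS,\MM)$ with the vanishing of $H^0(\partial\SS,\MM;\mathbb{Z}_2)$. The only cosmetic difference is that you compute $\dim H^1(\SS,\partial\SS;\mathbb{Z}_2)=2g+b-1$ via Lefschetz duality where the paper uses an Euler characteristic argument.
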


\begin{proof}
The function $D\mapsto \chi_D$ is injective by the definition of congruence. Given $\chi$ satisfying (2), the set of simple marked arcs killed by $\chi$ contains an odd number of arcs in each triangle. By Lemma \ref{lemma: polydissavoid}, there exists a polygonal dissection $D$ on which $\chi$ is non-zero; by the uniqueness in Proposition \ref{prop: cohomologyclass}.2, $\chi_D=\chi$. Therefore, the function $D\mapsto \chi_D$ is surjective.


For cardinality, consider (part of) the long exact sequence induced by the triple $(\SS,\partial \SS,\MM)$.
\[
\dotsb
\rightarrow
H^0(\partial\SS,\MM;\mathbb{Z}_2)
\rightarrow
H^1(\SS,\partial \SS;\mathbb{Z}_2)
\xrightarrow{f}
H^1(\SS,\MM;\mathbb{Z}_2)
\xrightarrow{g}
H^1(\partial \SS,\MM;\mathbb{Z}_2)
\rightarrow
\dotsb
\]
Since every component of $\partial \SS$ contains a marked point, $H^0(\partial\SS,\MM;\mathbb{Z}_2)=0$ and so the map labeled $f$ above is injective.
The map labeled $g$ above sends $\chi$ to its values on the boundary arcs, and so the set of relative cohomology classes coming from polygonal dissections is precisely the preimage of the element in $H^1(\partial \SS,\MM;\mathbb{Z}_2)$ which sends each boundary arc to $1$. Since this preimage is non-empty, it has the same cardinality of as the preimage of $0$; that is, the kernel of $g$. Since $f$ is injective and the sequence is exact, the kernel of $g$ has the same cardinality as $H^1(\SS,\partial \SS;\mathbb{Z}_2)$.

The dimension of this relative cohomology group can be computed via Euler characteristics.
\[
\begin{aligned}
\sum_{i=0}^2 (-1)^i\dim_{\mathbb{Z}_2}(H^i(\SS,\partial \SS;\mathbb{Z}_2))
&=
\sum_{i=0}^2 (-1)^i\dim_{\mathbb{Z}_2}(H^i(\SS;\mathbb{Z}_2))
-
\sum_{i=0}^2 (-1)^i\dim_{\mathbb{Z}_2}(H^i(\partial\SS;\mathbb{Z}_2)) \\
=
(2-2g-b) + 0 
\end{aligned}
\]
Since $\SS$ is connected, $\dim(H^0(\SS,\partial \SS;\mathbb{Z}_2))=0$ and $\dim(H^2(\SS,\partial \SS;\mathbb{Z}_2))=1$. Therefore, 
\[
\dim_{\mathbb{Z}_2}(H^1(\SS,\partial \SS;\mathbb{Z}_2)) = 2g+b-1
\] 
and so the cardinality of $H^1(\SS,\partial\SS;\mathbb{Z}_2)$ is $2^{2g+b-1}$.
\end{proof}

\begin{rem}
Congruence classes of polygonal dissections are in bijection with several other sets of potential interest; we explore these different interpretations in Appendix \ref{appendix: indexingdeepcomponents}.
\end{rem}

\subsection{Deep points of marked surfaces}



We can now combine our results to characterize and parameterize the deep locus of $V(\CA(\SS),\kk)$.

\begin{thm}\label{thm: deepsurfacediss}
Let $\SS$ be a connected, unpunctured, triangulable marked surface with genus $g$, $b$-many boundary components, and $m$-many marked points.
\begin{enumerate}
    \item If $m$ is odd, then $V(\CA(\SS),\kk)$ has no deep points.
    \item If $m$ is even, then each deep point of $V(\CA(\SS),\kk)$ is non-zero on some polygonal dissection. 
    \begin{enumerate}
       \item 
        Given a polygonal dissection $D$ of $\SS$, a function
        \[ p:\{\text{boundary arcs in $\SS$}\} \cup D \rightarrow \kk^\times\]
        extends to a (necessarily unique) deep point of $V(\CA(\SS),\kk)$ which is non-zero on $D$ iff the alternating product of the values of $p$ around the boundary of $\SS\smallsetminus D$ is $(-1)^{b+\frac{m}{2}}$.
        \item Two polygonal dissections of $\SS$ parameterize the same deep points iff they are congruent (see Definition \ref{defn: congruent}); otherwise they parameterize disjoint sets of deep points.
    \end{enumerate}
     Therefore, the deep locus of $V(\CA(\SS),\kk)$ consists of $2^{2g+b-1}$-many disjoint copies of \\$(\kk^\times)^{2g+b+m-2}$.
\end{enumerate}
\end{thm}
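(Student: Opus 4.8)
The plan is to reduce the whole statement to the polygonal case (Theorem~\ref{thm: deeppolygon}) by cutting along polygonal dissections. Fix $\SS$ with invariants $g,b,m$ as in the statement. Three facts from earlier will do the bookkeeping: every deep point of $V(\CA(\SS),\kk)$ is non-zero on \emph{some} polygonal dissection $D$ (Corollary~\ref{coro: deeppolydiss}); the cutting isomorphism restricts to a bijection between the deep points of $V(\CA(\SS),\kk)$ non-zero on $D$ and the deep points of $V(\CA(\SS\smallsetminus D),\kk)$ gluable along $D$ (Corollary~\ref{coro: deepcutpoly}); and, since $\SS$ is unpunctured, Proposition~\ref{prop: polydisscounts} says $\SS\smallsetminus D$ is a polygon with $\delta(\SS)=4g+2b+m-2$ sides, whose edges are the $m$ boundary arcs of $\SS$ together with the two copies $s',s''$ of each of the $d(\SS)=2g+b-1$ arcs $s\in D$. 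For part (1), I would observe that when $m$ is odd the polygon $\SS\smallsetminus D$ has an odd number of sides, hence no deep points by Theorem~\ref{thm: deeppolygon}(1); by the first two facts, $V(\CA(\SS),\kk)$ then has no deep points at all.

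For part (2) I assume $m$ even, so $\delta(\SS)$ is even; the first clause is again Corollary~\ref{coro: deeppolydiss}. For (2)(a) I would run the cutting bijection through Theorem~\ref{thm: deeppolygon}(2): a deep point of $V(\CA(\SS\smallsetminus D),\kk)$ is uniquely determined by its (necessarily nonzero) values on the $\delta(\SS)$ edges subject to the single constraint that the alternating product of those values around the boundary equals $(-1)^{(\delta(\SS)+2)/2}$, and one computes $(-1)^{(\delta(\SS)+2)/2}=(-1)^{2g+b+m/2}=(-1)^{b+m/2}$. Gluability along $D$ is exactly the requirement $p(s')=p(s'')$ for all $s\in D$, which is automatically compatible with nonvanishing (the $s',s''$ are edges) and simply identifies those pairs of coordinates; so deep points of $V(\CA(\SS),\kk)$ non-zero on $D$ correspond bijectively to functions on $\{\text{boundary arcs of }\SS\}\cup D$ valued in $\kk^\times$ whose alternating product around $\partial(\SS\smallsetminus D)$ is $(-1)^{b+m/2}$. (This target is well-defined independently of the choice of initial edge and orientation, since changing it inverts the ratio and $(-1)^{b+m/2}=\pm1$ is its own inverse.) For the dimension I would note there are $m+(2g+b-1)$ coordinates and the alternating product is a Laurent monomial in which each of the $m$ boundary arcs occurs with exponent $\pm1$; hence the exponent vector is primitive, the monomial map to $\kk^\times$ is surjective, and its fibers are isomorphic to $(\kk^\times)^{m+2g+b-2}=(\kk^\times)^{2g+b+m-2}$ (in particular nonempty).

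For (2)(b) and the final count I would invoke Proposition~\ref{prop: cohomologyclass}(3): a deep point $p$ is non-zero on $D$ iff $\chi_p=\chi_D$, so the set of deep points non-zero on $D$ depends only on the congruence class of $D$, and two polygonal dissections parameterize the same deep points iff $\chi_{D_1}=\chi_{D_2}$, i.e.\ iff they are congruent; if they are not congruent the two sets are disjoint, since each deep $p$ has a single well-defined class $\chi_p$. Since every deep point is non-zero on some polygonal dissection, the deep locus is the union over congruence classes of these pairwise-disjoint subsets, each isomorphic to $(\kk^\times)^{2g+b+m-2}$ by (2)(a), and there are exactly $2^{2g+b-1}$ of them by Proposition~\ref{prop: polydisscount}. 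Finally, each such subset is open in the deep locus (it is cut out by the non-vanishing of the arcs of $D$), so a finite family of pairwise-disjoint open sets covers the deep locus and each is therefore also closed; this yields the asserted decomposition of the deep locus into $2^{2g+b-1}$ disjoint copies of $(\kk^\times)^{2g+b+m-2}$.

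I expect the only real friction to be in part (2)(a): keeping straight the identification of the $\delta(\SS)$ edges of the cut polygon (boundary arcs of $\SS$ plus doubled dissection arcs), verifying the sign bookkeeping that turns $(-1)^{(\delta(\SS)+2)/2}$ into $(-1)^{b+m/2}$, and --- the genuinely substantive point --- checking that imposing the $d(\SS)$ gluing relations together with the one alternating-product relation produces a single algebraic torus of dimension $2g+b+m-2$ rather than a disjoint union of several tori. This last issue is precisely the primitivity of the monomial defining the alternating-product relation, which holds because each boundary arc of $\SS$ appears exactly once in the boundary cycle of $\SS\smallsetminus D$.
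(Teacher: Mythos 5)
Your proposal is correct and follows essentially the same route as the paper: reduce to the polygon $\SS\smallsetminus D$ via Corollaries \ref{coro: deeppolydiss} and \ref{coro: deepcutpoly}, apply Theorem \ref{thm: deeppolygon} with the side count from Proposition \ref{prop: polydisscounts}, and handle (2b) and the component count via Propositions \ref{prop: cohomologyclass} and \ref{prop: polydisscount}. The only cosmetic difference is in the dimension count for (2a), where you argue via primitivity of the character defined by the alternating product while the paper simply solves for the value on one fixed boundary arc; these amount to the same observation that each boundary arc appears exactly once, with exponent $\pm1$.
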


\begin{proof}
Let $D$ be a polygonal dissection of $\SS$. By Corollary \ref{coro: deepcutpoly}, deep points of $V(\CA(\SS),\kk)$ which are non-zero on $D$ are in bijection with deep points of $V(\CA(\SS\smallsetminus D),\kk)$ which are gluable along $D$. 
By Proposition \ref{prop: polydisscounts}, $D$ contains $(2g+b-1)$-many arcs, and $\SS\smallsetminus D$ is a polygon with $(4g+2b+m-2)$-many sides. By Theorem \ref{thm: deeppolygon}, $V(\CA(\SS\smallsetminus D),\kk)$ has no deep points if $(4g+2b+m-2)$ is odd, or equivalently, if $m$ is odd. This verifies Part (1) of the theorem.

If $m$ is even, then $V(\CA(\SS\smallsetminus D),\kk)$ has a unique deep point for each choice of values on the boundary of $\SS\smallsetminus D$, subject to the product in Condition \eqref{eq: alternating product}. 
If we fix a boundary arc $\alpha$ in $\SS$, then we may freely choose the values of $p$ on the $(m-1)$-many other boundary arcs and the $(2g+b-1)$-many arcs in $D$ to be any non-zero numbers in $\kk$. Since $p(\alpha)$ will appear once in the alternating product, there is a unique non-zero value which makes Condition \eqref{eq: alternating product} true.
Therefore, a deep point in $V(\CA(\SS),\kk)$ which is non-zero on $D$ is freely determined by the choice of $(2g+b+m-2)$-many non-zero elements in $\kk$. This verifies part (2a) and the shape of each deep component.

If two polygonal dissections have any deep points in common, they must have the same cohomology class by Proposition \ref{prop: cohomologyclass}, in which case all of their deep points are the same. This verifies Part (2b). Since there are $2^{2g+b-1}$-many equivalence classes of polygonal dissections (Proposition \ref{prop: polydisscount}), this will be the number of components of the deep locus.
%
\end{proof}

\begin{ex}\label{ex: 2 2 annulus 1}
Let $\SS$ be the $(2,2)$-annulus, the genus $g=0$ surface with $b=2$ boundary components and $2$ marked points on each boundary, for a total of $m=4$ marked points. 
Figure \ref{fig: annulus 2 2 identified sides deep} depicts $\SS$ as a rectangle with opposite sides identified (drawn dashed).
The marked points divide each boundary component into two boundary arcs, which we label $a_1, a_2$ and $c_1,c_2$ as in Figure \ref{fig: annulus 2 2 deep}.

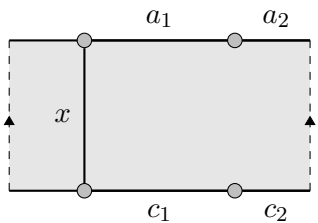
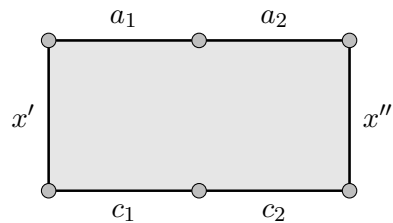
\begin{figure}[h!tb]
\captionsetup[subfigure]{justification=centering}
\centering
\begin{subfigure}[c]{0.45\linewidth}
\centering
\begin{tikzpicture}
    \path[fill=black!10] (-2,1) to (2,1) to (2,-1) to (-2,-1) to (-2,1);
    \draw[thick] (-2,1) to (2,1) (-2,-1) to (2,-1);
    \draw[dashed] (-2,-1) to (-2,1) (2,-1) to (2,1);
    \draw[-Triangle,thin] (-2,-.1) to (-2,0);
    \draw[-Triangle,thin] (2,-.1) to (2,0);
    
    \node[dot] (1) at (-1,1) {};
    \node[dot] (2) at (1,1) {};
    \node[dot] (3) at (1,-1) {};
    \node[dot] (4) at (-1,-1) {};
    \draw[thick] (1) to node [midway, left=1pt] {$x$} (4);
    \draw[thick] (1) to node [midway, above=1pt] {$a_1$} (2);
    \draw[thick] (2) to node [midway, above=1pt] {$a_2$} (2,1);
    \draw[thick] (3) to node [midway, below=1pt] {$c_1$} (4);
    \draw[thick] (3) to node [midway, below=1pt] {$c_2$} (2,-1);

\end{tikzpicture}
\subcaption{The $(2,2)$-annulus $\SS$\\with marked arc $x$}
\label{fig: annulus 2 2 identified sides deep}
\end{subfigure}
\begin{subfigure}[c]{0.45\linewidth}
\centering
\begin{tikzpicture}
    \draw[fill=black!10,thick] (-3,1) to (1,1) to (1,-1) to (-3,-1) to (-3,1);
    \node[dot] (1) at (-3,1) {};
    \node[dot] (2) at (-1,1) {};
    \node[dot] (3) at (1,1) {};
    \node[dot] (4) at (1,-1) {};
    \node[dot] (5) at (-1,-1) {};
    \node[dot] (6) at (-3,-1) {};
    \draw[thick] (2) to node [midway, above=1pt] {$a_2$} (3);
    \draw[thick] (1) to node [midway, above=1pt] {$a_1$} (2);
    \draw[thick] (4) to node [midway, below=1pt] {$c_2$} (5);
    \draw[thick] (6) to node [midway, below=1pt] {$c_1$} (5);
    \draw[thick] (6) to node [midway, left=1pt]  {$x'$} (1);
    \draw[thick] (4) to node [midway, right=1pt] {$x''$} (3);
\end{tikzpicture}
\subcaption{A hexagon $\Delta$ formed by cutting $\SS$ \\along the polygonal dissection $D=\{x\}$.}
\label{fig: annulus 2 2 poly diss deep}
\end{subfigure}
\caption{A polygonal dissection of the (2,2)-annulus.}
   \label{fig: annulus 2 2 deep}
\end{figure}

The theorem predicts that the deep locus of $V(\CA(\SS),\kk)$ consists of $2$ copies of $(\kk^{\times})^4$; we show this now.
Consider the polygonal dissection of $\SS$ consisting of the marked arc $x$ in Figure \ref{fig: annulus 2 2 identified sides deep}, whose cutting is the hexagon $\Delta$ in Figure \ref{fig: annulus 2 2 poly diss deep} (note that the arc $x$ becomes boundary arcs $x'$ and $x''$ in $\Delta$). By Theorem \ref{thm: deeppolygon}, a function 
\[ p:\{a_1,a_2,c_1,c_2,x',x''\} \rightarrow \kk^\times \]
extends to a (necessarily unique) deep point in $V(\CA(\Delta),\kk)$ iff Condition \eqref{eq: alternating product} holds; that is,
\[
\frac{
p(a_1)p(x'')p(c_1)
}{
p(a_2)p(c_2)p(x')
}=
1 
=
(-1)^{\frac{6+2}{2}} 
\]

Such a deep point is gluable along $x$ iff $p(x')=p(x'')$; therefore, a function 
\[ p:\{a_1,a_2,c_1,c_2,x\} \rightarrow \kk^\times \]
extends to a (necessarily unique) deep point in $V(\CA(\SS),\kk)$ iff
\[
\frac{
p(a_1)p(x)p(c_1)
}{
p(a_2)p(c_2)p(x)
}=
\frac{
p(a_1)p(c_1)
}{
p(a_2)p(c_2)
}=
1 = (-1)^{2+\frac{4}{2}}
\]
Note that this condition can be used to determine one value in terms of the other three.
Therefore, a deep point in $V(\CA(\SS),\kk)$ which is non-zero on $\{x\}$ is freely determined by a function 
\[p:\{a_1,a_2,c_1,x\}\rightarrow \kk^\times\]
So, the set of such deep points forms a copy of $(\kk^\times)^4$ inside $V(\CA(\SS),\kk)$.

The other component of the deep locus can be parametrized by finding a polygonal dissection which is not congruent to the first; for example, the marked arc $y$ in Figure \ref{fig: annulus 2 2 identified sides other}. By the same logic as before, a function 
\[ 
p:\{a_1,a_2,c_1,c_2,y\}\rightarrow \kk^\times
\]
extends to a (necessarily unique) deep point of $\CA(\SS)$ if and only if
\[ \frac{p(a_1)p(c_2)}{p(a_2)p(c_1)} = 1 \]
The set of such deep points forms another copy of $(\kk^\times)^4$ inside $V(\CA(\SS),\kk)$.
\end{ex}

\begin{figure}[h!tb]
\captionsetup[subfigure]{justification=centering}
\centering
\begin{subfigure}[c]{0.45\linewidth}
\centering
\begin{tikzpicture}
    \path[fill=black!10] (-2,1) to (2,1) to (2,-1) to (-2,-1) to (-2,1);
    \draw[thick] (-2,1) to (2,1) (-2,-1) to (2,-1);
    \draw[dashed] (-2,-1) to (-2,1) (2,-1) to (2,1);
    \draw[-Triangle,thin] (-2,-.1) to (-2,0);
    \draw[-Triangle,thin] (2,-.1) to (2,0);
    
    \node[dot] (1) at (-1,1) {};
    \node[dot] (2) at (1,1) {};
    \node[dot] (3) at (1,-1) {};
    \node[dot] (4) at (-1,-1) {};
    \draw[thick] (1) to node [midway, left=1pt] {$y$} (3);
    \draw[thick] (1) to node [midway, above=1pt] {$a_1$} (2);
    \draw[thick] (2) to node [midway, above=1pt] {$a_2$} (2,1);
    \draw[thick] (3) to node [midway, below=1pt] {$c_1$} (4);
    \draw[thick] (3) to node [midway, below=1pt] {$c_2$} (2,-1);

\end{tikzpicture}
\subcaption{The $(2,2)$-annulus $\SS$,\\with marked arc $y$}
\label{fig: annulus 2 2 identified sides other}
\end{subfigure}
\begin{subfigure}[c]{0.45\linewidth}
\centering
\begin{tikzpicture}
    \draw[fill=black!10,thick] (-3,1) to (1,1) to (3,-1) to (-1,-1) to (-3,1);
    \node[dot] (1) at (-3,1) {};
    \node[dot] (2) at (-1,1) {};
    \node[dot] (3) at (1,1) {};
    \node[dot] (4) at (3,-1) {};
    \node[dot] (5) at (1,-1) {};
    \node[dot] (6) at (-1,-1) {};
    \draw[thick] (2) to node [midway, above=1pt] {$a_2$} (3);
    \draw[thick] (1) to node [midway, above=1pt] {$a_1$} (2);
    \draw[thick] (4) to node [midway, below=1pt] {$c_1$} (5);
    \draw[thick] (6) to node [midway, below=1pt] {$c_2$} (5);
    \draw[thick] (6) to node [midway, left=1pt]  {$y'$} (1);
    \draw[thick] (4) to node [midway, right=1pt] {$y''$} (3);
\end{tikzpicture}
\subcaption{A hexagon $\Delta$ formed by cutting $\SS$ \\along the polygonal dissection $D=\{y\}$.}
\label{fig: annulus 2 2 poly diss other}
\end{subfigure}
\caption{Another polygonal dissection of the (2,2)-annulus.}
   \label{fig: annulus 2 2 other}
\end{figure}
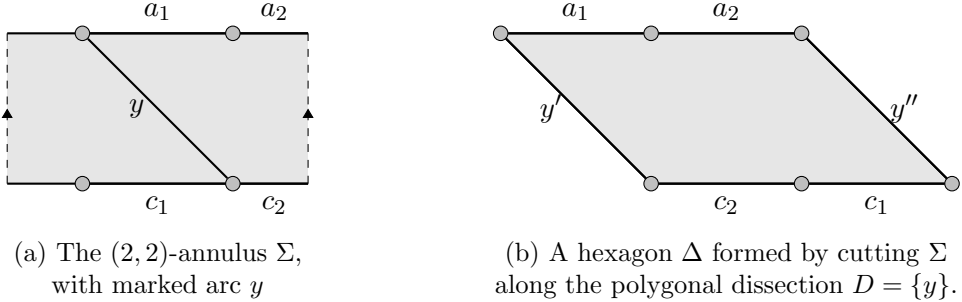

\draftnewpage

\appendix

\section{The deep components of unpunctured marked surfaces} \label{appendix: indexingdeepcomponents}

By Theorem \ref{thm: deepsurfacediss}, the deep locus of an unpunctured marked surface with an even number of marked points consists of $2^{2g+b-1}$-many disjoint algebraic tori. Proposition \ref{prop: polydisscount} showed these tori may be indexed by either (a) polygonal dissections of $\SS$ up to congruence, or (b) relative cohomology classes in $H^1(\SS,\MM;\mathbb{Z}_2)$ with value 1 on the boundary arcs. 

In this appendix, we construct several other sets which index the components of the deep locus. The results of this appendix are summarized by the following theorem.

%

\begin{thm}\label{thm: deepcomponents}
Let $\SS$ be a connected, unpunctured, triangulable marked surface with an even number of marked points. Then the following sets are in canonical bijection.
\begin{enumerate}
    \item Components of the deep locus of $V(\CA(\SS),\kk)$, for a fixed field $\kk$.
    \item Polygonal dissections of $\SS$, up to congruence.
    \item Relative cohomology classes in $H^1(\SS,\MM;\mathbb{Z}_2)$ with value 1 on the boundary arcs.
    \item \emph{Vanishing classes} of $\SS$; that is, sets of simple, non-boundary marked arcs in $\SS$ which contain an odd number of arcs in every triangle.
    \item Deep points in $V(\CA(\SS),\mathbb{Z}_2)$.
    \item Subsets of $D$, for a fixed polygonal dissection $D$ of $\SS$.
    \item \emph{Alternating double covers} of $\SS$; that is, double covers of $\SS$ with an alternating coloring of their marked points, such that the non-trivial deck transformation swaps colors.
\end{enumerate}
While the set of spin structures on $\SS$ has the same number of elements as each of these sets, in general there is no bijection which commutes with the action of the marked mapping class group.
\end{thm}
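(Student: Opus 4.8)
The plan is to run a chain of bijections: keep $(1)$--$(3)$ as already established, reduce $(4)$ and $(5)$ to them by elementary moves, handle $(6)$ by a torsor computation, and treat $(7)$ and the closing spin-structure remark as the two genuinely new points. First, $(1)\Leftrightarrow(2)\Leftrightarrow(3)$ is precisely Theorem~\ref{thm: deepsurfacediss} together with Proposition~\ref{prop: polydisscount}: the deep component on which a point is non-zero over a polygonal dissection depends only on its congruence class, and $D\mapsto\chi_D$ identifies congruence classes with the classes in $H^1(\SS,\MM;\mathbb{Z}_2)$ taking value $1$ on every boundary arc. For $(3)\Leftrightarrow(4)$, send such a $\chi$ to its vanishing set on simple non-boundary arcs: the $1$-cocycle condition makes the number of arcs of a triangle with $\chi=1$ even, hence the number with $\chi=0$ odd, so the vanishing set is a vanishing class, and conversely the indicator of the complement of a vanishing class is a cocycle; agreement of the two descriptions on \emph{all} simple arcs (not just one triangulation) is the flip argument from the proof of Proposition~\ref{prop: cohomologyclass}(1). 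For $(4)\Leftrightarrow(5)$: over $\mathbb{Z}_2$ one has $\mathbb{Z}_2^{\times}=\{1\}$ and $(-1)^{b+m/2}=1$, so the obstruction of Theorem~\ref{thm: deepsurfacediss}(2a) is vacuous, each deep component contains exactly one $\mathbb{Z}_2$-point, and $p\mapsto\{a:p(a)=0\}$ is a bijection from deep points of $V(\CA(\SS),\mathbb{Z}_2)$ onto vanishing classes, with inverse supplied by $(3)\Leftrightarrow(4)$ and Proposition~\ref{prop: cohomologyclass}(3). (When $m$ is odd every set in sight is empty, so there is nothing to prove.)

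For $(2)\Leftrightarrow(6)$, fix a polygonal dissection $D$. By the proof of Proposition~\ref{prop: polydisscount}, the classes in $H^1(\SS,\MM;\mathbb{Z}_2)$ equal to $1$ on $\partial\SS$ form a torsor, based at $\chi_D$, over the image of the injection $f\colon H^1(\SS,\partial\SS;\mathbb{Z}_2)\hookrightarrow H^1(\SS,\MM;\mathbb{Z}_2)$. I would check that $\psi\mapsto f(\psi)|_D$ is an isomorphism onto $\mathbb{Z}_2^{D}$: it is injective because $f(\psi)|_D=0$ makes $\chi_D+f(\psi)$ agree with $\chi_D$ on $D\cup\partial\SS$, forcing $f(\psi)=0$ by the uniqueness in Proposition~\ref{prop: cohomologyclass}(2); and the two sides have the same dimension, $|D|=2g+b-1=\dim_{\mathbb{Z}_2}H^1(\SS,\partial\SS;\mathbb{Z}_2)$, by Propositions~\ref{prop: polydisscounts} and~\ref{prop: polydisscount}. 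Translating, $D'\mapsto\{a\in D:\chi_{D'}(a)=0\}$ is a bijection from congruence classes onto subsets of $D$; surjectivity can also be seen directly, since any $E\subseteq D$ is realized by taking $\chi$ with $\chi|_D=\mathbf{1}_{D\setminus E}$, noting its vanishing class avoids $\partial\SS$, and applying Lemma~\ref{lemma: polydissavoid} to build a polygonal dissection with cohomology class $\chi$.

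The first topologically substantive step is $(3)\Leftrightarrow(7)$. I would feed in the long exact sequence of the pair $(\SS,\MM)$,
\[
0\to H^0(\SS;\mathbb{Z}_2)\to H^0(\MM;\mathbb{Z}_2)\to H^1(\SS,\MM;\mathbb{Z}_2)\to H^1(\SS;\mathbb{Z}_2)\to 0
\]
(the last map onto because $H^1(\MM;\mathbb{Z}_2)=0$), and read a class $\chi$ as the monodromy of a double cover $\widehat\SS\to\SS$ (its image in $H^1(\SS;\mathbb{Z}_2)=\Hom(\pi_1\SS,\mathbb{Z}_2)$) together with enough extra data to pin down an alternating $2$-colouring of the marked points of $\widehat\SS$ up to isomorphism, the coloured-marked-point data accounting precisely for the $(m-1)$-dimensional ambiguity of $H^1(\SS,\MM;\mathbb{Z}_2)$ over $H^1(\SS;\mathbb{Z}_2)$. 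The content is that $\chi=1$ on every boundary arc translates exactly to the colouring being alternating around each boundary component of $\widehat\SS$: a boundary arc joins consecutive marked points of $\partial\SS$, and $\chi$ records whether their lifts are coloured alike, so ``all ones'' is the alternation condition. As a consistency check the construction must (and does) send $\chi$ to no alternating cover when $m$ is odd --- a boundary circle with an odd number of marked points cannot be split in an alternating cover, so the monodromy around it is forced nontrivial, and the total monodromy cannot then vanish on the null-homologous sum of boundary classes. The inverse rebuilds $\chi$ from a cover's monodromy and colouring, and one checks compatibility with $(3)\Leftrightarrow(4)$ (the vanishing class of $\chi$ is the set of arcs whose endpoints lift to like-coloured marked points). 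I expect the labour here to be bookkeeping --- base points, choices of lift, counting colourings of a cover modulo isomorphism --- rather than anything conceptual.

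Finally, the closing assertion. Equinumerosity is immediate: on a surface with non-empty boundary the spin structures form a torsor over $H^1(\SS;\mathbb{Z}_2)\cong\mathbb{Z}_2^{2g+b-1}$, so there are $2^{2g+b-1}$ of them, matching Theorem~\ref{thm: deepsurfacediss}. The obstruction to a marked-mapping-class-group-equivariant bijection is structural: each of the seven sets above is a torsor over $H^1(\SS,\partial\SS;\mathbb{Z}_2)$ carrying no additional equivariant structure, whereas the set of spin structures, though a torsor over the abstractly isomorphic module $H^1(\SS;\mathbb{Z}_2)$, also carries a canonical $\mathbb{Z}_2$-quadratic refinement of the intersection pairing through which the mapping class group acts. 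I would make this precise on an explicit surface for which $2g+b-1$ is large enough that the two actions visibly differ --- e.g.\ genus $2$ with one boundary component and two boundary marked points --- and compare orbit sizes: the marked mapping class group surjects onto the group of automorphisms of $H_1(\SS;\mathbb{Z}_2)$ preserving the sublattice spanned by the boundary classes, this group splits the quadratic refinements into orbits of unequal sizes of the shape $2^{k-1}(2^k\pm1)$, and these do not coincide with the orbit sizes of the same group acting on an affine space over the isomorphic module $H^1(\SS,\partial\SS;\mathbb{Z}_2)$. Making this orbit count rigorous --- and in particular verifying that the deep-component torsor carries the ``naive'' affine action and not a disguised quadratic one --- is the step I expect to be the main obstacle; everything else is bookkeeping on top of results already in hand.
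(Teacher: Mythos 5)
Your chain $(1)\Leftrightarrow(2)\Leftrightarrow(3)\Leftrightarrow(4)\Leftrightarrow(5)$ matches the paper's route (Theorem \ref{thm: deepsurfacediss}, Propositions \ref{prop: polydisscount}, \ref{prop: cohomologyclass}, \ref{prop: vanishingclasses}). For $(6)$ you take a genuinely different path: the paper proves Proposition \ref{prop: uniquevanishingclassforeachsubset} by induction on $|S|$, sliding an arc $\alpha\in S$ across a boundary triangle to reduce to a smaller subset, whereas you observe that the classes with value $1$ on $\partial\SS$ form a torsor over $\mathrm{im}(f)$ with $f\colon H^1(\SS,\partial\SS;\mathbb{Z}_2)\hookrightarrow H^1(\SS,\MM;\mathbb{Z}_2)$, and that restriction to $D$ is injective (by the uniqueness in Proposition \ref{prop: cohomologyclass}(2)) hence bijective by the dimension count $|D|=2g+b-1$. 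That is correct and arguably cleaner. For $(7)$ you again diverge: the paper constructs the alternating double cover directly from a polygonal dissection (two oppositely colored copies of the cut polygon, reglued; Construction \ref{cons: polydiss}), checks $\chi_{\cV(\SS')}=\chi_D$, and closes the bijection by a cardinality count; your long-exact-sequence reading of a relative class as monodromy plus coloring data is workable but, as you note, needs real bookkeeping --- in particular the fiber of $H^1(\SS,\MM;\mathbb{Z}_2)\to H^1(\SS;\mathbb{Z}_2)$ has $2^{m-1}$ elements while a fixed admissible cover has only $2^{b-1}$ alternating colorings, so you must track which elements of the fiber satisfy the boundary condition. The paper's constructive route sidesteps this.

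The one genuine gap is the closing spin-structure assertion, which you explicitly leave as ``the main obstacle.'' Your proposed strategy --- comparing orbit sizes of the mapping class group on quadratic refinements versus on an affine space over $H^1(\SS,\partial\SS;\mathbb{Z}_2)$ for a genus-$2$ surface --- is far heavier than necessary and, crucially, still requires you to compute how mapping classes actually act on deep components, which you have not done. The paper's proof is a two-line example: on the $(1,1)$-annulus there are exactly two spin structures and two deep components; the Dehn twist about the core loop $\ell$ fixes the lifted loop $d\ell$ in the direction bundle and hence fixes both spin structures, but it visibly exchanges the two alternating double covers of Figure \ref{fig: annulus 1 1 covers inline}, hence exchanges the two deep components. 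You should replace your orbit-counting program with a single explicit computation of this kind; without it, the final sentence of the theorem remains unproved.

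Two smaller cautions. In the $(3)\Leftrightarrow(7)$ step, make sure the ``alternating'' condition is checked boundary component by boundary component: a component with an odd number of marked points forces connected preimage and a component with an even number forces disconnected preimage, which is exactly where the hypothesis that the \emph{total} number of marked points is even enters (otherwise the set in $(7)$ is empty while you would still be trying to build covers). And in $(4)\Leftrightarrow(5)$, note that uniqueness of the $\mathbb{Z}_2$-point in each component uses not just $|\mathbb{Z}_2^\times|=1$ but also the uniqueness of the extension in Theorem \ref{thm: deeppolygon}(2); you implicitly use this but should say so.
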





\begin{conv}
In this section, $\SS$ will always denote a connected, unpunctured, triangulable marked surface with genus $g$, $b$-many marked points, and an even number $m$ of marked points.
\end{conv}



\subsection{Vanishing classes}

The elementary tool in Section \ref{section: deepmarkedsurfaces} is Lemma \ref{lemma: trianglesurf}, which states that a deep point of $V(\CA(\SS),\kk)$ must vanish on an odd number of arcs in each triangle in $\SS$. In contrast with the polygonal case (Lemma \ref{lemma: trianglepoly}), this does not uniquely determine the set of simple arcs (i.e.~cluster variables) killed by a given deep point. We give such a set of arcs a name.

\begin{defn}
A \textbf{vanishing class} of $\SS$ is a subset $\cV$ of the set of simple, non-boundary marked arcs in $\SS$ which contains an odd number of arcs in each triangle in $\SS$.
\end{defn}

By Lemma \ref{lemma: trianglesurf}, each deep point $p$ of $V(\CA(\SS),\kk)$ determines a vanishing class
%
\begin{equation}\label{eq: vanishingclass}
\cV(p) \coloneqq \{ \text{simple marked arcs $a$ in $\SS$ such that $p(a)=0$}\}
\end{equation}
Note that a vanishing class is usually infinite.


\begin{ex}
A (unpunctured) polygon has a unique vanishing class, consisting of the set of arcs whose two indices differ by an even integer (c.f.~Lemma \ref{lemma: evenrule}).
\end{ex}

\begin{ex}
The $(2,2)$-annulus has 2 vanishing classes. One way to describe these classes is via the two colorings of the marked points in Figure \ref{fig: annulus 2 2 alternating colorings}. For each, the set of simple marked arcs whose endpoints have different colors is a vanishing class, and these are the only two vanishing classes.
\end{ex}

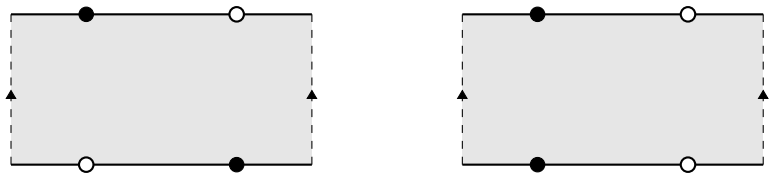
\begin{figure}[h!tb]
\begin{tikzpicture}
	\node (a) at (-3,0) {
		\begin{tikzpicture}
			\path[fill=black!10] (-2,1) to (2,1) to (2,-1) to (-2,-1) to (-2,1);
			\draw[thick] (-2,1) to (2,1) (-2,-1) to (2,-1);
			\draw[dashed] (-2,-1) to (-2,1) (2,-1) to (2,1);
			\draw[-Triangle,thin] (-2,-.1) to (-2,0);
			\draw[-Triangle,thin] (2,-.1) to (2,0);
		    
			\node[bpoint] (1) at (-1,1) {};
			\node[wpoint] (2) at (1,1) {};
			\node[bpoint] (3) at (1,-1) {};
			\node[wpoint] (4) at (-1,-1) {};
		
		\end{tikzpicture}
	};
	\node (c) at (3,0) {
		\begin{tikzpicture}
			\path[fill=black!10] (-2,1) to (2,1) to (2,-1) to (-2,-1) to (-2,1);
			\draw[thick] (-2,1) to (2,1) (-2,-1) to (2,-1);
			\draw[dashed] (-2,-1) to (-2,1) (2,-1) to (2,1);
			\draw[-Triangle,thin] (-2,-.1) to (-2,0);
			\draw[-Triangle,thin] (2,-.1) to (2,0);
		    
			\node[bpoint] (1) at (-1,1) {};
			\node[wpoint] (2) at (1,1) {};
			\node[wpoint] (3) at (1,-1) {};
			\node[bpoint] (4) at (-1,-1) {};
		
		\end{tikzpicture}
	};
\end{tikzpicture}
\caption{Coloring vertices to describe vanishing classes of the $(2,2)$-annulus}
\label{fig: annulus 2 2 alternating colorings} 
\end{figure}

\begin{warn}
It is not always possible to describe the vanishing classes by coloring the vertices of $\SS$, as in Figure \ref{fig: annulus 2 2 alternating colorings}! More generally, one needs to pass to a double cover (see Section \ref{section: alternatingdoublecover}).
\end{warn}

\begin{prop}\label{prop: vanishingclasses}
The vanishing class of a deep point (Equation \eqref{eq: vanishingclass})
%
is constant on components of the deep locus of $V(\CA(\SS),\kk)$, and induces a bijection between components of the deep locus of $V(\CA(\SS),\kk)$ and vanishing classes of $\SS$.
\end{prop}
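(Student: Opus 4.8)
The plan is to prove the two assertions of Proposition \ref{prop: vanishingclasses} separately: first that the vanishing class $\cV(p)$ is locally constant on the deep locus, and second that the induced map from components to vanishing classes is a bijection. For the first claim, recall from Theorem \ref{thm: deepsurfacediss}(2a) that each component of the deep locus is the set of deep points non-zero on a fixed polygonal dissection $D$, parametrized by the nonvanishing values on $\partial\SS\cup D$. Along such a component, Proposition \ref{prop: cohomologyclass}(3) tells us that every point has the same cohomology class $\chi_p = \chi_D$, and by Formula \eqref{eq: cohomologyclass} the class $\chi_p$ records exactly which simple marked arcs are killed by $p$; hence $\cV(p)$ depends only on $\chi_p$, which is constant on the component. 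So $p\mapsto \cV(p)$ descends to a well-defined function from components of the deep locus to subsets of the simple non-boundary arcs, and by Lemma \ref{lemma: trianglesurf} its image lies in the set of vanishing classes.

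For injectivity, suppose two deep points $p, q$ have $\cV(p) = \cV(q)$. By Corollary \ref{coro: deeppolydiss} there is a polygonal dissection $D$ on which $p$ is non-zero; since $\cV(q) = \cV(p)$, the point $q$ is also non-zero on $D$, so both $p$ and $q$ lie in the single component indexed by $D$ (via Theorem \ref{thm: deepsurfacediss}(2b)). Thus points with equal vanishing class lie in the same component, which gives injectivity of the descended map.

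For surjectivity, let $\cV$ be an arbitrary vanishing class of $\SS$. Since $\cV$ contains an odd number of arcs in every triangle, it contains neither all nor none of the three arcs, so in particular it never contains exactly two arcs of a triangle; by Lemma \ref{lemma: polydissavoid} there is a polygonal dissection $D$ of $\SS$ with $D\cap\cV = \emptyset$. By Corollary \ref{coro: deepcutpoly}, the component of the deep locus indexed by $D$ is nonempty (it is isomorphic to $(\kk^\times)^{2g+b+m-2}$, which is nonempty since $\kk$ is a field, hence has a nonzero element). Pick any deep point $p$ in this component; it is non-zero on $D$, so $\chi_p = \chi_D$ is non-zero exactly on those arcs outside $\cV$ — more precisely, the vanishing class of $\chi_D$ is a vanishing class disjoint from $D$, and one checks using the cocycle/flip argument of Proposition \ref{prop: cohomologyclass}(1) that a vanishing class disjoint from $D$ is uniquely determined, hence equals $\cV$. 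Therefore $\cV(p) = \cV$, proving surjectivity.

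The main obstacle is the last step: showing that a vanishing class is uniquely determined by the requirement that it be disjoint from a fixed polygonal dissection $D$, equivalently that $\cV(p)$ is genuinely recovered from $\chi_D$. This follows the pattern of the flip argument already used in Proposition \ref{prop: cohomologyclass}(1) — a vanishing class, being a $\mathbb{Z}_2$-cocycle on every triangulation refining $D$, is determined on $D$-refining triangulations by its values on $D$ (all zero) together with the cocycle condition, and then propagates to all simple arcs by sequences of flips — but care is needed because a vanishing class is a priori only a combinatorial object, not manifestly a cohomology class; the identification with $H^1(\SS,\MM;\mathbb{Z}_2)$ classes having value $1$ on boundary arcs (Proposition \ref{prop: polydisscount}) is what makes this rigorous, and invoking it cleanly is the delicate point.
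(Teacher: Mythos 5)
Your argument is correct and follows essentially the same route as the paper: constancy and injectivity via the identification $\cV(p)\leftrightarrow\chi_p$ together with Proposition \ref{prop: cohomologyclass}, and surjectivity by producing a dissection $D$ disjoint from $\cV$ via Lemma \ref{lemma: polydissavoid} and a deep point non-zero on $D$ via Theorem \ref{thm: deepsurfacediss}. The one place you are more explicit than the paper --- checking via the cocycle/flip argument that a vanishing class disjoint from $D$ is unique, so that $\chi_p=\chi_D$ really forces $\cV(p)=\cV$ --- is a genuine step the paper leaves implicit (recording it only afterward as Corollary \ref{coro: uniquepolydiss}), and your direct treatment correctly avoids any circularity there.
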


\begin{proof}
By Proposition \ref{prop: cohomologyclass}.1, the cohomology class $\chi_p$ of a deep point $p$ is uniquely determined by the set of simple non-boundary arcs it sends to $0$; that is, its vanishing class $\cV(p)$. Therefore, $\cV(p)=\cV(p')$ iff $\chi_p=\chi_{p'}$ iff $p$ and $p'$ are in the same component of the deep locus.

To show surjectivity, consider a vanishing class $\cV$. 
By Lemma \ref{lemma: polydissavoid}, we can find a polygonal dissection $D$ disjoint from $\cV$. By Theorem \ref{thm: deepsurfacediss}, we can find a deep point $p$ of $V(\CA(\SS),\kk)$ which is non-zero on $D$. By Proposition \ref{prop: cohomologyclass}, $\chi_p=\chi_D$; therefore, $\cV(p)=\cV$.
\end{proof}

\begin{coro}\label{coro: uniquepolydiss}
Every polygonal dissection of $\SS$ is disjoint from a unique vanishing class of $\SS$.
\end{coro}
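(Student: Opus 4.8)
The plan is to read the corollary off of the cohomological bookkeeping already in place: to a polygonal dissection $D$ I would attach the relative cohomology class $\chi_D \in H^1(\SS,\MM;\mathbb{Z}_2)$ of Proposition \ref{prop: cohomologyclass}.2, and I claim that the vanishing class disjoint from $D$ is exactly
\[ \cV_D \coloneqq \{ \text{simple, non-boundary marked arcs } a \text{ of } \SS \text{ with } \chi_D(a)=0 \}. \]

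To see that $\cV_D$ is a bona fide vanishing class disjoint from $D$: since $\chi_D$ is a $\mathbb{Z}_2$-valued cohomology class, its values sum to $0$ around the boundary of any triangle in $\SS$, so each triangle contains an even number of arcs on which $\chi_D=1$ and hence an odd number on which $\chi_D=0$; as $\chi_D$ equals $1$ on every boundary arc (Proposition \ref{prop: cohomologyclass}.2), those zero-arcs are all non-boundary, so $\cV_D$ meets every triangle in an odd number of arcs. This is exactly the argument already used in the surjectivity part of Proposition \ref{prop: polydisscount}. Disjointness from $D$ is immediate, since $\chi_D\neq 0$ on the arcs of $D$. (Alternatively one could produce a deep point non-zero on $D$ via Theorem \ref{thm: deepsurfacediss} and take its vanishing class, invoking Lemma \ref{lemma: trianglesurf}; the cohomological version has the mild advantage of not mentioning a field.)

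For uniqueness I would argue as follows. Let $\cV$ be any vanishing class with $\cV\cap D=\emptyset$. By Proposition \ref{prop: vanishingclasses} there is a deep point $p\in V(\CA(\SS),\kk)$, over any field we like, with $\cV(p)=\cV$. By Proposition \ref{prop: cohomologyclass}.1, $\chi_p(a)=0$ precisely when $p(a)=0$ for each simple marked arc $a$, so $\cV=\cV(p)=\{a\text{ non-boundary}:\chi_p(a)=0\}$; since this set avoids $D$, the point $p$ is non-zero on every arc of $D$, and Proposition \ref{prop: cohomologyclass}.3 then forces $\chi_p=\chi_D$. Hence $\cV=\{a\text{ non-boundary}:\chi_D(a)=0\}=\cV_D$, proving uniqueness.

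I do not expect a real obstacle here — the statement is a repackaging of the maps $p\mapsto\chi_p$, $p\mapsto\cV(p)$, $D\mapsto\chi_D$ and the identities relating them. The only step worth a second thought is that in the uniqueness argument one needs a deep point realizing $\cV$ over \emph{some} field; this is fine because $(\kk^\times)^{2g+b+m-2}$ is nonempty for every field (taking $\kk=\mathbb{Z}_2$ it is a single point), so Proposition \ref{prop: vanishingclasses} applies. As a consistency check, combining this corollary with Proposition \ref{prop: polydisscount} says that $D\mapsto\cV_D$ descends to a bijection between polygonal dissections up to congruence and vanishing classes, both sets having $2^{2g+b-1}$ elements.
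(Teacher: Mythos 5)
Your proof is correct and follows essentially the same route as the paper: uniqueness in both cases comes from realizing a vanishing class disjoint from $D$ as $\cV(p)$ for a deep point $p$ via Proposition \ref{prop: vanishingclasses}, noting $p$ is non-zero on $D$, and invoking Proposition \ref{prop: cohomologyclass} to force $\chi_p=\chi_D$. Your existence step (reading $\cV_D$ off the cocycle $\chi_D$, as in the surjectivity argument of Proposition \ref{prop: polydisscount}) is a valid, slightly more explicit variant of the paper's one-line citation, and introduces no circularity.
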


\begin{proof}
Existence follows from Lemma \ref{lemma: polydissavoid}.
Let $\cV$ and $\cV'$ be vanishing classes disjoint from a polygonal dissection $D$. By Proposition \ref{prop: vanishingclasses}, there are deep points $p,p'$ such that $\cV=\cV(p)$ and $\cV'=\cV(p')$. Since $p$ and $p'$ are non-zero on $D$, $\chi_p=\chi_D=\chi_{p'}$; therefore, $\cV=\cV'$.
\end{proof}





\subsection{Deep points over \texorpdfstring{$\mathbb{Z}_2$}{Z2}}

When $\kk=\mathbb{Z}_2$ is the field with two elements, the algebraic torus $(\kk^\times)^n$ consists of a single point for any $n$. Therefore, the deep points of $V(\CA(\SS),\mathbb{Z}_2)$ are in bijection with polygonal dissections of $D$ up to congruence by Theorem \ref{thm: deepsurfacediss}, and so they may be used to index the deep components of $V(\CA(\SS),\kk)$ for any field $\kk$.

This correspondence may be realized more directly as follows.

\begin{prop}
Let $\cV$ be a vanishing class of $\SS$. Then the function from the set of simple marked arcs in $\SS$ to $\mathbb{Z}_2$ defined by 
    \begin{equation}\label{eq: functionofvclass}
    a \mapsto  \left\{
    \begin{array}{cc}
    0 & \text{if $a\in \cV$} \\
    1 & \text{if $a\not\in\cV$}
    \end{array}
    \right\}
    \end{equation}
\begin{enumerate}
    \item extends uniquely to a relative cohomology class $\chi_\cV\in H^1(\SS,\MM;\mathbb{Z}_2)$ with value $1$ on the boundary arcs, and 
    \item extends uniquely to a deep point $p_\cV\in V(\CA(\SS),\mathbb{Z}_2)$; that is, a ring homomorphism $\CA(\SS)\rightarrow \mathbb{Z}_2$ which kills at least one marked arc in each triangulation.
\end{enumerate}
Furthermore, every such relative cohomology class and deep point arises in this way.
\end{prop}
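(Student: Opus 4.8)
The plan is to deduce everything from the machinery already built in Section~\ref{section: deepmarkedsurfaces}, in particular Proposition~\ref{prop: cohomologyclass}, Proposition~\ref{prop: vanishingclasses} and Theorem~\ref{thm: deepsurfacediss}, rather than redo the cohomological bookkeeping from scratch.

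For part~(1), I would first apply Proposition~\ref{prop: vanishingclasses} to obtain a deep point $p\in V(\CA(\SS),\kk)$ (over any field) with $\cV(p)=\cV$, and then set $\chi_\cV\coloneqq\chi_p$, the class supplied by Proposition~\ref{prop: cohomologyclass}.1. By construction $\chi_p(a)=0$ exactly when $p(a)=0$, i.e.\ exactly when $a\in\cV$, for every simple marked arc $a$, and $\chi_p$ is nonzero on every boundary arc because $p$ is; so $\chi_\cV$ extends \eqref{eq: functionofvclass} with the required boundary values. For uniqueness I would invoke the computation of $H^1(\SS,\MM;\mathbb{Z}_2)$ from the reduced simplicial cochain complex of a triangulation $T$ used in the proof of Proposition~\ref{prop: cohomologyclass}: the degree-$0$ term vanishes, so every class is represented by a unique cocycle on $T$ and is therefore determined by its values on the (simple marked) arcs of $T$, which are prescribed by \eqref{eq: functionofvclass}.

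For part~(2), existence is obtained by specializing Theorem~\ref{thm: deepsurfacediss} to $\kk=\mathbb{Z}_2$: each deep component $(\mathbb{Z}_2^\times)^{2g+b+m-2}$ is a single point, so there is a deep point $p_\cV\in V(\CA(\SS),\mathbb{Z}_2)$ with $\cV(p_\cV)=\cV$, and since $1$ is the only unit of $\mathbb{Z}_2$ this $p_\cV$ sends arcs in $\cV$ to $0$ and all other simple marked arcs (in particular the boundary arcs) to $1$, i.e.\ it restricts to \eqref{eq: functionofvclass}. Uniqueness uses that $\CA(\SS)$ is generated as a ring, after inverting the boundary arcs, by the union of all its cluster variables and frozen variables --- all of which are simple marked arcs --- so that a homomorphism $\CA(\SS)\to\mathbb{Z}_2$ is pinned down by its values on simple marked arcs.

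Finally, for the ``furthermore'' clause I would argue both directions at once. Given $\chi\in H^1(\SS,\MM;\mathbb{Z}_2)$ with value $1$ on every boundary arc, put $\cV\coloneqq\{\,a : a\text{ is a simple non-boundary marked arc and }\chi(a)=0\,\}$; representing $\chi$ by its cocycle on a triangulation $T$ and using that every triangle of arcs in $\SS$ occurs as a face of some triangulation, the $1$-cocycle condition forces $\cV$ to contain an odd number of arcs in each triangle, so $\cV$ is a vanishing class and $\chi_\cV=\chi$ because the two classes agree on $T$. Symmetrically, any deep point $p\in V(\CA(\SS),\mathbb{Z}_2)$ has $\cV(p)$ a vanishing class by Lemma~\ref{lemma: trianglesurf}, and $p=p_{\cV(p)}$ by the uniqueness established in part~(2). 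There is no genuinely hard step here; the two points I would be most careful about are (i) the generation claim for $\CA(\SS)$, which is what makes the extensions in (1) and (2) unique and lets relative cohomology classes be tested on arcs alone, and (ii) the observation that the cocycle condition with respect to a single triangulation already encodes the odd-parity condition for every triangle of arcs in $\SS$.
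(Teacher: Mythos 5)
Your argument is correct, and for parts (1) and (2) it is essentially the paper's proof: part (1) is Proposition \ref{prop: cohomologyclass}.1 combined with the surjectivity in Proposition \ref{prop: vanishingclasses}, and part (2) is Proposition \ref{prop: vanishingclasses} over $\kk=\mathbb{Z}_2$ together with the observation that $\lvert\mathbb{Z}_2^\times\rvert=1$ collapses each deep torus to a point. Where you genuinely diverge is the ``furthermore'' clause. The paper notes that both constructions are injections and then simply counts: all three sets have cardinality $2^{2g+b-1}$ (by Proposition \ref{prop: polydisscount} and Theorem \ref{thm: deepsurfacediss}), so the injections are bijections. You instead build the inverse maps directly --- reading a vanishing class off a cocycle, and recovering $p=p_{\cV(p)}$ from uniqueness --- which is more self-contained and would survive even without the counting results, at the cost of having to verify the two delicate points you flag: that $\CA(\SS)$ is generated by the simple arcs and the inverses of the boundary arcs (true by the definition of the cluster algebra, or via the cutting isomorphism to a polygon, so values on arcs do pin down values on loops), and that the cocycle condition relative to one triangulation controls \emph{every} triangle of arcs in $\SS$ (cleanest via the remark preceding Proposition \ref{prop: cohomologyclass}: each arc is a relative $1$-cycle, and the three arcs of a triangle sum to a relative boundary, so any class in $H^1(\SS,\MM;\mathbb{Z}_2)$ evaluates to $0$ on their sum). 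Both routes are sound; the paper's is shorter, yours is more explicit about why the bijections are what they are.
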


\begin{proof}
Part (1) is a restatement of Proposition \ref{prop: cohomologyclass}.1. To show Part (2), let $\cV$ be a vanishing class of $\SS$. By Proposition \ref{prop: vanishingclasses} and the fact that $|\mathbb{Z}_2^\times|=1$, there is a unique deep point $p\in V(\CA(\SS),\mathbb{Z}_2)$ such that $\cV(p)=\cV$. 
For any simple marked arc $a$, $p(a) = 0$ if and only if $a\in \cV(p)$ by Equation \eqref{eq: vanishingclass}. Since there is only one other element in $\mathbb{Z}_2$, $p(a) =1$ if and only if $a\not\in \cV(p)$. Therefore, $p$ extends the function given in Equation \eqref{eq: functionofvclass}. 

Therefore, these constructions define injections from the set of vanishing classes to (1) the sets of relative cohomology classes with value $1$ on the boundary and (2) deep points in $V(\CA(\SS),\mathbb{Z}_2)$. Since all three sets have cardinality $2^{2g+b-1}$, these constructions are bijections.
\end{proof}

\subsection{Subsets of a polygonal dissection}

One of the simpler characterizations of the deep components is by the subset of arcs they kill in a fixed polygonal dissection.

\begin{prop} \label{prop: uniquevanishingclassforeachsubset}
Let $D$ be a polygonal dissection of $\SS$. For every subset $S\subset D$, there is a unique vanishing class $\cV$ such that $D\cap \cV=S$.
\end{prop}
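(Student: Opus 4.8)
The plan is to reduce the statement to the already-established bijections by recognizing that each subset $S \subset D$ should correspond, via ``cutting," to a choice of alternating-parity data on the polygon $\SS \smallsetminus D$. Here is the strategy in more detail.

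First, I would fix a polygonal dissection $D$ and consider the cutting $\SS \smallsetminus D$, which is a polygon $\Delta$ with $\delta(\SS)$ sides by Proposition \ref{prop: polydisscounts}. Each arc $s \in D$ splits into two boundary arcs $s', s''$ of $\Delta$. By Lemma \ref{lemma: evenrule}, the (unique) vanishing class of $\Delta$ consists of those diagonals whose indices differ by an even number; in particular, whether a boundary arc of $\Delta$ ``would vanish" depends only on the parity of its position, which in turn is determined by the cyclic sequence of boundary arcs of $\Delta$ read off from the polygonal dissection together with the original boundary arcs of $\SS$. The key observation is that a vanishing class $\cV$ of $\SS$ with $D \cap \cV = S$ is exactly the same data as a consistent parity-labeling of the marked points of $\Delta$: the arcs of $D$ lying in $S$ must ``vanish" (endpoints of the same parity class do not apply — rather, following Lemma \ref{lemma: evenrule}, $s \in \cV$ iff its two endpoints in $\SS$ sit at an even distance along some/any triangulation path), while arcs in $D \smallsetminus S$ must not. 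I would make this precise by working with the relative cohomology class $\chi$ attached to a vanishing class (Proposition \ref{prop: cohomologyclass} and Proposition \ref{prop: vanishingclasses}): a vanishing class $\cV$ is uniquely determined by $\chi_\cV \in H^1(\SS, \MM; \ZZ_2)$, which must be $1$ on all boundary arcs and which we require to take the prescribed value ($0$ on $S$, $1$ on $D \smallsetminus S$) on the arcs of $D$.

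So the heart of the argument is: the restriction map
\[
H^1(\SS, \MM; \ZZ_2) \longrightarrow \ZZ_2^{\,\partial\SS} \times \ZZ_2^{\,D},
\]
sending a class to its values on boundary arcs and on arcs of $D$, is a bijection onto the subset where the $\partial\SS$-component is the all-ones vector. Injectivity: the arcs of $\partial\SS \cup D$ form part of a triangulation $T$ of $\SS$ whose dual graph is a tree (this is exactly how $D$ was produced, in the proof of Proposition \ref{prop: polydiss}); a $1$-cocycle for $T$ is determined by its values on the ``leaves'' of the tree of triangles, i.e.\ by propagating the cocycle condition across each internal flip, and the boundary arcs plus $D$ pin down every value — indeed, by the cocycle condition on each triangle of $\Delta$, once the positional parities of the $\delta(\SS)$ sides of $\Delta$ are fixed, every diagonal value is determined, and the sides of $\Delta$ are precisely $\partial\SS$ together with two copies of each arc of $D$. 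Surjectivity onto the all-ones-on-$\partial\SS$ subset: count. We know $\dim_{\ZZ_2} H^1(\SS, \MM; \ZZ_2)$ fits into the exact sequence used in the proof of Proposition \ref{prop: polydisscount}, and the fibre over the all-ones boundary vector has size $2^{2g+b-1} = 2^{|D|}$ (using $|D| = d(\SS) = 2g+b-1$ from Proposition \ref{prop: polydisscounts} in the unpunctured case). Since the target subset $\{\mathbf{1}\} \times \ZZ_2^{D}$ also has size $2^{|D|}$ and the map is injective, it is a bijection.

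Finally I would translate back: given $S \subset D$, the unique class $\chi \in H^1(\SS,\MM;\ZZ_2)$ with value $1$ on $\partial\SS$, value $0$ on arcs in $S$, and value $1$ on arcs in $D \smallsetminus S$ corresponds (Proposition \ref{prop: polydisscount}, Proposition \ref{prop: vanishingclasses}) to a unique vanishing class $\cV$, and by construction $D \cap \cV = S$; uniqueness of $\cV$ with this property follows because any such $\cV$ has $\chi_\cV$ taking these same prescribed values on $\partial\SS \cup D$, hence equals $\chi$ by injectivity, hence $\cV$ is determined. The main obstacle I anticipate is the bookkeeping in the injectivity/surjectivity of the restriction map: one must be careful that ``value on the arcs of $D$'' together with ``value on $\partial\SS$'' genuinely captures a full set of coordinates on the tree-like triangulation, i.e.\ that no further relations force some of the $2^{|D|}$ candidate assignments to be non-realizable — this is where the dimension count and the tree structure of $\SS \smallsetminus D$ must be invoked cleanly rather than hand-waved.
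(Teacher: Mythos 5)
Your argument is correct, but it takes a genuinely different route from the paper. The paper proves Proposition \ref{prop: uniquevanishingclassforeachsubset} by a purely combinatorial double induction: the base case $S=\emptyset$ is Corollary \ref{coro: uniquepolydiss}, and the inductive step picks adjacent edges $\alpha\in S$, $\beta\notin S$ on the boundary of the polygon $\SS\smallsetminus D$, completes them to a triangle with a third arc $\gamma$, and replaces $D$ by the slid dissection $(D\cup\{\gamma\})\smallsetminus\{\alpha\}$ so that the triangle parity rule (Lemma \ref{lemma: trianglesurf}) forces $\alpha$ into the vanishing class produced by the inductive hypothesis. You instead package everything into the restriction map $H^1(\SS,\MM;\ZZ_2)\to \ZZ_2^{\partial\SS}\times\ZZ_2^{D}$: injectivity via ear-removal in the tree-like triangulation of the cut polygon, surjectivity onto $\{\mathbf{1}\}\times\ZZ_2^{D}$ by comparing the cardinality $2^{2g+b-1}$ of the fibre over the all-ones boundary vector (from the exact sequence in Proposition \ref{prop: polydisscount}) with $|D|=2g+b-1$ (Proposition \ref{prop: polydisscounts}). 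This is legitimate and non-circular, since the identification of vanishing classes with relative cohomology classes taking value $1$ on the boundary (Propositions \ref{prop: cohomologyclass} and \ref{prop: vanishingclasses}) and the relevant counts all precede this proposition; the trade-off is that your proof leans on the cohomological dimension count, whereas the paper's induction is constructive and self-contained, and in fact the paper deliberately reserves the cardinality comparison for the remark immediately after the proposition, where it is used to upgrade the statement to a bijection. One small caution: your injectivity step should be phrased as "the dual graph of the induced triangulation of the polygon $\SS\smallsetminus D$ is a tree," not the dual graph of the triangulation of $\SS$ itself, and you should note explicitly that a relative $1$-cocycle is determined by its values on any one triangulation, so restricting to $\partial\SS\cup D$ inside a triangulation containing $D$ determines the class.
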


\begin{proof}
By Corollary \ref{coro: uniquepolydiss}, there exists a unique vanishing class of $\SS$ disjoint from $D$. 


For our inductive hypothesis, let $S \subseteq D$ be nonempty and assume that for each proper subset $S^\prime \subsetneq S$ and every polygonal dissection $D^\prime$ containing $S^\prime$, there exists a unique vanishing class $\cV^\prime$ such that $D^\prime \cap \cV^\prime = S^\prime$. 
Because $\SS$ has nonempty boundary, we can choose arcs $\alpha \in S$ and $\beta \not\in S$ which are adjacent edges in the boundary of $\SS \smallsetminus D$. Let $\gamma$ be the arc in $\SS$ that completes a triangle with $\alpha$ and $\beta$ in $\SS\smallsetminus D$. Then $D_0 \coloneqq (D \cup \{\gamma\})\smallsetminus \{\alpha \}$ is a polygonal dissection, and $S_0 \coloneqq S\smallsetminus \{a\} $ is a proper subset of $S$ that is contained in $D_0$. By the inductive hypothesis, there exists a unique vanishing class $\cV_0$ such that $D_0 \cap \cV_0 = S_0$. Since $\{\beta, \gamma\} \subset (D_0 \cup \partial \SS)$ and $\{\beta,\gamma\} \cap S_0 = \emptyset$, Lemma \ref{lemma: trianglesurf} implies that $\alpha \in \cV_0$. Hence, $\cV_0$ is the unique vanishing class such that $D \cap \cV_0 = S$.
\end{proof}

Note that the number of arcs in $D$ is $2g+b-1$ by Proposition \ref{prop: polydisscounts}, so the number of subsets of $D$ is $2^{2g+b-1}$. By Theorem \ref{thm: deepsurfacediss} and Proposition \ref{prop: vanishingclasses}, the number of vanishing classes is also $2^{2g+b-1}$. Since the sets have the same cardinality, Proposition \ref{prop: uniquevanishingclassforeachsubset} implies that these sets are in bijection.

\subsection{Alternating double covers}\label{section: alternatingdoublecover}

It is well-known that elements of $H^1(\SS;\mathbb{Z}_2)$ may be identified with double covers of $\SS$ up to equivalence. Over the next several sections, we relate elements of $H^1(\SS,\MM;\mathbb{Z}_2)$ with value $1$ on the boundary arcs to double coverings of $\SS$ endowed with an \emph{alternating coloring} of their marked points.


\begin{defn}
An \textbf{alternating double cover} of $\SS$ is a (not necessarily connected) 2-fold covering space\footnote{A \textbf{covering space} of a marked surface is a covering space of the underlying surface whose marked points are the points which map to marked points in the original surface.} $\SS'\rightarrow \SS$, with a black and white coloring of the marked points of $\SS'$, such that
\begin{itemize}
    \item every pair of adjacent marked points in $\SS'$ has opposite colors, and
    \item the preimages of each marked point in $\SS$ consists of one point of each color.
\end{itemize}
An \textbf{equivalence} of alternating double covers of $\SS$ is a diffeomorphism which commutes with the covering map to $\SS$ and either sends every marked point to a marked point of the same color, or sends every marked point to a marked point of the opposite color.
\end{defn}

Swapping the colors in an alternating double cover yields an equivalent double cover; for this reason, the specific colors do not matter as much as the underlying partition of the marked points.
We say an alternating double cover is \textbf{trivial} if the underlying covering space $\SS'$ is a trivial double cover of $\SS$ (that is, two copies of $\SS$).

\begin{figure}[h!tb]
\begin{tikzpicture}
	\node (a) at (-5,0) {
		\begin{tikzpicture}
			\path[fill=black!10] (-2,1) to (2,1) to (2,-1) to (-2,-1) to (-2,1);
			\draw[thick] (-2,1) to (2,1) (-2,-1) to (2,-1);
			\draw[dashed] (-2,-1) to (-2,1) (2,-1) to (2,1);
			\draw[-Triangle,thin] (-2,-.1) to (-2,0);
			\draw[-Triangle,thin] (2,-.1) to (2,0);
		    
			\node[bpoint] (1) at (-1,1) {};
			\node[wpoint] (2) at (1,1) {};
			\node[bpoint] (3) at (1,-1) {};
			\node[wpoint] (4) at (-1,-1) {};
		
		\end{tikzpicture}
	};
	\node (b) at (0,0) {
		\begin{tikzpicture}
			\path[fill=black!10] (-1,1) to (1,1) to (1,-1) to (-1,-1) to (-1,1);
			\draw[thick] (-1,1) to (1,1) (-1,-1) to (1,-1);
			\draw[dashed] (-1,-1) to (-1,1) (1,-1) to (1,1);
			\draw[-Triangle,thin] (-1,-.1) to (-1,0);
			\draw[-Triangle,thin] (1,-.1) to (1,0);
		    
			\node[dot] (1) at (0,1) {};
			\node[dot] (2) at (0,-1) {};
		
		\end{tikzpicture}
	};
	\node (c) at (5,0) {
		\begin{tikzpicture}
			\path[fill=black!10] (-2,1) to (2,1) to (2,-1) to (-2,-1) to (-2,1);
			\draw[thick] (-2,1) to (2,1) (-2,-1) to (2,-1);
			\draw[dashed] (-2,-1) to (-2,1) (2,-1) to (2,1);
			\draw[-Triangle,thin] (-2,-.1) to (-2,0);
			\draw[-Triangle,thin] (2,-.1) to (2,0);
		    
			\node[bpoint] (1) at (-1,1) {};
			\node[wpoint] (2) at (1,1) {};
			\node[wpoint] (3) at (1,-1) {};
			\node[bpoint] (4) at (-1,-1) {};
		
		\end{tikzpicture}
	};
	\draw[-angle 90] (a) to node[above] {$\pi_1$} (b);
	\draw[-angle 90] (c) to node[above] {$\pi_2$} (b);
\end{tikzpicture}
\caption{Alternating double covers of the $(1,1)$-annulus (up to equivalence)}
\label{fig: annulus 1 1 covers inline} 
\end{figure}
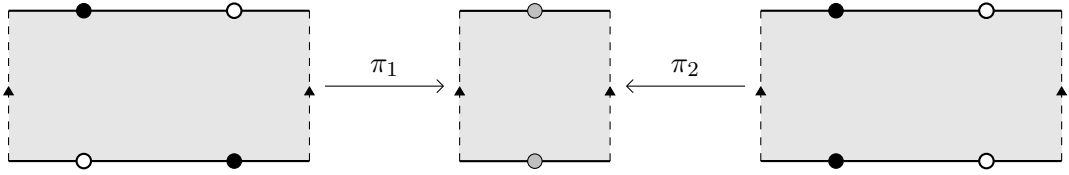

\begin{ex}
The $(1,1)$-annulus (that is, the annulus with 1 marked point on each boundary component) has two alternating double covers up to equivalence (Figure \ref{fig: annulus 1 1 covers inline}). Note that a trivial double cover consisting of two copies of $\SS$ would have been allowed, except there is no way to color the marked points in such a double cover so as to satisfy the alternating condition.
\end{ex}

\subsection{Constructing alternating double covers}

The natural way to construct an alternating double cover is to first choose a (not necessarily connected) double cover of $\SS$, and then attempt to color the marked points in $\SS'$. The following proposition summarizes when and how this works.

\begin{prop}
A double cover $\SS'\rightarrow\SS$ admits an alternating coloring if and only if 
\begin{itemize}
    \item each component in $\partial\SS$ with an odd number of marked points has connected preimage, and
    \item each component in $\partial \SS$ with an even number of marked points has disconnected preimage.
\end{itemize}
Such a double cover exists iff the total number of marked points in $\SS$ is even, in which case there are $2^{2g}$-many such double covers and each admit $2^{b-1}$-many alternating colorings up to equivalence.
\end{prop}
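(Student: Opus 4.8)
The plan is to separate the two claims — the criterion for when a double cover admits an alternating coloring, and the enumeration of such covers and of their colorings — and to reduce everything to the combinatorics of the boundary circles.

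Since $\SS$ is unpunctured, every marked point of a double cover $\SS'\to\SS$ lies on $\partial\SS'$. Viewing each boundary circle of $\SS'$ as a cycle graph whose vertices are its marked points and whose edges are its boundary arcs, an alternating coloring is exactly a proper $2$-coloring of each such cycle which, in addition, gives opposite colors to the two points of every fiber over a marked point of $\SS$. This decomposes the question over the boundary components $C_1,\dots,C_b$ of $\SS$, with $m_i$ marked points on $C_i$, and I would treat each $C_i$ separately. If the monodromy of $\SS'\to\SS$ around $C_i$ is nontrivial, the preimage of $C_i$ is a single circle carrying $2m_i$ marked points cyclically, the deck involution acts on it as the shift by $m_i$ positions, and the two proper $2$-colorings of the $2m_i$-cycle satisfy the fiber condition precisely when $m_i$ is odd. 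If the monodromy around $C_i$ is trivial, the preimage is two circles with $m_i$ marked points each, exchanged by the deck involution; a valid coloring is a proper $2$-coloring of one of them (which exists, in two ways, exactly when $m_i$ is even) together with the opposite coloring of the other. Because the preimage of $C_i$ is connected iff the monodromy around it is nontrivial, combining the cases over all $i$ yields exactly the stated criterion, and shows that when the criterion holds there are exactly $2^b$ alternating colorings of $\SS'$.

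To count the double covers meeting the criterion, I would use that double covers of $\SS$ up to isomorphism correspond to classes $\chi\in H^1(\SS;\mathbb{Z}_2)\cong\Hom(H_1(\SS;\mathbb{Z}_2),\mathbb{Z}_2)$, with the preimage of $C_i$ connected iff $\chi([C_i])=1$; the criterion becomes $\chi([C_i])\equiv m_i\pmod 2$ for every $i$. In $H_1(\SS;\mathbb{Z}_2)\cong(\mathbb{Z}_2)^{2g+b-1}$ the boundary classes obey the single relation $[C_1]+\dots+[C_b]=0$, and $[C_1],\dots,[C_{b-1}]$ are linearly independent, so $\chi\mapsto(\chi([C_1]),\dots,\chi([C_{b-1}]))$ maps onto $(\mathbb{Z}_2)^{b-1}$ with kernel of size $2^{2g}$; the remaining condition $\chi([C_b])\equiv m_b$ is then automatic exactly when $\sum_i m_i$ is even. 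Hence the system is solvable iff the total number $m$ of marked points is even, and in that case it has exactly $2^{2g}$ solutions.

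It remains to pass from the $2^b$ alternating colorings of a fixed qualifying cover to equivalence classes. The only nontrivial automorphism of $\SS'\to\SS$ over $\SS$ is the deck involution $\tau$ (when $\SS'$ is the trivial cover, $\tau$ is the sheet swap), and for every alternating coloring $c$ one has $c\circ\tau=\bar c$, since $\tau$ exchanges the two points of each fiber and those always carry opposite colors. Thus the group acting on the set of alternating colorings — cover automorphisms together with the global color swap — acts through the single involution ``swap all colors'', which is fixed-point free because each $m_i\geq1$; the $2^b$ colorings therefore form $2^{b-1}$ equivalence classes. The step needing genuine care, rather than bookkeeping, is the connected-preimage analysis in the first part: one must see precisely how the deck shift by $m_i$ on the $2m_i$-cycle interacts with its two proper colorings to produce the parity constraint on $m_i$. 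The homological count and the equivalence argument are then routine linear algebra over $\mathbb{Z}_2$.
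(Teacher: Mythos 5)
Your argument is correct and follows the paper's proof in all essentials: the same boundary-component-by-component analysis of when an alternating coloring exists (connected preimage forces $m_i$ odd via the shift-by-$m_i$ deck action on a $2m_i$-cycle of marked points, disconnected preimage forces $m_i$ even), yielding $2^b$ colorings per qualifying cover and hence $2^{b-1}$ up to the color-swap equivalence. The only difference is cosmetic: you count the qualifying double covers via the single relation $[C_1]+\dots+[C_b]=0$ among boundary classes in $H_1(\SS;\mathbb{Z}_2)$, whereas the paper expresses the same fact dually through the long exact sequence of the pair $(\SS,\partial\SS)$ and the map $H^1(\partial\SS;\mathbb{Z}_2)\to H^2(\SS,\partial\SS;\mathbb{Z}_2)$ given by summing values over boundary components.
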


\begin{rem}
So, $\SS$ admits a trivial alternating double cover iff each component of $\partial \SS$ has an even number of marked points, in which case there are $2^{b-1}$-many trivial alternating double covers.
\end{rem}

\begin{proof}
Consider a double cover $\SS'\rightarrow \SS$, and let $S_i$ be a boundary component of $\SS$ with $m_i$-many marked points.
\begin{enumerate}
    \item If the preimage of $S_i$ in $\SS'$ is connected, then this preimage is a single circle with $2m_i$-many marked points. There are always exactly two ways to color these points so that adjacent points have opposite colors. However, the two preimages of a marked point in $S_i$ are $m_i$-many steps apart, so they will receive opposite colors if and only if $m_i$ is odd.
    
    \item If the preimage of $S_i$ in $\SS'$ is disconnected, then this preimage consists of two circles, each with $m_i$-many marked points. Each of these circles may be given an alternating coloring if and only if $m_i$ is even. Once such a coloring is chosen, the alternating coloring of the other circle is determined by the condition that preimages have opposite colors.
\end{enumerate}
This confirms that the parity conditions on boundary components of $\SS$ are a necessary and sufficient for the existence of alternating colorings. Furthermore, when they exist, we see that there are always exactly 2 choices of alternating coloring for each boundary component in $\SS$. Since swapping colors is an equivalence, this gives $2^{b-1}$-many alternating colorings of $\SS'$ up to equivalence.

To count double covers of $\SS$ satisfying the parity conditions, recall that double covers of a manifold $\SS$ are in bijection with $H^1(\SS,\mathbb{Z}_2)$, where a closed loop in $\SS$ lifts along the double cover iff it is killed by the corresponding cohomology class. 

Define $\chi_0\in H^1(\partial\SS;\mathbb{Z}_2)$ to be the class which sends each boundary component in $\SS$ to the parity of the number of marked points on that boundary component. Then double covers of $\SS$ which satisfy the parity conditions are in bijection the preimage of $\chi_0$ under the map
\[ H^1(\SS;\mathbb{Z}_2)\rightarrow H^1(\partial\SS;\mathbb{Z}_2) \]
%

Consider the long exact sequence (with dimensions computed in the proof of Proposition \ref{prop: polydisscount}) 
\[
0\!
\rightarrow
\underbrace{
H^0(\SS;\mathbb{Z}_2)
}_{\dim=1}
\rightarrow
\underbrace{
H^0(\partial\SS;\mathbb{Z}_2)
}_{\dim=b}
\rightarrow
\underbrace{
H^1(\SS,\partial \SS;\mathbb{Z}_2)
}_{\dim=2g+b-1}
\xrightarrow{f}
\underbrace{
H^1(\SS;\mathbb{Z}_2)
}_{\dim=2g+b-1}
\xrightarrow{g}
\underbrace{
H^1(\partial \SS;\mathbb{Z}_2)
}_{\dim=b}
 \xrightarrow{h}
\underbrace{
 H^2(\SS,\partial \SS;\mathbb{Z}_2)
}_{\dim=1}
\rightarrow
\!0
\]
Under the identification $H^2(\SS,\partial \SS;\mathbb{Z}_2)\simeq \mathbb{Z}_2$, $h$ sends a class in $H^1(\partial \SS;\mathbb{Z}_2)$ to the sum of its values on the boundary components. 
In particular, $\SS$ admits an alternating double cover iff it admits a double cover satisfying the parity conditions iff the preimage of $\chi_0$ is non-empty iff $h(\chi_0)=0$ iff the total number of marked points $m$ is even.

When $h(\chi_0)=0$, the cardinality of the preimage $g^{-1}(\chi_0)$ is equal to the cardinality of the preimage $g^{-1}(0)$ of $0$; that is, the kernel of $g$. Using the above dimensions, we compute that
\[ \dim(\mathrm{ker}(g)) = \dim(\mathrm{im}(f)) = \dim (H^1(\SS,\partial\SS;\mathbb{Z}_2)) -\dim(H^0(\partial\SS;\mathbb{Z}_2) ) +\dim(H^0(\SS;\mathbb{Z}_2)) = 2g \]
Therefore, the number of preimages of $\chi_0$ is $2^{2g}$ when $m$ is even and $0$ otherwise.
\end{proof}

\subsection{Alternating double covers and deep components}

Assuming that the total number of marked points $m$ is even, each polygonal dissection of $\SS$ can be used to construct an alternating double cover of $\SS$, as follows.


\begin{cons}\label{cons: polydiss}
Let $D$ be a polygonal dissection of $\SS$. By Proposition \ref{prop: polydisscounts}, $\SS\smallsetminus D$ has an even number of sides. As a result, there are two ways to color the marked points in $\SS\smallsetminus D$ alternating between black and white (differing by swapping the colors). Take two copies of the polygon $\SS \smallsetminus D$, one with each of the two possible alternating colorings. For each arc in $D$, glue the two pairs of preimages of that arc back together in such a way that the colorings at the endpoints match. The resulting surface $\SS'$ is an alternating double cover of $\SS$ (e.g.~ Figure \ref{fig: adcfrompolydiss}).
\end{cons}

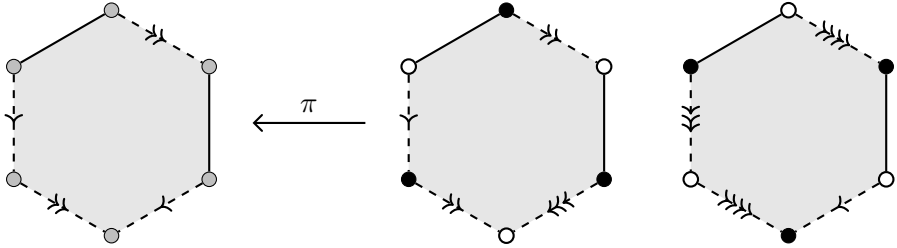
\begin{figure}[h!tb]
\centering
\begin{tikzpicture}[xscale=1,scale=.75,baseline={(0,0)}]
    \path[use as bounding box] (-2,-2.5) rectangle (14,2.5);
    \begin{scope}
        \path[fill=black!10] 
        (90-0*60:2)
        to (90-1*60:2)
        to (90-2*60:2)
        to (90-3*60:2)
        to (90-4*60:2)
        to (90-5*60:2)
        to (90-6*60:2)
        ;
        \node[dot] (1) at (90-0*60:2) {};
        \node[dot] (2) at (90-1*60:2) {};
        \node[dot] (3) at (90-2*60:2) {};
        \node[dot] (4) at (90-3*60:2) {};
        \node[dot] (5) at (90-4*60:2) {};
        \node[dot] (6) at (90-5*60:2) {};
        \draw[thick] (6) to (1) (2) to (3);
        \begin{scope}[decoration={markings,mark=at position 0.5 with {\arrow{>}}}]
            \draw[thick,dashed,postaction={decorate}] (3) to (4);
            \draw[thick,dashed,postaction={decorate}] (6) to (5);
        \end{scope}
        \begin{scope}[decoration={markings,mark=at position 0.55 with {\arrow{>>}}}]
            \draw[thick,dashed,postaction={decorate}] (1) to (2);
            \draw[thick,dashed,postaction={decorate}] (5) to (4);
        \end{scope}
    \end{scope}
    \draw[-angle 90,thick] (4.5,0) to node[above] {$\pi$} (2.5,0);
    \begin{scope}[xshift=7cm]
    \path[fill=black!10] 
    (90-0*60:2)
    to (90-1*60:2)
    to (90-2*60:2)
    to (90-3*60:2)
    to (90-4*60:2)
    to (90-5*60:2)
    to (90-6*60:2)
    ;
    \node[bpoint] (1) at (90-0*60:2) {};
    \node[wpoint] (2) at (90-1*60:2) {};
    \node[bpoint] (3) at (90-2*60:2) {};
    \node[wpoint] (4) at (90-3*60:2) {};
    \node[bpoint] (5) at (90-4*60:2) {};
    \node[wpoint] (6) at (90-5*60:2) {};
        \draw[thick] (6) to (1) (2) to (3);
        \begin{scope}[decoration={markings,mark=at position 0.6 with {\arrow{>>>}}}]
            \draw[thick,dashed,postaction={decorate}] (3) to (4);
        \end{scope}
        \begin{scope}[decoration={markings,mark=at position 0.5 with {\arrow{>}}}]
            \draw[thick,dashed,postaction={decorate}] (6) to (5);
        \end{scope}
        \begin{scope}[decoration={markings,mark=at position 0.55 with {\arrow{>>}}}]
            \draw[thick,dashed,postaction={decorate}] (1) to (2);
        \end{scope}
        \begin{scope}[decoration={markings,mark=at position 0.55 with {\arrow{>>}}}]
            \draw[thick,dashed,postaction={decorate}] (5) to (4);
        \end{scope}
    \end{scope}
    \begin{scope}[xshift=12cm]
    \path[fill=black!10] 
    (90-0*60:2)
    to (90-1*60:2)
    to (90-2*60:2)
    to (90-3*60:2)
    to (90-4*60:2)
    to (90-5*60:2)
    to (90-6*60:2)
    ;
    \node[wpoint] (1) at (90-0*60:2) {};
    \node[bpoint] (2) at (90-1*60:2) {};
    \node[wpoint] (3) at (90-2*60:2) {};
    \node[bpoint] (4) at (90-3*60:2) {};
    \node[wpoint] (5) at (90-4*60:2) {};
    \node[bpoint] (6) at (90-5*60:2) {};
    \draw[thick] (6) to (1) (2) to (3);
        \begin{scope}[decoration={markings,mark=at position 0.5 with {\arrow{>}}}]
            \draw[thick,dashed,postaction={decorate}] (3) to (4);
        \end{scope}
        \begin{scope}[decoration={markings,mark=at position 0.6 with {\arrow{>>>}}}]
            \draw[thick,dashed,postaction={decorate}] (6) to (5);
        \end{scope}
        \begin{scope}[decoration={markings,mark=at position 0.65 with {\arrow{>>>>}}}]
            \draw[thick,dashed,postaction={decorate}] (1) to (2);
        \end{scope}
        \begin{scope}[decoration={markings,mark=at position 0.65 with {\arrow{>>>>}}}]
            \draw[thick,dashed,postaction={decorate}] (5) to (4);
        \end{scope}
    \end{scope}
\end{tikzpicture}
\caption{Constructing the alternating double covering of a polygonal dissection}
\label{fig: adcfrompolydiss}
\end{figure}

Conversely, alternating double covers determine vanishing classes via the following observation. Given a simple marked arc $a$ in $\SS$, there are two lifts of $a$ along an alternating double cover $\SS'\rightarrow \SS$. Since these two lifts are related by the unique non-trivial deck transformation, either (a) both lifts of $a$ have same colored endpoints, or (b) both lifts of $a$ have opposite colored endpoints.

\begin{prop}\label{prop: vanclassaltdouble}
Given an alternating double cover $\SS'\rightarrow \SS$, the set 
\[ \cV(\SS') \coloneqq \{\text{simple marked arcs $a$ in $\SS$ such that either lift of $a$ to $\SS$ has same colored ends}\} \]
is a vanishing class of $\SS$. If $D$ is a polygonal dissection of $\SS$ with associated alternating double cover $\SS'$ (Construction \ref{cons: polydiss}), then $\chi_{\cV(\SS')} = \chi_D$.
\end{prop}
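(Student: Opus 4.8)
The plan is to verify the two claims of Proposition \ref{prop: vanclassaltdouble} in order: first that $\cV(\SS')$ is a vanishing class, then that it matches the cohomology class of a polygonal dissection $D$ when $\SS'$ is constructed from $D$ via Construction \ref{cons: polydiss}. For the first claim, the key observation is that the coloring of the marked points of $\SS'$ defines an element of $C^0(\SS';\mathbb{Z}_2)$ (black $\mapsto 0$, white $\mapsto 1$), and for a lifted simple arc $\widehat a$ in $\SS'$, whether $\widehat a$ has same-colored or differently-colored ends is recorded by the value of the coboundary $\delta(\text{coloring})$ on $\widehat a$. Concretely, $a \notin \cV(\SS')$ iff the lift $\widehat a$ has opposite-colored endpoints iff $\delta(\text{coloring})(\widehat a) = 1$. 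Now take a triangle $\{x,y,z\}$ bounding a disk in $\SS$; lift it to a triangle $\{\widehat x,\widehat y,\widehat z\}$ bounding a disk in $\SS'$ (possible since the disk is simply connected). The three lifts form a 1-cycle (the boundary of the lifted 2-cell), so $\delta(\text{coloring})$ evaluates to $0$ on their sum; hence an even number of $\widehat x,\widehat y,\widehat z$ are ``opposite-colored'', so an odd number of $x,y,z$ lie in $\cV(\SS')$. This is exactly the defining condition of a vanishing class. (One should also note that boundary arcs of $\SS$ are never in $\cV(\SS')$: a boundary arc connects adjacent marked points in $\SS$, whose lifts connect adjacent marked points in $\SS'$, which have opposite colors by the alternating condition — so boundary arcs get value $1$.)

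For the second claim, let $D$ be a polygonal dissection and $\SS'$ the alternating double cover built in Construction \ref{cons: polydiss}, i.e. two copies of the polygon $\SS \smallsetminus D$ with opposite alternating colorings, glued along the preimages of arcs in $D$ so that endpoint colors agree. By Proposition \ref{prop: cohomologyclass}.2 the class $\chi_D$ is the unique class in $H^1(\SS,\MM;\mathbb{Z}_2)$ taking value $1$ on every arc of $D$ and every boundary arc; by the uniqueness statement there, to prove $\chi_{\cV(\SS')} = \chi_D$ it suffices to show $\cV(\SS')$ is disjoint from $D$ (so that $\chi_{\cV(\SS')}$, which already has value $1$ on the boundary arcs by the previous paragraph, has value $1$ on $D$ as well), invoking also Corollary \ref{coro: uniquepolydiss} or the uniqueness in Proposition \ref{prop: cohomologyclass}. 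So I would argue: for $a \in D$, the gluing in Construction \ref{cons: polydiss} was performed precisely so that the two preimages of $a$ have matching (hence opposite-at-the-two-ends, since inside each polygon copy the coloring is alternating) endpoint colors — concretely, a lift of $a\in D$ runs from a vertex of one polygon copy to the corresponding vertex of the other copy, which carries the swapped color, and since within a single polygon copy $a$ would have opposite-colored ends (it is a diagonal or edge of an even polygon with alternating coloring, connecting two vertices... wait, here I must be careful) — the cleanest route is: the lift $\widehat a$ of $a$ has one endpoint in the ``black-start'' copy and one in the ``white-start'' copy, and the color at a marked point is determined by the parity of its position within the polygon, with the two copies using opposite conventions, so the two ends of $\widehat a$ get opposite colors. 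Hence $a \notin \cV(\SS')$, i.e. $\cV(\SS') \cap D = \emptyset$. Combined with $\cV(\SS')$ being a vanishing class and Corollary \ref{coro: uniquepolydiss} (every polygonal dissection is disjoint from a unique vanishing class, namely the one with the same cohomology class as $D$), we conclude $\chi_{\cV(\SS')} = \chi_D$.

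The main obstacle I anticipate is the bookkeeping in the second claim: making the endpoint-color computation for arcs of $D$ genuinely rigorous requires tracking how the alternating coloring of a polygon behaves under the cutting/gluing identifications of Construction \ref{cons: polydiss}, and in particular confirming that an arc of $D$, whose two preimages were glued ``so the colorings at the endpoints match'', really does lift to an arc with opposite-colored ends in $\SS'$. The subtlety is that ``colorings match at endpoints'' refers to the gluing data, whereas I need a statement about a single lifted arc crossing between the two sheets; reconciling these amounts to checking that the two sheets carry opposite color-parity conventions and that a $D$-arc's lift is exactly a path from sheet one to sheet two. Once that is pinned down, everything else is a short cohomological argument using that lifted triangles bound disks and that $H^1(\SS,\MM;\mathbb{Z}_2)$-classes with value $1$ on the boundary arcs and disjoint from $D$ are unique.
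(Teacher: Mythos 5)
Your proposal is correct and follows essentially the same route as the paper: the coboundary-parity computation on a lifted triangle is just a repackaging of the paper's direct case analysis of the vertex colors, and concluding via disjointness of $\cV(\SS')$ from $D$ together with the uniqueness in Proposition \ref{prop: cohomologyclass} is exactly the paper's argument. The bookkeeping you flagged in the second claim resolves more simply than you feared: each lift of an arc $a\in D$ is, after the gluing in Construction \ref{cons: polydiss}, a boundary edge of one alternately colored copy of the even polygon $\SS\smallsetminus D$, so its two ends automatically carry opposite colors.
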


\begin{proof}
If $a$ is a boundary arc in $\SS$, then both lifts of $a$ to $\SS'$ are boundary arcs, so the endpoints have opposite colors and $a\not\in \cV(\SS')$. Consider a triangle $\{a,b,c\}$ in $\SS$, and let $\{a',b',c'\}$ be a lift to $\SS'$. If all three vertices of the triangle $\{a',b',c'\}$ have the same color, then all three of $\{a,b,c\}$ are in $\cV(\SS')$. If one vertex of the triangle $\{a',b',c'\}$ has a different color than the other two, then exactly one of $\{a,b,c\}$ is in $\cV(\SS')$. Therefore, $\cV(\SS')$ is a vanishing class of $\SS$.

Let $D$ be a polygonal dissection of $\SS$ with associated alternating double cover $\SS'$. 
Since the coloring of marked points in $\SS'$ alternates around the boundary of each copy of $\SS\smallsetminus D$ in Construction \ref{cons: polydiss}, both lifts of an arc in $D$ to an arc in $\SS'$ have opposite colored ends. As a consequence, $D$ is disjoint from $\cV(\SS')$.
By Proposition \ref{prop: cohomologyclass}.2, $\chi_{\cV(\SS')} (a) =1$ for each arc $a\in D$ and each boundary arc $a$. By the uniqueness in Proposition \ref{prop: cohomologyclass}.3, $\chi_{\cV(\SS')} = \chi_D$.
%
%
%
\end{proof}




\begin{coro}
Construction \ref{cons: polydiss} and Proposition \ref{prop: vanclassaltdouble} define bijections between
\begin{itemize}
    \item polygonal dissections of $\SS$ up to congruence,
    \item alternating double covers of $\SS$ up to equivalence, and
    \item vanishing classes of $\SS$.
\end{itemize}  
\end{coro}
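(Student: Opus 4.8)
The plan is to deduce the corollary by stringing together bijections that are already in hand, after observing that only one new fact is really needed. From Proposition \ref{prop: polydisscount} we already have a bijection between polygonal dissections of $\SS$ up to congruence and relative cohomology classes in $H^1(\SS,\MM;\mathbb{Z}_2)$ with value $1$ on every boundary arc, and by Proposition \ref{prop: vanishingclasses} (together with Proposition \ref{prop: cohomologyclass}.1) vanishing classes of $\SS$ are in bijection with those same cohomology classes via $\cV\mapsto\chi_\cV$; composing, polygonal dissections up to congruence correspond to vanishing classes, with $D$ matched to the unique vanishing class disjoint from it (Corollary \ref{coro: uniquepolydiss}). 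So the content of the corollary is the third set: I must show that Construction \ref{cons: polydiss} and Proposition \ref{prop: vanclassaltdouble} set up a bijection between alternating double covers of $\SS$ up to equivalence and, say, vanishing classes of $\SS$.

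First I would verify that the two constructions are mutually inverse on the nose, or at least that each is injective, and then finish by a cardinality count. Proposition \ref{prop: vanclassaltdouble} already shows that if $\SS'$ is the alternating double cover built from a polygonal dissection $D$ via Construction \ref{cons: polydiss}, then $\chi_{\cV(\SS')}=\chi_D$; hence $\cV(\SS')$ is exactly the vanishing class disjoint from $D$. This shows the composite (polygonal dissection up to congruence) $\to$ (alternating double cover up to equivalence) $\to$ (vanishing class) agrees with the already-established bijection (polygonal dissection up to congruence) $\to$ (vanishing class). In particular the map $\SS'\mapsto\cV(\SS')$ is surjective onto vanishing classes. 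For injectivity of $\SS'\mapsto\cV(\SS')$ I would argue that an alternating double cover is recovered, up to equivalence, from the induced $\mathbb{Z}_2$-cocycle ``does a lifted arc have equal-colored ends'': this data pins down which loops of $\SS$ lift (those on which the cocycle, extended to $H^1(\SS;\mathbb{Z}_2)$, vanishes) hence the underlying double cover, and then the alternating coloring is determined up to the global color swap by the coloring on one sheet over one polygon. Combined with the count from the previous subsection — there are $2^{2g}$ admissible double covers, each with $2^{b-1}$ alternating colorings up to equivalence, for a total of $2^{2g+b-1}$ alternating double covers up to equivalence, matching the $2^{2g+b-1}$ vanishing classes from Theorem \ref{thm: deepsurfacediss} and Proposition \ref{prop: vanishingclasses} — surjectivity plus equality of finite cardinalities forces bijectivity.

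Assembling: $\cV(\SS')$ gives a bijection (alternating double covers up to equivalence) $\xrightarrow{\sim}$ (vanishing classes), Construction \ref{cons: polydiss} followed by $\cV(-)$ realizes the bijection (polygonal dissections up to congruence) $\xrightarrow{\sim}$ (vanishing classes) established above, and therefore Construction \ref{cons: polydiss} itself is a bijection (polygonal dissections up to congruence) $\xrightarrow{\sim}$ (alternating double covers up to equivalence). This is exactly the three-way identification claimed.

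The main obstacle I anticipate is the injectivity (equivalently, the ``recover the cover from the cocycle'') step: one has to be careful that two genuinely inequivalent alternating double covers cannot induce the same rule on arcs, and that the ambiguity in reconstructing the coloring is precisely the global color swap (which is an equivalence) and nothing more. In practice this is most cleanly handled not by a direct reconstruction but by the cardinality sandwich — prove surjectivity of $\SS'\mapsto\cV(\SS')$ directly from Proposition \ref{prop: vanclassaltdouble} (every vanishing class comes from a polygonal dissection by Corollary \ref{coro: uniquepolydiss}/Lemma \ref{lemma: polydissavoid}, and that dissection's associated cover hits it), then invoke $\#\{\text{alt.\ double covers}\}=2^{2g+b-1}=\#\{\text{vanishing classes}\}$ from the two preceding subsections to upgrade surjectivity to bijectivity. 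That sidesteps the delicate reconstruction argument entirely.
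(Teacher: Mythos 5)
Your proposal is correct and follows essentially the route the paper intends: surjectivity of $\SS'\mapsto\cV(\SS')$ via Proposition \ref{prop: vanclassaltdouble} together with Lemma \ref{lemma: polydissavoid}/Corollary \ref{coro: uniquepolydiss}, upgraded to a bijection by matching the cardinality $2^{2g+b-1}$ of all three sets, exactly the injection-plus-counting pattern the paper uses for the analogous bijections in this appendix. No gaps.
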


This implies that alternating double covers of $\SS$ are in bijection with the other sets in Theorem \ref{thm: deepcomponents}. Many of these bijections can be formulated directly; for example, if $p$ is a deep point of $V(\CA(\SS),\kk)$ with vanishing class $\cV(p)$, then the alternating double cover $\SS'\rightarrow \SS$ associated to $\cV(p)$ is the unique alternating double cover of $\SS$ such that the pullback of $p$ to $V(\CA(\SS'),\kk)$ kills precisely the marked arcs in $\SS'$ whose ends have the same color.

\subsection{Spin structures}

Finally, we dispel a tempting misconception: that components of the deep locus of $V(\CA(\SS),\kk)$ may be identified with spin structures on $\SS$.

There are many equivalent ways to define spin structures on a surface-with-boundary; here is one. Define the \textbf{direction bundle} $\mathcal{D}\SS$ to $\SS$ to be the quotient of the punctured tangent bundle $\mathcal{T}^\times\SS$ by scaling by $\mathbb{R}_+$.\footnote{Given a Riemannian metric of $\SS$, the direction bundle may be identified with the unit tangent bundle.} Elements of $\mathcal{D}\SS$ are non-zero tangent vectors to $\SS$ up to a positive scalar. The direction bundle is a bundle of circles over the base $\SS$.
A \textbf{spin structure} on $\SS$ is a choice of double cover $\mathcal{S}\SS\rightarrow \mathcal{D}\SS$ such that the preimage of any circular fiber in $\mathcal{D}\SS$ is connected. 

The set of spin structures on $\SS$ is a torsor over $H^1(\SS;\mathbb{Z}_2)$ \cite[Theorem 3A]{Joh80}, and so there are $2^{2g+b-1}$-many spin structures on $\SS$, which coincides with the number of deep components of $V(\CA(\SS),\kk)$. Therefore, bijections between the two sets exist; however, the following example shows that no such bijection can be `natural'.

\begin{prop}
Let $\SS$ be the $(1,1)$-annulus (so $2^{2g+b-1}=2$), and let $\gamma:\SS\rightarrow\SS$ be the Dehn twist around the unique simple loop in $\SS$. Then 
\begin{enumerate}
    \item the induced action of $\gamma$ on the set of spin structures of $\SS$ is trivial, but
    \item the induced action of $\gamma$ on the set of components of the deep locus of $\CA(\SS)$ is non-trivial.
\end{enumerate}
As a consequence, there can be no bijection between the two sets which commutes with the action of the mapping class group of $\SS$.
\end{prop}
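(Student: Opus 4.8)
The plan is to make both actions fully explicit from the combinatorics of the $(1,1)$-annulus. I would first fix the model $\SS = \RR/\ZZ \times [0,1]$ with one marked point on each boundary circle, write $c = \RR/\ZZ \times \{1/2\}$ for the core loop, and let $x_n$ (for $n \in \ZZ$) denote the simple arc from the lower marked point to the upper marked point that winds $n$ times around the annulus. Lifting to the cover $\RR \times [0,1]$ shows that $x_n$ and $x_m$ cross if and only if $|n-m| \ge 2$, so the compatible pairs of non-boundary arcs are exactly the $\{x_n, x_{n+1}\}$, the triangulations are exactly $T_n := \{x_n, x_{n+1}\}$ together with the two boundary arcs $a$ (on the lower circle) and $b$ (on the upper circle), and the triangles of arcs are exactly $\{x_n, x_{n+1}, a\}$ and $\{x_n, x_{n+1}, b\}$ for $n \in \ZZ$. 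The Dehn twist $\gamma = T_c$ fixes $a$ and $b$ and sends $x_n \mapsto x_{n+1}$ (for one orientation of the twist; the other gives $x_n \mapsto x_{n-1}$, which changes nothing below).

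For part (1), I would argue that $\gamma$ acts trivially on spin structures because its image under the forgetful homomorphism $\mathrm{MCG}(\SS,\MM) \to \mathrm{MCG}(\SS)$ is trivial: a spin structure is a topological datum on $\SS$ that makes no reference to $\MM$, and isotopic homeomorphisms pull spin structures back to isomorphic ones, so the action on isomorphism classes of spin structures factors through $\mathrm{MCG}(\SS)$. Since $c$ is isotopic to a boundary component of the annulus, $\gamma = T_c$ is isotopic to the identity once the boundary is allowed to move, so it is trivial in $\mathrm{MCG}(\SS)$ and fixes every spin structure. (As a consistency check one could instead observe that $\gamma_\ast = \mathrm{id}$ on $H^1(\SS;\ZZ_2) \cong \ZZ_2$, since $H_1(\SS;\ZZ_2)$ is spanned by $[c]$ and $\langle [c],[c]\rangle = 0$, so that $\gamma$ acts on the $H^1(\SS;\ZZ_2)$-torsor of spin structures by translation by a single class, and then identify that class with $0$; but the isotopy argument already suffices.)

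For part (2), the key input is Proposition \ref{prop: vanishingclasses}, which identifies the components of the deep locus of $V(\CA(\SS),\kk)$ with the vanishing classes of $\SS$ via $p \mapsto \cV(p)$; this identification is $\mathrm{MCG}(\SS,\MM)$-equivariant because $\cV(\gamma\cdot p) = \gamma(\cV(p))$, and $\gamma$ does act on $V(\CA(\SS),\kk)$, hence on the deep locus and its components, since it is an automorphism of the cluster pattern of $\SS$. It therefore suffices to see that $\gamma$ acts non-trivially on the two-element set of vanishing classes. A vanishing class contains no boundary arc, so the requirement that it meet each triangle $\{x_n, x_{n+1}, a\}$ and $\{x_n, x_{n+1}, b\}$ in an odd number of arcs forces it to contain exactly one of $x_n, x_{n+1}$ for every $n$; hence the only vanishing classes are $\cV_{\mathrm{ev}} = \{x_n : n \text{ even}\}$ and $\cV_{\mathrm{odd}} = \{x_n : n \text{ odd}\}$, recovering the count $2 = 2^{2g+b-1}$. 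Since $\gamma(x_n) = x_{n+1}$, it interchanges $\cV_{\mathrm{ev}}$ and $\cV_{\mathrm{odd}}$, so the induced permutation of the deep components is the non-trivial transposition.

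Combining the two parts: $\gamma$ acts as the identity on the two spin structures of $\SS$ and as a transposition on the two components of the deep locus, so no bijection between these two (two-element) sets can intertwine the $\gamma$-actions; as $\gamma$ lies in $\mathrm{MCG}(\SS,\MM)$, no bijection can commute with the action of the marked mapping class group. I expect the only real subtlety to be in part (1) — namely the clean justification that the action on spin structures sees only the underlying unmarked surface and that $\gamma$ is null-isotopic there — while the combinatorial bookkeeping in part (2) is routine once the triangles of arcs have been listed.
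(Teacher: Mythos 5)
Your proof is correct, and it reaches both conclusions by routes that differ in detail from the paper's. For part (1), the paper distinguishes the two spin structures by whether the derivative lift $d\ell$ of the core loop has connected preimage in the spin double cover, and then observes that the Dehn twist fixes $d\ell$ up to homotopy; you instead argue that the action on (isomorphism classes of) spin structures factors through the unmarked mapping class group, where the boundary-parallel twist is null-isotopic. Your argument is the more conceptual one -- it immediately shows that \emph{every} mapping class that becomes trivial after forgetting $\MM$ acts trivially on spin structures, which is really the structural reason the proposition holds -- while the paper's is a direct two-element computation; both are valid given that the paper's spin structures carry no boundary condition, so that the relevant action indeed only depends on the isotopy class in $\mathrm{Diff}(\SS)$. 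For part (2), the paper simply points to the two alternating double covers of the $(1,1)$-annulus and notes the twist exchanges them, whereas you index the components by vanishing classes (another of the equivalent sets in Theorem \ref{thm: deepcomponents}) and verify the swap by the explicit combinatorics of the arcs $x_n$; your bookkeeping (the $x_n$ exhaust the simple non-boundary arcs, the triangles are $\{x_n,x_{n+1},a\}$ and $\{x_n,x_{n+1},b\}$, hence the only vanishing classes are the even- and odd-indexed families) is accurate and makes the paper's figure-based assertion fully explicit. The only cosmetic caution is the parenthetical in part (1): the action of a general mapping class on the torsor of spin structures is affine rather than a pure translation, but as you note the isotopy argument already suffices, so nothing rests on that remark.
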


\begin{proof}
Let $\ell$ be the (homotopy class of) the simple loop in $\SS$ (Figure \ref{fig: annulus 2 2 loop}). The derivative of a parametrization of $\ell$ lifts to a loop $d\ell$ in $\mathcal{D}\SS$, and the homotopy class of $d\ell$ is independent of the choice of parametrization. The two spin structures on $\SS$ differ in whether $d\ell$ has connected or disconnected preimage along the double cover $\mathcal{S}\SS\rightarrow \mathcal{D}\SS$. Since the Dehn twist around $\ell$ fixes $d\ell$, the Dehn twist does not exchange these two spin structures.

Conversely, the Dehn twist around $\ell$ exchanges the two alternating double covers of $\SS$ (Figure \ref{fig: annulus 1 1 covers inline}). Therefore, the action of the Dehn twist on the other sets in Theorem \ref{thm: deepcomponents} is also non-trivial.
\end{proof}

\draftnewpage

\bibliographystyle{alpha}
\bibliography{GlobalBib}

\def\cprime{$'$} \def\cprime{$'$} \def\cprime{$'$} \def\cprime{$'$}
  \def\cprime{$'$} \def\cprime{$'$}
\begin{thebibliography}{GHKK18}

\bibitem[BBH11]{BBH11}
Andre Beineke, Thomas Br{\"u}stle, and Lutz Hille.
\newblock Cluster-cyclic quivers with three vertices and the {M}arkov equation.
\newblock {\em Algebr. Represent. Theory}, 14(1):97--112, 2011.
\newblock With an appendix by Otto Kerner.

\bibitem[BFMS23]{BFMS23}
Angelica Benito, Eleonore Faber, Hussein Mourtada, and Bernd Schober.
\newblock Classification of singularities of cluster algebras of finite type:
  the case of trivial coefficients.
\newblock {\em Glasgow Mathematical Journal}, 65(1):170--204, January 2023.

\bibitem[BFZ05]{BFZ05}
Arkady Berenstein, Sergey Fomin, and Andrei Zelevinsky.
\newblock Cluster algebras. {III}. {U}pper bounds and double {B}ruhat cells.
\newblock {\em Duke Math. J.}, 126(1):1--52, 2005.

\bibitem[CGSS24]{CGSS24arxiv}
Marco Castronovo, Mikhail Gorsky, José Simental, and David~E. Speyer.
\newblock Cluster deep loci and mirror symmetry, March 2024.
\newblock arXiv:2402.16970.

\bibitem[CLS15]{CLS15}
Ilke Canakci, Kyungyong Lee, and Ralf Schiffler.
\newblock On cluster algebras from unpunctured surfaces with one marked point.
\newblock {\em Proceedings of the American Mathematical Society, Series B},
  2(3):35--49, November 2015.

\bibitem[FG06]{FG06}
Vladimir Fock and Alexander Goncharov.
\newblock Moduli spaces of local systems and higher {T}eichm\"uller theory.
\newblock {\em Publ. Math. Inst. Hautes \'Etudes Sci.}, 103:1--211, 2006.

\bibitem[FG07]{FG07}
Vladimir~V. Fock and Alexander~B. Goncharov.
\newblock Dual {T}eichm\"uller and lamination spaces.
\newblock In {\em Handbook of {T}eichm\"uller theory. {V}ol. {I}}, volume~11 of
  {\em IRMA Lect. Math. Theor. Phys.}, pages 647--684. Eur. Math. Soc.,
  Z\"urich, 2007.

\bibitem[FG09]{FG09}
Vladimir~V. Fock and Alexander~B. Goncharov.
\newblock Cluster ensembles, quantization and the dilogarithm.
\newblock {\em Ann. Sci. {\'E}c. Norm. Sup{\'e}r. (4)}, 42(6):865--930, 2009.

\bibitem[FST08]{FST08}
Sergey Fomin, Michael Shapiro, and Dylan Thurston.
\newblock Cluster algebras and triangulated surfaces. {I}. {C}luster complexes.
\newblock {\em Acta Math.}, 201(1):83--146, 2008.

\bibitem[FWZ21a]{FWZcc}
Sergey Fomin, Lauren Williams, and Andrei Zelevinsky.
\newblock Introduction to {Cluster} {Algebras}. {Chapter} 6, August 2021.
\newblock arXiv:2008.09189.

\bibitem[FWZ21b]{FWZdd}
Sergey Fomin, Lauren Williams, and Andrei Zelevinsky.
\newblock Introduction to {Cluster} {Algebras}. {Chapter} 7, August 2021.
\newblock arXiv:2106.02160.

\bibitem[FWZ21c]{FWZaa}
Sergey Fomin, Lauren Williams, and Andrei Zelevinsky.
\newblock Introduction to {Cluster} {Algebras}. {Chapters} 1-3, August 2021.
\newblock arXiv:1608.05735.

\bibitem[FWZ21d]{FWZbb}
Sergey Fomin, Lauren Williams, and Andrei Zelevinsky.
\newblock Introduction to {Cluster} {Algebras}. {Chapters} 4-5, August 2021.
\newblock arXiv:1707.07190.

\bibitem[FZ02]{FZ02}
Sergey Fomin and Andrei Zelevinsky.
\newblock Cluster algebras. {I}. {F}oundations.
\newblock {\em J. Amer. Math. Soc.}, 15(2):497--529 (electronic), 2002.

\bibitem[FZ03]{FZ03}
Sergey Fomin and Andrei Zelevinsky.
\newblock Cluster algebras. {II}. {F}inite type classification.
\newblock {\em Invent. Math.}, 154(1):63--121, 2003.

\bibitem[GHK15]{GHKBirat15}
Mark Gross, Paul Hacking, and Sean Keel.
\newblock Birational geometry of cluster algebras.
\newblock {\em Algebraic Geometry}, 2(2):137--175, May 2015.

\bibitem[GHKK18]{GHKK}
Mark Gross, Paul Hacking, Sean Keel, and Maxim Kontsevich.
\newblock Canonical bases for cluster algebras.
\newblock {\em J. Amer. Math. Soc.}, 31(2):497--608, 2018.

\bibitem[GSV05]{GSV05}
Michael Gekhtman, Michael Shapiro, and Alek Vainshtein.
\newblock Cluster algebras and {W}eil-{P}etersson forms.
\newblock {\em Duke Math. J.}, 127(2):291--311, 2005.

\bibitem[Joh80]{Joh80}
Dennis Johnson.
\newblock Spin structures and quadratic forms on surfaces.
\newblock {\em J. London Math. Soc. (2)}, 22(2):365--373, 1980.

\bibitem[MM15]{MM15}
Jacob~P. Matherne and Greg Muller.
\newblock Computing upper cluster algebras.
\newblock {\em Int. Math. Res. Not. IMRN}, 2015(11):3121--3149, 2015.

\bibitem[Mul13]{MulLA}
Greg Muller.
\newblock Locally acyclic cluster algebras.
\newblock {\em Adv. Math.}, 233:207--247, 2013.

\bibitem[Mul16]{Mul16}
Greg Muller.
\newblock Skein and cluster algebras of marked surfaces.
\newblock {\em Quantum Topol.}, 7(3):435--503, 2016.

\bibitem[MW13]{MW13}
Gregg Musiker and Lauren Williams.
\newblock Matrix formulae and skein relations for cluster algebras from
  surfaces.
\newblock {\em Int. Math. Res. Not. IMRN}, 2013(13):2891--2944, 2013.

\bibitem[War14]{War14}
Matthias Warkentin.
\newblock {\em Exchange {Graphs} via {Quiver} {Mutation}}.
\newblock Ph.{D}. {Thesis}, Technische Universit{\"a}t Chemnitz, January 2014.

\end{thebibliography}

\end{document}